\documentclass[12pt]{amsart}

\usepackage[english]{babel}
\usepackage[latin1]{inputenc}
 \usepackage[T1]{fontenc}
 \usepackage{lmodern}
 \usepackage{amssymb}
\usepackage{amsmath}
\usepackage{enumerate}

\textwidth=420pt
\textheight=600pt
\hoffset=-30pt


\newtheorem{theo}{Theorem}[section]
\newtheorem{lemm}[theo]{Lemma}
\newtheorem{prop}[theo]{Proposition}
\newtheorem{coro}[theo]{Corollary}
\newtheorem{defi}[theo]{Definition\rm}
\newtheorem{rema}[theo]{Remark}

\def\og{\leavevmode\raise.3ex\hbox{$\scriptscriptstyle\langle\!\langle$~}}
\def\fg{\leavevmode\raise.3ex\hbox{~$\!\scriptscriptstyle\,\rangle\!\rangle$}}

\newcommand{\N}{\mathbb{N}}
\newcommand{\Z}{\mathbb{Z}}

\newcommand{\C}{\mathbb{C}}

\def\germ #1 {\mathfrak{#1}}
\def\cal #1 {\mathcal{#1}}

\title[Restriction and category $\mathcal{O}$]
{Restriction to Levi subalgebras and generalization of the category $\mathcal{O}$}


\author{Guillaume Tomasini}

\address{Institut for Matematiske Fag\\ 
Aarhus Universitet\\
Ny Munkegade 118\\
Bygning 1530\\
8000 Aarhus C}

\email{tomasini@imf.au.dk}

\begin{document}

\begin{abstract}
 The category of all modules over a reductive complex Lie algebra is wild, and therefore it is useful to study full subcategories. For instance, Bernstein, Gelfand and Gelfand introduced a category of modules which provides a natural setting for highest weight modules. In this paper, we define a family of categories which generalizes the BGG category and we classify the simple modules for a subfamily. As a consequence, we show that some of the obtained categories are semisimple.
\end{abstract}

\maketitle


\section{Introduction}


The problem of understanding the restriction of a module over an algebra or a group to a given subalgebra or subgroup is referred to as a branching problem. Branching rules play an important part in representation theory and in physics (e.g. Clebsch-Gordon coefficients). Of great importance to us is the special case of dual pairs. Here one considers a Lie algebra $\germ g $ and a pair of Lie subalgebras $(\germ a , \germ b )$ which are mutual commutants. Then the natural question is to understand the restriction of a (simple) $\germ g $-module to the Lie algebra $\germ a \oplus \germ b $. Such branching rules where obtained first by R. Howe in the case of the metaplectic group and the so-called Weil representation \cite{Ho89a,Ho89b}. Since then, many mathematicians have continued Howe's study of the Weil representation and extended it to others Lie groups or Lie algebras (e.g. \cite{RS95,Pr96,Li99b,Li00}).

Even though we now know many examples, the problem of finding branching rules is highly non trivial. It is even harder if one wants to study non highest weight modules (Note that the Weil representation is from the infinitesimal point of view a highest weight module). To prove some branching rules for a general (simple) weight module, we need to impose some extra conditions. In \cite{Li99b}, Jian-Shu Li studies the restriction of the minimal representation of $E_{7}$ to the dual pair $(A_{1}, F_{4})$. To give a formula in this case, his first step was to use a branching rule from $E_{7}$ to $E_{6}$ which is a Levi subalgebra of $E_{7}$ containing the subalgebra $F_{4}$ appearing in the dual pair.

Motivated by this example, we would like to study the following problem. Let $\germ g $ be a reductive finite dimensional Lie algebra over $\C$. Let $\germ l $ be a Levi subalgebra of $\germ g $.

{\bf Problem $\cal P (\germ l )$:} Can we find all the simple weight $\germ g $-modules $M$ such that the restriction of $M$ to $\germ l $ splits into a direct sum of simple highest weight $\germ l $-modules?

In this article we give a partial answer to this question (and explain why it is only a partial answer).  Our strategy is to add more conditions on the modules $M$ having the above property. More specifically we introduce a family of categories taking into account the above property and some cuspidality condition (see the definition \ref{defiGO}). We then study these categories and their simple modules. In some cases we give a complete description of the simple modules and show that the category is semisimple.

The article is organized as follows. In section 2, we recall some facts about weight modules and their classification. In the third section, we give the definition of our family of categories and give some non trivial examples. In the fourth section, we state and prove a classification result (Theorem \ref{thmpcpl}). Finally in the last section we prove that (some of) our categories are semisimple (Corollaries \ref{corextAf}, \ref{corextC} and \ref{corextCf}).

{\bf Conventions:} All the Lie algebras considered in this paper are finite dimensional and defined over $\C$. We shall denote by $\N=\{0,1,2,\ldots\}$ the set of non negative integers and by $\Z$ the set of all integers. We denote by $\delta_{i,j}$ the Kronecker $\delta$-symbol.

This article is a part of the author's thesis \cite{To}. The main results were announced in \cite{To10}



\section{Weight modules}\label{sec:WM}


The study of the modules over a given Lie algebra lead the mathematicians to explore various categories. The category of all finite dimensional modules was studied first and then it was enlarged to obtain the so-called BGG category $\cal O $ which gave rise later on to the notion of weight modules. Before we state some definitions and review the main results about these modules, some notations are introduced.

Let $\germ g $ denote a reductive Lie algebra and $\cal U (\germ g )$ denote its universal enveloping algebra. Let $\germ h $ be a fixed Cartan subalgebra and denote by $\cal R $ the corresponding set of roots. For $\alpha \in \cal R $, we denote by $\germ g _{\alpha}$ the root space for the root $\alpha$. More generally for $S \subset \cal R $ we denote by $\germ g _{S}$ the direct sum of the root spaces for the various $\alpha \in S$. For $S\subset \cal R $ we denote by $\langle S \rangle$ the set of all roots which are linear combination of elements of $S$. As a particular case, given a basis $\Phi$ of $\cal R $, we consider $\theta\subset \Phi$ and the set of roots $\langle \theta\rangle$. We then consider $\cal R ^{\pm}$ the set of positive (resp. negative) roots with respect to $\Phi$ and $\langle \theta \rangle^{\pm}=\langle \theta \rangle\cap \cal R ^{\pm}$. We define the following subalgebras of $\germ g $ :
$$\germ l _{\theta}=\germ h \oplus \germ g _{\langle \theta\rangle}, \quad \germ n ^{\pm}_{\theta}=\germ g _{\cal R ^{\pm}-\langle \theta\rangle ^{\pm}}.$$

The subalgebra $\germ p _{\theta}=\germ l _{\theta}\oplus \germ n ^+_{\theta}$ is called the standard parabolic subalgebra associated to $\theta$ and $\germ l _{\theta}$ is the standard Levi subalgebra associated to $\theta$. The latter is a reductive algebra. Its semisimple part is denoted by $\germ l '_{\theta}$. If $\theta=\emptyset$, then $\germ l _ {\emptyset}=\germ h $ and we simply write $\germ n ^+ $ instead of $\germ n ^+_{\emptyset}$.


\subsection{The category of weight modules}\label{ssec:WM}


We denote by $Mod(\germ g )$ the category of all $\germ g $--modules. This category is very wild and therefore we will investigate some full subcategories of $Mod(\germ g )$ for which we can describe the simple modules. The first well known subcategory of $Mod(\germ g )$ is the full subcategory of finite dimensional modules $Fin(\germ g )$. It was studied by \'E. Cartan, H. Weyl, W. Killing and many others. This category turns out to be semisimple: every finite dimensional module splits into a direct sum of simple modules. Moreover, we can completely describe the simple objects of this category: they are all the simple highest weight modules with integral dominant highest weight.

We now turn to a bigger category. The category of weight modules.
\begin{defi}\label{def:WM}
A module $M$ is a \emph{weight module} if it is finitely generated, and $\germ h $--diagonalizable in the sense that 
$$M=\oplus_{\lambda \in \germ h ^*}\: M_{\lambda}, \quad \mbox{where } M_{\lambda}=\{m\in M \: : \: H\cdot m = \lambda(H)m, \: \forall \: H\in \germ h \},$$ with weight spaces $M_{\lambda}$ of finite dimension. We will denote by $\cal M (\germ g , \germ h )$ the full subcategory of $Mod(\germ g )$ consisting of all weight modules.
\end{defi}
\begin{rema}
Note that we require finite dimensional weight spaces in our definition, which is not always the case in the literature. This category also appears as a particular case of several other categories (e.g. \cite{PS02,PZ04a} or \cite{DFO94,FMO10}).
\end{rema}

This category has several \og good\fg\ properties, for instance we have the following:
\begin{prop}[Fernando {\cite[thm 4.21]{Fe90}}]\label{prop:JH}
The category $\cal M (\germ g ,\germ h )$ is abelian, noetherian and artinian.
\end{prop}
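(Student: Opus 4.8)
The plan is to verify the three properties in order, using the abelian structure of $Mod(\germ g )$ as a backdrop. Since $\cal M (\germ g ,\germ h )$ is a full subcategory, it suffices to check that it is closed under the formation of kernels, cokernels and finite direct sums computed in $Mod(\germ g )$, and then to establish the two chain conditions. First I would record the elementary closure properties. A finite direct sum of weight modules is again finitely generated, $\germ h $--diagonalizable, and has finite dimensional weight spaces, so the category has biproducts. Given a morphism $f\colon M\to N$, the inclusion $f(M_{\lambda})\subseteq N_{\lambda}$ (valid because $f$ commutes with $\germ h $) shows that $\ker f=\oplus_{\lambda}(\ker f\cap M_{\lambda})$ and $\mathrm{coker}\,f=\oplus_{\lambda}N_{\lambda}/f(M_{\lambda})$ are again $\germ h $--diagonalizable with finite dimensional weight spaces, the cokernel being finitely generated as a quotient of $N$. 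The only nontrivial point left for the abelian claim is that $\ker f$, a submodule of $M$, is finitely generated.

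That point is exactly the content of the noetherian property, which I would prove next. The key external input is that $\cal U (\germ g )$ is a left noetherian ring: by the PBW theorem the associated graded algebra for the standard filtration is the symmetric algebra $S(\germ g )$, which is noetherian by Hilbert's basis theorem, whence $\cal U (\germ g )$ is noetherian. Consequently every finitely generated $\cal U (\germ g )$--module is a noetherian object, so every submodule of a weight module is finitely generated and every ascending chain of submodules stabilises. This simultaneously completes the proof that the category is abelian (kernels now lie in $\cal M (\germ g ,\germ h )$) and that it is noetherian.

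The artinian property is the substantial part, and here the finiteness of the weight spaces is indispensable, since noetherianity alone does not force the descending chain condition for $\cal U (\germ g )$--modules. I would reduce to a single block as follows. The centre $Z(\germ g )$ commutes with $\germ h $, hence preserves each finite dimensional weight space and decomposes it into finitely many generalized eigenspaces; summing over all weights gives a decomposition $M=\oplus_{\chi}M^{\chi}$ into $\germ g $--submodules indexed by generalized central characters. Because $M$ is noetherian it cannot be an infinite direct sum of nonzero submodules, so only finitely many $M^{\chi}$ are nonzero, and it suffices to treat one of them. Moreover, if $M$ is generated by weight vectors of weights $\mu_{1},\dots,\mu_{k}$, then $\mathrm{supp}(M)\subseteq\bigcup_{i}(\mu_{i}+Q)$, where $Q\subset\germ h ^{*}$ is the root lattice, since $\germ g _{\alpha}M_{\lambda}\subseteq M_{\lambda+\alpha}$ and $\alpha\in Q$; thus each block is supported in finitely many $Q$--cosets.

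The main obstacle is to bound the length of such a block $M^{\chi}$. The strategy is to show that its simple subquotients are simple weight modules with central character $\chi$ supported in the fixed finite set of cosets, and that there are only finitely many isomorphism classes of these, each carrying strictly positive weight multiplicities on a common infinite part of the support. Since the finite dimensional weight spaces of $M^{\chi}$ cannot accommodate an unbounded total multiplicity of such factors, the length of $M^{\chi}$ is finite. Finite length yields in particular the descending chain condition, and assembling the finitely many blocks and cosets shows that every object of $\cal M (\germ g ,\germ h )$ has finite length, so the category is artinian. I expect the control of the simple subquotients and their multiplicities, rather than the chain conditions in the abstract, to be where the real work lies.
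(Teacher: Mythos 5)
Your first two steps are fine: closure of $\mathcal{M}(\mathfrak{g},\mathfrak{h})$ under finite direct sums, kernels and cokernels, plus the noetherianity of $\mathcal{U}(\mathfrak{g})$ (PBW and Hilbert's basis theorem) correctly give the abelian and noetherian claims, and you rightly observe that the artinian claim is where finite dimensionality of weight spaces must enter. Note that the paper itself offers no argument for this proposition -- it is quoted from Fernando \cite[thm 4.21]{Fe90} -- so your attempt has to stand on its own, and it is precisely in the artinian step that it does not.

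The gap is that the entire descending-chain argument is made to rest on the assertion that there are only finitely many isomorphism classes of simple weight modules with a fixed (generalized) central character and support contained in finitely many fixed cosets of the root lattice. This is not an elementary fact one may simply announce: it is of essentially the same depth as the finite-length theorem being proved, and establishing it without circularity is the real content of Fernando's work (his proof goes through the structure theory of weight modules -- parabolic induction from cuspidal modules and control of the cuspidal situation -- not through a count of simples that is available in advance). As written, the decisive input of your proof is missing. In addition, the subsidiary claim that these simples have strictly positive multiplicities ``on a common infinite part of the support'' is false: already for $\mathfrak{sl}_2$ the finite-dimensional simple module of highest weight $n$ and the simple Verma module of highest weight $-n-2$ have the same central character and supports in the same coset of $2\mathbb{Z}$, yet one support is finite, the other infinite, and they are disjoint. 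That particular defect is repairable -- choose one witness weight $\mu_j$ for each of the finitely many classes $L_j$, use noetherianity to extract from each nonzero subquotient of a chain a simple quotient, and bound the number of occurrences of $L_j$ by $\dim M_{\mu_j}$ -- but the repaired argument still collapses without the unproven finiteness statement. So the reduction to blocks with fixed central character and finitely many support cosets is sound, but the heart of the artinian property remains unproved.
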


Before trying to get a better understanding of this category, we will need more concepts that we shall review now.
\begin{defi}\label{def:afin}
Let $\germ a $ be any subalgebra of $\germ g $. A module $M$ is \emph{$\germ a $-finite} if 
$$\forall\: m\in M, \: dim(\cal U (\germ a )m)<\infty.$$
\end{defi}

In the $70's$, Bernstein, Gelfand and Gelfand enlarged the category $Fin(\germ g )$ to include all highest weight modules \cite{BGG75}. More precisely, they introduced the following category:
\begin{defi}\label{def:O}
The category $\cal O $ is the full subcategory of $\cal M (\germ g ,\germ h )$ whose objects are $\germ n ^{+}$-finite.
\end{defi}

This category is quite well understood now. The complete list of simple modules in $\cal O $ is known and there are also important results about projective objects, resolutions\ldots We refer the reader to \cite{Hu08} and the references therein.

In the $80's$, several subcategories of category $\cal O $ where defined and investigated by Rocha-Caridi, the so-called parabolic versions of $\cal O $. We recall here a definition.
\begin{defi}\label{def:Opara}
Let $\germ p $ be a parabolic subalgebra of $\germ g $ and write $\germ p =\germ l \oplus \germ n $ for a Levi decomposition of $\germ p $. The category $\cal O ^{\germ p }$ is the full subcategory of $\cal O $ whose objects $M$ satisfy the following condition:

As a $\germ l $-module, $M$ splits into a direct sum of simple finite dimensional modules.
\end{defi}
The classification of simple modules in $\cal O ^{\germ p }$ and results about projective objects were obtained by Rocha-Caridi in \cite{RoC80}. See also \cite{Hu08}. Note that this is a category in which a certain kind of restriction condition is required.

The next step to understand simple weight modules is the notion of generalised Verma modules. We recall here some well known facts about these modules.

Let $\germ p $ be a parabolic subalgebra of $\germ g $. Let $\germ p = \germ l \oplus \germ n $ be a Levi decomposition of $\germ p $. Given a $\germ l $--module $M$, one can construct a $\germ p $--module structure on $M$ by letting $\germ n $ act trivially. For such a module $M$ we define the \emph{generalised Verma module} $V(\germ p ,M):=\cal U (\germ g )\otimes _{\cal U (\germ p )} \: M.$ Conversely, given any $\germ g $--module $V$ we define the $\germ l $--module $V^{\germ n }:=\{v\in V \: | \: \germ n \cdot v=0\}.$ We then have the following:

\begin{prop}\label{GVM}\hfill{ }
\begin{enumerate}
\item Let $M$ be a weight $\germ l $-module. Then $V(\germ p , M)$ is a weight $\germ g $-module
\item If $M$ is a simple $\germ l $--module, the module $V(\germ p ,M)$ is indecomposable and admits a unique maximal submodule $K(\germ p ,M)$ and a unique simple quotient $L(\germ p ,M)$.
\item Assume $M$ is a simple $\germ l $--module. Then the image of $1\otimes M$ in $L(\germ p ,M)$ is isomorphic to $M$ as $\germ l $-modules.
\item If $V$ is a simple $\germ g $--module such that $V^{\germ n } \not=\{0\}$, then $V\cong L(\germ p ,L^{\germ n })$.
\item Assume $M$ is a simple $\germ l $--module. Let $pr$ be the projection from $V(\germ p ,M)$ onto $1 \otimes M$ given by the decomposition of $V(\germ p ,M)$ into weight spaces. Then the module $K(\germ p , M)$ can be characterized as follows:
$$K(\germ p ,M)=\{v \in V\: :\: \forall \: u \in \cal U (\germ n ), \: pr(u\cdot v)=0\}.$$
\end{enumerate}
\end{prop}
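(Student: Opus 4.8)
The whole argument rests on the Poincar\'e--Birkhoff--Witt isomorphism $\cal U (\germ g )\cong\cal U (\germ n ^-)\otimes\cal U (\germ p )$, where $\germ g =\germ n ^-\oplus\germ l \oplus\germ n $ is the triangular decomposition attached to $\germ p =\germ l \oplus\germ n $ and $\germ n ^-$ is the opposite nilradical, together with one grading device. I fix an element $H_0$ in the centre of $\germ l $ with $\alpha(H_0)>0$ for every root $\alpha$ of $\germ n $; then $\mathrm{ad}(H_0)$ is zero on $\germ l $, strictly positive on $\germ n $ and strictly negative on $\germ n ^-$, and for a weight $\nu$ I set $d(\nu)=\nu(H_0)$. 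Since the weights of any $\germ l $-module lie in a single coset of $\Z\langle\theta\rangle$ and $\langle\theta\rangle$ vanishes on $H_0$, the function $d$ is constant, equal to some $d_0$, on $\mathrm{wts}(M)$. For (1), PBW gives $V(\germ p ,M)\cong\cal U (\germ n ^-)\otimes _{\C}M$ as $\germ h $-modules, so $V(\germ p ,M)_{\nu}$ is the sum of the finite dimensional spaces $\cal U (\germ n ^-)_{-\mu}\otimes M_{\nu+\mu}$; only finitely many $\mu$ occur, because $\nu+\mu\in\mathrm{wts}(M)$ forces $d(\mu)=d_0-d(\nu)$, which bounds the number of nilradical roots in $\mu$. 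Finite generation is clear since $1\otimes M$ generates $V(\germ p ,M)$ and $M$ is finitely generated over $\cal U (\germ l )\subset\cal U (\germ p )$.

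The structural fact behind (2), (3) and (5) is that $1\otimes M$ is exactly the top $d$-level of $V(\germ p ,M)$: for $\nu\in\mathrm{wts}(M)$ one has $V(\germ p ,M)_{\nu}=1\otimes M_{\nu}$, while every other weight space lies in $\ker pr=\germ n ^-\cal U (\germ n ^-)\otimes M$. Consequently $pr$ is the projection onto the top $d$-level; it is $\germ l $-equivariant and it annihilates $\germ n ^-V(\germ p ,M)$. Granting this, (2) runs along classical lines: a submodule $N$ is $\germ h $-graded, $N\cap(1\otimes M)$ is an $\germ l $-submodule of the simple module $M$, hence $0$ or everything, and it cannot be everything without forcing $N=V(\germ p ,M)$; so every proper submodule misses $1\otimes M$, and therefore so does the sum of all of them, which is then the unique maximal submodule $K(\germ p ,M)$. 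Indecomposability and the existence of a unique simple quotient $L(\germ p ,M)$ are then formal. Part (3) follows at once: $K(\germ p ,M)\cap(1\otimes M)=0$, so $1\otimes M$ maps isomorphically onto its image in $L(\germ p ,M)$, and $1\otimes M\cong M$ as $\germ l $-modules.

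For (5) I first note that $K(\germ p ,M)$, being $\germ h $-graded and meeting $1\otimes M$ trivially, is the largest submodule contained in $\ker pr$; indeed any submodule inside $\ker pr$ is proper and hence lies in $K(\germ p ,M)$. The inclusion $K(\germ p ,M)\subseteq K'$, where $K'=\{v:pr(\cal U (\germ n )v)=0\}$, is immediate. For the converse, take $v\in K'$ and write an arbitrary element of $\cal U (\germ g )$ in PBW form $z^-z^0z^+$ with $z^-\in\cal U (\germ n ^-)$, $z^0\in\cal U (\germ l )$, $z^+\in\cal U (\germ n )$. Because $pr$ kills $\germ n ^-V(\germ p ,M)$, only the scalar term of $z^-$ survives under $pr$; because $pr$ is $\germ l $-equivariant it commutes with $z^0$; and $pr(z^+v)=0$ since $v\in K'$. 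Hence $pr(z^-z^0z^+v)=0$, so $\cal U (\germ g )v\subseteq\ker pr$ and $v\in K(\germ p ,M)$. This gives $K'=K(\germ p ,M)$.

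Part (4) carries the only genuine extra difficulty. Here $V$ is a simple weight module, $V^{\germ n }$ is $\germ h $-stable and hence a weight $\germ l $-module, and I claim it is \emph{simple}. For a nonzero weight vector $v\in V^{\germ n }$ one has $\cal U (\germ g )v=\cal U (\germ n ^-)\cal U (\germ l )v=V$, since $\germ n $ kills $v$ and $V$ is simple. Given two such weight vectors $v,v'$, writing $v'\in\cal U (\germ n ^-)\cal U (\germ l )v$ and using that $\cal U (\germ n ^-)$ can only strictly lower $d$ forces $d(v')\le d(v)$, and by symmetry $d(v)=d(v')$; the $d$-homogeneous part then puts $v'\in\cal U (\germ l )v$, so $V^{\germ n }=\cal U (\germ l )v$ is simple. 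With $V^{\germ n }$ simple, Frobenius reciprocity $\mathrm{Hom}_{\germ g }(V(\germ p ,V^{\germ n }),V)\cong\mathrm{Hom}_{\germ l }(V^{\germ n },V^{\germ n })$ produces from the identity a nonzero, hence surjective, map onto the simple module $V$, which by (2) identifies $V$ with $L(\germ p ,V^{\germ n })$. The main obstacle is precisely this simplicity of $V^{\germ n }$: parts (1)--(3) and (5) are essentially bookkeeping with the $d$-grading, whereas ruling out $\germ n $-invariant vectors spread over several $d$-levels is what the two-sided comparison of $d$-values is really for.
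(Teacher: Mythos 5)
Your proof is correct, and it is self-contained precisely where the paper is not: for this proposition the paper gives no argument at all, referring instead to Fernando's paper (proposition 3.8 there), so there is no internal proof to compare against. Your route --- the PBW factorisation $\mathcal{U}(\mathfrak{g})\cong\mathcal{U}(\mathfrak{n}^-)\otimes\mathcal{U}(\mathfrak{p})$ together with the grading $d(\nu)=\nu(H_0)$ by a central element $H_0$ of the Levi that is positive on the roots of the nilradical (which exists for the parabolics containing $\mathfrak{h}$ used in the paper) --- is the standard one, and you deploy it efficiently: identifying $1\otimes M$ with the top $d$-level gives (2), (3) and (5) by graded bookkeeping, your characterisation of $K(\mathfrak{p},M)$ as the largest submodule inside $\ker pr$ makes (5) clean, and the two-sided $d$-comparison establishing simplicity of $V^{\mathfrak{n}}$ is exactly the point of (4) that requires care, after which Frobenius reciprocity finishes. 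One small inaccuracy: in part (1) you assert that the weights of \emph{any} weight $\mathfrak{l}$-module lie in a single coset of $\Z\langle\theta\rangle$; this is false in general (take a direct sum of two modules whose supports lie in different cosets), but since weight modules are by definition finitely generated, their weights lie in finitely many cosets, so $d$ takes only finitely many values on the support of $M$ and your finiteness argument for the weight spaces of $V(\mathfrak{p},M)$ goes through verbatim; in parts (2)--(5), where $M$ is simple, the single-coset claim is true and nothing else is affected.
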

\begin{proof} We refer to \cite[proposition $3.8$]{Fe90} for a proof.

\end{proof}

The module $L(\germ p ,M)$ will be called the simple $\germ g $--module \emph{induced} from $(\germ p ,M)$. We refer to \cite{CF94,Maz00} for a more detailed discussion about generalised Verma modules.

To give the classification of simple weight modules, we need one more ingredient: the so--called \emph{cuspidal} modules.
\begin{defi}\label{def:cusp}
Let $M$ be a weight module. A root $\alpha \in \cal R $ is said to be \emph{locally nilpotent} with respect to $M$ if any non zero $X\in \germ g _{\alpha}$ acts by a locally nilpotent operator on the whole module $M$. It is said to be \emph{cuspidal} if any non zero $X\in \germ g _{\alpha}$ acts injectively on the whole module $M$.
\end{defi}

We denote by $\cal R ^N(M)$ the set of locally nilpotent roots and by $\cal R ^I(M)$ the set of cuspidal roots. We shall simply denote them by $\cal R ^N $ and $\cal R ^I$ when the module $M$ is clear from the context. Then we have the following:
\begin{lemm}[Fernando, {\cite[lemma $2.3$]{Fe90}}]
For a simple weight module $M$, $\cal R = \cal R ^N\sqcup \cal R ^I$.
\end{lemm}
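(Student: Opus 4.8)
The statement asserts two things: that the sets $\cal R ^N(M)$ and $\cal R ^I(M)$ are disjoint, and that their union exhausts $\cal R $. Disjointness is immediate. Since $\germ g $ is reductive, each root space $\germ g _{\alpha}$ is one-dimensional, so both conditions in Definition \ref{def:cusp} amount to a statement about a single nonzero vector $X\in \germ g _{\alpha}$. If $\alpha$ were simultaneously locally nilpotent and cuspidal, then $X$ would act both locally nilpotently and injectively on the nonzero module $M$; but for any $m\neq 0$ local nilpotence yields a least $n\geq 1$ with $X^{n}m=0$, so $X^{n-1}m$ is a nonzero vector in $\ker X$, contradicting injectivity.

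For the union, fix a root $\alpha$ and a nonzero $X\in \germ g _{\alpha}$ (one-dimensionality of root spaces again lets us work with a single vector). The plan is to consider
$$M^{X}=\{m\in M \: : \: X^{n}m=0 \text{ for some } n\geq 0\},$$
the set of vectors annihilated by a power of $X$, and to show it is a $\germ g $-submodule of $M$. Granting this, simplicity of $M$ forces $M^{X}=0$ or $M^{X}=M$. In the first case no nonzero vector is killed by any power of $X$, so in particular $X$ is injective and $\alpha\in \cal R ^I(M)$; in the second case $X$ acts locally nilpotently and $\alpha\in \cal R ^N(M)$.

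The crux is therefore the submodule property. Stability under $\germ h $ is routine: from $[X,H]=-\alpha(H)X$ one obtains the identity $X^{N}H=HX^{N}-N\alpha(H)X^{N}$ in $\cal U (\germ g )$, so $X^{N}m=0$ implies $X^{N}(Hm)=0$. For a root vector $Y\in \germ g _{\beta}$ the key inputs are that $\mathrm{ad}\,X$ is nilpotent on $\germ g $—since $(\mathrm{ad}\,X)^{j}Y\in \germ g _{\beta+j\alpha}$ vanishes as soon as $\beta+j\alpha$ is neither a root nor zero—together with the associative-algebra identity
$$X^{k}Y=\sum_{j\geq 0}\binom{k}{j}\,(\mathrm{ad}\,X)^{j}(Y)\,X^{k-j}$$
valid in $\cal U (\germ g )$. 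If $(\mathrm{ad}\,X)^{p}=0$ on $\germ g $ and $X^{N}m=0$, then for $k\geq N+p-1$ every nonzero summand (those with $j<p$) carries a factor $X^{k-j}m$ with $k-j\geq N$, hence vanishes; thus $X^{k}(Ym)=0$ and $Ym\in M^{X}$. As $\germ g $ is spanned by $\germ h $ and its root spaces, this gives $\germ g \cdot M^{X}\subseteq M^{X}$. The one step requiring care is this commutator computation; once it is in place the dichotomy follows formally from simplicity.
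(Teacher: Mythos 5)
Your proof is correct, and it is essentially the standard argument: the paper itself gives no proof, citing Fernando \cite[lemma 2.3]{Fe90}, and your route --- showing via the identity $X^{k}Y=\sum_{j}\binom{k}{j}(\mathrm{ad}\,X)^{j}(Y)X^{k-j}$ and the ad-nilpotence of root vectors that the locally $X$-nilpotent vectors form a submodule, then invoking simplicity to get the dichotomy --- is exactly the classical argument behind that lemma. No gaps: the disjointness step and the treatment of $\germ h$ and of one-dimensional root spaces are all in order.
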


\begin{defi}\label{def:CM}
Let $S \subset \cal R $. A weight module is called \emph{$S$-cuspidal} if $S\subset \cal R ^I$. When $S=\cal R $, it is simply called \emph{cuspidal}.
\end{defi}
\begin{rema}
If $M$ is a simple cuspidal weight module, then all its weight spaces have the same dimension and for all $\alpha \in \cal R $ and any non zero $X\in \germ g _{\alpha}$, $X$ acts bijectively on $M$.
\end{rema}

Set $\cal R ^N_s=\{\alpha \in \cal R ^N \: : \: -\alpha \in \cal R ^N\}, \quad \cal R ^N_a=\cal R ^N\setminus\cal R ^N_s$. We define $\cal R ^I_s$ and $\cal R ^I_a$ the same way. Recall the following theorem:

\begin{theo}[Fernando {\cite[theorem $4.18$]{Fe90}}, Futorny \cite{Fu}]\label{thmFe}%
Let $M$ be a simple weight module. Then there are a parabolic subalgebra $\germ p $ of $\germ g $ with a Levi decomposition $\germ p =\germ l \oplus \germ n $, and a simple cuspidal $\germ l $-module $C$ such that $M\cong L(\germ p ,C)$.
\end{theo}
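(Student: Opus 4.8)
The plan is to read a parabolic subalgebra off the action of the root vectors on $M$, and then to realise $M$ as the simple quotient of a generalised Verma module induced from a cuspidal module over the associated Levi factor. Since $M$ is simple, we may use the dichotomy $\cal R =\cal R ^N\sqcup\cal R ^I$ recalled above together with the refined partition into $\cal R ^N_s,\cal R ^N_a,\cal R ^I_s,\cal R ^I_a$; the whole difficulty is to show that this partition is forced to arise from a parabolic.

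The combinatorial core, which I expect to be the real work, is to establish the closure properties of these sets. The basic one is that $\cal R ^N$ is additively closed: if $\alpha,\beta\in\cal R ^N$ and $\alpha+\beta\in\cal R $ then $\alpha+\beta\in\cal R ^N$. Granting this, $\cal R ^N_s$ is at once a closed symmetric subsystem; the delicate point is that $\cal R ^I_s$ is one too, so that it is the root system of a Levi subalgebra $\germ l $ of $\germ g $. A priori $\cal R ^I_s$ could fail to be closed --- in type $A_2$, for instance, a configuration with $\alpha_1\in\cal R ^N_s$ and $\alpha_2,\alpha_1+\alpha_2\in\cal R ^I_s$ violates closedness --- and such configurations must be excluded. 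I would do this by restricting $M$ to the rank-one ($\mathfrak{sl}_2$-) and rank-two reductive subalgebras attached to pairs of roots and analysing the resulting weight strings: local nilpotency of $\germ g _{\alpha}$ makes every $\alpha$-string finite, injectivity of a cuspidal $\germ g _{\beta}$ pushes weights arbitrarily far in the $\beta$-direction, and the relations $[\germ g _{\alpha},\germ g _{\beta}]=\germ g _{\alpha+\beta}$ propagate these incompatibilities. It is exactly the finiteness of all weight spaces (Proposition \ref{prop:JH}) that upgrades these local observations to the global statement that no obstructing configuration survives on a simple module.

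With $\cal R ^I_s=\cal R _{\germ l }$ recognised as a Levi subsystem, I would next choose the parabolic. Using that $\cal R ^N$ is closed (so that $\cal R ^N_a\cap(-\cal R ^N_a)=\emptyset$ and $\cal R ^N_a$ lies on one side of a hyperplane) one may pick $\phi\in\germ h ^{*}_{\R }$ vanishing on $\cal R ^I_s$, strictly positive on $\cal R ^N_a$, and generic on the remaining symmetric directions $\cal R ^N_s$. Setting $\cal R _{\germ n }=\{\alpha\in\cal R \setminus\cal R ^I_s\;:\;\phi(\alpha)>0\}$ yields a parabolic $\germ p =\germ l \oplus\germ n $ with Levi decomposition $\germ l =\germ h \oplus\germ g _{\cal R ^I_s}$ and $\germ n =\germ g _{\cal R _{\germ n }}$, built so that $\cal R _{\germ n }\subseteq\cal R ^N$ while every root of $\germ l $ lies in $\cal R ^I$.

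Finally I would produce the cuspidal module and conclude. Because every $\germ g _{\alpha}$ with $\alpha\in\cal R _{\germ n }$ acts locally nilpotently, an induction on $\dim\germ n $ (commuting locally nilpotent operators share a kernel vector, and $\germ n $ is nilpotent) gives $M^{\germ n }\neq\{0\}$. As $\germ l $ normalises $\germ n $, the space $M^{\germ n }$ is an $\germ l $-submodule, and it is cuspidal since the roots of $\germ l $ lie in $\cal R ^I$ and so act injectively on $M$, a fortiori on $M^{\germ n }$. Choosing a simple $\germ l $-submodule $C\subseteq M^{\germ n }$ --- which exists because $M^{\germ n }$ is a weight $\germ l $-module, hence artinian by Proposition \ref{prop:JH} --- gives a simple cuspidal $\germ l $-module. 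The inclusion $C\subseteq M^{\germ n }$ is an $\germ l $-map on which $\germ n $ acts by zero, so by the universal property it extends to a nonzero $\germ g $-map $V(\germ p ,C)\to M$; its image is all of $M$ by simplicity, whence $M$ is a simple quotient of $V(\germ p ,C)$ and therefore $M\cong L(\germ p ,C)$ by Proposition \ref{GVM}. (Equivalently, one applies part (4) of Proposition \ref{GVM} to $M$ directly.)
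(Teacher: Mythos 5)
The paper does not prove this statement at all: it is imported verbatim from Fernando \cite[theorem 4.18]{Fe90} (and Futorny \cite{Fu}), and the remark following it merely records the precise shape of the output (a basis $\Phi$ and $\theta\subset\Phi$ with $\langle\theta\rangle=\mathcal{R}^I_s$, $\mathcal{R}^+\setminus\langle\theta\rangle^+\subset\mathcal{R}^N$, and $C=M^{\mathfrak{n}^+_{\theta}}$). So the relevant comparison is with Fernando's proof, and against that benchmark your sketch, while following the right overall plan (read the parabolic off the partition $\mathcal{R}=\mathcal{R}^N\sqcup\mathcal{R}^I$, then induce), leaves unproved exactly the points that constitute the theorem. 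First, closedness of $\mathcal{R}^N$ is not ``the basic one'' to be granted: it is Fernando's (and Kac's) hard result that the locally finite part $\mathfrak{g}[M]$ is a subalgebra -- sums of locally nilpotent root vectors need not be locally nilpotent, and no elementary argument is offered. Second, the assertion that $\mathcal{R}^I_s$ is the root system of a Levi subalgebra needs not only closedness but also fullness, i.e.\ $\mathrm{span}(\mathcal{R}^I_s)\cap\mathcal{R}=\mathcal{R}^I_s$; your proposed rank-one/rank-two weight-string analysis is a strategy statement, not a proof, and fullness is not addressed at all, yet your later genericity argument (``$\ker\phi$ meets $\mathcal{R}$ only in $\mathcal{R}^I_s$'') silently uses it. Third, the existence of $\phi$ vanishing on $\mathcal{R}^I_s$ and strictly positive on $\mathcal{R}^N_a$ does not follow from ``$\mathcal{R}^N$ closed, hence $\mathcal{R}^N_a$ lies on one side of a hyperplane'': $\mathcal{R}^N_a$ need not be closed (a sum of two antisymmetric nilpotent roots can be symmetric), and even granting a half-space for $\mathcal{R}^N_a$ you must rule out a positive combination of roots of $\mathcal{R}^N_a$ lying in $\mathrm{span}(\mathcal{R}^I_s)$, which is essentially equivalent to the statement being proved. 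These three points are the substance of Fernando's Theorem 4.18.

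The concluding portion is essentially sound and matches the paper's framework: once a parabolic $\mathfrak{p}=\mathfrak{l}\oplus\mathfrak{n}$ with $\mathcal{R}_{\mathfrak{n}}\subset\mathcal{R}^N$ and $\mathcal{R}_{\mathfrak{l}}\subset\mathcal{R}^I$ is in hand, $M^{\mathfrak{n}}\neq\{0\}$ (your parenthetical needs a small repair -- the root vectors of $\mathfrak{n}$ do not commute, so one inducts along the lower central series, whose terms are spanned by commuting root vectors), a simple $\mathfrak{l}$-submodule $C\subseteq M^{\mathfrak{n}}$ exists by applying Proposition \ref{prop:JH} to the cyclic submodule $\mathcal{U}(\mathfrak{l})v$ of a weight vector $v$, cuspidality of $C$ is inherited from injectivity on $M$, and $M\cong L(\mathfrak{p},C)$ via Proposition \ref{GVM}. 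But since the combinatorial core is only gestured at, the proposal is a roadmap rather than a proof; for the purposes of this paper the statement should simply be cited, as the authors do.
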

\begin{rema}
More precisely there are a basis $\Phi$ of $\cal R $ and a subset $\theta\subset \Phi $ such that $\langle \theta \rangle =\cal R ^I_s$ and $\cal R ^+\setminus\langle \theta \rangle ^+\subset \cal R ^N$. Then $M^{\germ n ^+_{\theta}}$  is a simple cuspidal $\germ l _{\theta}$-module and $M\cong L(\germ p _{\theta},M^{\germ n ^+_{\theta}})$.
\end{rema}

The theorem of Fernando reduces the classification of simple weight $\germ g $--modules to the classification of simple cuspidal weight modules for reductive Lie algebras. By standard arguments this reduces to the classification of simple cuspidal modules for simple Lie algebras. A first step towards this classification is given by the following theorem:

\begin{theo}[Fernando {\cite[theorem $5.2$]{Fe90}}]\label{thmFeC}
Let $\germ g $ be a simple Lie algebra. If $M$ is a simple cuspidal $\germ g $--module, then $\germ g $ is of type $A$ or $C$
\end{theo}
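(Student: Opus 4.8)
The plan is to extract rigid constraints on the root system $\cal R $ from the defining property of $M$ and then to show that these constraints single out types $A$ and $C$. Since $M$ is a simple cuspidal $\germ g $--module, the Remark following Definition \ref{def:CM} applies: every weight space has one and the same finite dimension, the support of $M$ is a single coset of the root lattice, and for every $\alpha \in \cal R $ and every non zero $X \in \germ g _{\alpha}$ the operator $X$ acts bijectively on $M$. As $\germ g $ is simple it is enough to show that this simultaneous bijectivity is impossible as soon as $\cal R $ is neither of type $A$ nor of type $C$.

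My first step is a reduction to a short list of forbidden root systems. If $\cal R '\subset \cal R $ is a closed subsystem, let $\germ g '$ be the semisimple subalgebra generated by the root spaces $\germ g _{\alpha}$, $\alpha \in \cal R '$, and let $\germ z \subset \germ h $ be the annihilator of $\cal R '$, so that $\tilde{\germ g }=\germ g '\oplus \germ z $ is reductive with Cartan subalgebra $\germ h $ and root system $\cal R '$. Restricting $M$ to $\tilde{\germ g }$ I obtain a weight module whose weight spaces are still finite dimensional and on which every root vector for $\cal R '$ still acts injectively. Choosing a non zero weight vector and generating a finitely generated $\tilde{\germ g }$--submodule $N$, I land in $\cal M (\tilde{\germ g },\germ h )$, which is artinian by Proposition \ref{prop:JH}; hence $N$ has a simple submodule $S$, and injectivity of root vectors passes to submodules, so $S$ is a simple cuspidal $\tilde{\germ g }$--module. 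By Schur's lemma $\germ z $ acts by scalars, so $S$ restricts to a simple cuspidal module over the algebra $\germ g '$ of type $\cal R '$. A check on Dynkin diagrams shows that every simple root system which is neither of type $A$ nor of type $C$ contains a closed subsystem of type $B_3$, $D_4$ or $G_2$ (recall $B_2=C_2$ is of type $C$): the series $B_n$ and $F_4$ contain $B_3$, the series $D_n$ and the exceptional types $E_6,E_7,E_8$ contain $D_4$, and $G_2$ is on the list itself. It therefore suffices to prove that $B_3$, $D_4$ and $G_2$ admit no simple cuspidal module.

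For each of these three algebras I would argue by contradiction from the existence of such a module $S$. Restricting $S$ to the three dimensional subalgebra $\langle X_{\alpha},H_{\alpha},X_{-\alpha}\rangle \cong \mathfrak{sl}_2$ attached to a root $\alpha$, bijectivity of $X_{\pm\alpha}$ forces every simple $\mathfrak{sl}_2$--subquotient of the restriction to be torsion free, having neither a highest nor a lowest weight; the structure of torsion free $\mathfrak{sl}_2$--modules is completely known and translates into a genericity condition on the coset of weights relative to the coroot $\alpha^{\vee}$. The crucial point is that these per root conditions cannot be combined freely: for a triple $\alpha,\beta,\alpha+\beta$ of roots the bracket relation $[X_{\alpha},X_{\beta}]=N_{\alpha,\beta}X_{\alpha+\beta}$ together with bijectivity of the three root vectors ties together the $\mathfrak{sl}_2$--data of $\alpha$, $\beta$ and $\alpha+\beta$. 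Running through the rank two and rank three configurations inside $B_3$, $D_4$ and $G_2$, I expect to produce in each case an explicit element of $\cal U (\germ g ')$ preserving a fixed weight space which the relations force to be simultaneously invertible and singular, the desired contradiction.

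The main obstacle is precisely this last step. The $\mathfrak{sl}_2$--data attached to the individual roots is by itself too weak to separate the types: a weight $\lambda$ generic on every coroot satisfies all the per root genericity conditions in any root system, so it cannot distinguish $A$ and $C$ from the rest. The separation must come from the multiplicative relations among the structure constants $N_{\alpha,\beta}$ and from the way root strings overlap, which is genuinely more delicate in $B_3$, $D_4$ and $G_2$ than in $A$ and $C$; organizing this finite but intricate computation uniformly, rather than type by type, is where the real work lies.
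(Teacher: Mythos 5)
The paper offers no internal proof of this statement: it is quoted verbatim from Fernando \cite[theorem 5.2]{Fe90}, so your argument has to stand entirely on its own. Your reduction step is fine in outline: restricting to the reductive subalgebra $\mathfrak{g}'\oplus\mathfrak{z}$ attached to a closed subsystem $\mathcal{R}'$, taking the submodule generated by one weight vector, extracting a simple submodule via the artinian property (Proposition \ref{prop:JH}), noting that injectivity of root vectors passes to submodules, and using Schur's lemma on the center does produce a simple cuspidal module for $\mathcal{R}'$; and the list $B_3$, $D_4$, $G_2$ of minimal closed subsystems occurring in every simple system not of type $A$ or $C$ is correct.

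The genuine gap is that the theorem's actual content -- that $B_3$, $D_4$ and $G_2$ admit no simple cuspidal module -- is never established. Your final paragraph only announces that you ``expect to produce'' an element of $\mathcal{U}(\mathfrak{g}')$ that is simultaneously invertible and singular on a weight space, and you yourself concede that the per-root $\mathfrak{sl}_2$ data is too weak to separate the types and that the needed interplay of root strings and structure constants $N_{\alpha,\beta}$ has not been organized. That computation is precisely where Fernando's theorem lives; everything before it is a routine reduction, so as written the proposal is a plan rather than a proof. To close it you would need, for each of the three algebras, a concrete incompatibility -- for instance an identity in $\mathcal{U}(\mathfrak{g}')_0$ forced by bijectivity of the root vectors on a triple $\alpha,\beta,\alpha+\beta$ (and $2\alpha+\beta$ in the multiply-laced cases) that has no solution, in the spirit of the paper's own Lemmas \ref{lem3}--\ref{lem5b} or of \cite[lemma 4.7]{BBL97}, or else an appeal to Fernando's or Mathieu's published arguments, which proceed differently.
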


The classification of simple cuspidal modules was then completed in two steps. In the first step Britten and Lemire classified all simple cuspidal modules of degree $1$ (see \cite{BL87}), where $deg(M)=\sup_{\lambda \in \germ h ^*} \: \{dim(M_{\lambda})\}.$ We will come back to these modules later on as they will play an important part in our study. Then  Mathieu gave the full classification of simple cuspidal modules of finite degree by introducing the notion of a coherent family (see \cite{Ma00}).


\subsection{The case of $\mathfrak{sl}_{2}$}\label{ssec:sl2}


In this section, we review the classification of weight modules for $\germ g =\germ sl _{2}$. We shall fix an $\germ sl _{2}$-triple $(X^-,H,X^+)$. We therefore have the following relations:
$$[H,X^{\pm}]=\pm 2X^{\pm}, \quad [X^+,X^-]=H.$$

Recall that the degree of a weight module $M$ is 
$$deg(M)=\sup\{dim(M_{\lambda}),\: \lambda \in \germ h ^*\}.$$

\begin{prop}\label{prop:WMsl2}
Let $M$ be a simple weight $\germ sl _{2}$-module. Then $deg(M)=1$.
\end{prop}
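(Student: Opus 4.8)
The plan is to show that $M$ is spanned by two \og strings\fg\ issuing from a single weight vector, so that every weight space is at most one-dimensional. First I would record the elementary structure: since $X^{\pm}$ shift the $H$-eigenvalue by $\pm 2$ and $M$ is simple (hence generated by any nonzero vector), the support of $M$ is contained in a single coset $\mu + 2\Z$, and $H$ acts on $M_{\lambda}$ by the scalar $\lambda(H)$, which I abbreviate to $\lambda$.

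The crucial preliminary step is to prove that the weight-zero operators $X^{-}X^{+}$ and $X^{+}X^{-}$ act by a \emph{scalar} on each weight space $M_{\lambda}$. Both preserve $M_{\lambda}$. I would bring in the Casimir element $C=\frac{1}{2}H^{2}+X^{+}X^{-}+X^{-}X^{+}$, which is central and, using $[X^{+},X^{-}]=H$, rewrites as $C=\frac{1}{2}H^{2}+2X^{-}X^{+}+H$. By Dixmier's version of Schur's lemma (the algebra $\cal U (\germ sl _{2} )$ is countable-dimensional over $\C$ and $M$ is simple), $C$ acts by a scalar $c$. On $M_{\lambda}$, where $H=\lambda$, this forces $X^{-}X^{+}$ to act by the scalar $p_{\lambda}:=\frac{1}{2}(c-\frac{\lambda^{2}}{2}-\lambda)$, and then $X^{+}X^{-}=X^{-}X^{+}+H$ acts by $q_{\lambda}:=p_{\lambda}+\lambda$.

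With these scalars available I would fix a nonzero $v\in M_{\lambda}$, set $w_{k}=(X^{+})^{k}v$ and $u_{i}=(X^{-})^{i}v$ for $k,i\geq 0$ (so $w_{0}=u_{0}=v$), and let $N=\mathrm{span}\{w_{k},u_{i}\}$. The heart of the argument is to check that $N$ is a submodule. Stability under $H$, and under $X^{+}w_{k}=w_{k+1}$ and $X^{-}u_{i}=u_{i+1}$, is immediate; the only nontrivial closures are $X^{-}w_{k}$ and $X^{+}u_{i}$, and this is exactly where the scalar step pays off. Indeed, for $k\geq 1$ one has $X^{-}w_{k}=X^{-}X^{+}w_{k-1}=p_{\lambda+2(k-1)}\,w_{k-1}\in N$, and for $i\geq 1$ one has $X^{+}u_{i}=X^{+}X^{-}u_{i-1}=q_{\lambda-2(i-1)}\,u_{i-1}\in N$, with the boundary cases $X^{-}w_{0}=u_{1}$ and $X^{+}u_{0}=w_{1}$. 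Since $N$ contains $v\neq 0$ and $M$ is simple, $N=M$. As $w_{k}\in M_{\lambda+2k}$ and $u_{i}\in M_{\lambda-2i}$, each weight space is spanned by a single vector, so $deg(M)\leq 1$; as $M\neq\{0\}$ this yields $deg(M)=1$.

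The main obstacle is the scalar step: everything rests on $X^{-}X^{+}$ being scalar on each weight space, which in turn rests on the central Casimir acting as a scalar. The only genuine subtlety there is invoking Schur's lemma in infinite dimension, for which Dixmier's countability argument is precisely the right tool; once that is granted, the verification that $N$ is stable is a short commutator computation and the conclusion is immediate.
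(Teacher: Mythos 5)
Your proof is correct, and it shares the paper's crucial first step: the Casimir $\Omega=\frac{1}{2}H^{2}+H+2X^{-}X^{+}$ acts by a scalar on the simple module $M$, whence $X^{-}X^{+}$ (and $X^{+}X^{-}$) act by scalars on each weight space. Where you diverge is in how one-dimensionality is extracted. The paper upgrades the scalar statement to the whole commutant $\mathcal{U}(\mathfrak{g})_{0}$ of $\mathbb{C}H$ acting by scalars on each weight space, and then uses simplicity to say that any two nonzero vectors of the same weight are linked by an element of $\mathcal{U}(\mathfrak{g})$ whose zero-weight component already does the job, forcing proportionality; this is the argument that feeds directly into Lemire's correspondence used later in the paper. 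You instead fix one weight vector $v$ and exhibit the explicit submodule spanned by the two strings $(X^{+})^{k}v$ and $(X^{-})^{i}v$, closed under the action precisely because $X^{-}X^{+}$ and $X^{+}X^{-}$ are scalars on weight spaces, and identify it with $M$ by simplicity. Your route is slightly more self-contained (it never needs the description of $\mathcal{U}(\mathfrak{g})_{0}$ as generated by $H$ and $X^{-}X^{+}$, nor the decomposition of a general $u$ into weight components) and it produces an explicit one-vector-per-weight spanning set; the paper's route is shorter and is the pattern that generalizes to higher rank via $\mathcal{U}(\mathfrak{g})_{0}$. One small remark: your appeal to Dixmier's version of Schur's lemma is a legitimate justification the paper leaves implicit, but here it can be avoided entirely, since $\Omega$ preserves a nonzero finite-dimensional weight space, so it has an eigenvector there, and the corresponding $\Omega$-eigenspace in $M$ is a submodule (as $\Omega$ is central), hence all of $M$.
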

\begin{proof}
Recall that $\Omega = \frac{1}{2}H^2+H+2X^-X^+$ is in the center of the universal enveloping algebra of $\germ sl _{2}$. Therefore, $M$ being simple, $\Omega$ acts as a scalar operator. On the other hand, as $M$ is a weight module, $H$ acts on each weight space by some constant (the weight). Therefore, on each weight space, $X^-X^+$ acts by some constant. From this, we conclude that $\cal U (\germ g )_{0}$, the commutant of $\C H$, acts by some constant on each weight space. But, since $M$ is simple, given two non zero vectors $v$ and $w$ in the same weight space, there should exist some element $u\in \cal U (\germ g )$ sending $v$ to $w$. The fact that $v$ and $w$ have the same weight forces $u$ to be in the commutant of $\C H$. From the above we know that $u$ acts by some constant. This forces $v$ and $w$ to be proportional and therefore the corresponding weight space is $1$-dimensional. This completes the proof.

\end{proof}

Now we recall the construction of simple cuspidal $\germ g $-modules. Let $a=(a_{1},a_{2})\in \C^2$. Assume that $a_{1}$ and $a_{2}$ are not integers. We construct then a vector space as follows. For each $k\in \Z$, we define a vector $x(k)$. The vector space generated by these (formal) elements is denoted by $N(a)$. We put the following action of $\germ g $ on $N(a)$:
$$\left\{\begin{array}{ccc}
H\cdot x(k) & = & (a_{1}-a_{2}+2k)x(k),\\
X^+\cdot x(k) & = & (a_{2}-k)x(k+1),\\
X^-\cdot x(k) & = & (a_{1}+k)x(k-1).
\end{array}\right.$$
It is now easy to see that $N(a)$ is a simple cuspidal $\germ g $-module. It turns out that any simple cuspidal $\germ sl _{2}$-module is of this form (see \cite{BL87}).


\subsection{The category of cuspidal modules}\label{ssec:CM}


The category of all cuspidal modules has been intensively studied since Mathieu's classification result. We will not try to recall all the known results here. We refer the reader to \cite{GS06,GS07,MS10,MS10b} for details.

In the last part of this article we will be interested in extension between modules. We review now some facts about that. Given two weight $\germ g $-modules $M$ and $N$, one wants to find all the weight modules $V$ such that the sequence $0\rightarrow N \rightarrow V \rightarrow M\rightarrow 0$ is exact. This problem can be solved by using cocycles. We recall its definition:
\begin{defi}\label{def:cocy}
Let $M$ and $N$ be two weight modules. A \emph{cocycle} from $M$ to $N$ is a linear map $c:\germ g \rightarrow Hom_{\C}(M,N)$ such that: For every $m\in M$, we have $$c([X,Y])(m)=[c(X),Y](m)+[X,c(Y)](m),$$ where the bracket in the right hand side is the commutator in $Hom_{\C}(M,N)$, for instance $[c(X),Y](m)=c(X)(Y\cdot m)-Y\cdot (c(X)(m))$.
\end{defi}
Given a cocyle $c$ from $M$ to $N$, one can construct a $\germ g $-module structure on $V:=N\oplus M$ as follows:
For any $X\in \germ g $, define $X\cdot (n,m):=(X\cdot n+c(X)(m),X\cdot m)$. Then it is easy to see that endowed with this structure $V$ fits into an exact sequence $0\rightarrow N \rightarrow V \rightarrow M\rightarrow 0$. Moreover, for every exact sequence $0\rightarrow N \rightarrow V \rightarrow M\rightarrow 0$ there is a cocycle $c$ from $M$ to $N$ such that $V$ is isomorphic (as a vector space) to $N\oplus M$ with the $\germ g $-module structure given as above. Besides, such an exact sequence splits exactly when there is a linear map $\phi : M\rightarrow N$ such that $c(X)=[X,\phi]$ for all $X \in \germ g $. Such a cocycle is called a \emph{coboundary}.

The case when $\germ g =\germ sl _{2}$ is particularly simple. It is given in the following:

\begin{prop}[Grantcharov, Serganova {\cite[example 3.3]{GS07}}]\label{prop:GS}
Let $M$ and $N$ be simple cuspidal $\germ sl _{2}$--modules. If $M\not\cong N$, then every cocycle from $M$ to $N$ is a coboundary. If $M=N$, then up to a coboundary, every cocycle $c$ from $M$ to $N$ has the following form: $c(H)=0=c(X^-)$ and $c(X^+)=b\times {\left(X^-\right)}^{-1}$ where $b\in \C$.
\end{prop}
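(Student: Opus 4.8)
The plan is to observe that the defining identity for a cocycle is exactly the Chevalley--Eilenberg $1$-cocycle condition for $\mathfrak{sl}_2$ acting on $W:=\mathrm{Hom}_{\mathbb{C}}(M,N)$ by $X\cdot\phi=X\phi-\phi X$ (the left $X$ acting in $N$, the right one in $M$), and that coboundaries are precisely the $1$-coboundaries $\phi\mapsto(X\mapsto[X,\phi])$. So the content is the computation of $H^{1}(\mathfrak{sl}_2,W)$ together with explicit representatives. A cochain is determined by $c(H),c(X^{+}),c(X^{-})\in W$, and the three relations $[H,X^{\pm}]=\pm2X^{\pm}$, $[X^{+},X^{-}]=H$ translate into
\begin{gather*}
[H,c(X^{+})]-2c(X^{+})=[X^{+},c(H)],\qquad [H,c(X^{-})]+2c(X^{-})=[X^{-},c(H)],\\
[X^{+},c(X^{-})]-[X^{-},c(X^{+})]=c(H).
\end{gather*}
First I would exploit the grading of $W$ by $\mathrm{ad}(H)$: since $M$ and $N$ have one-dimensional weight spaces (Proposition \ref{prop:WMsl2}), every $\phi\in W$ is a sum $\phi=\sum_{d}\phi_{d}$ of homogeneous ``diagonals'' with $[H,\phi_{d}]=\nu_{d}\phi_{d}$. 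As $[H,-]$ multiplies $\phi_{d}$ by the scalar $\nu_{d}$, subtracting the coboundary of $\sum_{\nu_{d}\neq0}\nu_{d}^{-1}c(H)_{d}$ removes every nonzero-weight part of $c(H)$. The weight-zero part survives only when the supports of $M$ and $N$ coincide modulo $2\mathbb{Z}$, and even then it must vanish: since we classify extensions in the category of weight modules, $V=N\oplus M$ must be $\mathfrak{h}$-diagonalizable, and a short computation shows this holds iff the weight-zero component of $c(H)$ is $0$. Hence we may assume $c(H)=0$, after which the first two relations say exactly that $c(X^{+})$ is homogeneous of weight $+2$ and $c(X^{-})$ of weight $-2$.

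Next comes the case analysis. If the supports of $M$ and $N$ lie in different cosets of $2\mathbb{Z}$ (so in particular $M\not\cong N$), then $W$ has no vectors of weight $\pm2$, forcing $c(X^{\pm})=0$; thus $c=0$ and the original cocycle was a coboundary. Otherwise I would reindex the weight basis $y(k)$ of $N$ so that $M\cong N(a)$ and $N\cong N(a')$ share the same weight support, in the normal form of Section \ref{ssec:sl2}. Writing $c(X^{-})x(k)=\gamma_{k}\,y(k-1)$ and using that $X^{-}$ acts bijectively on the simple cuspidal module $N$ (the remark following Definition \ref{def:CM}), I would solve $\gamma_{k}=(a_{1}+k)(p_{k}-p_{k-1})$ for a weight-zero map $\phi:x(k)\mapsto p_{k}y(k)$, i.e.\ subtract a coboundary to arrange $c(X^{-})=0$. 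The last relation then collapses to $[X^{-},c(X^{+})]=0$.

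Writing $c(X^{+})x(k)=\beta_{k}\,y(k+1)$, the equation $[X^{-},c(X^{+})]=0$ is a first-order recursion whose solution space is one-dimensional; telescoping yields $\beta_{k}(a_{1}+k+1)=\mathrm{const}=:b$, that is $c(X^{+})=b\,(X^{-})^{-1}$, which is legitimate because $X^{-}$ is invertible on the cuspidal module. The remaining freedom consists of the weight-zero coboundaries that preserve $c(X^{-})=0$; these form a one-parameter family, and their effect on $\{\beta_{k}\}$ is again a solution of the same recursion. The dichotomy is that this residual shift is nonzero exactly when $M\not\cong N$ and identically zero exactly when $M\cong N$: indeed the residual parameters $p_{k}$ are governed by the very scalars that would define a weight-preserving homomorphism $M\to N$, and such a homomorphism exists iff the two ratios $\frac{a_{2}'-k}{a_{2}-k}$ and $\frac{a_{1}+k+1}{a_{1}'+k+1}$ agree for all $k$, i.e.\ iff $M\cong N$. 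Consequently, when $M\not\cong N$ the residual coboundaries span the solution space, so $c(X^{+})$ is a coboundary and $c$ is a coboundary; when $M=N$ the one-parameter family survives, giving the asserted form $c(H)=c(X^{-})=0$, $c(X^{+})=b\,(X^{-})^{-1}$, which one checks directly is not a coboundary for $b\neq0$.

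The main obstacle is precisely this dichotomy in the equal-support case: deciding exactly when the one-dimensional solution of $[X^{-},c(X^{+})]=0$ is already accounted for by a coboundary. Everything else---the $\mathrm{ad}(H)$-grading, the reduction $c(H)=0$, and the elimination of $c(X^{-})$---is bookkeeping that uses only Proposition \ref{prop:WMsl2} and the bijectivity of $X^{-}$. The crux is to establish cleanly the equivalence between the comparison of the two scalar ratios above (equivalently, the equality of the supports together with the Casimir eigenvalue $\tfrac12(a_{1}+a_{2})(a_{1}+a_{2}+2)$) and the isomorphism $M\cong N$, thereby separating the isomorphic case, where $H^{1}$ is one-dimensional and spanned by $b\,(X^{-})^{-1}$, from the non-isomorphic case, where $H^{1}=0$.
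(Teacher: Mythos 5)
Your argument is sound, but there is nothing in the paper to compare it with: Proposition \ref{prop:GS} is imported verbatim from Grantcharov--Serganova \cite{GS07} (Example 3.3), and the paper gives no internal proof. What you supply is therefore an independent, self-contained computation of the relevant first cohomology, using only Proposition \ref{prop:WMsl2} (degree $1$) and the explicit realization $N(a)$ of Section \ref{ssec:sl2}: grade $Hom_{\C}(M,N)$ by $ad(H)$, normalize $c(H)=0$, remove $c(X^-)$ by a coboundary using that the coefficients $a_1+k$, $a'_1+k$ never vanish, and then observe that $[X^-,c(X^+)]=0$ is a first-order recursion with a one-dimensional solution space, which is exhausted by coboundaries exactly when there is no nonzero weight-preserving homomorphism $M\rightarrow N$, i.e.\ exactly when $M\not\cong N$; when $M=N$ the surviving class is $c(X^+)=b(X^-)^{-1}$. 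Two points deserve care. First, your reduction to $c(H)=0$ genuinely uses $\germ h $-semisimplicity of the extension $V=N\oplus M$, so you are computing extensions in the weight category; this restriction is necessary (the ``parameter-derivative'' cocycle on $M=N=N(a)$ given by $c(H)=\mathrm{id}$, $c(X^+)=0$, $c(X^-)x(k)=x(k-1)$ satisfies the cocycle identity but is not cohomologous to the stated normal form), and it is also the only setting in which the paper applies the proposition, so you should state it explicitly rather than claim the result for arbitrary cocycles. Second, your intermediate formulas $\gamma_k=(a_1+k)(p_k-p_{k-1})$ and $\beta_k(a_1+k+1)=b$ are written as if $M=N$; in the equal-support non-isomorphic case they should read $\gamma_k=(a'_1+k)p_k-(a_1+k)p_{k-1}$ and $\beta_k(a'_1+k+1)=(a_1+k)\beta_{k-1}$, but since all these coefficients are nonzero the solvability and the one-dimensionality of the solution space are unaffected, and your dichotomy (the coboundary $[X^+,\phi]$ of the weight-zero solution $\phi$ of $[X^-,\phi]=0$ vanishes iff $\phi$ is a homomorphism of simple modules, hence iff $M\cong N$, using also that $N(a_1,a_2)\cong N(-1-a_2,-1-a_1)$) correctly closes the argument.
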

\begin{rema}
Note that as $M$ is a simple cuspidal module, $X^-$ (and $X^+$) acts bijectively on $M$. Thus the operator $X^-$ has an inverse which we denote by $(X^-)^{-1}$.
\end{rema}

The general case for $\germ sl _{n}$ is more complicated. As we will not need it, we do not mention it here (see \cite{GS07,MS10b}). On the other hand we will need the case of $\germ sp _{2n}$. We recall the following:

\begin{theo}[Britten, Khomenko,Lemire, Mazorchuk {\cite[thm 1]{BKLM04}}]\label{thmBKLM}
The category of cuspidal $\germ sp _{2n}$-module is semisimple. 
\end{theo}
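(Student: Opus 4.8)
The plan is to translate semisimplicity into the vanishing of extensions and then reduce the extension computation to the rank-one case already handled in Proposition \ref{prop:GS}. By Mathieu's classification \cite{Ma00} every cuspidal $\germ sp _{2n}$-module has finite length, so the category is semisimple if and only if every short exact sequence of simple cuspidal modules splits; by the discussion following Definition \ref{def:cocy} this is equivalent to showing that every cocycle $c:\germ g \to Hom_{\C}(M,N)$ between two simple cuspidal modules $M$ and $N$ is a coboundary. I would prove this through a sequence of reductions followed by one computation that is specific to type $C$.

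First I would dispose of the easy cases. The weights of a simple cuspidal module fill exactly one coset of the root lattice, so if $M$ and $N$ have different cosets then any extension is already a direct sum (the root vectors preserve cosets) and the cocycle is a coboundary. Similarly, the center $\cal Z (\cal U (\germ g ))$ acts by a single scalar on $M$ and on $N$, and by a single generalized eigenvalue on any indecomposable extension, so a non-split extension forces $M$ and $N$ to share a central character. Fixing both the coset and the central character leaves, again by \cite{Ma00}, only finitely many simple modules to compare.

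Next comes the local analysis. Since $M$ and $N$ are cuspidal, every root vector acts bijectively (the remark following Definition \ref{def:CM}); hence for each root $\alpha$ the restrictions of $M$ and $N$ to the $\germ sl _2$-triple $(X_{-\alpha},H_\alpha,X_\alpha)$ decompose into simple cuspidal $\germ sl _2$-modules, to which Proposition \ref{prop:GS} applies. Normalising first so that $c$ vanishes on $\germ h $ (which can be arranged by a coboundary, after which each $c(X_\alpha)$ becomes homogeneous of $\germ h $-weight $\alpha$), Proposition \ref{prop:GS} reduces $c(X_\alpha)$, up to a further coboundary along this triple, to a single scalar: $c(X_\alpha)=b_\alpha (X_{-\alpha})^{-1}$ together with $c(X_{-\alpha})=0$.

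The crux, and the main obstacle, is to show that every $b_\alpha$ vanishes. This is precisely the point at which $\germ sp _{2n}$ with $n\geq 2$ diverges from $\germ sl _2$, where Proposition \ref{prop:GS} leaves a genuine one-parameter family of self-extensions that is never killed. Here I would feed the cocycle identity $c([X_\alpha,X_\beta])=[c(X_\alpha),X_\beta]+[X_\alpha,c(X_\beta)]$ through the rank-two subsystems. The effective relations are not those of the $A_2$-subsystems---cuspidal modules in type $A$ are not completely reducible, so these cannot suffice---but those of the $C_2$-subsystems built from the long roots $\pm 2e_i$: writing $2e_i=(e_i-e_j)+(e_i+e_j)$ and evaluating the identity couples $b_{e_i-e_j}$, $b_{e_i+e_j}$ and $b_{2e_i}$ through the structure constants and the invertible operators $(X_{-\gamma})^{-1}$. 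Running this over all such subsystems yields an overdetermined homogeneous system whose only solution is $b_\alpha=0$ for all $\alpha$. Checking that this system is genuinely forced to be trivial, and that the weight-by-weight normalisations above can be made simultaneously consistent across overlapping $\alpha$-strings, is the technical heart of the argument. Once all $b_\alpha=0$, the normalised cocycle is identically zero, the original cocycle is a coboundary, and so every cuspidal $\germ sp _{2n}$-module is a direct sum of simple modules.
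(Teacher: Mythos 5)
You should first note that the paper does not prove this statement at all: Theorem \ref{thmBKLM} is imported verbatim from \cite{BKLM04}, so there is no in-paper argument to compare with, and your proposal has to stand on its own. As it stands it is a plausible outline, but it has two genuine gaps. The first is the step where you claim that, because $M$ and $N$ are cuspidal, their restrictions to each root triple $(X_{-\alpha},H_\alpha,X_\alpha)$ \emph{decompose into direct sums of simple cuspidal} $\mathfrak{sl}_2$-modules, so that Proposition \ref{prop:GS} applies block by block. Bijectivity of the root vectors only gives that the restriction is cuspidal, not that it is semisimple: a non-split self-extension of $N(a)$ is itself a cuspidal weight $\mathfrak{sl}_2$-module, and Proposition \ref{prop:GS} says precisely that such extensions exist. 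In the paper the analogous decompositions (Proposition \ref{propNM}) are deduced from the \emph{degree-one} property of the modules at hand, not from cuspidality; to make your reduction work you would have to invoke (and justify) that simple cuspidal $\mathfrak{sp}_{2n}$-modules of finite degree have degree $1$, or find another argument for semisimplicity of the restriction, and the proposal does neither.

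The second gap is that the actual content of the theorem is deferred. After the (per-root, up-to-coboundary) normalisation $c(X_{-\alpha})=0$, $c(X_\alpha)=b_\alpha (X_{-\alpha})^{-1}$, everything hinges on showing that all $b_\alpha$ vanish and that the separate normalisations are compatible: each application of Proposition \ref{prop:GS} modifies $c$ by a coboundary on all of $\mathfrak{g}$, so it can destroy the normalisation already achieved for other roots. You acknowledge both points but settle them with \og running this over all $C_2$-subsystems yields an overdetermined homogeneous system whose only solution is $b_\alpha=0$\fg\ and \og checking that this system is genuinely forced to be trivial \ldots is the technical heart\fg. That computation, with explicit structure constants and the operators $(X_{-\gamma})^{-1}$, is exactly what distinguishes type $C$ from type $A$ (where, as you correctly observe, the conclusion is false), so without carrying it out you have a strategy, not a proof. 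Compare with how the paper handles the special cases it does prove (Theorems \ref{thmextA} and \ref{thmextC}): there the coefficients $b(k)$ are killed by an explicit weight-by-weight recursion, and nothing short of such a computation closes the argument here.
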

This theorem means that every cocycle between two simple cuspidal $\germ sp _{2n}$-modules is a coboundary.


\section{The category $\mathcal O _{S,\theta}$}\label{sec:GO}


In all this section, $\germ g $ denote a reductive Lie algebra and $\germ h $ a fixed Cartan subalgebra. We also denote by $\cal R $ the set of roots of $(\germ g ,\germ h )$.


\subsection{General definition}\label{ssec:GGO}


The general definition requires some subsets of $\cal R $. We first recall some basic definitions. Given a subset $S$ of $\cal R $ we denote $S_{s}$ its symmetric part: $S_{s}=S\cap -S$ and $S_{a}$ its antisymmetric part: $S_{a}=S\setminus S_{s}$.

\begin{defi}[see \cite{Bo81}]\label{def:root}
A subset $S$ of $\cal R $ is \emph{symmetric} if $S=-S$ ; it is \emph{closed} if the conditions $\alpha\in S, \: \beta \in S, \: \alpha+\beta \in \cal R $ imply $\alpha+\beta\in S$. A \emph{parabolic} subset of $\cal R $ is a closed subset $P$ such that $P\cup -P=\cal R $. A \emph{Levi} subset of $\cal R $ is a closed and symmetric subset of $\cal R $.
\end{defi}
\begin{rema}
Note that if $P$ is a parabolic subset, $P_{s}$ is a Levi subset. In this case we call $P_{s}$ the \emph{Levi part} of $P$. The antisymmetric part $P_{a}$ of $P$ should also be referred to as the \emph{unipotent part} of $P$.
\end{rema}
Given a Levi subset $S$, we denote $\germ l _{S}:=\germ h \oplus \germ g _{S}$. Given a parabolic subset $P$, we denote $\germ n ^+_{P}:=\germ g _{P_{a}}$ and $\germ p _{P}:= \germ l _{P_{s}}\oplus \germ n ^+_{P}$. For any subset $S$ of $\cal R $, we denote $Q_{S}$ the lattice generated by $S$.

\begin{defi}\label{defiGO}
Let $S$ and $T$ be two Levi subsets of $\cal R $ such that $Q_{S}\cap Q_{T}=0$. Let $P$ be a parabolic subset containing $S\cup T$ and let $B$ be a basis of $T$. We denote by $\cal O _{P,S,T,B}$ the full subcategory of the category of weight $\germ g $-modules $M$ such that
\begin{enumerate}
\item The module $M$ is $S$-cuspidal,
\item As a $\germ l  _{T}$-module, $M$ splits into a direct sum of simple $B$-highest weight modules,
\item The module $M$ is $P_{a}$-finite.
\end{enumerate}
\end{defi}

\begin{rema}
The condition $Q_{S}\cap Q_{T}=0$ may seem strange. In fact, if this condition is not fulfilled then the corresponding category (defined the same way) would only consist of the zero module.
\end{rema}

Let $T$ be some Levi subset of $\cal R $. Obviously, any simple weight $\germ g $-module having the restriction property $\cal P (\germ l _{T})$ would be in such a category. It suffices to choose $S=\cal R ^{I}_{s}(M)$, $B$ to be the basis corresponding to the restriction property of $M$ and $P=\cal R $. Of course, it could happen that some smaller choice for $P$ is also possible. But this situation is in some sense the basic one, as asserted in the following

\begin{prop}\label{prop:GGO}
Let $(P,S,T,B)$ be as in definition \ref{defiGO}. Assume $P_{a}\not= \emptyset$. Then every simple $\germ g $-module $M$ in $\cal O _{P,S,T,B}$ is of the form $L(\germ p ,N)$ where $\germ p =\germ l _{P_{s}}\oplus \germ g _{P_{a}}$ and $N$ is a simple module in $\cal O _{P_{s},S,T,B}(\germ l _{P_{s}})$.
\end{prop}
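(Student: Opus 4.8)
The plan is to exhibit $M$ as a simple induced module via Proposition \ref{GVM}. Write $\germ n :=\germ n ^+_{P}=\germ g _{P_{a}}$, $\germ l :=\germ l _{P_{s}}$ and $\germ p :=\germ p _{P}=\germ l \oplus \germ n $; since $P$ is parabolic, this is a parabolic subalgebra with nilradical $\germ n $. By part (4) of Proposition \ref{GVM}, as soon as we know that $M^{\germ n }\not=\{0\}$ we obtain $M\cong L(\germ p ,N)$ with $N:=M^{\germ n }$, and the very formation of $L(\germ p ,N)$ forces $N$ to be a simple $\germ l $-module. Thus the proof reduces to two things: establishing $M^{\germ n }\not=\{0\}$, and checking that $N$ lies in $\cal O _{P_{s},S,T,B}(\germ l _{P_{s}})$.

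To prove $M^{\germ n }\not=\{0\}$ I would invoke the $P_{a}$-finiteness, condition (3). Because $P$ is parabolic, its antisymmetric part $P_{a}$ lies in the open half-space cut out by some linear functional $f$ on $\germ h ^*$ with $f(\alpha)>0$ for every $\alpha\in P_{a}$; consequently every element of $\germ n $ strictly raises the $f$-value of weights. Pick any nonzero $m\in M$. By condition (3) the subspace $U:=\cal U (\germ n )m$ is finite dimensional, hence a weight module with only finitely many weights, and since each $X\in \germ n $ strictly increases $f$ along weights, $X$ acts nilpotently on $U$. By Engel's theorem $U$ contains a nonzero vector annihilated by all of $\germ n $, so $M^{\germ n }\not=\{0\}$. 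This is the crux of the argument; everything else is formal.

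It remains to check that $N=M^{\germ n }$ satisfies the three conditions of Definition \ref{defiGO} for $\germ l _{P_{s}}$. First, the datum $(P_{s},S,T,B)$ is admissible for $\germ l _{P_{s}}$: as $S$ and $T$ are symmetric and contained in $P$, they lie in $P_{s}=P\cap -P$, so they are Levi subsets of the root system $P_{s}$ of $\germ l _{P_{s}}$, still satisfy $Q_{S}\cap Q_{T}=0$, and $P_{s}$ is a parabolic subset of $P_{s}$ containing $S\cup T$ with basis $B$ of $T$. Now $N$ is an $\germ l $-submodule of $M$ since $\germ l $ normalises $\germ n $, and it is a simple, hence finitely generated, weight $\germ l _{P_{s}}$-module. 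For condition (1): for $\alpha\in S\subset P_{s}$ any nonzero $X\in \germ g _{\alpha}\subset \germ l $ acts injectively on $M$ by hypothesis, hence injectively on the $\germ l $-stable subspace $N$, so $N$ is $S$-cuspidal. For condition (2): since $T\subset P_{s}$ we have $\germ l _{T}\subset \germ l $, so $N$ is an $\germ l _{T}$-submodule of the semisimple $\germ l _{T}$-module $M$; a submodule of a semisimple module is semisimple with simple constituents among those of $M$, so $N$ is a direct sum of simple $B$-highest weight $\germ l _{T}$-modules. Condition (3) is vacuous, since $(P_{s})_{a}=\emptyset$. Therefore $N\in \cal O _{P_{s},S,T,B}(\germ l _{P_{s}})$ and $M\cong L(\germ p ,N)$, as required.

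The main obstacle is the nonvanishing $M^{\germ n }\not=\{0\}$ of the second paragraph: this is the one place where condition (3) is genuinely used, through the half-space property of $P_{a}$ together with Engel's theorem. Once it is in hand, Proposition \ref{GVM} supplies both the isomorphism $M\cong L(\germ p ,N)$ and the simplicity of $N$, and the verification of the three defining conditions is merely a matter of restricting them to the submodule $N$.
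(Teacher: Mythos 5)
Your proof is correct and takes essentially the same route as the paper: both apply Proposition \ref{GVM} (part (4)) to the parabolic subalgebra $\mathfrak{p}=\mathfrak{l}_{P_{s}}\oplus\mathfrak{g}_{P_{a}}$, with condition (3) of Definition \ref{defiGO} supplying the nonvanishing of $M^{\mathfrak{g}_{P_{a}}}$. You simply spell out the details the paper leaves implicit, namely the half-space/nilpotency argument giving $M^{\mathfrak{n}}\neq\{0\}$ and the verification that $N=M^{\mathfrak{n}}$ satisfies the defining conditions over $\mathfrak{l}_{P_{s}}$.
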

\begin{proof}
Apply proposition \ref{GVM} to the module $M$ and to the parabolic algebra $\germ p =\germ l _{P_{s}}\oplus \germ g _{P_{a}}$. Here the third condition in the definition \ref{defiGO} ensures that $M^{\germ g _{P_{a}}}\not= \{0\}$.

\end{proof}


\subsection{Particular case}\label{ssec:GO}


Unfortunately, the general case of category $\cal O _{P,S,T,B}$ does not seem easy to study. Thus, in the sequel we would be interested in a particular case of this general definition. For sake of clarity, we write down explicitly its definition. Let us fix a basis $\Phi$ of $\cal R $.

\begin{defi}\label{def:GO}
Let $\theta\subset S\subset \Phi$.  We denote by $\cal O _{S,\theta}(\germ g )$ or simply by $\cal O _{S,\theta}$ the full subcategory of the category of weight $\germ g $-modules $M$ such that
\begin{enumerate}
\item The module $M$ is $\langle S\setminus \theta\rangle$-cuspidal,
\item As a $\germ l _{\theta}$-module, $M$ splits into a direct sum of simple highest weight modules,
\item The module $M$ is $\germ n ^{+}_{S}$-finite.
\end{enumerate}
\end{defi}

In other terms, we have $\cal O _{S,\theta}=\cal O _{\langle S\rangle \cup \cal R ^+, \langle S\setminus \theta\rangle, \langle \theta\rangle,  \theta}$. Of course, not every category $\cal O _{P,S,T,B}$ is of this latter form. In what follows we shall refer to the first property as the {\bf cuspidality condition} and to the second one as the {\bf restriction condition}.

Note that if $S=\theta=\emptyset$, then we recover the usual category $\cal O $ of Bernstein-Gelfand-Gelfand. More generally, when $S=\theta$ we get a generalisation of the category $\cal O ^{\germ p _{_{S}}}$ of Rocha-Caridi. Indeed, remember that category $\cal O ^{\germ p _{_{S}}}$ also requires that the $\germ l _{S}$-highest weight modules have finite dimension. We shall see later on that we could not impose such a strong condition on our category (see proposition \ref{prop:GO}). Finally if $\theta=\emptyset$ and $S=\Phi$ then we recover the category of all cuspidal modules.


\subsection{First properties}\label{ssec:GOprop}


Let us now examine the first easy properties that carry the categories $\cal O _{S,\theta}$. We have the following:

\begin{prop}\label{prop:GO}
Let $\theta \subset S\subset \Phi$. Then:
\begin{enumerate}
\item The category $\cal O _{S,\theta}$ is abelian, noetherian and artinian.
\item The multiplicities of the $\germ l _{\theta}$--modules appearing in the decomposition of a module $M\in \cal O _{S,\theta}$ are finite.
\item Assume there exists $\alpha \in S\setminus\theta$ and $\beta \in \theta$ such that $\alpha+\beta\in \cal R $. Then the simple $\germ l _{\theta}$--modules in the decomposition of any simple module in $\cal O _{S,\theta}$ are of infinite dimension.
\item The simple modules in $\cal O _{S,\theta}(\germ g )$ are of the form $L(\germ p _{S},N)$ where $N$ is a simple module in $\cal O _{S,\theta}(\germ l _S)$.
\end{enumerate}
\end{prop}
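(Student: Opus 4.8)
Since $\cal O _{S,\theta}$ is a full subcategory of the abelian, noetherian and artinian category $\cal M (\germ g ,\germ h )$ (Proposition \ref{prop:JH}), the plan is to check that each of the three defining conditions is inherited by subobjects, quotients and finite direct sums; the abelian structure and the chain conditions then follow formally. Closure of $\germ n ^+_S$-finiteness and of the restriction condition is immediate: an $\germ l _{\theta}$-submodule or quotient of a module which is a direct sum of simple highest weight modules is again semisimple with highest weight constituents, and the finiteness of $\cal U (\germ n ^+_S)m$ obviously passes to submodules and quotients. For the cuspidality condition I would first observe that, since $\langle S\setminus\theta\rangle$ is symmetric and the weight spaces are finite dimensional, any nonzero $X\in\germ g _{\alpha}$ with $\alpha\in\langle S\setminus\theta\rangle$ acts \emph{bijectively}: injectivity of both $X$ and of a nonzero $Y\in\germ g _{-\alpha}$ forces $\dim M_{\lambda}=\dim M_{\lambda+\alpha}$. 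On a submodule $N$ the same reasoning gives $X(N_{\lambda})=N_{\lambda+\alpha}$, and this surjectivity is exactly what is needed to deduce that $X$ is still injective on the quotient $M/N$. For part (2), each simple highest weight $\germ l _{\theta}$-module $L$ has a one dimensional highest weight space, of some weight $\mu$; the highest weight vectors of the distinct summands isomorphic to $L$ are linearly independent in the finite dimensional space $M_{\mu}$, so the multiplicity of $L$ is at most $\dim M_{\mu}<\infty$.

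\textbf{Part (4).} Here I would simply specialise Proposition \ref{prop:GGO}. Writing $\cal O _{S,\theta}=\cal O _{\langle S\rangle\cup\cal R ^+,\langle S\setminus\theta\rangle,\langle\theta\rangle,\theta}$ as in the text, the parabolic subset $P=\langle S\rangle\cup\cal R ^+$ has Levi part $P_{s}=\langle S\rangle$ and $\germ p _P=\germ l _{\langle S\rangle}\oplus\germ n ^+_S=\germ p _S$. If $P_{a}=\emptyset$ then $\germ p _S=\germ g $ and there is nothing to prove; otherwise Proposition \ref{prop:GGO} gives $M\cong L(\germ p _S,N)$ with $N$ simple in $\cal O _{\langle S\rangle,\langle S\setminus\theta\rangle,\langle\theta\rangle,\theta}(\germ l _S)$, and I would check that inside $\germ l _S$ this category is precisely $\cal O _{S,\theta}(\germ l _S)$, the $\germ n ^+_S$-finiteness condition becoming vacuous since $S$ is a basis of the root system $\langle S\rangle$ of $\germ l _S$.

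\textbf{Part (3).} Let $M$ be simple in $\cal O _{S,\theta}$ and suppose, for contradiction, that some $\germ l _{\theta}$-constituent $L$ is finite dimensional. Since $\beta\in\theta$, the triple $\germ s _{\beta}\cong\germ sl _{2}$ inside $\germ l _{\theta}$ acts finitely on $L$, so the $\germ s _{\beta}$-finite part $F=\{m\in M:\dim\cal U (\germ s _{\beta})m<\infty\}$ is nonzero. The decisive observation is that $F$ is a \emph{$\germ g $-submodule}: because $\germ g $ is finite dimensional it is locally finite under $\mathrm{ad}\,\germ s _{\beta}$, and a standard commutation argument then shows $F$ stable under all of $\germ g $. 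As $M$ is simple, $F=M$, so $M$ is $\germ s _{\beta}$-integrable; in particular $\pm\beta\in\cal R ^N$, the $H_{\beta}$-eigenvalues are integral, and the reflection $s_{\beta}$ lifts to an operator $\tau$ on $M$ with $\tau(M_{\nu})=M_{s_{\beta}\nu}$ which intertwines the action of $\germ g _{\gamma}$ with that of $\germ g _{s_{\beta}\gamma}$. Now $\alpha\in S\setminus\theta$ is cuspidal, so nonzero elements of $\germ g _{\pm\alpha}$ act bijectively; conjugating by $\tau$ shows the same for $\germ g _{\pm s_{\beta}\alpha}$, i.e. $s_{\beta}\alpha\in\cal R ^{I}_{s}$. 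By the remark after Theorem \ref{thmFe} the symmetric cuspidal set is a standard Levi subset, so $\cal R ^{I}_{s}=\langle\theta_{0}\rangle=\cal R \cap\mathrm{span}(\cal R ^{I}_{s})$. Since $\alpha+\beta\in\cal R $ forces $\langle\alpha,\beta^{\vee}\rangle\neq0$, we have $s_{\beta}\alpha=\alpha-\langle\alpha,\beta^{\vee}\rangle\beta$, and both $\alpha$ and $s_{\beta}\alpha$ lie in $\cal R ^{I}_{s}$; their difference is a nonzero multiple of $\beta$, so $\beta\in\mathrm{span}(\cal R ^{I}_{s})$ and hence $\beta\in\cal R \cap\mathrm{span}(\cal R ^{I}_{s})=\cal R ^{I}_{s}$. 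This contradicts $\beta\in\cal R ^N$, since $\cal R =\cal R ^N\sqcup\cal R ^I$ (Fernando's lemma).

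The main obstacle is Part (3), and within it the crucial step is proving that the $\germ s _{\beta}$-finite part $F$ is a $\germ g $-submodule: this is what promotes a single finite dimensional constituent into full $\germ s _{\beta}$-integrability of $M$ and thereby brings the Weyl reflection $s_{\beta}$ into play. The remaining delicate point will be to use the \emph{genuine} (as opposed to pseudo-) Levi nature of $\cal R ^{I}_{s}$, encoded in the identity $\langle\theta_{0}\rangle=\cal R \cap\mathrm{span}(\langle\theta_{0}\rangle)$; without it the conclusion $\beta\in\cal R ^{I}_{s}$ can fail, as the long-root $A_{1}\times A_{1}$ inside $B_{2}$ shows, and it is precisely the hypothesis $\alpha+\beta\in\cal R $ together with this identity that forces the contradiction.
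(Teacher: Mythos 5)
Your argument is correct, and parts (1), (2) and (4) track the paper's proof closely: the only non-obvious closure property is cuspidality of quotients, which you settle by the same finite-dimensional weight-space count (injectivity of $X$ and of a vector of weight $-\alpha$ gives bijectivity on weight spaces of $M$ and of the submodule $N$, whence injectivity on $M/N$); multiplicities are bounded by $\dim M_{\mu}$; and (4) is, as in the paper, just Proposition \ref{prop:GGO} applied to $P=\langle S\rangle\cup\mathcal{R}^{+}$, $P_{s}=\langle S\rangle$, $\mathfrak{p}_{P}=\mathfrak{p}_{S}$, with your extra (correct) check that the induced category on $\mathfrak{l}_{S}$ is $\mathcal{O}_{S,\theta}(\mathfrak{l}_{S})$. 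Part (3) is where you genuinely diverge. The paper argues in two lines: cuspidality gives $\alpha\in\mathcal{R}^{I}_{s}$, a finite-dimensional $\mathfrak{l}_{\theta}$-constituent plus Fernando's dichotomy gives $\beta\in\mathcal{R}^{N}_{s}$, and the incompatibility of $\alpha\in\mathcal{R}^{I}_{s}$, $\beta\in\mathcal{R}^{N}_{s}$, $\alpha+\beta\in\mathcal{R}$ is simply quoted from \cite[lem 4.7]{BBL97}. You instead reprove that incompatibility from scratch: the locally $\mathfrak{s}_{\beta}$-finite part is a $\mathfrak{g}$-submodule, hence all of $M$ by simplicity, so $M$ is $\mathfrak{s}_{\beta}$-integrable; the lifted reflection $\tau$ transports cuspidality from $\alpha$ to $s_{\beta}\alpha$; and the remark after Theorem \ref{thmFe} (that $\mathcal{R}^{I}_{s}=\langle\theta_{0}\rangle$ for a subset $\theta_{0}$ of a basis, hence $\mathcal{R}^{I}_{s}=\mathcal{R}\cap\operatorname{span}(\mathcal{R}^{I}_{s})$) then forces $\beta\in\mathcal{R}^{I}_{s}$, contradicting local nilpotence. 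This is a valid self-contained substitute: it costs the integrability/reflection machinery and leans on the Levi property of $\mathcal{R}^{I}_{s}$ from Fernando's theorem, but it buys independence from the external lemma of \cite{BBL97}. One point you should state precisely: the assertion that $\alpha+\beta\in\mathcal{R}$ forces $\langle\alpha,\beta^{\vee}\rangle\neq0$ is false for arbitrary roots (two orthogonal short roots in type $C_{2}$ sum to a long root); it holds here only because $\alpha\in S\setminus\theta$ and $\beta\in\theta$ are distinct \emph{simple} roots, so $\alpha+\beta\in\mathcal{R}$ forces them to be adjacent in the Dynkin diagram — say this explicitly, since the nonvanishing of $\alpha-s_{\beta}\alpha$ is exactly what makes your final step work.
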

\begin{proof}
\begin{enumerate}
\item Thanks to proposition \ref{prop:JH}, we only need to check that the category $\cal O _{S,\theta}$ is stable by finite direct sums, taking submodules and quotients. Everything here is obvious except the cuspidality condition for a quotient. Therefore, let $M$ be in $\cal O _{S,\theta}$ and $N$ be a proper submodule of $M$. We prove that $M/N$ satisfies the cuspidality condition. First note that $\oplus_{\lambda}\: (M_{\lambda}+N)/N$ is a weight space decomposition of $M/N$. Note also that $(M_{\lambda}+N)/N\oplus N_{\lambda}=M_{\lambda}$. Let $\alpha \in \langle S\setminus\theta\rangle$ and let $X$ be a non zero vector in $\germ g _{\alpha}$. By the cuspidality condition for $M$ and $N$, we have $dim(N_{\lambda})=dim(X\cdot N_{\lambda})$ and $dim(M_{\lambda})=dim(X\cdot M_{\lambda})$. Therefore, we have $dim((M_{\lambda}+N)/N)=dim(X\cdot ((M_{\lambda}+N)/N)$. For this to hold for any $\lambda$ it is necessary that $X$ acts injectively on $(M_{\lambda}+N)/N$.
\item This is an immediate consequence of the fact that the weight spaces are finite dimensional.
\item By the cuspidality condition we have $\alpha \in \cal R ^{I}_{s}$. If there were in the decomposition of $M$ a finite dimensional $\germ l _{\theta}$-module then any root in $\langle \theta\rangle$ should be locally nilpotent. Thus we would have $\beta \in \cal R ^N_{s}$. This together with the hypothesis $\alpha+\beta\in \cal R $ contradicts \cite[lem 4.7]{BBL97}.
\item This is a particular case of proposition \ref{prop:GGO}.
\end{enumerate}

\end{proof}
\begin{rema}
From $(4)$ in the above proposition, we see that one should first study the category $\cal O _{\Phi,\theta}$. However note that $(4)$ does not imply that any simple module induced from a simple module in some $\cal O _{S,\theta}(\germ l _{S})$ is in the category $\cal O _{S,\theta}(\germ g )$.
\end{rema}


\subsection{The modules of degree $1$}\label{ssec:deg1}


So far, we have not shown that at least some new category $\cal O _{S,\theta}$ is non trivial. We will do this now by exhibiting very special modules. These are the infinite dimensional modules of degree $1$. They were introduced and classified by Benkart, Britten and Lemire in \cite{BBL97}. In particular such modules only exist for Lie algebras of type $A$ or $C$. Let us review their construction.

\subsubsection{Modules over the Weyl algebra}\label{sssec:Weyl}

Let $N$ be a positive integer. Recall that the Weyl algebra $W_{N}$ is the associative algebra generated by the $2N$ generators $\{q_{i}, \: p_{i}, \: 1\leq i\leq N\}$ submitted to the following relations:
$$[q_{i},q_{j}]=0=[p_{i},p_{j}],\quad [p_{i},q_{j}]=\delta_{i,j}\cdot 1,$$ where the bracket is the usual commutator for associative algebras.

Define a vector space as follows. Fix some $a\in \C^N$. Let $$\cal K =\{k\in \Z^N \: :\: \mbox{if } a_{i}\in \Z, \mbox{ then } a_{i}+k_{i}<0 \iff a_{i}<0\}.$$ Now our vector space $W(a)$ is the $\C$-vector space whose basis is indexed by $\cal K $. For each $k\in \cal K $, we fix a vector basis $x(k)$. Define an action of $W_{N}$ by the following recipe:
$$\begin{array}{ccc}
q_{i}\cdot x(k) & = & \left\{ \begin{array}{cc} (a_{i}+k_{i}+1)x(k+\epsilon_{i}) & \mbox{if } a_{i}\in \Z_{<0}\\ x(k+\epsilon_{i}) & \mbox{otherwise} \end{array}\right. ,\\
p_{i}\cdot x(k) & = & \left\{ \begin{array}{cc} x(k-\epsilon_{i}) & \mbox{if } a_{i}\in \Z_{<0}\\ (a_{i}+k_{i})x(k-\epsilon_{i}) & \mbox{otherwise} \end{array}\right.
\end{array}$$

Then we have:
\begin{theo}[Benkart,Britten,Lemire {\cite[thm 2.9]{BBL97}}]\label{thmBBLWeyl}
Let $a\in \C^N$. Then $W(a)$ is a simple $W_{N}$-module.
\end{theo}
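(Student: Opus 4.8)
The plan is to prove that $W(a)$ is simple by showing that any nonzero $W_{N}$-submodule $V$ equals $W(a)$. I would proceed in two stages: first locate a single basis vector $x(k)$ inside $V$, then show that the raising and lowering operators $q_{i},p_{i}$ connect every basis vector to every other, so that $V$ must contain all of them.

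First I would identify the diagonal part of the action. A direct computation of $q_{i}p_{i}$ on $x(k)$ in each of the two defining cases (according to whether $a_{i}\in \Z_{<0}$) gives, in all cases, $q_{i}p_{i}\cdot x(k)=(a_{i}+k_{i})x(k)$. Hence the commuting family $\{q_{i}p_{i}:1\leq i\leq N\}$ acts diagonally in the basis $\{x(k)\}_{k\in \cal K }$, the vector $x(k)$ having eigenvalue tuple $(a_{i}+k_{i})_{1\leq i\leq N}\in \C^{N}$. Since the $k_{i}$ are integers, distinct indices $k\in \cal K $ yield distinct tuples, so the simultaneous eigenspaces of this family are exactly the lines $\C x(k)$.

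Next I would extract a basis vector from $V$. Being a submodule, $V$ is stable under each $q_{i}p_{i}$. Writing a nonzero $v\in V$ as $v=\sum_{k\in F}c_{k}x(k)$ with $F$ finite and every $c_{k}\neq 0$, and using that the tuples $(a_{i}+k_{i})_{i}$ for $k\in F$ are pairwise distinct points of $\C^{N}$, I would choose a polynomial $P$ in $N$ variables equal to $1$ at the tuple of a fixed $k_{0}\in F$ and to $0$ at the others. Then $P(q_{1}p_{1},\dots ,q_{N}p_{N})\cdot v=c_{k_{0}}x(k_{0})\in V$, so $x(k_{0})\in V$.

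Finally I would propagate. The set $\cal K $ factors as a product $\prod_{i}\cal K _{i}$ of integer intervals: $\cal K _{i}=\Z$ when $a_{i}\notin \Z$, a lower half-line when $a_{i}\in \Z_{<0}$, and an upper half-line when $a_{i}\in \Z_{\geq 0}$. Thus $\cal K $ is connected for nearest-neighbour steps $k\mapsto k\pm \epsilon_{i}$ staying inside $\cal K $. The crucial point, which I expect to be the main obstacle, is to check that whenever both $k$ and $k\pm\epsilon_{i}$ lie in $\cal K $ the relevant structure constant is nonzero; equivalently, the scalars appearing in the formulas for $q_{i}\cdot x(k)$ and $p_{i}\cdot x(k)$ vanish \emph{exactly} when the target index would leave $\cal K $, and never strictly inside it. This is a short verification split over the three types of $a_{i}$. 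Granting it, from $x(k_{0})\in V$ one reaches an arbitrary $x(k')$ by a path in $\cal K $, applying at each step the appropriate $q_{i}$ or $p_{i}$ and rescaling by the nonzero coefficient; hence every basis vector lies in $V$ and $V=W(a)$.
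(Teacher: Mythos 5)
The paper itself contains no proof of this statement: it is imported verbatim from Benkart--Britten--Lemire (their Theorem 2.9), so there is nothing internal to compare your argument against, and it must be judged on its own. On that score your proof is correct and complete in outline, and the one verification you left ``granted'' does work out exactly as you predicted. Indeed, if $a_i\in\Z_{<0}$ then on $\mathcal{K}$ one has $a_i+k_i\le -1$; the coefficient $a_i+k_i+1$ in $q_i\cdot x(k)$ vanishes precisely when $a_i+k_i=-1$, which is precisely when $k+\epsilon_i$ would violate the defining condition of $\mathcal{K}$, while $p_i$ has coefficient $1$ and its target always stays inside. If $a_i\in\Z_{\geq 0}$ then $a_i+k_i\ge 0$ on $\mathcal{K}$; $q_i$ has coefficient $1$ and stays inside, and the coefficient $a_i+k_i$ in $p_i\cdot x(k)$ vanishes precisely when $a_i+k_i=0$, i.e.\ precisely when $k-\epsilon_i$ would leave $\mathcal{K}$. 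If $a_i\notin\Z$ both coefficients ($1$ and $a_i+k_i$) are nonzero and $\mathcal{K}_i=\Z$. Granting this, your three steps are all sound: the commuting operators $q_ip_i$ act on $x(k)$ by $(a_i+k_i)$ in every case (including the boundary case where $p_i\cdot x(k)=0$, where both sides are zero), the eigenvalue tuples $(a_i+k_i)_i$ separate the basis vectors so joint eigenspaces are lines, polynomial interpolation in the $q_ip_i$ extracts a basis vector from any nonzero submodule, and the nearest-neighbour connectivity of the product of integer intervals $\mathcal{K}$ together with the nonvanishing of interior structure constants propagates it to all of $W(a)$. This is the standard weight-space argument one would expect for such a module, and it establishes the theorem.
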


\subsubsection{Type A case}\label{sssec:deg1A}

In this section only, $\germ g $ denotes a simple Lie algebra of type $A$. We shall construct weight $\germ g $-modules of degree $1$ by using the previous construction. We realize the Lie algebra $\germ g $ inside some $W_{N}$. Let $N-1$ be the rank of $\germ g $. Then, we can embed $\germ g $ into $W_{N}$ as follows: to an elementary matrix $E_{i,j}$ we associate the element $q_{i}p_{j}$ of $W_{N}$. This is easily seen to define an embedding of $\germ g $ into $W_{N}$.  Let $\cal K _{0}=\{k\in \cal K \: :\: \sum_{i=1}^N\: k_{i}=0\}$. Let $N(a)$ be the subspace of $W(a)$ whose basis is indexed by $\cal K _{0}$. Then we have the following:

\begin{theo}[Benkart,Britten,Lemire {\cite[thm 5.8]{BBL97}}]\label{thmBBLA}\hfill{ }
\begin{enumerate}
\item The vector subspace $N(a)$ of $W(a)$ is a simple weight $\germ g $--module of degree $1$. Moreover, $N(a)$ is cuspidal if and only if $a_{i}\not\in\Z$ for all $i\in\{1,\ldots ,N\}$.
\item Conversely if $M$ is an infinite dimensional simple weight $\germ g $--module of degree $1$, then there exist $a=(a_1,\ldots, a_N)\in \C^N$, and two integers $j$ and $l$ such that 
\begin{itemize}
\item[$\bullet$] $a_i=-1 \mbox{ for } i=1,\ldots ,j-1,$
\item[$\bullet$] $a_i\in \C\setminus \Z \mbox{ for } i=j,\ldots ,l,$
\item[$\bullet$] $a_i=0 \mbox{ for } i=l+1,\ldots ,N,$
\item[$\bullet$] and the module $M$ is isomorphic to $N(a)$.
\end{itemize}
\end{enumerate}
\end{theo}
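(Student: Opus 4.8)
The plan is to prove the two assertions separately: the first by direct computation inside the Weyl algebra, the second by reducing to the cuspidal case via Fernando's theorem. For assertion (1), I would begin with the Cartan action. Writing $E_{i,i}=q_{i}p_{i}$, a one-line check in each of the two branches of the definition gives $E_{i,i}\cdot x(k)=(a_{i}+k_{i})x(k)$, so every $x(k)$ is a weight vector, of weight $\mu(k)$ determined by $\mu(k)(E_{i,i})=a_{i}+k_{i}$. Since the relevant Cartan of $\germ g $ is the traceless diagonal part, two vectors $x(k),x(k')$ with $k,k'\in \cal K _{0}$ have the same weight if and only if $k-k'$ is a constant vector, and $\sum_{i}k_{i}=0=\sum_{i}k'_{i}$ then forces $k=k'$. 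Hence every weight space of $N(a)$ is at most one-dimensional, so $deg(N(a))=1$, and $N(a)$ is infinite dimensional because $\cal K _{0}$ is infinite. Finally the root vectors $E_{i,j}=q_{i}p_{j}$ (for $i\ne j$) shift $k$ by $\epsilon_{i}-\epsilon_{j}$, which preserves $\cal K _{0}$, so $N(a)$ is a $\germ g $-submodule of $W(a)$.

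For simplicity and cuspidality I would analyse the structure constants. One computes $E_{i,j}\cdot x(k)=c\, x(k+\epsilon_{i}-\epsilon_{j})$, where $c$ is the product of a $p_{j}$-factor (equal to $a_{j}+k_{j}$, or to $1$ when $a_{j}\in \Z_{<0}$) and a $q_{i}$-factor (equal to $a_{i}+k_{i}+1$ when $a_{i}\in \Z_{<0}$, and to $1$ otherwise). The set $\cal K $ is a product $\prod_{i}I_{i}$ of integer intervals, each $I_{i}$ a half-line or all of $\Z$, and its definition is arranged precisely so that $c$ vanishes exactly when the target $k+\epsilon_{i}-\epsilon_{j}$ leaves $\cal K $; in the interior all coefficients are nonzero. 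Since the weight spaces are one-dimensional, a nonzero submodule is spanned by a nonempty subset of $\{x(k):k\in \cal K _{0}\}$ stable under these operators, so simplicity reduces to the connectivity of $\cal K _{0}=\cal K \cap\{\sum_{i}k_{i}=0\}$ under the moves $k\mapsto k\pm(\epsilon_{i}-\epsilon_{j})$. This is elementary: if $k\ne k'$ have equal coordinate sums, some coordinate can be raised and another lowered while staying in the box, strictly decreasing $\|k-k'\|_{1}$. Cuspidality then amounts to every $E_{i,j}$ acting injectively, i.e. to no vector being killed; by the vanishing analysis this holds if and only if $\cal K $ has no boundary at all, that is $\cal K =\Z^{N}$, which is equivalent to $a_{i}\notin \Z$ for every $i$.

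For the converse (2), let $M$ be an infinite dimensional simple weight module of degree $1$. By Fernando's theorem (Theorem \ref{thmFe}) there are a parabolic subalgebra $\germ p _{\theta}$ and a simple cuspidal $\germ l _{\theta}$-module $C$ with $M\cong L(\germ p _{\theta},C)$; since the image of $1\otimes C$ is isomorphic to $C$ and its weight spaces are weight spaces of $M$, we get $deg(C)=1$. As $\germ g $ is of type $A$, the Levi $\germ l _{\theta}$ is a product of factors of type $A$ (times a torus), so by Theorem \ref{thmFeC} together with the degree-$1$ cuspidal classification (the cuspidal case of (1), i.e. \cite{BL87}) each simple cuspidal tensor factor of $C$ is some $N(a')$ with all parameters non-integral. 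These account for the coordinates in $\theta$, giving the middle block $a_{i}\in \C\setminus \Z$. The remaining coordinates are the nilpotent directions of the parabolic: realizing $L(\germ p _{\theta},C)$ inside a suitable $W(a)$, each such coordinate is integral, and whether the corresponding weight string is bounded above or below forces $a_{i}\in \Z_{<0}$ or $a_{i}\in \Z_{\geq 0}$. Shifting each integral $a_{i}$ within its sign class and permuting the coordinates---a permutation of the $E_{i,i}$ is an inner automorphism of $\germ g $ and so leaves the isomorphism class unchanged---brings $a$ to the stated normal form: $a_{i}=-1$ for the bounded-above directions, $a_{i}\in \C\setminus \Z$ for the cuspidal ones, and $a_{i}=0$ for the bounded-below directions.

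The step I expect to be the main obstacle is this last matching of the abstractly reconstructed module with a concrete $N(a)$. Concretely one must show that the support of $M$ is exactly a translate of the box $\cal K _{0}$---in particular that each integral direction really is a half-line with a consistent orientation and no mixed boundary behaviour---and that the structure constants $c_{i,j}(\mu)$ governing $E_{i,j}\cdot v_{\mu}=c_{i,j}(\mu)v_{\mu+\epsilon_{i}-\epsilon_{j}}$ factor as an $i$-factor times a $j$-factor, so as to coincide with the $q_{i}p_{j}$-formulas. This amounts to solving the system of difference equations imposed by the relations $[E_{i,j},E_{k,l}]$ on a normalized basis $\{v_{\mu}\}$, and then checking that the integer shifts and the coordinate permutation can indeed be chosen to produce the three prescribed blocks in the required order.
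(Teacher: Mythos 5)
First, note what the paper itself does with this statement: it is quoted as Theorem \ref{thmBBLA} directly from Benkart--Britten--Lemire \cite[thm 5.8]{BBL97} and no proof is given in the paper, so your attempt is really an attempt to reprove their classification theorem rather than to reproduce an argument from this article.

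Your treatment of part (1) is essentially correct and complete as a direct verification: the computation $E_{i,i}\cdot x(k)=(a_i+k_i)x(k)$, the observation that on $\mathcal{K}_0$ equal weights force $k=k'$ (hence degree $1$ and one-dimensional weight spaces), the fact that the coefficient of $E_{i,j}\cdot x(k)$ vanishes exactly when $k+\epsilon_i-\epsilon_j$ leaves $\mathcal{K}$, and the $\ell_1$-decreasing connectivity argument for simplicity all work (the move toward $k'$ stays in the box precisely because $k'$ itself satisfies the constraints). Two small blemishes: your claim that $N(a)$ is infinite dimensional because $\mathcal{K}_0$ is infinite is false when all $a_i$ are integers of the same sign (e.g.\ $a=0$ gives the trivial module), but the theorem does not assert it; and for the "only if" direction of cuspidality you should exhibit a vector of $\mathcal{K}_0$ actually sitting on the boundary in the relevant direction, which requires a line of argument you omit.

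Part (2) contains a genuine gap, and you identify it yourself. The Fernando reduction $M\cong L(\mathfrak{p}_\theta,C)$ with $C$ cuspidal of degree $1$ on the Levi, together with the Britten--Lemire classification of the cuspidal factors, is a reasonable start, but it only controls the restriction of $M$ to $1\otimes C$; the entire content of the theorem is the step you defer: showing that the simple quotient $L(\mathfrak{p}_\theta,C)$ of degree $1$ has support equal to a translated box $\mathcal{K}_0$, that the structure constants of the $E_{i,j}$-action on a normalized basis factor as an $i$-factor times a $j$-factor so as to match the $q_ip_j$-formulas, that the "nilpotent" directions correspond to integral parameters with a consistent half-line orientation, and that the integer shifts and coordinate permutation produce the block form $(-1,\ldots,-1,a_j,\ldots,a_l,0,\ldots,0)$. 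Asserting that these difference equations "can be solved" is not a proof; as it stands the argument establishes only that $M$ is induced from a degree-$1$ cuspidal Levi module, not that $M\cong N(a)$. Since this is precisely the hard part of \cite[thm 5.8]{BBL97}, the proposal does not yet prove statement (2); if you do not intend to carry out that analysis, the honest course is to do what the paper does and cite Benkart--Britten--Lemire for it.
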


Denote be $\Phi$ the standard basis for the root system of $\germ g $ with respect to the standard Cartan subalgebra $\germ h $. Let $a\in \C^{N}$ such that $a=(\underbrace{-1,\ldots ,-1}_{j},a_{j+1},\ldots, a_m,\underbrace{0,\ldots ,0}_{l})$ where $l+m=N$ and $m>j+1$. Let $\theta_{a}\subset \Phi$ be given by the non-circled simple roots of the following Dynkin diagram:
\setlength{\unitlength}{1cm}
\begin{center}
\begin{picture}(8,1)
\put(1,0.5){\line(1,0){.2}}
\put(1.4,0.5){\line(1,0){.2}}
\put(1.8,0.5){\line(1,0){.2}}
\put(2,0.5){\circle*{.2}}
\put(2,0.5){\line(1,0){1}}
\put(3,0.5){\circle*{.2}}
\put(3,0.5){\line(1,0){.2}}
\put(3.4,0.5){\line(1,0){.2}}
\put(3.8,0.5){\line(1,0){.2}}
\put(4.2,0.5){\line(1,0){.2}}
\put(4.6,0.5){\line(1,0){.2}}
\put(5,0.5){\circle*{.2}}
\put(5,0.5){\line(1,0){1}}
\put(6,0.5){\circle*{.2}}
\put(6,0.5){\line(1,0){.2}}
\put(6.4,0.5){\line(1,0){.2}}
\put(6.8,0.5){\line(1,0){.2}}
\put(3,0.5){\circle{.35}}
\put(5,0.5){\circle{.35}}
\put(1.9,0.8){$e_j$}
\put(2.8,0.8){$e_{j+1}$}
\put(4.5,0.8){$e_{m-1}$}
\put(5.8,0.8){$e_{m}$}
\put(1,.3){$\underbrace{\hphantom{111111}}_{A_j}$}
\put(2.8,.3){$\underbrace{\hphantom{111111111111}}_{A_{m-1-j}}$}
\put(5.8,.3){$\underbrace{\hphantom{1111111}}_{A_l}$}
\end{picture}
\end{center}

For the commodity of the reader, we explicit the action of $\germ h $ and of $X_{\pm e_{i}}$ on $N(a)$:
$$\left\{\begin{array}{lcl}
H_{e_{i}}\cdot x(k) & = & (k_{i}-k_{i+1})x(k), \: i=1,\ldots ,j-1\\
H_{e_{j}}\cdot x(k) & = & (-1-a_{j+1}+k_{j}-k_{j+1})x(k)\\
H_{e_{i}}\cdot x(k) & = & (a_{i}-a_{i+1}+k_{i}-k_{i+1})x(k), \: i=j+1,\ldots,m-1\\
H_{e_{m}}\cdot x(k) & = & (a_{m}+k_{m}-k_{m+1})x(k)\\
H_{e_{i}}\cdot x(k) & = & (k_{i}-k_{i+1})x(k), \: i\geq m+1\\
X_{e_{i}}\cdot x(k) & = & k_{i}x(k-\epsilon_{i+1}+\epsilon_{i}), \: i=1,\ldots ,j-1\\
X_{e_{j}}\cdot x(k) & = & k_{j}(a_{j+1}+k_{j+1})x(k-\epsilon_{j+1}+\epsilon_{j})\\
X_{e_{i}}\cdot x(k) & = & (a_{i+1}+k_{i+1})x(k-\epsilon_{i+1}+\epsilon_{i}), \: i=j+1,\ldots,m-1\\
X_{e_{i}}\cdot x(k) & = & k_{i+1}x(k-\epsilon_{i+1}+\epsilon_{i}), \: i\geq m\\
X_{-e_{i}}\cdot x(k) & = & k_{i+1}x(k-\epsilon_{i}+\epsilon_{i+1}), \: i=1,\ldots j-1\\
X_{-e_{j}}\cdot x(k) & = & x(k-\epsilon_{j}+\epsilon_{j+1})\\
X_{-e_{i}}\cdot x(k) & = & (a_{i}+k_{i})x(k-\epsilon_{i}+\epsilon_{i+1}), \: i=j+1,\ldots,m\\
X_{-e_{i}}\cdot x(k) & = & k_{i}x(k-\epsilon_{i}+\epsilon_{i+1}), \: i\geq m+1
\end{array}\right.$$

Now we will prove the following:
\begin{theo}\label{thmNO}
The module $N(a)$ is a simple object of the category $\cal O _{\Phi,\theta_{a}}$. Moreover, the highest weight vectors for the action of $\germ l _{\theta_{a}}$ are the linear combination of the $x(0,\ldots ,0,k_1,\ldots ,k_m,0,\ldots ,0)$ where $k_i \in \Z$ are such that $\sum_i  \: k_i=0$.
\end{theo}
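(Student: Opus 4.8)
The plan is to establish simplicity and the three conditions of Definition \ref{def:GO} separately, and then to identify the highest weight vectors. Simplicity of $N(a)$ as a weight $\germ g $-module is Theorem \ref{thmBBLA}$(1)$, so nothing is needed there. Condition $(3)$ is empty: with $S=\Phi$ one has $\langle\Phi\rangle=\cal R $, so $\germ n ^+_{\Phi}=\germ g _{\cal R ^+-\langle\Phi\rangle^+}=0$ and $\germ n ^+_{\Phi}$-finiteness is automatic. For condition $(1)$ I would first record, from the Dynkin diagram, that the circled nodes are $\Phi\setminus\theta_{a}=\{e_{j+1},\ldots,e_{m-1}\}$, so that $\langle\Phi\setminus\theta_{a}\rangle$ is the set of roots $\epsilon_{p}-\epsilon_{q}$ with $j+1\leq p\neq q\leq m$, with root vectors $E_{p,q}=q_{p}p_{q}$. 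Since $a_{p},a_{q}\notin\Z$ in this range, the Weyl-algebra action gives $q_{p}p_{q}\cdot x(k)=(a_{q}+k_{q})\,x(k-\epsilon_{q}+\epsilon_{p})$ with $a_{q}+k_{q}\neq 0$; hence each such root vector is injective and $N(a)$ is $\langle\Phi\setminus\theta_{a}\rangle$-cuspidal.

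Next I would compute the $\germ l _{\theta_{a}}$-highest weight vectors, which yields the second assertion. Here $\theta_{a}=\{e_{1},\ldots,e_{j}\}\cup\{e_{m},\ldots,e_{N-1}\}$, and a vector is highest weight precisely when it is annihilated by the $X_{e_{i}}$ with $e_{i}\in\theta_{a}$. Each $X_{e_{i}}$ carries a basis vector $x(k)$ to a scalar times the single basis vector $x(k-\epsilon_{i+1}+\epsilon_{i})$, and $k\mapsto k-\epsilon_{i+1}+\epsilon_{i}$ is injective, so no cancellation occurs and a combination is killed if and only if each $x(k)$ appearing in it is. From the explicit formulas $X_{e_{i}}\cdot x(k)=k_{i}x(\cdots)$ for $i<j$, $X_{e_{j}}\cdot x(k)=k_{j}(a_{j+1}+k_{j+1})x(\cdots)$, and $X_{e_{i}}\cdot x(k)=k_{i+1}x(\cdots)$ for $i\geq m$, together with $a_{j+1}+k_{j+1}\neq 0$, these vanish exactly when $k_{1}=\cdots=k_{j}=0$ and $k_{m+1}=\cdots=k_{N}=0$, that is, when $k$ is supported on the middle block $\{j+1,\ldots,m\}$. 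This is the asserted description.

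The remaining and central point is condition $(2)$. The plan is to decompose the restriction along the quantities preserved by $\germ l _{\theta_{a}}$. Setting $B_{1}=\{1,\ldots,j+1\}$ and $B_{3}=\{m,\ldots,N\}$, which are disjoint because $m>j+1$, the block root vectors only redistribute mass within $B_{1}$ or within $B_{3}$; hence they fix $s_{1}=\sum_{i\in B_{1}}k_{i}$, $s_{3}=\sum_{i\in B_{3}}k_{i}$, and every interior coordinate $k_{i}$ with $j+2\leq i\leq m-1$. This gives a decomposition $N(a)=\bigoplus V_{s_{1},s_{3},\kappa}$ into $\germ l _{\theta_{a}}$-stable weight modules, each containing exactly one highest weight line $\C x(k^{0})$ by the previous paragraph. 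As $\germ l _{\theta_{a}}$ is the product of its two semisimple blocks, acting on the disjoint coordinate sets $B_{1}$ and $B_{3}$, together with its center, each $V_{s_{1},s_{3},\kappa}$ is an outer tensor product of a $B_{1}$-module and a $B_{3}$-module, and it suffices to treat one block at a time.

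The main obstacle is to show that each block factor is a simple highest weight module. Because $N(a)$ has degree $1$ its weight spaces are at most one-dimensional, so every submodule is a span of basis vectors, and it is enough to prove that the factor is generated from $x(k^{0})$ by lowering operators acting with nonzero scalars: cyclicity makes the factor a highest weight module, and the uniqueness of the highest weight line then forces simplicity. Establishing this cyclicity is where the sign constraints defining $\cal K $, namely $k_{i}\leq 0$ on the $-1$-block and $k_{i}\geq 0$ on the $0$-block, must be handled with care. I would argue by a cascade of nonzero moves: on $B_{1}$ the lowering operator $X_{-e_{j}}$ has constant coefficient $1$ and so is never obstructed, and once it has made a coordinate nonzero the remaining lowerings $X_{-e_{i}}$ ($i<j$), of coefficient $k_{i+1}$, become effective, while the matching raisings return to $x(k^{0})$ through the factor $a_{j+1}+k_{j+1}$, which never vanishes; thus no basis vector of the factor is isolated from $x(k^{0})$. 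A symmetric cascade on $B_{3}$ uses the never-vanishing coefficient $a_{m}+k_{m}$ of $X_{-e_{m}}$ as the initiating move. This would complete condition $(2)$ and show that $N(a)$ is a simple object of $\cal O _{\Phi,\theta_{a}}$. Finally, one may note that the highest weights so obtained are non-integral in the $e_{j}$- and $e_{m}$-directions, so the summands are infinite dimensional, in agreement with Proposition \ref{prop:GO}$(3)$.
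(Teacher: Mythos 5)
Your proof is correct, and for the crucial restriction condition it takes a route that differs in an instructive way from the paper's. Both arguments begin identically: from the explicit formulas one reads off the $\langle\Phi\setminus\theta_{a}\rangle$-cuspidality, the vacuity of $\mathfrak{n}^{+}_{\Phi}$-finiteness, and the exact list of $\mathfrak{l}_{\theta_{a}}$-highest weight vectors (your no-cancellation remark for linear combinations is the right justification). The paper then proves that the $\mathfrak{l}_{\theta_{a}}$-submodule generated by each highest weight vector is simple by noting that any nonzero submodule of a highest weight module contains a highest weight vector, and excluding the presence of a second one by comparing the action of the center of $\mathfrak{l}_{\theta_{a}}$ and the $\mathfrak{l}'_{\theta_{a}}$-weights of the two candidate vectors; it does not spell out why these simple submodules exhaust $N(a)$. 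You instead cut $N(a)$ into the $\mathfrak{l}_{\theta_{a}}$-stable blocks determined by the conserved quantities $s_{1}$, $s_{3}$ and the interior coordinates, observe that each block carries exactly one highest weight line, and obtain simplicity from cyclicity, proved by an explicit cascade of lowering (and raising) operators with nonvanishing coefficients. Your cascade does work, but it needs the order made precise: apply $X_{-e_{j}}$ its full number of times first, then $X_{-e_{j-1}}$, and so on, so that the coefficient $k_{i+1}$ is strictly negative at every application, and symmetrically on the other block starting with $X_{-e_{m}}$, whose coefficient $a_{m}+k_{m}$ never vanishes. The trade-off: the paper's central-character-and-weight comparison is shorter and avoids any combinatorics of operator orderings, while your block decomposition makes explicit both the direct sum splitting of $N(a)$ and the fact that each highest weight vector actually generates its block --- points the paper's write-up leaves implicit --- so your version is the more self-contained of the two.
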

\begin{proof}
From the explicit action of $\germ g $, we easily derive that $N(a)$ is a weight $\germ g $-module which is $\langle\Phi\setminus \theta_{a}\rangle$-cuspidal. Using once again the action of $\germ g $, one checks that the vectors $x(0,\ldots ,0,k_1,\ldots ,k_m,0,\ldots ,0)$ are $\germ l _{\theta_{a}}$-highest weight vectors and that every $\germ l _{\theta_{a}}$-highest weight vector is a linear combination of these vectors.

Then we show that each of these vectors generate a simple $\germ l _{\theta_{a}}$-module. Indeed, we already know that it generates an indecomposable module (since it is a highest weight module). To show it is simple we only have to show that it does not contain any other highest weight vector (since a submodule of a highest weight module is again a highest weight module). As we already know the complete list of highest weight vectors in $N(a)$ we just have to check that the $\germ l _{\theta_{a}}$-module generated by $x(0,\ldots ,0,k_1,\ldots ,k_m,0,\ldots ,0)$ cannot contain the  $\germ l _{\theta_{a}}$-module generated by $x(0,\ldots ,0,k'_1,\ldots ,k'_m,0,\ldots ,0)$ for $(k_{1},\ldots,k_{m})\not=(k'_{1},\ldots,k'_{m})$. Assume it is not the case. Then the action of the center of $\germ l _{\theta_{a}}$ should be the same on these two vectors and the $\germ l '_{\theta_{a}}$-weight of $x(0,\ldots ,0,k'_1,\ldots ,k'_m,0,\ldots ,0)$ should be smaller than that of $x(0,\ldots ,0,k_1,\ldots ,k_m,0,\ldots ,0)$. These two conditions can only be fulfilled in case $(k_{1},\ldots,k_{m})=(k'_{1},\ldots,k'_{m})$. This contradiction completes the proof.

\end{proof}

\begin{rema}
In fact the module $N(a)$ is in general an object of several categories $\cal O _{S,\theta}$.
\end{rema}

\subsubsection{Type C case}\label{sssec:deg1C}

In this section only, $\germ g $ denotes a simple Lie algebra of type $C$. We shall construct weight $\germ g $-modules of degree $1$ in the same way as above. So we need to realize the Lie algebra $\germ g $ inside some $W_{N}$. Let $N$ be the rank of $\germ g $. Then, $span_{\C}\{q_{i}p_{j}, p_{i}p_{j}, q_{i}q_{j}, \: 1\leq i,j\leq N\}$ is a subalgebra of $W_{N}$ isomorphic to $\germ g $. More specifically, the Cartan subalgebra is given by $span\left(\{q_{i}p_{i}-q_{i+1}p_{i+1}, \: i=1,\ldots,n-1\}\cup\{q_{n}p_{n}+\frac{1}{2}\}\right)$, the $n-1$ weight vectors corresponding to the short simple roots are given by $q_{i}p_{i+1}$ with $i=1,\ldots ,n-1$, and the weight vector corresponding to the long simple root is given by $\frac{1}{2}q_{n}^2$. Note that this is not the same kind of embedding as for Lie algebras of type A. 

Let $\cal K _{\bar 0}=\{k\in \cal K \: :\: \sum_{i=1}^N\: k_{i}\in 2\Z\}$. Let $M(a)$ be the subspace of $W(a)$ whose basis is indexed by $\cal K _{\bar 0}$.Then we have the following:

\begin{theo}[Benkart,Britten,Lemire {\cite[thm 5.21]{BBL97}}]\label{thmBBLC}\hfill{ }
\begin{enumerate}
\item The vector subspace $M(a)$ of $W(a)$ is a simple weight $\germ g $--module of degree $1$. Moreover, $M(a)$ is cuspidal if and only if $a_{i}\not\in\Z$ for all $i\in\{1,\ldots ,N\}$.
\item Conversely if $M$ is an infinite dimensional simple weight $\germ g $--module of degree $1$, then there exist two integers $l$ and $m$ with $l+m=N$ and $a=(\underbrace{-1,\ldots ,-1}_{l},a_{1},\ldots , a_{m})$ such that $M\cong M(a)$. Moreover, if $m>1$ then $a_{1},\ldots,a_{m}$ are non integer complex numbers, and if $m=1$ then $a_{1}$ is either a non integer complex numbers or equals to $-1$ or $-2$.
\end{enumerate}
\end{theo}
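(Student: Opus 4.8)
The plan is to exploit the realization of $\germ g =\germ sp _{2N}$ inside the Weyl algebra $W_{N}$ described above, together with the simplicity of the $W_{N}$-module $W(a)$ furnished by Theorem \ref{thmBBLWeyl}. The crucial algebraic observation is that the quadratic elements $q_{i}q_{j}$, $p_{i}p_{j}$, $q_{i}p_{j}$ span the image of $\germ g $ (a dimension count gives $N(2N+1)=\dim\germ sp _{2N}$), and that as an associative algebra they generate exactly the even part $W_{N}^{\mathrm{even}}$ of $W_{N}$, namely the span of the monomials of even length; hence the image of $\cal U (\germ g )$ in $W_{N}$ is precisely $W_{N}^{\mathrm{even}}$. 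Since every generator of $\germ g $ has even degree, it preserves the parity of $\sum_{i}k_{i}$, so $W(a)$ splits as a $\germ g $-module into its even part $M(a)$ (indexed by $\cal K _{\bar 0}$) and its odd complement (indexed by $\cal K \setminus\cal K _{\bar 0}$). This parity decomposition drives the whole argument.

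For part $(1)$ I would first record that $M(a)$ is a weight module of degree $1$: a direct computation from the $W_{N}$-action shows that $q_{i}p_{i}$ acts on $x(k)$ by the scalar $a_{i}+k_{i}$ in both cases of the definition, so each Cartan generator is diagonal on the basis $\{x(k)\}$, and comparing eigenvalues across the Cartan forces $x(k)$ and $x(k')$ to share a weight only when $k=k'$; thus every weight space is at most one-dimensional. Simplicity then follows from a clean parity argument: for any nonzero $v\in M(a)$ we have $W_{N}\cdot v=W(a)$ by Theorem \ref{thmBBLWeyl}, and decomposing $W_{N}\cdot v=W_{N}^{\mathrm{even}}\!\cdot v\oplus W_{N}^{\mathrm{odd}}\!\cdot v$ into its even and odd summands forces $W_{N}^{\mathrm{even}}\!\cdot v=M(a)$; since $W_{N}^{\mathrm{even}}$ is the image of $\cal U (\germ g )$, the module $M(a)$ is $\germ g $-simple. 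The cuspidality criterion is obtained by inspecting when a root vector can acquire a kernel: writing out $q_{i}$, $p_{i}$ and the products realizing $q_{i}^{2}$, $p_{i}^{2}$, $q_{i}p_{j}$, $q_{i}q_{j}$, $p_{i}p_{j}$, one sees that a coefficient $a_{i}+k_{i}$ vanishes for some admissible $k$ precisely when $a_{i}\in\Z$, and that this is the only possible source of non-injectivity; hence $M(a)$ is cuspidal if and only if $a_{i}\notin\Z$ for all $i$.

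For the converse, part $(2)$, I would mirror the strategy used for type $A$ in Theorem \ref{thmBBLA}. Let $M$ be an infinite dimensional simple weight module of degree $1$. Because every weight space is at most one-dimensional, fixing a basis vector $v_{\lambda}$ in each nonzero weight space reduces the $\germ g $-action to a system of scalars $c_{\alpha}(\lambda)$ defined by $X_{\alpha}\cdot v_{\lambda}=c_{\alpha}(\lambda)v_{\lambda+\alpha}$, and the Chevalley relations of $\germ sp _{2N}$ translate into functional equations among these scalars along each root string. Solving these equations on the multiplicity-free support, combined with the $\germ sl _2$-theory of Subsection \ref{ssec:sl2} applied to each root $\germ sl _2$-triple, pins down the support as a (possibly truncated) coset of the root lattice and matches the scalars to those of some $M(a)$; integral parameter values are exactly what produce the truncations, forcing the blocks of $-1$'s and the segment of non-integral $a_{i}$ in the stated normal form. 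Alternatively, one may first invoke Fernando's Theorem \ref{thmFe} to write $M\cong L(\germ p ,C)$ with $C$ a simple cuspidal module of degree $1$ over a Levi $\germ l $, apply the cuspidal classification (the case $a_{i}\notin\Z$) to the type $A$ and type $C$ factors of $\germ l $, and then reconstruct the full parameter $a$ by identifying the induced module with the appropriate $M(a)$.

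I expect the main obstacle to be the converse direction, and specifically the low-rank boundary phenomena. The clean ``coset of the weight lattice'' picture valid for cuspidal modules breaks down as soon as some $a_{i}\in\Z$, and one must carefully track how the admissibility set $\cal K _{\bar 0}$ truncates the support. This is what produces the exceptional list when $m=1$, where $a_{1}$ may be non-integral or equal to $-1$ or $-2$; verifying that these and only these occur, and that no further degree-$1$ modules are overlooked, requires a delicate case analysis of the admissible scalar systems near the boundary of the support, rather than the uniform argument available in the cuspidal interior.
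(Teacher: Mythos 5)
A preliminary remark: the paper does not prove this statement at all. Theorem \ref{thmBBLC} is imported verbatim from Benkart--Britten--Lemire \cite[thm 5.21]{BBL97}, so there is no internal proof to compare yours with; what follows assesses your proposal on its own terms. Your part (1) is essentially sound: the image of $\mathcal{U}(\mathfrak{g})$ in $W_N$ is indeed the even subalgebra $W_N^{\mathrm{even}}$ (up to the constant shift hidden in the Cartan element $q_np_n+\tfrac12$), the parity decomposition of $W(a)$ together with simplicity of the $W_N$-module $W(a)$ does give $W_N^{\mathrm{even}}\cdot v=M(a)$ for every nonzero $v\in M(a)$, the computation that $q_ip_i$ acts on $x(k)$ by $a_i+k_i$ yields degree $1$, and the boundary analysis of $\mathcal{K}$ gives the cuspidality criterion, provided you actually verify injectivity of every root vector (including the long-root vectors $q_i^2$, $p_i^2$) rather than only locating vanishing coefficients.

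The genuine gap is part (2), which is where the substance of the theorem lies and where your text offers a plan rather than a proof. Saying that the Chevalley relations ``translate into functional equations among these scalars'' and that solving them ``pins down the support and matches the scalars to those of some $M(a)$'' is precisely the content of the Benkart--Britten--Lemire classification; nothing in the sketch shows why an integral parameter can only occur with the value $-1$ in the normal form, why the non-integral entries can be gathered into a single terminal block, how the isomorphisms $M(a)\cong M(a')$ are controlled so as to reach that normal form, or why for $m=1$ the exceptional values $a_1\in\{-1,-2\}$ (the two halves of the oscillator representation) occur and no others. The alternative route through Fernando's Theorem \ref{thmFe} is not free either: while $C=M^{\mathfrak{n}}$ automatically has degree $1$, you would still need the classification of degree-$1$ cuspidal modules over the Levi factors (essentially the cuspidal case of the very statement being proved, plus its type-$A$ analogue), and, more seriously, an argument identifying the abstract simple quotient $L(\mathfrak{p},C)$ with a concrete $M(a)$ and pinning down the action of the center of the Levi on $C$ as forced by degree-$1$-ness of all of $M$ --- the analogue of what the paper does laboriously in its Sections 4.2--4.3 via Lemire's correspondence. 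As written, the converse direction is an announcement of strategy, not an argument.
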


Denote be $\Phi$ the standard basis for the root system of $\germ g $ with respect to the standard Cartan subalgebra $\germ h $. Let $a\in \C^{N}$ such that $a=(\underbrace{-1,\ldots ,-1}_{l},a_{l+1},\ldots, a_n)$ where $0<l<n$. Let $\theta_{a}\subset \Phi$ be given by the non-circled simple roots of one of the following Dynkin diagram (according to $l=n-1$ or $l<n-1$):
\setlength{\unitlength}{1cm}
\begin{center}
\begin{picture}(7,1)
\put(1,0.5){\circle*{.2}}
\put(1,0.5){\line(1,0){1}}
\put(2,0.5){\circle*{.2}}
\put(2,0.5){\line(1,0){.2}}
\put(2.4,0.5){\line(1,0){.2}}
\put(2.8,0.5){\line(1,0){.2}}
\put(3.2,0.5){\line(1,0){.2}}
\put(3.6,0.5){\line(1,0){.2}}
\put(4,0.5){\circle*{.2}}
\put(4,0.5){\line(1,0){1}}
\put(5,0.5){\circle*{.2}}
\put(5,.45){\line(1,0){1}}
\put(5,.55){\line(1,0){1}}
\put(6,.5){\circle*{.2}}
\put(6,.5){\circle{.35}}
\put(0.8,.8){$e_{1}$}
\put(1.8,.8){$e_2$}
\put(5.8,.8){$e_n$}
\put(5.4,.4){$<$}
\put(0.8,.3){$\underbrace{\hphantom{1111111111111111111111111}}_{A_{n-1}}$}
\put(5.7,.3){$\underbrace{\hphantom{l}}_{A_1}$}
\end{picture}
\end{center}

\setlength{\unitlength}{1cm}
\begin{center}
\begin{picture}(8,1.6)
\put(1,0.5){\line(1,0){.2}}
\put(1.4,0.5){\line(1,0){.2}}
\put(1.8,.5){\line(1,0){.2}}
\put(2,.5){\circle*{.2}}
\put(2,.5){\line(1,0){1}}
\put(3,.5){\circle*{.2}}
\put(3,.5){\line(1,0){.2}}
\put(3.4,.5){\line(1,0){.2}}
\put(3.8,.5){\line(1,0){.2}}
\put(4.2,.5){\line(1,0){.2}}
\put(4.6,.5){\line(1,0){.2}}
\put(5,.5){\circle*{.2}}
\put(5,.5){\line(1,0){1}}
\put(6,.5){\circle*{.2}}
\put(6,.45){\line(1,0){1}}
\put(6,.55){\line(1,0){1}}
\put(7,.5){\circle*{.2}}
\put(3,.5){\circle{.35}}
\put(5,.5){\circle{.35}}
\put(6,.5){\circle{.35}}
\put(7,.5){\circle{.35}}
\put(1.8,.8){$e_{l}$}
\put(2.8,.8){$e_{l+1}$}
\put(6.8,.8){$e_n$}
\put(6.4,.4){$<$}
\put(1,.3){$\underbrace{\hphantom{1111111}}_{A_{l}}$}
\put(2.8,.3){$\underbrace{\hphantom{1111111111111111111111111}}_{C_{n-l}}$}
\end{picture}
\end{center}

For the commodity of the reader, we explicit the action of $\germ h $ and of $X_{\pm e_{i}}$ on $M(a)$ for both cases:
$$\left\{\begin{array}{lcl}
H_{e_{i}}\cdot x(k) & = & (k_{i}-k_{i+1})x(k), \: i=1,\ldots ,l-1\\
H_{e_{l}}\cdot x(k) & = & (-1-a_{l+1}+k_{l}-k_{l+1})x(k)\\
H_{e_{i}}\cdot x(k) & = & (a_{i}-a_{i+1}+k_{i}-k_{i+1})x(k), \: i=l+1,\ldots,n-1\\
H_{e_{n}}\cdot x(k) & = & (a_{n}+k_{n}+\frac{1}{2})x(k)\\
X_{e_{i}}\cdot x(k) & = & k_{i}x(k-\epsilon_{i+1}+\epsilon_{i}), \: i=1,\ldots ,l-1\\
X_{e_{l}}\cdot x(k) & = & k_{l}(a_{l+1}+k_{l+1})x(k-\epsilon_{l+1}+\epsilon_{l})\\
X_{e_{i}}\cdot x(k) & = & (a_{i+1}+k_{i+1})x(k-\epsilon_{i+1}+\epsilon_{i}), \: i=l+1,\ldots,n-1\\
X_{e_{n}}\cdot x(k) & = & \frac{1}{2}x(k+2\epsilon_{n})\\
X_{-e_{i}}\cdot x(k) & = & k_{i+1}x(k-\epsilon_{i}+\epsilon_{i+1}), \: i=1,\ldots l-1\\
X_{-e_{l}}\cdot x(k) & = & x(k-\epsilon_{l}+\epsilon_{l+1})\\
X_{-e_{i}}\cdot x(k) & = & (a_{i}+k_{i})x(k-\epsilon_{i}+\epsilon_{i+1}), \: i=l+1,\ldots,n-1\\
X_{-e_{n}}\cdot x(k) & = & -\frac{1}{2}(a_{n}+k_{n})(a_{n}+k_{n}-1)x(k-2\epsilon_{n})
\end{array}\right.$$

Now we claim the following:
\begin{theo}\label{thmMO}
The module $M(a)$ is a simple object of the category $\cal O _{\Phi,\theta_{a}}$. Moreover, the highest weight vectors for the action of $\germ l _{\theta_{a}}$ are the linear combinations of the $x(0,\ldots ,0,k_1,\ldots ,k_m)$ where $k_i \in \Z$ are such that $\sum_i  \: k_i\in 2\Z$.
\end{theo}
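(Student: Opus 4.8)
The plan is to follow the scheme used for the type $A$ module $N(a)$ in Theorem \ref{thmNO}, adapting each step to the type $C$ combinatorics and to the constraint $\sum_{i}k_{i}\in 2\Z$ coming from $\cal K _{\bar 0}$. Since $S=\Phi$ gives $\germ n ^{+}_{\Phi}=0$, the $\germ n ^{+}_{\Phi}$-finiteness condition of Definition \ref{def:GO} is vacuous, and the facts that $M(a)$ is a simple weight $\germ g $-module (of degree one) are already granted by Theorem \ref{thmBBLC}. Hence I only have to establish the cuspidality condition for $\langle \Phi\setminus \theta_{a}\rangle$ and the restriction condition, namely that $M(a)$ decomposes, as an $\germ l _{\theta_{a}}$-module, into a direct sum of simple highest weight modules; the \og moreover\fg\ statement will drop out of the analysis of the restriction condition.

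First I would check cuspidality. With the normalisation of the Dynkin diagrams above, $\theta_{a}=\{e_{1},\ldots ,e_{l}\}$ spans a subsystem of type $A_{l}$ involving the coordinates $1,\ldots ,l+1$, whereas $\Phi\setminus \theta_{a}=\{e_{l+1},\ldots ,e_{n}\}$ generates the subsystem $\langle \Phi\setminus \theta_{a}\rangle$ of type $C_{n-l}$ supported on the coordinates $l+1,\ldots ,n$, where $a_{l+1},\ldots ,a_{n}\notin \Z$. Reading off the explicit action, each root vector attached to a root of this $C_{n-l}$ sends a basis vector $x(k)$ to a translate of $k$ times a scalar that is a product of factors $a_{i}+k_{i}$, the only new feature being the long roots $\pm 2\varepsilon_{i}$, for which the scalar has the quadratic shape $(a_{i}+k_{i})(a_{i}+k_{i}-1)$ visible in the displayed formula for $X_{-e_{n}}$. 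The hypothesis $a_{i}\notin \Z$ forbids any such factor from vanishing, so every one of these operators is injective; thus $M(a)$ is $\langle \Phi\setminus \theta_{a}\rangle$-cuspidal.

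Next I would pin down the highest weight vectors. A weight vector is $\germ l _{\theta_{a}}$-highest precisely when it is annihilated by the simple root vectors $X_{e_{1}},\ldots ,X_{e_{l}}$, and the displayed formulas show that $X_{e_{i}}\cdot x(k)$ is a nonzero multiple of $x(k-\epsilon_{i+1}+\epsilon_{i})$ exactly when $k_{i}\neq 0$ (for $i=l$ one uses once more $a_{l+1}\notin \Z$); hence the condition is $k_{1}=\cdots =k_{l}=0$. Together with the membership condition $\sum_{i}k_{i}\in 2\Z$ defining $\cal K _{\bar 0}$ this gives exactly the vectors $x(0,\ldots ,0,k_{1},\ldots ,k_{m})$ with $\sum_{i}k_{i}\in 2\Z$, which is the \og moreover\fg\ statement. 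I would then note that the $A_{l}$-action shifts $k$ only by $\pm(\epsilon_{i}-\epsilon_{i+1})$ with $i\leq l$, so it preserves both the tail $(k_{l+2},\ldots ,k_{n})$ and the partial sum $\sum_{i\leq l+1}k_{i}$; consequently $M(a)$ splits, as an $\germ l _{\theta_{a}}$-module, into blocks indexed by these invariants, each block containing a unique highest weight vector.

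Finally, to obtain the restriction condition I would prove that each such block is a simple highest weight module. As in the proof of Theorem \ref{thmNO}, the module generated by a highest weight vector is indecomposable, so simplicity amounts to ruling out a second highest weight vector inside it: a lower one $x(0,\ldots ,0,k'_{1},\ldots ,k'_{m})$ would be reached by lowering, so the centre of $\germ l _{\theta_{a}}$ would act by the same scalar on both while the $\germ l '_{\theta_{a}}$-weight would drop by a sum of positive $A_{l}$-roots; comparing $\varepsilon$-weights, that drop is supported on the coordinates $l+1,\ldots ,n$ but must also lie in the $A_{l}$-root lattice supported on $1,\ldots ,l+1$ with zero total, which is possible only when $(k_{1},\ldots ,k_{m})=(k'_{1},\ldots ,k'_{m})$. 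The cleanest way to package this is to observe that a block, viewed over the $\mathfrak{gl}_{l+1}$ part of $\germ l _{\theta_{a}}$ with the first $l$ coordinates frozen at $-1$, is precisely an infinite dimensional degree one module of type $A$, hence simple by Theorem \ref{thmBBLA}; the direct sum over blocks then yields the restriction condition and places $M(a)$ in $\cal O _{\Phi,\theta_{a}}$. I expect the genuinely type $C$ points, requiring the most care, to be the injectivity of the long root operators $\pm 2\varepsilon_{i}$ and the constant bookkeeping of the even-sum lattice $\cal K _{\bar 0}$ throughout the block decomposition.
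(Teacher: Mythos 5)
Your proof is correct and follows essentially the same route as the paper, whose proof of this theorem simply transfers the argument of Theorem \ref{thmNO} to the type $C$ setting: cuspidality read off the explicit action (including the quadratic scalars of the long roots), determination of the $\germ l _{\theta_{a}}$-highest weight vectors, and the weight/central character comparison ruling out a second highest weight vector inside a given highest weight submodule. Your additional packaging of each $\germ l _{\theta_{a}}$-stable block as a type $A$ degree-one module $N(-1,\ldots ,-1,a_{l+1}+s)$, simple by Theorem \ref{thmBBLA}, is a pleasant refinement that also makes the direct sum decomposition explicit, but it is not a genuinely different approach.
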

\begin{proof}
The proof goes along the same line as the proof of theorem \ref{thmNO}.

\end{proof}

\begin{rema}
Here again, the module $M(a)$ is in general on object of several categories $\cal O _{S,\theta}$.
\end{rema}

\subsubsection{Degree $1$ modules and cuspidality}\label{sssec:deg1cusp}

We shall now give one more property for the modules $N(a)$ and $M(a)$. We continue with the notations above. Note first that the action of $\germ l _{\Phi\setminus\theta_{a}}$ stabilizes the vector space consisting of all the $\germ l _{\theta_{a}}$-highest weight vectors. Thus this vector space has a structure of $\germ l _{\Phi\setminus\theta_{a}}$--module, which is cuspidal and one can also show it is simple by using the explicit action of $\germ l _{\Phi\setminus\theta_{a}}$. In fact, we do better:

\begin{prop}\label{propNM}
The modules $N(a)$ and $M(a)$ splits into a direct sum of simple cuspidal $\germ l _{\Phi\setminus\theta_{a}}$-modules.
\end{prop}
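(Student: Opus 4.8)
The plan is to prove both statements by the same explicit device, reading the decomposition directly off the formulas for the action displayed above; I would not invoke any abstract semisimplicity result, except optionally as a shortcut in the type $C$ case.

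First I would pin down $\germ l _{\Phi \setminus \theta_a}$ concretely. Comparing Definition \ref{def:cusp} with the displayed action of the $X_{\pm e_i}$, one checks that the cuspidal simple roots are exactly $e_{j+1},\ldots,e_{m-1}$ in the type $A$ situation and $e_{l+1},\ldots,e_n$ in the type $C$ situation: the parameters $a_{j+1},\ldots,a_m$ (resp. $a_{l+1},\ldots,a_n$) being non-integral forces the relevant coefficients never to vanish, so the corresponding $X_{\pm e_i}$ act injectively, whereas the coefficients attached to the outer roots do vanish. Hence $\Phi \setminus \theta_a$ is precisely the connected block carrying the circled nodes, with $\germ l '_{\Phi \setminus \theta_a}$ of type $A_{m-1-j}$ (resp. $C_{n-l}$), i.e. isomorphic to $\germ sl _{m-j}$ (resp. $\germ sp _{2(n-l)}$). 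The crucial structural observation is that every root vector of $\germ l _{\Phi \setminus \theta_a}$ moves only the inner coordinates $k_{j+1},\ldots,k_m$ (resp. $k_{l+1},\ldots,k_n$): the short-root vectors $X_{\pm e_i}$ shift $k_i,k_{i+1}$ inside that block, in type $C$ the long-root vectors $X_{\pm e_n}$ shift $k_n$ by $\pm 2$, while $\germ h $ acts diagonally and the remaining outer coordinates are untouched.

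From this I would read off the decomposition. Grouping the basis vectors $x(k)$ according to the fixed values of the outer coordinates $k_1,\ldots,k_j,k_{m+1},\ldots,k_N$ (resp. $k_1,\ldots,k_l$), ranging over the admissible data, yields a direct sum decomposition $N(a)=\bigoplus W$ (resp. $M(a)=\bigoplus W$) into $\germ l _{\Phi \setminus \theta_a}$-submodules, since the action neither mixes different outer data nor leaves the block; in type $C$ the residue of $\sum_{i>l} k_i$ modulo $2$ is already determined by the global parity constraint of $\cal K _{\bar 0}$. Each summand $W$ has a basis indexed by the inner coordinates running over $\Z^{m-j}$ with fixed sum (resp. over $\Z^{n-l}$ with fixed parity of the sum), and the displayed formulas restricted to $W$ are literally those of \ref{sssec:deg1A} (resp. \ref{sssec:deg1C}) for the smaller algebra $\germ l '_{\Phi \setminus \theta_a}$ with parameter $(a_{j+1},\ldots,a_m)$ (resp. $(a_{l+1},\ldots,a_n)$), all of whose entries are non-integral. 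By Theorem \ref{thmBBLA}(1) (resp. Theorem \ref{thmBBLC}(1)) each $W$ is therefore a simple cuspidal $\germ l '_{\Phi \setminus \theta_a}$-module; since the centre of $\germ l _{\Phi \setminus \theta_a}$ pairs trivially with every root in $\langle \Phi \setminus \theta_a\rangle$, it acts by a single scalar on $W$, so $W$ is in fact simple over the full reductive algebra $\germ l _{\Phi \setminus \theta_a}$. This exhibits $N(a)$ and $M(a)$ as direct sums of simple cuspidal $\germ l _{\Phi \setminus \theta_a}$-modules.

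The only genuine labour is the bookkeeping in the last paragraph, namely matching each slice with the Benkart--Britten--Lemire construction for the smaller algebra and confirming non-integrality of the parameters; this is routine given the explicit action, and I do not expect a real obstacle. In the type $C$ case one may shorten the argument: since $M(a)$ is $\langle \Phi \setminus \theta_a\rangle$-cuspidal, its restriction to $\germ l _{\Phi \setminus \theta_a}$ is a cuspidal $\germ sp $-module, and Theorem \ref{thmBKLM} guarantees that any such module is a direct sum of simple cuspidals. Since this shortcut is unavailable in type $A$, I would keep the explicit slicing as the uniform argument.
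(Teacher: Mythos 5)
Your argument is correct, but it takes a genuinely different route from the paper. The paper's proof is soft: it only uses that $N(a)$ (resp. $M(a)$) has degree $1$ and that the action of $\germ l _{\Phi\setminus\theta_{a}}$ is cuspidal, and shows that the cyclic $\germ l _{\Phi\setminus\theta_{a}}$-submodule generated by any basis vector $x(k)$ is simple -- applying a root vector $X$ and then a root vector $Y$ of opposite weight returns a non zero scalar multiple of $x(k)$, so every weight vector of that submodule regenerates $x(k)$ -- and then concludes since the whole module is a sum of such simple cyclic submodules. Your proof instead slices the module explicitly by the outer coordinates and identifies each slice with a Benkart--Britten--Lemire module for the smaller algebra, invoking Theorem \ref{thmBBLA}(1) (resp. \ref{thmBBLC}(1)) for simplicity and cuspidality. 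Both are valid; the paper's argument is shorter and needs no bookkeeping, while yours buys an explicit description of the simple summands, which the paper does not state. Two small points in your version: a slice whose inner coordinates have a fixed nonzero sum (type $A$) or odd parity (type $C$) is not \emph{literally} the module with parameter $(a_{j+1},\ldots ,a_{m})$ (resp. $(a_{l+1},\ldots ,a_{n})$); you must shift the last parameter by the appropriate integer, which keeps it non integral, so the identification survives -- this is exactly the routine bookkeeping you flag, and it does need to be said. Second, the proposed shortcut via Theorem \ref{thmBKLM} in type $C$ is less innocent than it looks: the restriction of $M(a)$ to the semisimple part $\germ l '_{\Phi\setminus\theta_{a}}$ is not finitely generated and has infinite dimensional weight spaces with respect to the smaller Cartan subalgebra, so it does not obviously lie in the category to which that theorem applies; since you keep the explicit slicing as the actual argument, this does not affect the proof, but the shortcut should not be presented as automatic.
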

\begin{proof} Let us prove the proposition for $N(a)$. From theorem \ref{thmNO} we already know that the action of $\germ l _{\Phi\setminus\theta_{a}}$ on $N(a)$ is cuspidal. Let $x(k)\in N(a)$. Consider the $\germ l _{\Phi\setminus\theta_{a}}$--module $V(k)$ generated by $x(k)$. Let $X\in {\germ l _{\Phi\setminus\theta_{a}}}$ be a weight vector of weight $\alpha$. Then $X\cdot x(k)$ is again a weight vector in $V(k)$ which is non zero since the action of $X$ is cuspidal. On the other hand, if $Y \in {\germ l _{\Phi\setminus\theta_{a}}}$ is a vector of weight $-\alpha$, then $Y\cdot (X\cdot x(k))$ is a non zero vector (since the action of $Y$ is injective) having the same weight as $x(k)$. As $N(a)$ is a degree $1$ module, $Y\cdot (X\cdot x(k))$ should then be a non zero scalar multiple of $x(k)$. This proves that $V(k)$ is simple. But $N(a)$ is generated as a vector space by the various $x(k)$. Thus the proposition is proved. The proof is of course the same for $M(a)$.

\end{proof}


\section{Classification of the simple modules in $\mathcal O _{\Phi,\theta}$}\label{sec:GOirr}


In this part, {\bf we assume that $\germ g $ is a simple Lie algebra}. We fix a Cartan subalgebra $\germ h $ and denote by $\cal R $ the corresponding root system. We also fix a basis $\Phi$ of simple roots of $\cal R $. The aim of this section is the study of the various categories $\cal O _{\Phi,\theta}$ where $\theta \subset \Phi$. Note that if $\theta=\Phi$, then this category reduces to the semi-simple category whose objects are the direct sum of simple highest weight $\germ g $-modules. On the other hand, if $\theta=\emptyset$, then we get the category of cuspidal modules. Therefore, {\bf in what follows we shall always assume that $\emptyset\not=\theta\not=\Phi$}.

Let $L$ be a simple module in $\cal O _{\Phi,\theta}$. Then using Fernando's theorem \ref{thmFe}, we see that $L\cong L(\germ p _{\Phi\setminus \theta},C)$ where $C$ is a simple cuspidal $\germ l _{\Phi\setminus\theta}$-module. Thus to understand the simple module in $\cal O _{\Phi,\theta}$ it suffices to know which of the above modules $L(\germ p _{\Phi\setminus \theta},C)$ satisfy the restriction condition of the category $\cal O _{\Phi,\theta}$. In the sequel, we shall write $\germ p $ instead of $\germ p _{\Phi\setminus\theta}$, $\germ l $ instead of $\germ l _{\Phi\setminus\theta}$, $\germ n $ instead of $\germ n ^{+}_{\Phi\setminus\theta}$, and $L(C)$ instead of $L(\germ p _{\Phi\setminus \theta},C)$. We shall also need to consider the generalized Verma module $V(C):=V(\germ p _{\Phi\setminus \theta},C)$. We shall denote by $p:V(C)\rightarrow L(C)$ the natural projection and by $K(C)$ the kernel of this projection.

Before going further, let us state the main results we are going to prove. As we already mention, we want to find the conditions that the $\germ l $-module $C$ must fulfill in order that $L(C)$ be in category $\cal O _{\Phi,\theta}$, that is in order that $L(C)$ satisfies the restriction condition. We shall prove the following results:

\begin{theo}\label{thm_{A}}
Let $L(C)$ be a simple module in $\cal O _{\Phi,\theta}$. Then $C$ is a simple cuspidal $\germ l $-module of degree $1$.
\end{theo}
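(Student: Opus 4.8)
The plan is to fix a simple module $L(C) \in \cal O _{\Phi,\theta}$ with $C$ a simple cuspidal $\germ l $-module (as guaranteed by Fernando's theorem \ref{thmFe}), and to show that the restriction condition forces $\deg(C) = 1$. My point of departure is the interplay between the cuspidal $\germ l $-module $C$ sitting inside $L(C)$ as the image of $1 \otimes C$ (proposition \ref{GVM}(3)) and the requirement that $L(C)$ decompose as an $\germ l _{\theta}$-module into simple \emph{highest weight} modules. The key tension is that $C$ is cuspidal for all of $\germ l = \germ l _{\Phi \setminus \theta}$, hence in particular $\langle \theta \rangle$ acts injectively on $C$, while the restriction condition demands that every vector of $L(C)$ be killed by high enough powers of the $\germ n ^+_{\theta}$-part, so as to live in a highest weight $\germ l _{\theta}$-module.

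**The main reduction.** First I would reduce to a rank-one computation. Since $C$ is cuspidal as an $\germ l $-module and $\theta \subset \Phi \setminus (\Phi \setminus \theta)$... more carefully, I want to extract from the decomposition of $L(C)\mid_{\germ l _{\theta}}$ information about the weight multiplicities of $C$ itself. The crucial observation is that $C \cong 1 \otimes C$ sits inside $L(C)$ and is an $\germ l _{\theta}$-stable (indeed $\germ l _\theta$-submodule-generating) subspace at the "top"; its weight spaces have dimension $\deg(C)$ on a Zariski-dense set of weights. I would then pick a root $\alpha \in \theta$ and the associated $\germ sl _2$-triple, and analyze how $C$ restricts along this copy of $\germ sl _2$. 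The restriction condition says $X_{-\alpha}$ (a lowering operator inside $\germ n ^+_{\theta}$ after suitable normalization) acts locally nilpotently on $L(C)$, so each $\germ l _{\theta}$-highest weight vector is annihilated by some power of it; but cuspidality of $C$ for $\germ l $ forces the same generator to act \emph{injectively} on $C$. The resolution must be that the highest weight vectors producing the decomposition do not lie in $C$ but are built from $C$ by the action of the lowering operators $\germ n = \germ n ^+_{\Phi\setminus\theta}$, and I would track weights carefully to pin down the degree.

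**The degree bound.** The heart of the argument, which I expect to be the main obstacle, is to prove $\deg(C) \le 1$. My plan is to use proposition \ref{prop:WMsl2}: for any $\germ sl _2$ sitting inside $\germ l $ coming from a root $\alpha \in \Phi \setminus \theta$, simple weight modules have degree $1$, and cuspidality guarantees that the $\germ sl _2$-submodules of $C$ along such directions are simple cuspidal, hence of the form $N(a)$ from subsection \ref{ssec:sl2}. The difficulty is to upgrade these rank-one statements about individual $\germ sl _2$-directions into a statement about the full degree of $C$ as an $\germ l $-module. I would argue that if $\deg(C) \ge 2$, then some weight space $C_{\mu}$ contains two linearly independent vectors, and by tracking them through the injective cuspidal actions of the various $X_{\pm\alpha}$ ($\alpha \in \theta$) against the local nilpotency forced by the restriction condition, one derives a contradiction with the classification in theorem \ref{thmFeC} or directly with proposition \ref{prop:GO}(3) (which already shows the $\germ l _{\theta}$-modules are infinite dimensional, hence highest weight modules that are genuinely "long"). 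Concretely, I expect the contradiction to arise by exhibiting, from a degree-$\ge 2$ weight space, an $\germ l _{\theta}$-highest weight vector on which some $X_\beta$, $\beta \in \langle\theta\rangle$, fails to be locally nilpotent — violating the highest-weight decomposition — and this is precisely where the hypothesis $\theta \neq \emptyset$ and the structure of the degree-$1$ modules $N(a), M(a)$ of subsection \ref{ssec:deg1} must be invoked to close the gap.

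**Conclusion.** Once $\deg(C) = 1$ is established, the statement follows immediately: $C$ is by hypothesis a simple cuspidal $\germ l $-module, and we have shown it has degree $1$, which is exactly the assertion. I would remark that the degree-$1$ condition is what makes the modules $N(a)$ (type $A$) and $M(a)$ (type $C$) of subsection \ref{ssec:deg1} the relevant building blocks, consistent with theorem \ref{thmFeC} restricting simple cuspidal modules to types $A$ and $C$; so the theorem should be read as saying that the restriction condition is an extremely rigid constraint, selecting out precisely the minimal-degree cuspidal modules classified by Benkart--Britten--Lemire.
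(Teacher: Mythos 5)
Your proposal leaves the heart of the theorem unproved, and the heuristics meant to replace it rest on a misidentification of which operators act on $C$. The module $C$ is cuspidal as an $\mathfrak{l}_{\Phi\setminus\theta}$-module, so injectivity holds for the root vectors of $\langle\Phi\setminus\theta\rangle$; the root vectors attached to $\theta$ do not act on $C$ at all (they act only on $L(C)$, where the positive ones act locally nilpotently by the restriction condition). Hence the advertised tension that ``$\langle\theta\rangle$ acts injectively on $C$'' while $\mathfrak{n}^+_{\theta}$ acts locally nilpotently does not exist, and the plan to restrict $C$ to the $\mathfrak{sl}_2$ of a root $\alpha\in\theta$ has no meaning. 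Moreover, the rank-one input you do have (Proposition \ref{prop:WMsl2}) cannot yield the degree bound: for a root $\alpha$ of $\mathfrak{l}$ each simple $\mathfrak{sl}_2$-constituent of $C$ has degree $1$, but nothing controls how many such constituents pass through a given weight space of $C$ --- simple cuspidal modules of arbitrarily large finite degree exist in type $A$, and their rank-one restrictions look exactly the same. Neither Theorem \ref{thmFeC} (which constrains the type, not the degree) nor Proposition \ref{prop:GO}(3) produces the contradiction you hope for, so the step ``if $\deg(C)\ge 2$ then some $X_\beta$, $\beta\in\langle\theta\rangle$, fails to be locally nilpotent'' is precisely the missing argument, not a consequence of what precedes it.

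What the paper actually does is work inside the generalized Verma module $V(C)$ with its projection $p$ onto $L(C)$, and convert the restriction condition into scalar identities on $C$ itself: since the $\mathfrak{l}_\theta$-module generated by $p(X_{-(\alpha+\beta_1+\cdots+\beta_i)}\otimes v)$ must be semisimple and contain the simple highest weight module generated by $p(1\otimes X_{-\alpha}v)$ (Lemmas \ref{lem2} and \ref{lem7}), one gets $p(X_{-(\alpha+\beta)}\otimes v)=\eta\, p(X_{-\beta}\otimes X_{-\alpha}v)$ for a simple root $\beta\in\theta$, and applying $X_{\alpha+\beta}\in\mathfrak{n}^+$ to this relation gives $X_{\alpha}X_{-\alpha}v\in\mathbb{C}v$ for every weight vector $v\in C$ (Lemma \ref{lem3}). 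This scalar property is then propagated from the roots adjacent to $\theta$ to all roots of $\mathfrak{l}$ by the root-string Lemmas \ref{lem4}, \ref{lem5}, \ref{lem5b} together with the connectedness of the Dynkin diagram, and finally $\mathcal{U}(\mathfrak{g})_0 v\subset\mathbb{C}v$ plus Lemire's correspondence (Lemma \ref{lem6}) gives $\deg(C)=1$. None of this mechanism --- in particular the induced-module computation in which the $\theta$-directions act through $V(C)$ rather than on $C$ --- appears in your outline, so the proposal as it stands does not prove the theorem.
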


\begin{theo}\label{thm_{B}}
Let $\germ g $ be a simple Lie algebra not of type $C$. Assume $L(C)$ is a simple module in $\cal O _{\Phi,\theta}$. Then the semisimple part of the algebra $\germ l $ is a sum of ideals of type $A$.
\end{theo}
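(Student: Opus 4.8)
The plan is to combine the two facts established just before this statement: by Theorem \ref{thm_{A}}, the cuspidal $\germ l $-module $C$ has degree $1$, and by Fernando's Theorem \ref{thmFeC}, any simple factor of $\germ l '$ admitting a simple cuspidal module must be of type $A$ or $C$. So the whole problem reduces to excluding type-$C$ factors of $\germ l '$ \emph{under the extra restriction condition} that $L(C)$ decomposes, as an $\germ l _\theta$-module, into simple highest weight modules. First I would write $\germ l '_{\Phi\setminus\theta}=\bigoplus_i \germ k _i$ as a direct sum of simple ideals. Since $C$ is simple and cuspidal of degree $1$, its restriction to each factor $\germ k _i$ is again a simple cuspidal module of degree $1$ (the weight spaces being one-dimensional, they stay one-dimensional under the commuting actions). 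Thus each $\germ k _i$ is of type $A$ or $C$, and I must rule out type $C$.

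The key mechanism is the interaction between the cuspidal factor $\germ k _i$ and the roots of $\theta$ that attach to it in the Dynkin diagram. Suppose some factor $\germ k _i$ is of type $C$; I would look at the simple roots of $\germ k _i$, which all lie in $\Phi\setminus\theta$, and in particular at the unique \emph{long} simple root, say $\gamma$, sitting at the end of the $C$-chain. The crucial structural point is that in a type-$C$ diagram the long root $\gamma$ can only be adjacent (via a double bond) to a short root of the same factor, and never to a node of $\theta$ lying outside $\germ k _i$; conversely the nodes of $\theta$ can only connect to the chain through short roots of $\germ k _i$. I would then exploit the degree-$1$ classification of Benkart--Britten--Lemire together with the explicit action recorded for the type-$C$ module $M(a)$ in Section \ref{sssec:deg1C}: the long root vector $X_{e_n}$ acts by $\frac12 x(k+2\epsilon_n)$, shifting $k_n$ by $2$. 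The presence of this \emph{squared} root vector is what obstructs the restriction condition, and isolating that obstruction is the heart of the argument.

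The main obstacle, and the step I expect to require the most care, is converting this structural observation into a genuine contradiction with the restriction condition. The strategy is to argue by contradiction: assume $\germ k _i$ is of type $C$ and that $L(C)$ nevertheless restricts to $\germ l _\theta$ as a direct sum of highest weight modules. I would pick a root $\alpha\in S\setminus\theta=\Phi\setminus\theta$ lying in the type-$C$ factor and a root $\beta\in\theta$ adjacent to it, so that $\alpha+\beta\in\cal R $, and invoke part $(3)$ of Proposition \ref{prop:GO}: the cuspidality condition forces $\alpha\in\cal R ^I_s$ while the decomposition into highest weight $\germ l _\theta$-modules, combined with the doubling action of the long root on $M(a)$, would force a root of $\langle\theta\rangle$ to be locally nilpotent, contradicting \cite[lem 4.7]{BBL97}. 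More concretely, the degree-$1$ module attached to a type-$C$ factor is $M(a)$, and its $\germ l _\theta$-highest weight vectors (computed as in Theorem \ref{thmMO}) are indexed by $k$ with $\sum_i k_i\in 2\Z$; the parity constraint coming from the long-root action is incompatible with the free $\Z$-grading that a type-$A$ attachment to $\theta$ would demand.

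Finally I would assemble these local statements into the global conclusion. Having shown that no simple ideal $\germ k _i$ of $\germ l '_{\Phi\setminus\theta}$ can be of type $C$ when $L(C)$ satisfies the restriction condition, and knowing from Fernando's Theorem \ref{thmFeC} that each $\germ k _i$ is forced to be of type $A$ or $C$, I conclude that every $\germ k _i$ is of type $A$. Hence the semisimple part $\germ l '$ is a sum of ideals of type $A$, which is exactly the assertion. The delicate point throughout is that the exclusion of type $C$ is not automatic from cuspidality alone — cuspidal modules for $\germ sp _{2n}$ certainly exist — but only becomes forced once the long root of the $C$-factor is required to interact with a genuine $\theta$-node; I would therefore be careful to treat separately the degenerate case where the type-$C$ factor is isolated from $\theta$ (corresponding to $l=n-1$ in the diagrams of Section \ref{sssec:deg1C}), since there the obstruction disappears and one must check that such a configuration does not actually arise under $\emptyset\neq\theta\neq\Phi$ with $\alpha+\beta\in\cal R $.
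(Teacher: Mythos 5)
Your reduction is the right first move and matches the paper: by Fernando's Theorem \ref{thmFeC} every simple ideal of $\mathfrak{l}'$ carrying a cuspidal module is of type $A$ or $C$, so the whole theorem amounts to excluding a type-$C$ ideal, which can only occur for $\mathfrak{g}$ of type $B_n$ or $F_4$, and such an ideal is automatically attached to a node of $\theta$ because the Dynkin diagram is connected (so your worry about an ``isolated'' $C$-factor is vacuous). The gap is in the step that is supposed to produce the contradiction. Proposition \ref{prop:GO}(3) together with \cite[lem 4.7]{BBL97} cannot do it: the restriction condition only makes $L(C)$ a direct sum of (possibly infinite-dimensional) highest weight $\mathfrak{l}_{\theta}$-modules, so every root of $\langle\theta\rangle^{+}$ is locally nilpotent \emph{by construction} -- that is not a contradiction -- and the Benkart--Britten--Lemire lemma needs $\beta\in\mathcal{R}^{N}_{s}$, i.e.\ finite-dimensional $\mathfrak{l}_{\theta}$-constituents, which is precisely what Proposition \ref{prop:GO}(3) rules out. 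The ``parity constraint versus free $\Z$-grading'' coming from the long-root action on $M(a)$ is not an argument either: $M(a)$ in Section \ref{sssec:deg1C} is a module over an ambient algebra of type $C$, whereas here only the Levi factor is of type $C$ and the object whose $\mathfrak{l}_{\theta}$-restriction you must control is the induced $\mathfrak{g}$-module $L(C)$ with $\mathfrak{g}$ of type $B_n$ or $F_4$; nothing in the sketch actually computes that restriction. Your structural claim that $\theta$ can only attach to the $C$-factor through a short root is also off (in $B_n$ the attaching node meets $e_{n-1}-e_n$, a long root of the $B_2=C_2$ factor), and in any case it is never used to any precise effect.

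What is missing is a quantitative use of the restriction condition inside $L(C)$, and this is exactly how the paper argues. With the configuration $\beta-\alpha_2\Rightarrow\alpha_1$ ($\beta\in\theta$, $\alpha_1,\alpha_2\in\Phi\setminus\theta$), Lemma \ref{lem7} (whose proof is where semisimplicity of the $\mathfrak{l}_{\theta}$-action enters) gives, for a weight vector $v\in C$, a nonzero scalar $\eta(v)$ with $p(X_{-\alpha_2-\beta}\otimes v)=\eta(v)\,p(X_{-\beta}\otimes X_{-\alpha_2}v)$. Applying the root vector $X_{\beta+\alpha_2+2\alpha_1}\in\mathfrak{n}^{+}$ -- which exists only because of this $B/C$ configuration -- annihilates the left side, since $[X_{\beta+\alpha_2+2\alpha_1},X_{-\alpha_2-\beta}]=0$, while it turns the right side into a nonzero multiple of $p(1\otimes X_{\alpha_2+2\alpha_1}X_{-\alpha_2}v)$, which is nonzero by cuspidality of $C$; contradiction. (Note the paper's proof does not use Theorem \ref{thm_{A}} at all.) Without a computation of this kind, or a genuine substitute for it, the exclusion of type-$C$ ideals -- the entire content of the theorem beyond Fernando's result -- remains unproved.
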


\begin{theo}\label{thm_{C}}
Let $L(C)$ be a simple module in $\cal O _{\Phi,\theta}$. Then the semisimple part of the algebra $\germ l $ is simple of type $A$ or $C$.
\end{theo}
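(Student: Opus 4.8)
The plan is to show that the subdiagram $\Phi\setminus\theta$ of the Dynkin diagram of $\germ g $ is connected, since everything else is then immediate. Write the semisimple part as a sum of simple ideals $\germ l '=\germ l '_{1}\oplus\cdots\oplus\germ l '_{r}$, one for each connected component of $\Phi\setminus\theta$. By Theorem \ref{thm_{A}} the module $C$ is simple cuspidal of degree one, and as the ideals $\germ l '_{i}$ commute it is an outer tensor product $C\cong C_{1}\boxtimes\cdots\boxtimes C_{r}$ (tensored with a character of the centre), each $C_{i}$ being a simple cuspidal $\germ l '_{i}$-module. By Fernando's Theorem \ref{thmFeC} each $\germ l '_{i}$ is of type $A$ or $C$, and by Theorem \ref{thm_{B}} all of them are of type $A$ when $\germ g $ is not of type $C$. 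Hence once $r=1$ is established, the single factor carries a simple cuspidal module and is of type $A$ or $C$ by Theorem \ref{thmFeC}, which is exactly the assertion.

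To obtain $r=1$ I argue by contradiction and exploit the restriction condition, the only hypothesis that distinguishes $\cal O _{\Phi,\theta}$ from a bare cuspidal-induction datum. The key elementary observation is that for a simple root $\alpha\in\Phi\setminus\theta$ and any $\beta\in\langle\theta\rangle^{+}$ the combination $\beta-\alpha$ is never a root (its coordinate along $\alpha$ is $-1$ while its coordinates along $\theta$ are $\geq 0$), so $[\germ g _{\beta},\germ g _{-\alpha}]=0$. Consequently the space $\cal H $ of $\germ l _{\theta}$-highest weight vectors of $L(C)$ is stable under every lowering operator $X_{-\alpha}$ with $\alpha\in\Phi\setminus\theta$, and these act injectively by the cuspidality condition. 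Thus $\cal H $ is a weight module for $\germ h \oplus\germ g _{\langle\Phi\setminus\theta\rangle^{-}}$ on which the negative cuspidal roots act injectively, while the restriction condition says precisely that $L(C)=\bigoplus_{j}U(\germ l _{\theta})v_{j}$ is a direct sum of simple highest weight $\germ l _{\theta}$-modules generated by a basis $\{v_{j}\}$ of $\cal H $.

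From here I would analyse how the injective cuspidal action on $\cal H $ is forced to respect the splitting into the two factors $\germ l '_{1},\germ l '_{2}$ while the $\germ l _{\theta}$-semisimplicity ties the associated highest weights together, and derive a contradiction --- either by reducing a suitable connected rank $\leq 4$ configuration to the Benkart--Britten--Lemire lists (Theorems \ref{thmBBLA} and \ref{thmBBLC}), whose degree one modules all have a \emph{connected} cuspidal support, or by producing a symmetric locally nilpotent root and invoking \cite[lem 4.7]{BBL97} as in Proposition \ref{prop:GO}(3). I expect the genuine obstacle to be exactly this last step: converting the complete reducibility built into the restriction condition into a root-theoretic or degree-one statement. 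The standard invariants are blind to it, since for a generic induced $L(C)$ there is no symmetric locally nilpotent root (the separating $\theta$-roots satisfy $\alpha_{i}\in\cal R ^{N}_{a}$ with $-\alpha_{i}\in\cal R ^{I}$), and the direct reduction to the classification stalls when the path joining two components crosses a multiple bond, so that the connecting subalgebra is of type $B$, $F$ or $G$ rather than $A$ or $C$; overcoming this is where the real work lies.
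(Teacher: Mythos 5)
Your reduction is set up correctly: writing $\germ l '=\germ l '_{1}\oplus\cdots\oplus\germ l '_{r}$, invoking Theorem \ref{thm_{A}}, Theorem \ref{thm_{B}} and Fernando's Theorem \ref{thmFeC}, the whole content of the statement is indeed the connectedness claim $r=1$, and your observation that the space of $\germ l _{\theta}$-highest weight vectors is stable under $X_{-\alpha}$ for simple $\alpha\in\Phi\setminus\theta$ is correct. But you never prove $r=1$; you only sketch two strategies and concede yourself that both stall. This is a genuine gap, not a routine verification: Theorems \ref{thm_{A}}, \ref{thm_{B}} and \ref{thmFeC} cannot exclude $r\geq 2$ on their own, because degree-one cuspidal modules over a non-simple $\germ l '$ do exist (outer tensor products of degree-one cuspidal modules over type $A$ or $C$ factors), so the exclusion must come from the restriction condition itself, and neither of your two fallback routes extracts it --- as you note, there is in general no symmetric locally nilpotent root to feed into \cite[lem 4.7]{BBL97}, and the Benkart--Britten--Lemire lists say nothing when the subalgebra connecting two components is not of type $A$ or $C$.

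The paper's proof supplies exactly the missing mechanism, and it is a local computation in the generalized Verma module rather than an analysis of the highest-weight space $\cal H $. Choose simple roots $\alpha$ and $\alpha'$ in two different components of $\Phi\setminus\theta$ and distinct $\beta_{1},\ldots,\beta_{k}\in\theta$ joining them, so that $\alpha+\beta_{1}+\cdots+\beta_{k}+\alpha'\in\cal R $ (connectedness of the Dynkin diagram of $\germ g $). Lemma \ref{lem7} --- itself a consequence of the restriction condition --- gives $p(X_{-(\alpha+\beta_{1}+\cdots+\beta_{k})}\otimes v)\neq 0$ and places it inside $\cal U (\germ l _{\theta}^{-})_{-(\beta_{1}+\cdots+\beta_{k})}\cdot p(1\otimes X_{-\alpha}v)$. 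Now apply $X_{\alpha+\beta_{1}+\cdots+\beta_{k}+\alpha'}\in\germ n ^{+}$: its adjoint action on $\cal U (\germ l _{\theta}^{-})_{-(\beta_{1}+\cdots+\beta_{k})}$ is trivial, so it kills the right-hand side, while on the left it produces $c\,p(1\otimes X_{\alpha'}v)$ for a nonzero structure constant $c$, and $X_{\alpha'}v\neq 0$ by cuspidality along the second component, so $p(1\otimes X_{\alpha'}v)\neq 0$. This contradiction is the step your proposal is missing; without it (or an equivalent argument converting the $\germ l _{\theta}$-semisimplicity of $L(C)$ into a constraint linking the two components), the proof is incomplete.
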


\begin{theo}\label{thmpcpl0}
Let $\germ g $ be a simple Lie algebra. Let $\theta \subset \Phi$ with $\theta\not=\Phi$ and $\theta\not=\emptyset$. Assume the pair $(\germ g , \Phi\setminus\theta)$ does not belong to table \ref{tab2}.Then we have:
\begin{enumerate}
\item There exists a non trivial module in $\cal O _{\Phi,\theta}$ if and only if $\germ g $ is isomorphic to $A_n$ and $\germ l '_{\Phi\setminus \theta}$ to $A_m$ with $m<n$ or if $\germ g $ is isomorphic to $C_n$ and $\germ l '_{\Phi\setminus\theta}$ is isomorphic to either the subalgebra $\germ {sl} _2$ generated by the unique long simple root or to $C_k$.
\item For these pairs $(\Phi,\theta)$, the simple modules in $\cal O _{\Phi,\theta}$ are modules of degree $1$ except when $\germ g $ is of type $A_n$ for $n>2$ and $\germ l '_{\Phi\setminus\theta}$ is isomorphic to the $\germ sl _{2}$-algebra generated by one of the extreme simple roots.
\end{enumerate}
\end{theo}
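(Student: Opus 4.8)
The plan is to obtain Theorem \ref{thmpcpl0} by assembling the three structural results \ref{thm_{A}}, \ref{thm_{B}} and \ref{thm_{C}} with the explicit degree-one modules of Section \ref{ssec:deg1}, together with the fact recalled there that an infinite-dimensional simple weight module of degree one can exist only in types $A$ and $C$. Throughout I write $\psi=\Phi\setminus\theta$ for the set of cuspidal simple roots, $\germ l =\germ l _{\psi}$, and $\germ l '=\germ l '_{\psi}$ for its semisimple part. Since $\cal O _{\Phi,\theta}$ is artinian (Proposition \ref{prop:GO}), a nonzero object contains a simple one, which by the reduction opening Section \ref{sec:GOirr} is of the form $L(C)$ for a simple cuspidal $\germ l $-module $C$. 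Theorem \ref{thm_{A}} forces $C$ to have degree one, and Theorems \ref{thm_{B}} and \ref{thm_{C}} combine to make $\germ l '$ \emph{simple}: of type $A$ when $\germ g $ is not of type $C$, and of type $A$ or $C$ when $\germ g $ is of type $C$. Equivalently $\psi$ is connected in the Dynkin diagram; it then remains to decide which connected positions of $\psi$ actually occur and to compute $\deg L(C)$.

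The heart of the proof --- and the step I expect to be the main obstacle --- is a degree dichotomy valid for every simple $L(C)$ in $\cal O _{\Phi,\theta}$: one has $\deg L(C)=1$ unless $\germ g $ is of type $A_n$ with $n>2$ and $\germ l '$ is the $\germ sl _{2}$ attached to an extreme node. To prove this I would realise $L(C)$ as the simple quotient of $V(C)=\cal U (\germ n ^-_{\psi})\otimes C$ and estimate $\dim L(C)_{\mu}$ weight space by weight space, combining the description of the maximal submodule $K(C)$ from Proposition \ref{GVM} with the two defining conditions of the category. Cuspidality makes each $\germ g _{\alpha}$ with $\alpha\in\langle\psi\rangle$ act bijectively, while the restriction condition presents $L(C)$ as a sum of $\germ l _{\theta}$-highest weight modules built from their highest weight vectors by lowering along $\germ g _{\langle \theta\rangle^-}$. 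Since $C$ already has degree one, a count at the junction between $\psi$ and $\theta$ should show that the two conditions together pin every weight multiplicity to $1$ as soon as $\theta$ bounds $\psi$ on both sides; the only configuration leaving one side free, namely a one-node $\psi$ at the extreme end of a type $A$ diagram, is exactly the stated exception, and there I would instead exhibit genuinely higher-degree modules satisfying all three conditions. The few low-rank coincidences that escape this uniform count are precisely those gathered in Table \ref{tab2}, which is why they are excluded by hypothesis.

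Granting the dichotomy, the remaining assertions follow. For any pair outside the exception every simple $L(C)$ has degree one, and since $C$ is cuspidal $L(C)$ is infinite dimensional; by the classification recalled in Section \ref{ssec:deg1} this already forces $\germ g $ to be of type $A$ or $C$ and rules out types $B$, $D$, $E$, $F$ and $G$. In type $A$ (respectively $C$), Theorem \ref{thmBBLA} (respectively \ref{thmBBLC}) identifies $L(C)$ with some $N(a)$ (respectively $M(a)$), and matching its symmetric cuspidal root system with $\langle\psi\rangle$ shows that $\germ l '$ must be of type $A_m$ with $m<n$ (respectively of type $C_k$ or the $\germ sl _{2}$ generated by the long root); in particular the short-root type $A$ Levi subalgebras of $C_n$ cannot occur. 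Adjoining the exceptional pair, where $\germ l '=\germ sl _{2}$ is again of type $A_1$ with $1<n$, gives exactly the list in part (1). For the converse I would exhibit modules explicitly: Theorems \ref{thmNO} and \ref{thmMO} show that $N(a)$ and $M(a)$ are simple objects of the corresponding categories $\cal O _{\Phi,\theta_a}$, covering the type $A_m$ (with $m\geq 2$), $C_k$ and long-$\germ sl _{2}$ positions, while the remaining $\germ sl _{2}$ positions --- including the exceptional extreme one with its higher-degree modules --- are handled by direct constructions of the same kind. Thus every listed pair carries a nonzero module, which establishes part (1); part (2) is the dichotomy restricted to those pairs.
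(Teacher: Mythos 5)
Your overall architecture (Theorems \ref{thm_{A}}, \ref{thm_{B}}, \ref{thm_{C}} to force $C$ of degree $1$ and $\germ l '$ simple of type $A$ or $C$, then the Benkart--Britten--Lemire classification to restrict the ambient type, then Theorems \ref{thmNO} and \ref{thmMO} for the converse) follows the paper's skeleton, but the step you yourself flag as the heart --- the ``degree dichotomy'' for $L(C)$ --- is not proved, and the counting argument you sketch for it cannot work as stated. Knowing $\deg C=1$ does not control $\deg L(C)$, and the weight multiplicities of $L(C)=V(C)/K(C)$ are not determined by the positions of $\psi$ and $\theta$ alone: they depend on the scalars by which the centre of $\germ l $ acts on $C$. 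This is exactly what the appendix on $A_{3}$ shows: with $\psi$ an extreme node, the parameter $d$ can be nonzero, and then the simple object $L_{c,d}(a_1,a_2)$ has $\germ l _{\theta}$-constituents isomorphic to Verma modules and is of infinite degree, while $d=0$ gives degree $1$. So no ``count at the junction'' in $V(C)/K(C)$ can pin multiplicities to $1$ before those central scalars are pinned down; the actual mechanism in the paper is to extract, from the restriction condition via Lemma \ref{lem7}, linear relations (Lemmas \ref{lemA12}, \ref{lem:A1N}, \ref{lemA1NL}, Theorems \ref{thm:AkAn}, \ref{thm:AC1}, \ref{thmCC}) that force the central constants to specific values, and only then to invoke Lemire's correspondence on the $\cal U (\germ g )_0$-action to identify $L(C)$ with an explicit degree-one module $N(a)$ or $M(a)$. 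Your proposal contains no substitute for this computation.

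A second, related gap: by folding everything into the unproven dichotomy you silently absorb the paper's ``first reduction'' (Theorem \ref{thm1red} and Table \ref{tab1}), i.e.\ the exclusion of positions such as a short-root $\germ sl _{2}$ or short type-$A$ chain inside $C_n$, type-$A$ Levi subalgebras in $B_n$, $F_4$, $G_2$, interior nodes of $D_n$ and $E$, etc. Theorems \ref{thm_{A}}--\ref{thm_{C}} do not exclude these, and the paper eliminates them by explicit structure-constant arguments exploiting the multiple bonds and large roots (again via Lemma \ref{lem7}), not by any uniform multiplicity estimate. Logically your route would recover these exclusions \emph{if} the dichotomy held in that generality, but that only relocates the missing work: for $F_4$ or for short-root positions in $C_n$ the claim ``every simple object has degree $1$'' is precisely what needs a type-specific computation. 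Finally, for part (2) in the exceptional extreme-node case you only promise to ``exhibit genuinely higher-degree modules''; this is not needed for the literal statement, but the non-exceptional half of part (2) is the dichotomy itself, so as it stands the proposal proves neither direction of the classification beyond what Theorems \ref{thm_{A}}--\ref{thm_{C}} already give.
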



\subsection{Proof of Theorems $A$, $B$, and $C$}\label{ssec:ABC}


\subsubsection{Proof of Theorem $A$}\label{sssec:A}

We now proceed to the proof of theorem $A$. This will require several lemmas. First of all, recall some facts about the action of $\germ g $ on $V(C)$. If $X\in \germ l $, then $X\cdot (1\otimes v)=1\otimes (X\cdot v)$ for any $v\in C$. If $X\in \germ n ^+$ then $X\cdot (1\otimes v)=0$ for any $v\in C$. More generally, for $X\in \germ n ^+$, we have $X\cdot (w\otimes v)=(ad(X)(w))\otimes v$ for any $v\in C$ and any $w\in\cal U (\germ g )$. Finally remark that $\germ l _{\theta}^+\subset\germ n ^+$. We will use these facts throughout this part without any further comments.

\begin{lemm}\label{lem2}
Let $L(C)$ be a simple module in $\cal O _{\Phi,\theta}$. Let $v\in C$ be a weight vector. Then the vector $p(1\otimes v)\in L(C)$ is a non zero weight vector and generates a simple highest weight $\germ l _{\theta}$--module.
\end{lemm}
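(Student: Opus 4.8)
The plan is to establish three things in turn: that $w:=p(1\otimes v)$ is nonzero and a weight vector, that it is an $\germ l _{\theta}$-highest weight vector, and finally that the $\germ l _{\theta}$-submodule it generates is simple. Note that only the restriction condition of $\cal O _{\Phi,\theta}$ will be needed here.

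First, by part $(3)$ of Proposition \ref{GVM} the projection $p$ restricts to an isomorphism of $\germ l $-modules from $1\otimes C$ onto its image in $L(C)$; hence $p(1\otimes v)\neq 0$ as soon as $v\neq 0$. Since $v$ is an $\germ h $-weight vector and $p$ is $\germ g $-equivariant, $w$ is a weight vector of the same weight, say $\mu$. To see that $w$ is $\germ l _{\theta}$-highest, I would use the inclusion $\germ l _{\theta}^+\subset \germ n $ recalled above (a root in $\langle\theta\rangle^+$ has a nonzero $\theta$-coordinate, so it lies in $\cal R ^+-\langle\Phi\setminus\theta\rangle^+$): since $\germ n $ acts trivially on $1\otimes C$, the vector $1\otimes v$ is annihilated by $\germ l _{\theta}^+$, and applying $p$ shows the same for $w$.

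Next I would invoke the restriction condition defining $\cal O _{\Phi,\theta}$: as an $\germ l _{\theta}$-module, $L(C)=\bigoplus_i S_i$ with each $S_i$ a simple highest weight module. Writing $w=\sum_i w_i$ along this decomposition and using that $\germ l _{\theta}^+$ preserves each summand, each $w_i$ is again killed by $\germ l _{\theta}^+$, hence is either zero or a highest weight vector of $S_i$. Because the only highest weight vectors of a simple highest weight module are the scalar multiples of its generator, every nonzero $w_i$ is a scalar multiple of the generator $u_i$ of $S_i$ and has weight $\mu$; in particular each such $S_i$ has highest weight $\mu$, so they are all isomorphic to the simple highest weight module $L_{\germ l _{\theta}}(\mu)$.

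The remaining step, which I regard as the only delicate point, is to check that $\cal U (\germ l _{\theta})\,w$ is genuinely simple and not merely a highest weight module. For this I would run a diagonal argument: choosing isomorphisms $\psi_i:L_{\germ l _{\theta}}(\mu)\xrightarrow{\sim}S_i$ carrying a fixed generator $v_\mu$ to $u_i$, one has $w=\sum_{i\in I}c_i\psi_i(v_\mu)$ with all $c_i\neq 0$ over the support $I$ of $w$, whence $\cal U (\germ l _{\theta})\,w=\{\sum_{i\in I}c_i\psi_i(u):u\in L_{\germ l _{\theta}}(\mu)\}$. This is exactly the image of the $\germ l _{\theta}$-equivariant map $u\mapsto\sum_{i\in I}c_i\psi_i(u)$, which is injective because projecting to a single summand recovers $c_i\psi_i(u)$; hence the submodule generated by $w$ is isomorphic to $L_{\germ l _{\theta}}(\mu)$ and in particular simple, completing the proof.
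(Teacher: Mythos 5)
Your argument is correct, and it rests on exactly the same ingredients as the paper's proof: part (3) of Proposition \ref{GVM} for the non-vanishing of $p(1\otimes v)$, the inclusion $\germ l _{\theta}^+\subset \germ n ^+$ to see that $p(1\otimes v)$ is annihilated by $\germ l _{\theta}^+$, and the restriction condition of $\cal O _{\Phi,\theta}$ to bring in semisimplicity. The only place where you diverge is the final step. The paper argues more briefly: the submodule $\cal U (\germ l _{\theta})\, p(1\otimes v)$ is a highest weight $\germ l _{\theta}$-module, hence indecomposable; being a submodule of the semisimple $\germ l _{\theta}$-module $L(C)$ it is also semisimple; and an indecomposable semisimple module is simple. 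You instead decompose $L(C)$ into simple summands, show that every nonzero component of $p(1\otimes v)$ is a highest weight vector of weight $\mu$, and identify the cyclic submodule with $L_{\germ l _{\theta}}(\mu)$ through a diagonal embedding. Your route is sound (the support of $p(1\otimes v)$ in the direct sum is finite, each component is again a weight vector because $\germ h \subset \germ l _{\theta}$ stabilizes each summand, and injectivity of the diagonal map follows by projecting onto any single summand with nonzero coefficient), and it buys slightly more information, namely the explicit identification of the generated module and the fact that every summand meeting $p(1\otimes v)$ has highest weight $\mu$; the price is length, since the paper's observation that ``indecomposable plus semisimple implies simple'' makes this extra bookkeeping unnecessary.
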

\begin{proof} According to proposition \ref{GVM}, $p$ is an isomorphism from $1\otimes C$ onto its image. Thus, $p(1\otimes v)\not=0$. This vector is obviously a weight vector. Moreover, we have $\germ l _{\theta}^+\subset \germ n ^+$. Therefore, the $\germ l _{\theta}$--module generated by $p(1\otimes v)$ in $L(C)$ is a highest weight module. As such, it is indecomposable. On the other hand, the $\germ l _{\theta}$--module $L(C)$ is semisimple by the restriction condition of the category $\cal O _{\Phi,\theta}$. So the $\germ l _{\theta}$--module generated by $p(1\otimes v)$ should be semisimple too. But we have seen that it is indecomposable. Hence it must be simple, as asserted.

\end{proof}

\begin{lemm}\label{lem7}
Let $L(C)$ be a simple module in $\cal O _{\Phi,\theta}$. Let $\alpha\in \langle \Phi\setminus\theta\rangle ^+$. Let $\mbox{\boldmath $\beta$}=(\beta_1,\ldots ,\beta_i) \in (\langle \theta \rangle ^+)^i$ such that $\alpha+\beta_1+\cdots +\beta_k \in \cal R $ for any $k\leq i$. Let $v \in C$ be a weight vector. Then $p(X_{-(\alpha+\beta_1+\cdots +\beta_i)}\otimes v)\not= 0$ and we have: 
$$p(X_{-(\alpha+\beta_1+\cdots +\beta_i)}\otimes v) \in \cal U (\germ l _{\theta}^-)_{-(\beta_1+\cdots +\beta_i)}\cdot p(1 \otimes X_{-\alpha}v).$$
In particular, if $i=1$ and $\beta_1$ is a simple root, then there exists $\eta(v)\in \C$ non zero such that 
$$p(X_{-\alpha-\beta_1}\otimes v)=\eta(v)X_{-\beta_1}\cdot p(1\otimes X_{-\alpha}v).$$
\end{lemm}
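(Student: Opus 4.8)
The plan is to argue by induction on the length $i$ of the chain $\boldsymbol\beta$. Throughout write $\gamma_k=\alpha+\beta_1+\cdots+\beta_k$ (each $\gamma_k\in\mathcal R$ by hypothesis) and $\nu_k=\beta_1+\cdots+\beta_k$, set $u_0:=p(1\otimes X_{-\alpha}v)$, and let $\mu:=\mathrm{wt}(v)-\alpha$ be its weight. Since $C$ is cuspidal, $X_{-\alpha}$ acts injectively, so $X_{-\alpha}v\neq0$ and by Proposition \ref{GVM} $u_0\neq0$; by Lemma \ref{lem2} it is an $\mathfrak l_\theta$–highest weight vector generating a simple highest weight $\mathfrak l_\theta$–module $N_0=\mathcal U(\mathfrak l_\theta^-)u_0$, for which $(N_0)_{\mu-\nu_k}=\mathcal U(\mathfrak l_\theta^-)_{-\nu_k}u_0$. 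Thus the asserted membership is exactly the statement that $p(X_{-\gamma_i}\otimes v)$ lies in $N_0$ (its weight being $\mu-\nu_i$ automatically). The base case $i=0$ is immediate, since $X_{-\alpha}\otimes v=1\otimes X_{-\alpha}v$ gives $p(X_{-\gamma_0}\otimes v)=u_0\in\mathbb C u_0$.

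The engine of the induction is one commutator computation inside $V(C)$. As $\beta_i\in\langle\theta\rangle^+$ we have $\mathfrak g_{\beta_i}\subset\mathfrak l_\theta^+\subset\mathfrak n^+$, so the rule $X\cdot(w\otimes v)=(\mathrm{ad}(X)w)\otimes v$ for $X\in\mathfrak n^+$ yields
\[
X_{\beta_i}\cdot\bigl(X_{-\gamma_i}\otimes v\bigr)=[X_{\beta_i},X_{-\gamma_i}]\otimes v=c_i\,(X_{-\gamma_{i-1}}\otimes v),
\]
where $c_i\neq0$ because $\beta_i,-\gamma_i,-\gamma_{i-1}$ are roots with $\beta_i+(-\gamma_i)=-\gamma_{i-1}$, whence $[\mathfrak g_{\beta_i},\mathfrak g_{-\gamma_i}]=\mathfrak g_{-\gamma_{i-1}}\neq0$. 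Applying the $\mathfrak g$–map $p$ gives $X_{\beta_i}\cdot p(X_{-\gamma_i}\otimes v)=c_i\,p(X_{-\gamma_{i-1}}\otimes v)$. By the induction hypothesis the right-hand side is a nonzero element of $N_0$, so $p(X_{-\gamma_i}\otimes v)\neq0$, which proves the nonvanishing assertion, and moreover $X_{\beta_i}w\in N_0$, where $w:=p(X_{-\gamma_i}\otimes v)$.

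For the membership I would exploit the restriction condition: as an $\mathfrak l_\theta$–module $L(C)$ is semisimple, $L(C)=\bigoplus_j N_j$ with each $N_j$ simple highest weight and $N_0$ as above. Write $w=\sum_j w_j$ with $w_j\in N_j$. Since $w$ has weight $\mu-\nu_i$, its component $w_0$ automatically lies in $(N_0)_{\mu-\nu_i}=\mathcal U(\mathfrak l_\theta^-)_{-\nu_i}u_0$, so it remains only to show $w_j=0$ for $j\neq0$. Because $X_{\beta_i}w\in N_0$, the $\mathfrak l_\theta$–equivariant projections give $X_{\beta_i}w_j=0$ for every $j\neq0$. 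This is the decisive point and where I expect the real work to lie: a priori a summand $N_j$ with $j\neq0$ could contain a nonzero weight vector of weight $\mu-\nu_i$ annihilated by $X_{\beta_i}$. I would rule this out by combining the $\mathfrak{sl}_2$–analysis for the triple attached to $\beta_i$ (such a $w_j$ is an $\mathfrak{sl}_2$–highest weight vector) with the lattice separation $Q_{\langle\theta\rangle}\cap Q_{\langle\Phi\setminus\theta\rangle}=0$ and the cuspidality of $C$, which together constrain the highest weights of the summands reaching the weight $\mu-\nu_i$ and force the offending components to vanish. Once $w_j=0$ for $j\neq0$ we obtain $w=w_0\in\mathcal U(\mathfrak l_\theta^-)_{-\nu_i}u_0$, completing the induction. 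For the final (``in particular'') statement, $i=1$ with $\beta_1$ simple, the relevant weight space is the line $\mathbb C X_{-\beta_1}u_0$, and comparing $X_{\beta_1}w=c_1u_0$ with $X_{\beta_1}X_{-\beta_1}u_0=\langle\mu,\beta_1^\vee\rangle u_0$ forces $\langle\mu,\beta_1^\vee\rangle\neq0$ and yields $w=\eta(v)X_{-\beta_1}u_0$ with $\eta(v)=c_1/\langle\mu,\beta_1^\vee\rangle\neq0$.
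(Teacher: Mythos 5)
Your treatment of the nonvanishing claim is correct and matches the paper in substance (the paper applies the whole monomial $X_{\beta_1}\cdots X_{\beta_i}$ at once and invokes Proposition \ref{GVM}, whereas you strip one $\beta_k$ at a time by induction; either way the point is that a suitable element of $\mathcal{U}(\mathfrak{l}_{\theta}^{+})$ carries $X_{-(\alpha+\beta_1+\cdots+\beta_i)}\otimes v$ to a nonzero multiple of $1\otimes X_{-\alpha}v$). The membership claim, however, is not proved: the gap sits exactly where you write that you ``expect the real work to lie'', and that work is never done. From $X_{\beta_i}w\in N_0$ you only extract $X_{\beta_i}w_j=0$ for the components $w_j$ of $w$ in the other simple summands $N_j$, and annihilation by the single root vector $X_{\beta_i}$ does not force $w_j=0$: as soon as $\mathfrak{l}'_{\theta}$ has rank at least two, an infinite-dimensional simple highest weight $\mathfrak{l}_{\theta}$-module contains many weight vectors killed by one fixed $X_{\beta_i}$ (all of its $\mathfrak{sl}_2$-highest vectors for the root $\beta_i$), occurring in many weights, so ``$w_j$ is an $\mathfrak{sl}_2$-highest weight vector of weight $\mu-\nu_i$'' is not by itself contradictory. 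The extra ingredients you invoke (the lattice separation and the cuspidality of $C$) are not actually brought to bear on the possible highest weights of the $N_j$, and it is not clear how they would exclude such singular vectors; as written this is a hope, not an argument, so your induction does not close.

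The paper avoids this componentwise analysis altogether. Having produced $w\cdot p(X_{-(\alpha+\beta_1+\cdots+\beta_i)}\otimes v)$ as a nonzero multiple of the highest weight vector $u_0=p(1\otimes X_{-\alpha}v)$ with $w=X_{\beta_1}\cdots X_{\beta_i}\in\mathcal{U}(\mathfrak{l}_{\theta}^{+})$, it considers the cyclic $\mathfrak{l}_{\theta}$-module $\mathcal{U}(\mathfrak{l}_{\theta})\cdot p(X_{-(\alpha+\beta_1+\cdots+\beta_i)}\otimes v)$, which is semisimple by the restriction condition and contains the simple module $\mathcal{U}(\mathfrak{l}_{\theta})u_0$ of Lemma \ref{lem2}; it argues that this cyclic semisimple module must coincide with that simple module, and the stated membership then follows simply by comparing weights inside the highest weight module generated by $u_0$. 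If you wish to keep your one-step induction, you must replace the projection argument by a statement of this kind, i.e.\ control the entire $\mathfrak{l}_{\theta}$-submodule generated by $p(X_{-(\alpha+\beta_1+\cdots+\beta_i)}\otimes v)$ rather than its behaviour under the single operator $X_{\beta_i}$. Your final ``in particular'' computation is fine once the membership statement is in place.
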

\begin{proof} Set $w:=X_{\beta_1}\cdots X_{\beta_i}\in \cal U (\germ l ^+_{\theta})$. The adjoint action of $w$ on $X_{-(\alpha+\beta_1+\cdots +\beta_i)}$ gives a non zero multiple of $X_{-\alpha}$ (we can of course express explicitly this multiple by means of structure constants).  Thus the action of $w$ on $X_{-(\alpha+\beta_1+\cdots +\beta_i)}\otimes v \in V(C)$ gives a non zero multiple of $1\otimes X_{-\alpha}v$.

Now the cuspidality condition for $-\alpha \in \langle \Phi\setminus\theta\rangle$ ensures that $X_{-\alpha}v\not= 0$. Using proposition \ref{GVM}, this implies that $p(X_{-(\alpha+\beta_1+\cdots +\beta_i)}\otimes v)\not= 0$. On the other hand, we have seen that 
$w\cdot p(X_{-(\alpha+\beta_1+\cdots +\beta_i)}\otimes v)$ is a non zero multiple of $p(1\otimes X_{-\alpha}v).$ According to lemma \ref{lem2}, $p(1\otimes X_{-\alpha}v)$ generates a simple highest weight $\germ l _{\theta}$--module. Now the restriction condition for $L(C)$ implies that $\cal U (\germ l _{\theta})\cdot p(X_{-(\alpha+\beta_1+\cdots +\beta_i)}\otimes v)$ is semisimple. As it is generated by one element and should contain the simple module $\cal U (\germ l _{\theta})p(1\otimes X_{-\alpha}v)$, then it as to be simple and equal to this latter. By comparing the weights we deduce from this fact that 
$$p(X_{-(\alpha+\beta_1+\cdots +\beta_i)}\otimes v) \in \cal U (\germ l _{\theta}^-)_{-(\beta_1+\cdots +\beta_i)}\cdot p(1 \otimes X_{-\alpha}v).$$ If $i=1$ and $\beta_1$ is a simple root, then $\cal U (\germ l _{\theta}^-)_{-\beta_1}=\C X_{-\beta_1}$. This completes the proof of the lemma.

\end{proof}

\begin{lemm}\label{lem3}
Let $L(C)$ be a simple module in $\cal O _{\Phi,\theta}$. Let $\alpha \in \langle \Phi\setminus\theta\rangle ^+$ be such that there exists $\beta \in \theta$ with $\alpha+\beta\in \cal R $. Let $v \in C$ be a weight vector. Then $X_{\alpha}X_{-\alpha}v\in \C v$.
\end{lemm}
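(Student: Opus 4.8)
The plan is to reduce the statement to a proportionality inside $L(C)$ and then extract it from a single computation done in two ways. Since $\alpha\in\langle\Phi\setminus\theta\rangle$, both root vectors $X_{\alpha},X_{-\alpha}$ lie in $\germ l $ and act on $C$ through the genuine $\germ l $-action, so on $1\otimes C$ we have $X_{\alpha}X_{-\alpha}\cdot p(1\otimes v)=p(1\otimes X_{\alpha}X_{-\alpha}v)$; moreover $X_{\alpha}X_{-\alpha}v$ is a weight vector of the same weight $\lambda$ as $v$. By proposition \ref{GVM} the projection $p$ restricts to an isomorphism on $1\otimes C$, so it suffices to prove $p(1\otimes X_{\alpha}X_{-\alpha}v)\in\C\,p(1\otimes v)$, which is then equivalent to $X_{\alpha}X_{-\alpha}v\in\C v$.

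To produce this, I would compute the vector $X_{\alpha+\beta}\cdot p(X_{-\alpha-\beta}\otimes v)$ in two ways. Note first that $\alpha+\beta$ has a nonzero $\beta$-component, hence $\alpha+\beta\notin\langle\Phi\setminus\theta\rangle$, so $X_{\alpha+\beta}\in\germ n ^+$ and $X_{-\alpha-\beta}\in\germ n ^-$, and $X_{-\alpha-\beta}\otimes v$ genuinely lives in $V(C)=\cal U (\germ n ^-)\otimes C$. For the \emph{first} computation I use the action formula for $\germ n ^+$ recalled above: writing $H:=[X_{\alpha+\beta},X_{-\alpha-\beta}]\in\germ h $, which acts on $v$ by the scalar $\lambda(H)$, one gets
$$X_{\alpha+\beta}\cdot(X_{-\alpha-\beta}\otimes v)=[X_{\alpha+\beta},X_{-\alpha-\beta}]\otimes v=H\otimes v=\lambda(H)\,(1\otimes v),$$
so that $X_{\alpha+\beta}\cdot p(X_{-\alpha-\beta}\otimes v)=\lambda(H)\,p(1\otimes v)$.

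For the \emph{second} computation I invoke lemma \ref{lem7} with $i=1$ and $\beta_1=\beta$ simple, giving $p(X_{-\alpha-\beta}\otimes v)=\eta(v)\,X_{-\beta}\cdot p(1\otimes X_{-\alpha}v)$ with $\eta(v)\neq 0$. Applying $X_{\alpha+\beta}$ and commuting past $X_{-\beta}$ via $X_{\alpha+\beta}X_{-\beta}=X_{-\beta}X_{\alpha+\beta}+[X_{\alpha+\beta},X_{-\beta}]$, I use that $X_{\alpha+\beta}\cdot p(1\otimes X_{-\alpha}v)=p(X_{\alpha+\beta}\otimes X_{-\alpha}v)=0$ because $X_{\alpha+\beta}\in\germ n ^+$ kills $C$, and that $[X_{\alpha+\beta},X_{-\beta}]=N\,X_{\alpha}$ with $N\neq 0$ (the root sum being $\alpha$). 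Since $X_{\alpha}\in\germ l $ acts genuinely, this yields $X_{\alpha+\beta}\cdot p(X_{-\alpha-\beta}\otimes v)=\eta(v)N\,p(1\otimes X_{\alpha}X_{-\alpha}v)$. Comparing the two expressions gives $\lambda(H)\,p(1\otimes v)=\eta(v)N\,p(1\otimes X_{\alpha}X_{-\alpha}v)$, whence, using injectivity of $p$ on $1\otimes C$ and $\eta(v)N\neq 0$, one obtains $X_{\alpha}X_{-\alpha}v=\lambda(H)(\eta(v)N)^{-1}v\in\C v$; the degenerate case $\lambda(H)=0$ is harmless, simply forcing $X_{\alpha}X_{-\alpha}v=0$.

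The argument is essentially a bookkeeping computation, so the only genuinely delicate points are, first, the nonvanishing of the structure constant $N$, which I would justify by the standard fact that $[\germ g _{\gamma},\germ g _{\delta}]=\germ g _{\gamma+\delta}$ whenever $\gamma,\delta,\gamma+\delta$ are all roots (here $\gamma=\alpha+\beta$, $\delta=-\beta$, $\gamma+\delta=\alpha$), and second, the compatibility of the root vector $X_{\alpha}$ arising from that bracket with the one in the statement, which is automatic since rescaling $X_{\alpha}$ or $X_{-\alpha}$ merely rescales $X_{\alpha}X_{-\alpha}v$. I expect the real conceptual step to be recognizing that $X_{\alpha+\beta}\cdot p(X_{-\alpha-\beta}\otimes v)$ is the right quantity to evaluate, precisely because lemma \ref{lem7} converts $p(X_{-\alpha-\beta}\otimes v)$ into an expression where $X_{-\alpha}$ appears and can be paired off against $X_{\alpha}$.
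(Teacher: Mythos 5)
Your argument is correct and is essentially the paper's own proof: both apply Lemma \ref{lem7} (with $i=1$, $\beta_1=\beta$ simple) to rewrite $p(X_{-\alpha-\beta}\otimes v)$, then hit the identity with $X_{\alpha+\beta}\in\germ n ^+$, use the nonvanishing structure constant in $[X_{\alpha+\beta},X_{-\beta}]=c'X_{\alpha}$, and conclude via the injectivity of $p$ on $1\otimes C$. Your "two computations" phrasing and the explicit remark about the harmless case $\lambda(H)=0$ are only cosmetic differences from the paper's presentation.
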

\begin{proof} Consider $u:=X_{-\alpha-\beta}\otimes v \: \in V(C)$. From the previous lemma applied to $\alpha$ and $\mbox{\boldmath $\beta$}=\beta$, there is a non zero complex number $\eta$ such that 
\begin{align}\label{lem3eq2}
p(u)= & \eta p(X_{-\beta}\otimes X_{-\alpha}v).
\end{align}
Apply then $X_{\alpha+\beta} \in \germ n ^+$ to equation \eqref{lem3eq2}. We get:
\begin{align*}
p(H_{\alpha+\beta}\otimes v) = & \eta p([X_{\alpha+\beta},X_{-\beta}]\otimes X_{-\alpha}v).
\end{align*}
Let $\lambda \in \germ h ^*$ denote the weight of $v$. Let $c'$ denote the non zero structure constant such that $[X_{\alpha+\beta},X_{-\beta}]=c'X_{\alpha}\in \germ l $. Then the above equation becomes:
\begin{align*}
\lambda (H_{\alpha+\beta})p(1\otimes v)= & \eta c' p(1\otimes X_{\alpha}X_{-\alpha}v).
\end{align*}
Since $\eta $ and $c'$ are non zero, we get $$p(1 \otimes (\kappa v)-1\otimes (X_{\alpha}X_{-\alpha}v))=0,$$ with $\kappa = \frac{\lambda(H_{\alpha+\beta})}{\eta c'}$. As $p$ is an isomorphism from $1\otimes C$ onto $C$, we deduce that
$$1 \otimes (\kappa v)-1\otimes (X_{\alpha}X_{-\alpha}v)=0$$ and therefore that $$\kappa v-X_{\alpha}X_{-\alpha}v=0.$$ Hence $X_{\alpha}X_{-\alpha}v=\kappa v\in \C v$ as asserted.

\end{proof}

\begin{lemm}\label{lem4}
Let $N$ be a weight $\germ g $--module. Let $\alpha, \gamma \in \cal R $ be such that
\begin{enumerate}
\item $\alpha+\gamma \in \cal R $ and $\alpha-\gamma \not\in \cal R $.
\item $X_{\alpha}X_{-\alpha}v\in \C v$ and $X_{\gamma}X_{-\gamma}v\in \C v$, for any weight vector $v \in N$.
\end{enumerate}
Then $X_{\alpha+\gamma}X_{-\alpha-\gamma}v\in \C v$,  for any weight vector $v \in N$.
\end{lemm}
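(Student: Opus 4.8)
The plan is to reduce the whole question to products of the four root vectors $X_{\pm\alpha},X_{\pm\gamma}$, each occurring exactly once, and to exploit hypothesis $(2)$ in a slightly strengthened form. First I would upgrade $(2)$: a linear operator on a finite-dimensional space that sends \emph{every} vector to a scalar multiple of itself is automatically a scalar operator. Since $X_{\alpha}X_{-\alpha}$ and $X_{\gamma}X_{-\gamma}$ both preserve each weight space $N_{\nu}$ (as $X_{-\beta}$ lowers and $X_{\beta}$ raises the weight by $\beta$), hypothesis $(2)$ says they act as scalars on each $N_{\nu}$, the scalar depending on $\nu$. This is the only place $(2)$ is used.

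Next I would rewrite the target operator through structure constants. Because $\alpha+\gamma\in\cal R $, the Chevalley constants $N_{\alpha,\gamma}$ and $N_{-\alpha,-\gamma}$ are nonzero, so $X_{\alpha+\gamma}=N_{\alpha,\gamma}^{-1}[X_{\alpha},X_{\gamma}]$ and $X_{-\alpha-\gamma}=N_{-\alpha,-\gamma}^{-1}[X_{-\alpha},X_{-\gamma}]$. Hence $X_{\alpha+\gamma}X_{-\alpha-\gamma}=\bigl(N_{\alpha,\gamma}N_{-\alpha,-\gamma}\bigr)^{-1}[X_{\alpha},X_{\gamma}][X_{-\alpha},X_{-\gamma}]$, and expanding the two commutators gives the alternating sum of the four orderings $X_{\alpha}X_{\gamma}X_{-\alpha}X_{-\gamma}$, $X_{\alpha}X_{\gamma}X_{-\gamma}X_{-\alpha}$, $X_{\gamma}X_{\alpha}X_{-\alpha}X_{-\gamma}$, $X_{\gamma}X_{\alpha}X_{-\gamma}X_{-\alpha}$. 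It therefore suffices to show that each of these four monomials sends a weight vector $v$ to a scalar multiple of $v$.

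Here condition $(1)$ enters. Since $\alpha+\gamma\in\cal R $ forces $\alpha\neq\gamma$ (otherwise $\alpha+\gamma=2\alpha\notin\cal R $), the root $\alpha-\gamma$ is a nonzero non-root, so $[X_{\alpha},X_{-\gamma}]\in\germ g _{\alpha-\gamma}=\{0\}$ and likewise $[X_{\gamma},X_{-\alpha}]=0$; thus $X_{\alpha}$ commutes with $X_{-\gamma}$, and $X_{\gamma}$ commutes with $X_{-\alpha}$. Now apply each monomial to $v$ and process from the right: at the appropriate stage one brings a matching pair $X_{\beta}X_{-\beta}$ into adjacency, where it acts on a genuine weight vector and contributes a scalar by the first paragraph; the remaining matching pair then contributes a scalar as well. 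So every monomial acts on $v$ by a scalar, whence so does their alternating sum, proving $X_{\alpha+\gamma}X_{-\alpha-\gamma}v\in\C v$.

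The only real content is the bookkeeping in this last step, and I expect that to be the main (though modest) obstacle. For two of the four orderings, namely $X_{\alpha}X_{\gamma}X_{-\gamma}X_{-\alpha}$ and $X_{\gamma}X_{\alpha}X_{-\alpha}X_{-\gamma}$, the matching pair $X_{\gamma}X_{-\gamma}$ (resp.\ $X_{\alpha}X_{-\alpha}$) is already adjacent at the moment it acts on $X_{-\alpha}v$ (resp.\ $X_{-\gamma}v$), and no reordering is needed. For the other two, $X_{\alpha}X_{\gamma}X_{-\alpha}X_{-\gamma}$ and $X_{\gamma}X_{\alpha}X_{-\gamma}X_{-\alpha}$, one must first use a commuting relation from condition $(1)$ to swap a cross pair $\{X_{\gamma},X_{-\alpha}\}$ or $\{X_{\alpha},X_{-\gamma}\}$ before the matching pair becomes adjacent. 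Verifying that these four reductions all go through — and noting that one never needs the reversed products $X_{-\alpha}X_{\alpha}$ or $X_{-\gamma}X_{\gamma}$ — completes the argument.
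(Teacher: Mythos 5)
Your proposal is correct and follows essentially the same route as the paper: write $X_{\alpha+\gamma}X_{-\alpha-\gamma}$ as a nonzero multiple of $[X_{\alpha},X_{\gamma}][X_{-\alpha},X_{-\gamma}]$, use $\alpha-\gamma\notin\mathcal{R}$ (and $\alpha\neq\gamma$) to commute the cross pairs $X_{\alpha},X_{-\gamma}$ and $X_{\gamma},X_{-\alpha}$, and then apply hypothesis $(2)$ to the weight vectors $v$, $X_{-\alpha}v$, $X_{-\gamma}v$ in each of the four monomials. The only differences are cosmetic: you make explicit the check $\alpha\neq\gamma$ and an unneeded strengthening of $(2)$ to scalar action on weight spaces, neither of which changes the argument.
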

\begin{proof} There are two non zero structure constants $c$ and $d$ such that $cX_{\alpha+\gamma}=[X_{\alpha},X_{\gamma}]$ and $dX_{-\alpha-\gamma}=[X_{-\alpha},X_{-\gamma}]$. Thus, in the universal enveloping algebra we get:
$$cdX_{\alpha+\gamma}X_{-\alpha-\gamma}=(X_{\alpha}X_{\gamma}-X_{\gamma}X_{\alpha})(X_{-\alpha}X_{-\gamma}-X_{-\gamma}X_{-\alpha}).$$ Let us develop this expression. Since $\alpha-\gamma \not\in \cal R $ by our hypothesis, the vectors $X_{\alpha}$ and $X_{-\gamma}$ commute as well as $X_{-\alpha}$ and $X_{\gamma}$. Thus, we obtain
\begin{align*}
cdX_{\alpha+\gamma}X_{-\alpha-\gamma}= & X_{\alpha}X_{-\alpha}X_{\gamma}X_{-\gamma}-X_{\alpha}X_{\gamma}X_{-\gamma}X_{-\alpha}\\
 & -X_{\gamma}X_{\alpha}X_{-\alpha}X_{-\gamma}+X_{\gamma}X_{-\gamma}X_{\alpha}X_{-\alpha}.
\end{align*}
Let us apply this expression to the weight vector $v$. We find:
\begin{align*}
cdX_{\alpha+\gamma}X_{-\alpha-\gamma}\cdot v= & (X_{\alpha}X_{-\alpha})(X_{\gamma}X_{-\gamma}\cdot v)-X_{\alpha}(X_{\gamma}X_{-\gamma})(X_{-\alpha}\cdot v)\\
 & -X_{\gamma}(X_{\alpha}X_{-\alpha})(X_{-\gamma}\cdot v)+(X_{\gamma}X_{-\gamma})(X_{\alpha}X_{-\alpha}\cdot v).
\end{align*}
Thanks to our second hypothesis we must have
$$X_{\alpha}X_{-\alpha}(X_{-\gamma}v) \in  \C X_{-\gamma}v \mbox{ and } X_{\gamma}X_{-\gamma}(X_{-\alpha}v) \in  \C X_{-\alpha}v.$$ From this, we deduce the lemma.

\end{proof}

\begin{lemm}\label{lem5}
Let $N$ be a weight $\germ g $--module. Let $\alpha, \gamma \in \cal R $ be such that
\begin{enumerate}
\item $\alpha+\gamma \in \cal R $, $2\alpha+\gamma \in \cal R $ and $\alpha-\gamma \not\in \cal R $.
\item $X_{\alpha}X_{-\alpha}v\in \C v$ and $X_{\gamma}X_{-\gamma}v\in \C v$, for any weight vector $v \in N$.
\end{enumerate}
Then $X_{2\alpha+\gamma}X_{-2\alpha-\gamma}v\in \C v$, for any weight vector $v\in N$.
\end{lemm}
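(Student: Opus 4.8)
The plan is to reduce everything to a computation inside the rank-two subsystem of $\cal R $ generated by $\alpha$ and $\gamma$. The three conditions in hypothesis (1) force $\alpha$ to be short and $\gamma$ long in this subsystem (the typical case being $B_2$), so that $\gamma$, $\alpha+\gamma$ and $2\alpha+\gamma$ all lie on one $\germ sl _{2}$-string for $\alpha$. The strategy mirrors the proof of lemma \ref{lem4}: I first produce the diagonality statement for the intermediate root $\alpha+\gamma$, and then push it one step further to $2\alpha+\gamma$ by expanding a product of commutators.

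First I would observe that the hypotheses of lemma \ref{lem4} hold for the pair $(\alpha,\gamma)$: indeed $\alpha+\gamma\in\cal R $, $\alpha-\gamma\notin\cal R $, and hypothesis (2) is exactly condition (2) of that lemma. Hence lemma \ref{lem4} yields $X_{\alpha+\gamma}X_{-\alpha-\gamma}v\in\C v$ for every weight vector $v\in N$. Writing $\delta:=\alpha+\gamma$, I then have at my disposal three operators, $X_{\alpha}X_{-\alpha}$, $X_{\gamma}X_{-\gamma}$ and $X_{\delta}X_{-\delta}$, each of which sends every weight line of $N$ into itself; in particular the hypotheses may be iterated, since applying a lowering operator to a weight vector again gives a weight vector.

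Next, since $2\alpha+\gamma=\alpha+\delta$, I would choose non zero structure constants $c,d$ with $[X_{\alpha},X_{\delta}]=cX_{2\alpha+\gamma}$ and $[X_{-\alpha},X_{-\delta}]=dX_{-2\alpha-\gamma}$, so that
$$cd\,X_{2\alpha+\gamma}X_{-2\alpha-\gamma}=(X_{\alpha}X_{\delta}-X_{\delta}X_{\alpha})(X_{-\alpha}X_{-\delta}-X_{-\delta}X_{-\alpha}).$$
I would expand the right hand side into four terms and normal order each of them. Two simplifications are available exactly as in lemma \ref{lem4}: because $\alpha-\gamma\notin\cal R $ one has $[X_{\alpha},X_{-\gamma}]=0$ and $[X_{-\alpha},X_{\gamma}]=0$. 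The essential difference with lemma \ref{lem4} is that here $\alpha-\delta=-\gamma\in\cal R $, so moving $X_{\delta}$ past $X_{-\alpha}$ (and $X_{\alpha}$ past $X_{-\delta}$) produces extra terms, governed by $[X_{\delta},X_{-\alpha}]\in\C X_{\gamma}$ and $[X_{\alpha},X_{-\delta}]\in\C X_{-\gamma}$, together with Cartan contributions $H_{\alpha}$ and $H_{\delta}$.

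After this reordering every one of the four terms becomes, up to scalars, a product of the normal-ordered operators $X_{\alpha}X_{-\alpha}$, $X_{\gamma}X_{-\gamma}$, $X_{\delta}X_{-\delta}$ and of Cartan elements. Applying the whole expression to a weight vector $v$ and using hypothesis (2), the conclusion of lemma \ref{lem4} for $\delta$, and $H_{\mu}v\in\C v$, each term lands in $\C v$; since $cd\neq 0$ this gives $X_{2\alpha+\gamma}X_{-2\alpha-\gamma}v\in\C v$. The main obstacle is precisely the bookkeeping of the extra commutator terms absent from lemma \ref{lem4}: one must check that every term produced by moving raising and lowering operators past one another closes up into a scalar multiple of $v$, applying the diagonality hypotheses to the correct intermediate weight vectors (such as $X_{-\alpha}v$ or $X_{-\gamma}v$) rather than to $v$ itself, which is legitimate because each operator involved preserves weight lines. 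A possible subtlety is that normal ordering may reproduce $X_{2\alpha+\gamma}X_{-2\alpha-\gamma}$ on the right hand side, in which case one solves the resulting linear relation for this operator in terms of the ones already known to be diagonal.
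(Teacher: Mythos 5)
Your use of lemma \ref{lem4} to obtain $X_{\alpha+\gamma}X_{-\alpha-\gamma}v\in\C v$ is fine, and so is the reduction of the two uncrossed terms $X_{\alpha}X_{\delta}X_{-\delta}X_{-\alpha}$ and $X_{\delta}X_{\alpha}X_{-\alpha}X_{-\delta}$ (apply the diagonality hypotheses to the weight vectors $X_{-\alpha}v$ and $X_{-\delta}v$). The gap is in the key claim that, after normal ordering, \emph{every} one of the four terms of $(X_{\alpha}X_{\delta}-X_{\delta}X_{\alpha})(X_{-\alpha}X_{-\delta}-X_{-\delta}X_{-\alpha})$, with $\delta=\alpha+\gamma$, becomes a product of $X_{\alpha}X_{-\alpha}$, $X_{\gamma}X_{-\gamma}$, $X_{\delta}X_{-\delta}$ and Cartan elements. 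In the crossed terms $X_{\alpha}X_{\delta}X_{-\alpha}X_{-\delta}$ and $X_{\delta}X_{\alpha}X_{-\delta}X_{-\alpha}$ the nonvanishing commutators $[X_{\delta},X_{-\alpha}]\in\C X_{\gamma}$ and $[X_{\alpha},X_{-\delta}]\in\C X_{-\gamma}$ leave behind, after all the reordering you describe, weight-zero cubic monomials: a non zero multiple of $X_{\alpha}X_{\gamma}X_{-\delta}\,v$ from the first, and of $X_{\delta}X_{-\gamma}X_{-\alpha}\,v$ from the second. These are not products of the three diagonal operators and Cartans: trying to reorder them only cycles through the six orderings of the same three letters modulo $\C v$, so nothing collapses. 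They are also not multiples of $X_{2\alpha+\gamma}X_{-2\alpha-\gamma}$, so the escape you anticipate (solving a linear relation in which that operator reappears) does not apply. As written, the argument does not show that these leftovers act by scalars, and that is exactly the content still to be proved.

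The gap can be closed, but it needs one more idea: inside the leftover monomials, re-expand the $\delta$-root vectors as brackets, $X_{\pm\delta}$ being a non zero multiple of $[X_{\pm\alpha},X_{\pm\gamma}]$, and use once more $[X_{\pm\alpha},X_{\mp\gamma}]=0$ (this is where $\alpha-\gamma\notin\mathcal{R}$ re-enters). For instance $X_{\gamma}X_{-\delta}v$ is a multiple of $X_{\gamma}(X_{-\alpha}X_{-\gamma}-X_{-\gamma}X_{-\alpha})v=X_{-\alpha}(X_{\gamma}X_{-\gamma}v)-(X_{\gamma}X_{-\gamma})(X_{-\alpha}v)\in\C X_{-\alpha}v$, whence $X_{\alpha}X_{\gamma}X_{-\delta}v\in\C X_{\alpha}X_{-\alpha}v\subset\C v$; the other leftover is handled symmetrically. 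Once you do this you are essentially redoing the paper's computation: the paper avoids the problem from the outset by writing $cX_{2\alpha+\gamma}=[X_{\alpha},[X_{\alpha},X_{\gamma}]]$ (and likewise for $X_{-2\alpha-\gamma}$), so that every monomial in the expansion is a word in $X_{\alpha},X_{\gamma}$ times a word in $X_{-\alpha},X_{-\gamma}$, and it collapses into nested pairs $X_{\mu}X_{-\mu}$ applied to weight vectors using only the vanishing commutators $[X_{\pm\alpha},X_{\mp\gamma}]=0$, exactly as in lemma \ref{lem4}.
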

\begin{proof} There is a non zero structure constant $c$ such that $cX_{2\alpha+\gamma}=[X_{\alpha},[X_{\alpha},X_{\gamma}]]$. The proof is now analogous to the previous one.

\end{proof}

\begin{lemm}\label{lem5b}
Let $N$ be a weight $\germ g $--module. Let $\alpha, \gamma \in \cal R $ be such that
\begin{enumerate}
\item $\alpha+\gamma \in \cal R $, $2\alpha+\gamma \in \cal R $, $3\alpha+\gamma\in \cal R $ and $\alpha-\gamma \not\in \cal R $.
\item $X_{\alpha}X_{-\alpha}v\in \C v$ and $X_{\gamma}X_{-\gamma}v\in \C v$, for any weight vector $v \in N$.
\end{enumerate}
Then $X_{3\alpha+\gamma}X_{-3\alpha-\gamma}v\in \C v$, for any weight vector $v\in N$.
\end{lemm}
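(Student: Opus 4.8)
The plan is to argue exactly as in the proofs of Lemmas \ref{lem4} and \ref{lem5}, replacing the single and double brackets by a triple one. Since $\alpha-\gamma\notin\cal R $ and $3\alpha+\gamma\in\cal R $, the $\alpha$-string through $\gamma$ is $\gamma,\gamma+\alpha,\gamma+2\alpha,\gamma+3\alpha$; hence there are non zero structure constants $c$ and $d$ with
$$cX_{3\alpha+\gamma}=[X_{\alpha},[X_{\alpha},[X_{\alpha},X_{\gamma}]]],\qquad dX_{-3\alpha-\gamma}=[X_{-\alpha},[X_{-\alpha},[X_{-\alpha},X_{-\gamma}]]].$$
Multiplying these two elements of $\cal U (\germ g )$ and expanding gives $cd\,X_{3\alpha+\gamma}X_{-3\alpha-\gamma}$ as a sum of monomials in $X_{\pm\alpha}$ and $X_{\pm\gamma}$. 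I would then apply this sum to an arbitrary weight vector $v\in N$ and show that every contribution lies in $\C v$, which yields the claim.

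The reduction relies on the same facts used before. First, $\alpha-\gamma\notin\cal R $ forces $X_{\alpha}$ to commute with $X_{-\gamma}$ and $X_{-\alpha}$ to commute with $X_{\gamma}$. Next, the second hypothesis says that $X_{\alpha}X_{-\alpha}$ and $X_{\gamma}X_{-\gamma}$ act by a scalar on each weight space, and an immediate induction then gives the same for $X_{\alpha}^{j}X_{-\alpha}^{j}$ for every $j$. Finally, and this is the reason these lemmas are proved in increasing order, the hypotheses of Lemma \ref{lem4} and Lemma \ref{lem5} are both satisfied here (we have $\alpha+\gamma,2\alpha+\gamma\in\cal R $ and $\alpha-\gamma\notin\cal R $), so $X_{\alpha+\gamma}X_{-\alpha-\gamma}$ and $X_{2\alpha+\gamma}X_{-2\alpha-\gamma}$ also act by a scalar on each weight space. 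Using the commutation rules to bring a lowering operator next to its raising partner and then replacing each pair $X_{\mu}X_{-\mu}$ (for $\mu\in\{\alpha,\gamma,\alpha+\gamma,2\alpha+\gamma\}$) by the corresponding scalar, together with the fact that each $H_{\mu}$ acts by the weight, one reduces every monomial to a multiple of $v$.

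The main obstacle is essentially bookkeeping. In contrast with Lemma \ref{lem4}, where each of the four terms contained a single $X_{\pm\alpha}$ and a single $X_{\pm\gamma}$ and could be grouped by a single commutation, the triple bracket produces monomials in which several $X_{\pm\alpha}$ sit between $X_{\gamma}$ and $X_{-\gamma}$. Moving the operators into grouped position therefore creates commutator terms involving the intermediate root vectors $X_{\pm(\alpha+\gamma)}$ and $X_{\pm(2\alpha+\gamma)}$, and one must check that, after collecting, the pieces that are not immediately of the form $X_{\mu}X_{-\mu}$ either cancel against one another or reduce once more via Lemmas \ref{lem4} and \ref{lem5}. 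This is precisely the role of the two preceding lemmas; once these cancellations are verified the computation closes and gives $X_{3\alpha+\gamma}X_{-3\alpha-\gamma}v\in\C v$ for every weight vector $v$, as asserted.
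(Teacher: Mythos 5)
Your proposal is correct and follows the paper's own route: the paper's proof of this lemma consists precisely of the remark that $cX_{3\alpha+\gamma}=[X_{\alpha},[X_{\alpha},[X_{\alpha},X_{\gamma}]]]$ for a non zero structure constant $c$, the expansion of the product with the analogous triple bracket for $-3\alpha-\gamma$ and its reduction being declared analogous to Lemmas \ref{lem4} and \ref{lem5}, which is exactly your plan. One remark: the bookkeeping you anticipate can be avoided entirely, since every monomial of the expanded product has the form $X_{\alpha}^{3-j}X_{\gamma}X_{\alpha}^{j}\,X_{-\alpha}^{k}X_{-\gamma}X_{-\alpha}^{3-k}$ and, applied to a weight vector $v$, reduces directly to a multiple of $v$ by first contracting the inner block $X_{\alpha}^{j}X_{-\alpha}^{k}$ pair by pair via hypothesis $(2)$, then sliding the leftover $X_{\pm\alpha}$'s past $X_{\mp\gamma}$ (they commute because $\alpha-\gamma\notin\mathcal{R}$), after which $X_{\gamma}X_{-\gamma}$ and the remaining equal powers $X_{\alpha}^{3-j}X_{-\alpha}^{3-j}$ act by scalars, so no cancellations and no appeal to Lemmas \ref{lem4} and \ref{lem5} are actually needed.
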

\begin{proof} There is a non zero structure constant $c$ such that $cX_{3\alpha+\gamma}=[X_{\alpha},[X_{\alpha},[X_{\alpha},X_{\gamma}]]]$. The proof is now analogous to the previous one.

\end{proof}

\begin{lemm}\label{lem6}
Let $N$ be a simple weight $\germ g $--module. Assume that for any $\alpha \in \cal R $ and any weight vector $v \in N$, we have $X_{\alpha}X_{-\alpha}v\in \C v$. Then $N$ is a module of degree $1$.
\end{lemm}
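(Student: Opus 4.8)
The plan is to follow the same strategy as in the proof of Proposition \ref{prop:WMsl2}, replacing the commutative algebra $\C [H,X^-X^+]$ by the centralizer $\cal U (\germ g )_0$ of $\germ h $ in $\cal U (\germ g )$ (the zero weight space of $\cal U (\germ g )$ for the adjoint action of $\germ h $). Fix a weight $\mu $ with $N_\mu \not= 0$ and a non zero vector $v\in N_\mu $. Since $N$ is simple we have $N=\cal U (\germ g )\cdot v$, and projecting this equality onto the weight space $N_\mu $ gives $N_\mu =\cal U (\germ g )_0\cdot v$. Thus it suffices to prove that every element of $\cal U (\germ g )_0$ acts by a scalar on each weight space: then $\cal U (\germ g )_0\cdot v\subset \C v$, whence $N_\mu =\C v$, and $N$ has degree $1$ since it is non zero.

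Because $\germ h $ acts by a scalar on each weight space, the claim reduces to showing that any product of root vectors $X_{\gamma_1}\cdots X_{\gamma_k}$ with $\gamma_1+\cdots +\gamma_k=0$ acts by a scalar on each $N_\nu $. I would prove this by induction on the number $k$ of factors. If two of the factors are opposite roots, say $\gamma_i=-\gamma_j$, one reorders the product so as to bring $X_{\gamma_i}$ and $X_{-\gamma_i}$ next to each other; each commutation replaces the two swapped factors by a single root vector (via $[X_\gamma ,X_{\gamma'}]=N_{\gamma ,\gamma'}X_{\gamma +\gamma'}$) and hence yields a zero weight product with $k-1$ factors, which is scalar by induction. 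The remaining term contains the adjacent block $X_{\gamma_i}X_{-\gamma_i}$, which is scalar on each weight space by hypothesis; factoring out its (weight dependent) scalar leaves a zero weight product with $k-2$ factors, again scalar by induction.

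The delicate case is when no two factors are opposite roots, the basic example being $X_\alpha X_\beta X_{-\alpha -\beta}$: here a direct reordering only reproduces the original expression modulo $\C v$ and yields no information. The device I would use to break this deadlock is to descend to simple root vectors. Expanding a factor $X_\gamma $ of height at least $2$ as an iterated bracket of the $X_{\pm\alpha}$, $\alpha \in \Phi $, manufactures opposite pairs; the point is that a zero weight product whose factors are all simple root vectors is automatically \emph{balanced}, that is, each simple root occurs as often with the sign $+$ as with the sign $-$ (because the simple roots are linearly independent), so that such an opposite pair necessarily exists and the previous step applies. Organising the expansion together with the subsequent reorderings requires a well chosen complexity, for instance an induction on the pair consisting of the total height of the factors and their number, arranged so that splitting a high root vector and cancelling a pair both strictly decrease it.

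The main obstacle is precisely this last point: the hypothesis controls only the length two products $X_\alpha X_{-\alpha}$, and the zero weight products with no opposite pair of factors cannot be reduced by commuting the given root vectors alone. The whole argument hinges on passing to simple root vectors in order to create the cancelling pairs demanded by the hypothesis, and on setting up the induction so that this passage terminates. Once the scalar action of $\cal U (\germ g )_0$ is established, the simplicity argument of the first paragraph finishes the proof.
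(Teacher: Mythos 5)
Your first two steps are sound and agree with the paper's proof: the reduction to showing that the commutant $\mathcal{U}(\mathfrak{g})_{0}$ of $\mathfrak{h}$ acts by scalars on each weight space (the paper concludes from this via Lemire's correspondence, your projection argument amounts to the same thing), and the treatment of a zero-weight word that already contains a pair of opposite root vectors. The genuine gap is in your third step, and you point at it yourself without resolving it: the zero-weight words with no opposite pair. The complexity you propose, the pair (total height of the factors, their number), cannot be arranged to work: splitting a root vector of height $h\geq 2$ into a bracket of shorter ones leaves the total height unchanged and only increases the number of factors, while the commutator corrections arising from your reorderings (when the two commuted factors have weights of the same sign) also leave the total height unchanged and decrease the number of factors; no ordering of that pair makes both moves decrease. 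Worse, the procedure can cycle: the correction created by commuting $X_{\beta}$ past $X_{\gamma}$ is a multiple of $X_{\beta+\gamma}$, and re-expanding $X_{\beta+\gamma}$ as a multiple of $[X_{\beta},X_{\gamma}]=X_{\beta}X_{\gamma}-X_{\gamma}X_{\beta}$ reproduces exactly the word you started from, so "expand, cancel, expand the corrections" has no evident termination. As written, the induction is therefore not valid.

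The paper's proof avoids this difficulty by doing the expansion once and for all, before any induction: since $\mathfrak{g}$ is generated by $\mathfrak{h}$ and the root vectors $X_{\pm\beta}$ with $\beta$ simple, the algebra $\mathcal{U}(\mathfrak{g})_{0}$ is spanned by $\mathcal{U}(\mathfrak{h})$ together with the zero-weight monomials in these generators, so only words in \emph{simple} root vectors ever need to be considered, and such a word is automatically balanced by linear independence of the simple roots. The second key point, which replaces your bookkeeping of correction terms, is that all the reorderings needed can be done with factors that commute exactly: the difference of two distinct simple roots is never a root, so a simple root vector commutes with every negative simple root vector except its opposite. Using only such exact swaps, the paper shows that a word with no zero-weight consecutive subword can be rearranged so that one of length two appears, and the induction on the number of factors alone then goes through, the hypothesis $X_{\alpha}X_{-\alpha}v\in\C v$ being applied to the adjacent pair. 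If you want to complete your argument, the shortest repair is precisely this: prove the scalar action only for zero-weight words in simple root vectors (sufficient by the spanning statement above), and make the reorderings exact rather than controlling commutator corrections by induction.
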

\begin{proof} Let $v\in N$ be a weight vector. We shall prove that $\cal U (\germ g )_0 v\subset \C v$ (where $\cal U (\germ g )_0$ is the commutant of $\germ h $ in $\cal U (\germ g )$). Since $v$ is a weight vector we have by definition $\cal U (\germ h )v\subset \C v$. But we know that the algebra $\cal U (\germ g )_0$ is generated by $\cal U (\germ h )$ and some monomials of the form $u=X_1\cdots X_k$ with $k \in \N$ and $X_i\in \germ g _{\pm \beta}$ for some simple root $\beta \in \cal R $. Such a monomial belongs to $\cal U (\germ g )_0$ if and only if the multiplicity of each simple root $\beta$ in $u$ is equal to that of $-\beta$. Note in particular that the integer $k$ should then be even. 

Let us show that $u\cdot v\in \C v$ by induction on $k$. For $k=0$, we have $u=1$ and so $u\cdot v=v$. For $k=2$, we have either $u=X_{\beta}X_{-\beta}$ or $u=X_{-\beta}X_{\beta}$ for some simple root $\beta$. In the first case, we have $u\cdot v \in \C v$ by our hypothesis. In the second case, we notice that $u=X_{\beta}X_{-\beta}-H_{\beta}$. Thus we also get here that $u\cdot v\in \C v$.

Assume then that $u'\cdot v \in \C v$ for any monomial $u'$ of degree less than $k$ and any weight vector $v$. Let $u=X_1\cdots X_k$ be a monomial of degree $k$. Note that for any  $i$, $X_i\cdots X_k\cdot v $ is again a weight vector. Therefore, if $u$ contains a submonomial $X_{j}\cdots X_{i-1}$ belonging  to $\cal U (\germ g )_0$ then our induction hypothesis implies that 
$$X_1\cdots X_{j-1}(X_j\cdots X_{i-1})(X_i \cdots X_k\cdot v)\in \C X_1\cdots X_{j-1}(X_i\cdots X_k\cdot v).$$ Since $u\in \cal U (\germ g )_0$ then $X_1\cdots X_{j-1}X_i\cdots X_k\in \cal U (\germ g )_0$ and we can apply once again our induction hypothesis to deduce that $u\cdot v\in \C v$.

Thus it suffices to show that $u$ does contain a submonomial belonging to $\cal U (\germ g )_0$. Assume it is not the case. Without lake of generality we can suppose that $X_1\in \germ g _{+\beta}$ for some simple root $\beta$. Let $i_1$ be the first integer greater than $1$ such that $X_{i_1}$ belongs to a root space associated to a simple root. For any integer $1<j<i_{1}$, the vector $X_{j}$ commutes with $X_{i_{1}}$ except if the weight of $X_{j}$ is the opposite of that of $X_{i_{1}}$. But if such a vector occurs then we would have a submonomial (of degree $2$) of $u$ belonging to $\cal U (\germ g )_0$ contrary to our assumption. Thus we can suppose that $i_{1}=2$.

We then look at $i_{2}$, the first integer greater than $2$ such that $X_{i_2}$ belongs to a root space associated to a simple root. The same reasoning shows that we can suppose that $i_{2}=3$. From this kind of reasoning we deduce that we can suppose that the first $k/2$ vectors belong to root spaces associated to simple roots.  Let $\beta$ be the simple root such that $X_{k/2}\in \germ g _{\beta}$. Necessarily, the last $k/2$  vectors belong to root spaces associated with negative roots. Moreover, among these vectors there is at least one belonging to $\germ g _{-\beta}$. Let $i$ be the smallest integer such that $X_i\in \germ g _{-\beta}$. Then for any $k/2<j<i$, $X_j$ commutes with $X_{k/2}$. Therefore we can find in $u$ a submonomial, $X_{k/2}X_i$, belonging to $\cal U (\germ g )_0$, contrary to our assumption. This proves that $u$ always contain a submonomial belonging to $\cal U (\germ g )_0$. Hence we have shown that $u\cdot v \in \C v$.

So we have $\cal U (\germ g )_0 v\subset \C v$. Lemire's correspondence \cite{Le68} gives then the lemma.

\end{proof}

\begin{proof} (theorem \ref{thm_{A}}) Thanks to lemma \ref{lem6}, it suffices to prove that for any $\alpha \in \langle \Phi\setminus\theta\rangle $ and any weight vector $v \in C$, we have$X_{\alpha}X_{-\alpha}v\in \C v$. Since $X_{\alpha}X_{-\alpha}-X_{-\alpha}X_{\alpha}\in \germ h $ for any $\alpha$, it suffices to prove it only for positive $\alpha $.

Let us fix some weight vector $v \in C$. Let $\alpha \in \langle \Phi\setminus\theta\rangle ^+$. If there is $\beta \in \theta$ such that $\alpha+\beta\in \cal R $, then lemma \ref{lem3} applied to $\beta$ and $\alpha$ gives $X_{\alpha}X_{-\alpha}v\in \C v$. Otherwise, let $\alpha'\in \langle \Phi\setminus\theta\rangle ^+$ be such that
\begin{itemize}
\item[$\bullet$] $\alpha+\alpha'\in \cal R $ and $\alpha-\alpha'\not\in \cal R $,
\item[$\bullet$] $\exists \: \beta \in \theta, \: \beta+\alpha'\in \cal R \mbox{ and } \beta+\alpha'+\alpha\in \cal R $.
\end{itemize}
Such a root $\alpha'$ does exist since the sets $\Phi\setminus\theta$ and $\theta$ form a partition of the Dynkin diagram of $(\germ g ,\germ h )$ which is connected. Lemma \ref{lem3} applied to $\beta$ and $\alpha'$ on one hand and to $\beta $ and $\alpha'+\alpha$ on the other hand gives $X_{\alpha'}X_{-\alpha'}v\in \C v $ and $X_{\alpha+\alpha'}X_{-\alpha-\alpha'}v\in \C v$.

Now if $2\alpha'+\alpha\not\in \cal R $, lemma \ref{lem4} applied to $-\alpha'$ and $\alpha'+\alpha$ gives $X_{\alpha}X_{-\alpha}v \in \C v$. If $2\alpha'+\alpha\in \cal R $ and $3\alpha'+\alpha\not\in \cal R $, then notice that $\beta+2\alpha'+\alpha \in \cal R $. So we can apply lemma \ref{lem3} to $\beta$ and $2\alpha'+\alpha$ to get $X_{2\alpha'+\alpha}X_{-(2\alpha'+\alpha)}v\in \C v$ and then lemma \ref{lem5} to the roots $-\alpha'$ and $2\alpha'+\alpha$ to obtain $X_{\alpha}X_{-\alpha}v\in \C$. If $2\alpha'+\alpha\in \cal R $ and $3\alpha'+\alpha\in \cal R $, then $\beta+2\alpha'+\alpha\in \cal R $ and $\beta+3\alpha'+\alpha\in \cal R $. Therefore we apply lemma \ref{lem3} to $\beta $ and $2\alpha'+\alpha\in \cal R $ and to $\beta$ and $3\alpha'+\alpha\in \cal R $ to get $X_{2\alpha'+\alpha}X_{-(2\alpha'+\alpha)}v\in \C v $ and $X_{3\alpha'+\alpha}X_{-(3\alpha'+\alpha)}v\in \C v $. Now $4\alpha'+\alpha\not\in \cal R $ (see for instance  \cite[table 1 p.45]{Hu78}). Thus lemma \ref{lem5b} applied to $-\alpha'$ and $3\alpha'+\alpha$ implies $X_{\alpha}X_{-\alpha}v\in \C v$. Hence we show that we always have $X_{\alpha}X_{-\alpha}v\in \C v$. The theorem is thus proved.

\end{proof}

\subsubsection{Proof of theorem $B$}\label{sssec:B}

We now proceed to the proof of theorem $B$:

\begin{theo}\label{thmB}
Assume $\germ g $ is not of type $C$. Let $L(C)$ be a simple module in $\cal O _{\Phi,\theta}$, then the semisimple part of the algebra $\germ l $ is a sum of ideals of type $A$.
\end{theo}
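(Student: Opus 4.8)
The plan is to deduce the statement from Theorem $A$ together with the classification of simple cuspidal modules; the only substantial work is the exclusion of type $C$ factors. First I would reduce to a statement about the simple ideals of $\germ l '$. By Theorem $A$ the module $C$ is simple cuspidal of degree $1$ over $\germ l $. Write $\germ l '=\bigoplus_{i}\germ s _{i}$ as a sum of simple ideals. Since $C$ is simple, the centre of $\germ l $ acts by scalars, so as an $\germ l '$-module $C$ is an outer tensor product $\boxtimes_{i}C_{i}$ of simple $\germ s _{i}$-modules; cuspidality of $C$ forces each factor $C_{i}$ to be a simple cuspidal, in particular infinite dimensional, $\germ s _{i}$-module. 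Fernando's Theorem \ref{thmFeC} then shows that every $\germ s _{i}$ is of type $A$ or of type $C$. As $C_{1}=A_{1}$, it only remains to prove that, when $\germ g $ is not of type $C$, no ideal $\germ s _{i}$ is of type $C_{k}$ with $k\geq 2$.

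So I would argue by contradiction and assume some ideal $\germ s $ is of type $C_{k}$, $k\geq 2$. Let $\gamma$ be its unique long simple root and $\delta$ the adjacent short simple root, giving inside $\germ s $ the $C_{2}$-string $\gamma,\ \gamma+\delta,\ \gamma+2\delta$ (with $\gamma,\gamma+2\delta$ long and $\gamma+\delta$ short). As the Dynkin diagram of $\germ g $ is connected and $\theta\neq\emptyset$, there is a simple root $\beta\in\theta$ adjacent to $\germ s $, and I would first record, according to the type of $\germ g $ and to whether $\beta$ attaches at the long or the short end of $\germ s $, the root strings joining $\beta$ to $\germ s $. The combinatorial fact that drives everything is that, \emph{because $\germ g $ is not of type $C$}, the vector $2(\gamma+\delta)$ (a vector of the form $2\epsilon_{j}$) is not a root of $\germ g $, whereas in type $C$ it would be.

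Using this, I would try to obstruct the restriction condition $(2)$. The natural candidate is a ``spurious'' $\germ l _{\theta}$-highest weight vector of $L(C)$ attached to the long root: taking a positive root $\rho$ built from $\beta$ and $\gamma+2\delta$ with $\rho-\beta=2(\gamma+\delta)\notin\cal R $, the bracket $[X_{\beta},X_{-\rho}]$ vanishes, so $p(X_{-\rho}\otimes v)$ is killed by $X_{\beta}$, and it is a nonzero weight vector by Lemma \ref{lem2}, Lemma \ref{lem7} and the explicit degree-$1$ action (Theorem \ref{thmBBLC}). I would then analyse the $\germ l _{\theta}$-submodules generated by these vectors and by the $p(1\otimes w)$ of Lemma \ref{lem2}, and show that they cannot all split off as simple highest weight modules, contradicting condition $(2)$ (and Proposition \ref{prop:GO}). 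In type $C$ the same bracket is nonzero, landing in a long root space of $\germ s $, so the vector is no longer highest weight and no contradiction occurs, consistently with the fact that $M(a)\in\cal O _{\Phi,\theta}$ (Theorem \ref{thmMO}).

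The main obstacle is precisely this final step: while the obstructing vectors are easy to write down, \emph{proving} that they force the $\germ l _{\theta}$-decomposition to fail is delicate, since the naive highest weight vector by itself only produces another legitimate Verma summand. I expect the genuine incompatibility to require the explicit formulas for $M(a)$ together with the description of the maximal submodule $K(C)$ in Proposition \ref{GVM}$(5)$, and a careful, case-by-case treatment of how $\beta$ attaches to the $C_{k}$-ideal, including the low-rank identification $C_{2}=B_{2}$. Once the case $k\geq 2$ is ruled out, every $\germ s _{i}$ is of type $A$, which is the assertion of the theorem.
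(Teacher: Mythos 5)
Your opening reduction is the same as the paper's: by Fernando's Theorem \ref{thmFeC} (applied to the simple ideals of $\mathfrak{l}'$, over which the simple cuspidal module $C$ factors), it suffices to exclude an ideal of type $C_k$, $k\geq 2$; the appeal to Theorem A is harmless but not needed. The genuine gap is in the exclusion itself, which is the entire content of the theorem and which you explicitly leave open: you exhibit a vector $p(X_{-\rho}\otimes v)$ killed by $X_{\beta}$ and then concede that you cannot show this contradicts the restriction condition --- and indeed, by itself it does not, since an extra $\mathfrak{l}_{\theta}$-highest weight vector is perfectly compatible with a semisimple decomposition. (Moreover, even the nonvanishing of $p(X_{-\rho}\otimes v)$ is not covered by the lemmas you cite: Lemma \ref{lem7} requires peeling off roots of $\langle\theta\rangle^{+}$ one at a time with all partial sums being roots, and $\rho-\beta=2(\gamma+\delta)$ is not a root, so its hypotheses fail for your $\rho$.) The paper closes the argument by a different and much shorter mechanism. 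Since $\mathfrak{g}$ is of type $B_n$ or $F_4$, the diagram contains a root $\beta\in\theta$ attached to the \emph{long} end of the double edge, with $\alpha_2$ (long) and $\alpha_1$ (short) the two circled roots inside the type-$C$ ideal. Lemma \ref{lem7} --- this is where the restriction condition actually enters, through Lemma \ref{lem2} --- forces the identity $p(X_{-\alpha_2-\beta}\otimes v)=\eta(v)\,p(X_{-\beta}\otimes X_{-\alpha_2}v)$ with $\eta(v)\neq 0$; applying the single raising operator $X_{\beta+\alpha_2+2\alpha_1}\in\mathfrak{n}^{+}$ kills the left-hand side (the commutator with $X_{-\alpha_2-\beta}$ has weight $2\alpha_1$, not a root) but sends the right-hand side to a nonzero multiple of $p(1\otimes X_{\alpha_2+2\alpha_1}X_{-\alpha_2}v)$, nonzero by cuspidality --- a contradiction in one stroke. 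None of the machinery you anticipate (explicit $M(a)$ formulas, the description of $K(C)$, a case analysis on how $\beta$ attaches) is required.

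A secondary inaccuracy: your ``driving combinatorial fact'' is misstated. With your labels $\gamma+\delta$ is itself a root of the ideal, and twice a root is never a root in a reduced root system, in type $C$ included, so the dichotomy ``$2(\gamma+\delta)\notin\mathcal{R}$ here, but it would be a root in type $C$'' is not what separates the cases. What the hypothesis that $\mathfrak{g}$ is not of type $C$ really buys is positional: in $B_n$ and $F_4$ one can choose $\beta\in\theta$ adjacent to the long simple root of the double edge, which is exactly what makes $[X_{\beta+\alpha_2+2\alpha_1},X_{-\alpha_2-\beta}]=0$ while $[X_{\beta+\alpha_2+2\alpha_1},X_{-\beta}]$ is a nonzero multiple of $X_{\alpha_2+2\alpha_1}\in\mathfrak{l}$; in type $C$ the long root sits at the extremity of the diagram, this configuration is unavailable, and consistently $M(a)$ does lie in $\mathcal{O}_{\Phi,\theta}$ (Theorem \ref{thmMO}).
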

\begin{proof} Thanks to theorem \ref{thmFeC}, it suffices to show that $\germ l '$ cannot have an ideal of type $C$. If $\germ l '$ does contain an ideal of type $C$ then $\germ g $ is of type $B_n$ (for $n\geq 3$) or $F_4$ and the Dynkin diagram of $\germ g $ contains the following piece:

\setlength{\unitlength}{1cm}
\begin{center}
\begin{picture}(4,1.2)
\put(1,.5){\circle*{.2}}
\put(1,.5){\line(1,0){1}}
\put(2,.5){\circle*{.2}}
\put(2,0.45){\line(1,0){1}}
\put(2,.55){\line(1,0){1}}
\put(3,.5){\circle*{.2}}
\put(2,.5){\circle{.35}}
\put(3,.5){\circle{.35}}
\put(.9,.8){$\beta$}
\put(1.9,.8){$\alpha_2$}
\put(2.9,.8){$\alpha_1$}
\put(2.4,.4){$>$}
\end{picture}
\end{center} 

Let then $v \in C$ be a weight vector. Consider $u:=p(X_{-\alpha_2-\beta}\otimes v)\in L(C)$. As $\beta$ is a simple root, lemma \ref{lem7} implies that there is a non zero complex number $\eta(v)$ such that $u=\eta(v) p(X_{-\beta}\otimes X_{-\alpha_2}v)$. Apply the vector $X_{\beta+\alpha_2+2\alpha_1}$ to this equality. We get:
\begin{align*}
p([X_{\beta+\alpha_2+2\alpha_1},X_{-\alpha_2-\beta}]\otimes v)= & \eta(v) \times p([X_{\beta+\alpha_2+2\alpha_1},X_{-\beta}] \otimes X_{-\alpha_2}v).
\end{align*}
But $[X_{\beta+\alpha_2+2\alpha_1},X_{-\alpha_2-\beta}]=0$. Moreover, there exists a non zero structure constant $c$ such that $[X_{\beta+\alpha_2+2\alpha_1},X_{-\beta}]=cX_{\alpha_2+2\alpha_1}\in \germ l $. Therefore: 
\begin{align*}
0 = & \eta (v)c p(1\otimes X_{\alpha_2+2\alpha_1}X_{-\alpha_2}v).
\end{align*}
Now the cuspidality condition for $L(C)$ implies that $X_{\alpha_2+2\alpha_1}X_{-\alpha_2}v\not= 0$ and thus $p(1\otimes X_{\alpha_2+2\alpha_1}X_{-\alpha_2}v)\not= 0$. This contradicts $\eta\not=0$ and the proof is completed.

\end{proof}

\subsubsection{Proof of theorem $C$}\label{sssec:C}

Now we turn to the proof of theorem $C$:

\begin{theo}\label{thmS}
Let $L(C)$ be a simple module in $\cal O _{\Phi,\theta}$. Then the semisimple part of the algebra $\germ l $ is simple, of type $A$ or $C$.
\end{theo}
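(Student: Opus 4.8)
The plan is to establish two things: that every simple ideal of the semisimple part $\germ l '$ of $\germ l =\germ l _{\Phi\setminus\theta}$ is of type $A$ or $C$, and that $\germ l '$ is in fact simple (equivalently, that the subdiagram on $\Phi\setminus\theta$ is connected). The first point is immediate from what is already proved: by Theorem \ref{thm_{A}} the module $C$ is cuspidal, so its restriction to each simple ideal $\germ a $ of $\germ l '$ is again cuspidal (every root vector of $\germ a $ is a root vector of $\germ l $, hence acts injectively on $C$); since an algebra carrying a cuspidal module must be of type $A$ or $C$ by Theorem \ref{thmFeC}, each $\germ a $ is of type $A$ or $C$. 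Thus the substance of the theorem is the connectedness of $\Phi\setminus\theta$, which I would prove by contradiction, assuming that this subdiagram has at least two connected components.

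The model case is the one in which some $\beta\in\theta$ is adjacent to two roots $\alpha,\gamma\in\Phi\setminus\theta$. Because the Dynkin diagram of $\germ g $ is a tree, $\alpha$ and $\gamma$ then lie in distinct components, so $\alpha\pm\gamma\notin\cal R $, while $\alpha+\beta$ and $\beta+\gamma$ are roots. Fix a weight vector $v\in C$ and put $u=p(X_{-\alpha-\beta}\otimes v)$; I would then compute $X_{\beta+\gamma}\cdot u$ in two ways. On one hand $X_{\beta+\gamma}\in\germ n $, so directly in $V(C)$ we get $X_{\beta+\gamma}\cdot u=p([X_{\beta+\gamma},X_{-\alpha-\beta}]\otimes v)=0$, since $(\beta+\gamma)-(\alpha+\beta)=\gamma-\alpha\notin\cal R $. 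On the other hand, Lemma \ref{lem7} (case $i=1$) gives $u=\eta\,X_{-\beta}\,p(1\otimes X_{-\alpha}v)$ with $\eta\neq 0$; as $X_{\beta+\gamma}$ annihilates $p(1\otimes X_{-\alpha}v)$ and $[X_{\beta+\gamma},X_{-\beta}]=c'X_{\gamma}$ with $c'\neq 0$, this yields $X_{\beta+\gamma}\cdot u=\eta c'\,p(1\otimes X_{\gamma}X_{-\alpha}v)$, which is nonzero because $\alpha,\gamma\in\langle\Phi\setminus\theta\rangle$ act injectively on $C$. This contradiction rules out any $\beta\in\theta$ with two neighbours in $\Phi\setminus\theta$; since only adjacency is used, it also covers the multiply-laced situations treated in the spirit of Theorem \ref{thmB}.

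The remaining, and genuinely harder, case is that in which two components are joined only through a chain $\alpha-\beta_1-\cdots-\beta_s-\gamma$ of length $s\geq 2$, with $\alpha,\gamma\in\Phi\setminus\theta$ and $\beta_1,\dots,\beta_s\in\theta$ (for example $\germ g =A_5$ with $\theta$ the three central nodes). Writing $\delta=\alpha+\beta_1+\cdots+\beta_s$, $\sigma=\beta_1+\cdots+\beta_s$ and $\mu=\sigma+\gamma$, the two computations above applied to $u=p(X_{-\delta}\otimes v)$ now \emph{both} give $X_{\mu}\cdot u=0$: Lemma \ref{lem7} only places $u$ in $\cal U (\germ l _{\theta}^{-})_{-\sigma}\cdot p(1\otimes X_{-\alpha}v)$, a space of dimension $>1$, and the contributions of its several monomials to $X_{\mu}\cdot u$ cancel one another. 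I expect precisely this cancellation to be the main obstacle of the theorem.

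To overcome it, the plan is to isolate the component of $u$ along the ``unclustered'' monomial $X_{-\beta_1}\cdots X_{-\beta_s}\,p(1\otimes X_{-\alpha}v)$ — the component on which the iterated brackets $[X_{\mu},X_{-\beta_1}]$, then $[\,\cdot\,,X_{-\beta_2}]$, and so on peel off the $\beta_i$ one at a time and finally produce a nonzero multiple of $p(1\otimes X_{\gamma}X_{-\alpha}v)$ — and to prove, by induction on the bridge length $s$, that this component cannot vanish. An alternative route would be to restrict the whole configuration to the standard Levi of type $A_{s+2}$ (or with a single multiple bond) carried by the bridge and to argue by induction on the rank of $\germ g $, using Proposition \ref{GVM} to control the induced structure. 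Either way, the delicate bookkeeping needed to follow these nested commutators without cancellation, together with the multiply-laced bridges, is where the real difficulty of the proof lies.
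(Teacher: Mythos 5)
Your opening reduction (each simple ideal of $\mathfrak{l}'$ carries a cuspidal module, hence is of type $A$ or $C$) and your treatment of a bridge of length one are correct; the latter is essentially the paper's computation with the roles of the two root vectors interchanged (the paper applies $X_{\alpha+\beta+\gamma}$ and gets zero from the Lemma \ref{lem7} side versus a nonzero multiple of $p(1\otimes X_{\gamma}v)$ from the direct side, you apply $X_{\beta+\gamma}$ and get the contradiction the other way around). But the theorem is not proved by your text: for a bridge $\alpha-\beta_1-\cdots-\beta_s-\gamma$ with $s\geq 2$, which is the actual content of the connectedness assertion, you only record that your computation degenerates (both evaluations of $X_{\beta_1+\cdots+\beta_s+\gamma}\cdot u$ vanish, as you correctly observe) and you sketch two programmes --- isolating the coefficient of the monomial $X_{-\beta_1}\cdots X_{-\beta_s}$ by induction on $s$, or an induction on the rank through a Levi of type $A_{s+2}$ --- neither of which is carried out. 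That is a genuine gap, and it is exactly where the theorem lives.

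Moreover, the difficulty you anticipate is an artifact of the vector you chose to apply, not an intrinsic obstacle. The paper applies instead the root vector attached to the \emph{full} chain, $X_{\alpha+\beta_1+\cdots+\beta_s+\gamma}$ (the sum of the simple roots along a connected path of the Dynkin diagram is a root in every type), to $p(u)$ where $u=X_{-(\alpha+\beta_1+\cdots+\beta_s)}\otimes v$. On the one hand, every monomial of $\mathcal{U}(\mathfrak{l}_{\theta}^-)_{-(\beta_1+\cdots+\beta_s)}$ is a product of vectors $X_{-(\beta_i+\cdots+\beta_j)}$, and $(\alpha+\beta_1+\cdots+\beta_s+\gamma)-(\beta_i+\cdots+\beta_j)$ has disconnected support, hence is never a root; so the adjoint action of $X_{\alpha+\beta_1+\cdots+\beta_s+\gamma}$ on that whole weight space is zero, and since this vector lies in $\mathfrak{n}$ and therefore kills $p(1\otimes X_{-\alpha}v)$, Lemma \ref{lem7} forces $X_{\alpha+\beta_1+\cdots+\beta_s+\gamma}\cdot p(u)=0$ with no cancellation analysis whatsoever. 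On the other hand, computing directly in $V(C)$, one has $[X_{\alpha+\beta_1+\cdots+\beta_s+\gamma},X_{-(\alpha+\beta_1+\cdots+\beta_s)}]=c\,X_{\gamma}$ with $c\neq 0$, so $X_{\alpha+\beta_1+\cdots+\beta_s+\gamma}\cdot p(u)=c\,p(1\otimes X_{\gamma}v)\neq 0$ by the cuspidality of $C$. This contradiction disposes of all bridge lengths and all lacings at once, so the delicate bookkeeping you foresee, and both inductions you propose, are unnecessary; to complete your argument you only need to replace $X_{\beta_1+\cdots+\beta_s+\gamma}$ by the full-chain root vector.
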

\begin{proof} Assume this is not the case. For simplicity, we suppose then that $\germ l '$ is a sum of two simple ideals of type $A$ or $C$. We shall denote these ideals by $\germ l _1$ and $\germ l _2$. Set $S_i$ for the root basis of $(\germ l _i,\germ h \cap\germ l _{i})$ deduced from $\Phi\setminus\theta$.

Let $v\in C$ be a weight vector. Let $\alpha \in S_1$, $\alpha '\in S_2$ and $\beta_1,\ldots ,\beta_k \in \theta$ such that $\alpha+\beta_1+\cdots +\beta_k+\alpha'\in \cal R $. We will suppose that the simple roots $\beta_i$ are all distinct. Consider $u:=X_{-(\alpha+\beta_1+\cdots \beta_k)}\otimes v\in V(C)$. Lemma \ref{lem7} implies that $p(u)\not= 0$ and that
$$p(u)\in \cal U (\germ l _{\theta}^-)_{-(\beta_1+\cdots +\beta_k)}p(1\otimes X_{-\alpha}v).$$ But the adjoint action of $X_{\alpha+\beta_1+\cdots +\beta_k+\alpha'}$ on $\cal U (\germ l _{\theta}^-)_{-(\beta_1+\cdots +\beta_k)}$ is trivial (since it is trivial on every vector of the form $X_{-(\beta_i+\cdots +\beta_j)}$ for $1\leq i\leq j\leq k$). Thus the action of $X_{\alpha+\beta_1+\cdots +\beta_k+\alpha'}$ on $p(u)$ must be trivial too. This action is given by:
\begin{align*}
X_{\alpha+\beta_1+\cdots +\beta_k+\alpha'}\cdot p(u) = & p([X_{\alpha+\beta_1+\cdots +\beta_k+\alpha'},X_{-(\alpha+\beta_1+\cdots \beta_k)}]\otimes v).
\end{align*}
There is a non zero structure constant $c$ such that
$$[X_{\alpha+\beta_1+\cdots +\beta_k+\alpha'},X_{-(\alpha+\beta_1+\cdots \beta_k)}]=cX_{\alpha'}\in \germ l .$$ We get then
\begin{align*}
X_{\alpha+\beta_1+\cdots +\beta_k+\alpha'}\cdot p(u) = & cp(1\otimes X_{\alpha'}v).
\end{align*}
The cuspidality condition for $L(C)$ ensures that $X_{\alpha'}v\not= 0$ and so $p(1\otimes X_{\alpha'}v)\not= 0$. This is a contradiction with $X_{\alpha+\beta_1+\cdots +\beta_k+\alpha'}\cdot p(u)=0$.

\end{proof}

\subsubsection{A first reduction}\label{sssec:1red}

We end this section by showing that for some $(\Phi,\theta)$ the simple module $L(C)$ cannot be in $\cal O _{\Phi,\theta}$. This will use lemma \ref{lem7} and the possibility of considering large positive roots.

\begin{table}[htbp]
\begin{center}
\begin{tabular}{|c|c|}
\hline 
\textbf{Type} & $\mathbf{\Phi\setminus\theta}$\\
\hline
$B_n$ ($n>3$) & $\{e_i\}, \: i\not=1$\\
\hline
$B_n$ & $\{e_i,\ldots , e_{i+k}\}, \: i+k<n, \: i\geq 1, \: k>1$\\
\hline
$C_n$ & $\{e_i\}, \: i<n$\\
\hline
$C_n$ & $\{e_i,\ldots , e_{i+k}\}, \: i+k<n, k>1$\\
\hline
$F_4$ & $\{e_i\}$\\
\hline
$F_4$ & $\{e_1,e_2\}$ or $\{e_3,e_4\}$\\
\hline
$D_n$ & $A_k, \: k>1$\\
\hline
$D_n$ & $\{e_i\}, \: i\not\in \{1,n-1,n\}$\\
\hline
$E$ & $A_k,$ except $\{e_1\}$ or $\{e_6\}$ for $E_6$, and $\{e_7\}$ for $E_7$\\
\hline
$G_2$ & $\{e_1\}$ or $\{e_2\}$\\
\hline
\end{tabular}
\end{center}
\caption{A first reduction}\label{tab1}
\end{table}

\begin{theo}\label{thm1red}
Let $(\Phi,\theta)$ be in table \ref{tab1}. Let $L(C)$ be a simple module. Then $L(C)$ is not in category $\cal O _{\Phi,\theta}$. Consequently the category $\cal O _{\Phi,\theta}$ is trivial.
\end{theo}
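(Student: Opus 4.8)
The plan is to eliminate each pair of table \ref{tab1} by the device already used for theorems $B$ and $C$, namely by computing one and the same vector $X_{\mu}\cdot p(u)$ in two ways, for a suitably \emph{large} positive root $\mu$. So assume, for contradiction, that $L(C)$ is a non zero simple module of $\cal O _{\Phi,\theta}$. By the definition of the category, the roots of $\langle\Phi\setminus\theta\rangle$ act injectively on $C$, and by theorem $A$ the module $C$ has degree one; moreover, in every line of table \ref{tab1} the set $\Phi\setminus\theta$ spans an ideal of type $A$, consistently with theorems $B$ and $C$. Fix a non zero weight vector $v\in C$.

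For each pair I would produce a simple root $\alpha\in\Phi\setminus\theta$, a finite sequence $\beta_1,\ldots,\beta_k$ of positive roots of $\langle\theta\rangle$ satisfying the chain hypothesis $\alpha+\beta_1+\cdots+\beta_j\in\cal R $ for all $j\leq k$, and a positive root $\mu$ enjoying
\begin{enumerate}
\item $\nu:=\mu-(\alpha+\beta_1+\cdots+\beta_k)$ lies in $\langle\Phi\setminus\theta\rangle\cap\cal R $, and
\item $\mu-\gamma\notin\cal R $ for every positive root $\gamma$ of $\langle\theta\rangle$ dominated by $\beta_1+\cdots+\beta_k$.
\end{enumerate}
Set $u:=X_{-(\alpha+\beta_1+\cdots+\beta_k)}\otimes v\in V(C)$. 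Because $\mu$ genuinely involves roots of $\theta$, we have $X_{\mu}\in\germ n ^+$, so the $\germ n ^+$-action formula applies to it. Lemma \ref{lem7} gives $p(u)\neq 0$ and $p(u)=Y\cdot p(1\otimes X_{-\alpha}v)$ for some $Y\in\cal U (\germ l _{\theta}^{-})_{-(\beta_1+\cdots+\beta_k)}$. Evaluating $X_{\mu}\cdot p(u)$ on $u$ directly gives
$$X_{\mu}\cdot p(u)=p\big([X_{\mu},X_{-(\alpha+\beta_1+\cdots+\beta_k)}]\otimes v\big)=c\,p(1\otimes X_{\nu}v),$$
with $c\neq 0$ a structure constant by (1); since $\nu\in\langle\Phi\setminus\theta\rangle$, cuspidality makes $X_{\nu}$ injective, so $X_{\nu}v\neq 0$ and, $p$ being injective on $1\otimes C$, the right-hand side is non zero. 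Evaluating instead on the factored form, property (2) makes $X_{\mu}$ commute with $Y$ while $X_{\mu}$ annihilates $1\otimes X_{-\alpha}v$ (as $\germ n ^+$ kills $C$), whence $X_{\mu}\cdot p(u)=0$. This contradiction shows $L(C)\notin\cal O _{\Phi,\theta}$; as every simple object of $\cal O _{\Phi,\theta}$ is of the form $L(C)$ by Fernando's theorem, the category is trivial.

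What remains --- and it is the whole content of the argument --- is to supply, in each family of table \ref{tab1}, the root $\alpha$, the chain and the root $\mu$, and to verify (1) and (2). For a single short or interior node $\Phi\setminus\theta=\{\alpha\}$ one sets $\nu=\alpha$, so that $\mu=2\alpha+(\beta_1+\cdots+\beta_k)$ must be a positive root with coefficient $2$ on $\alpha$; for the multi-node families one routes the chain through $\theta$ and back, so that $\nu$ is a genuine root of the type $A$ ideal while $\mu$ still carries a coefficient at least $2$ on some simple root of $\theta$. Such a $\mu$ is available precisely because $\germ g $ is not of type $A$ at the junction between $\Phi\setminus\theta$ and $\theta$, which is exactly the feature missing from the surviving pairs $(A_n,A_m)$. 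I expect condition (2) to be the main obstacle: one has to check, using the explicit root data of each type $B$, $C$, $D$, $E$, $F$ and $G$ and the position of $\Phi\setminus\theta$, that subtracting any $\langle\theta\rangle$-root below the chain from $\mu$ never lands back in $\cal R $. Once (1) and (2) are in hand the contradiction is automatic, exactly as in theorems $B$ and $C$.
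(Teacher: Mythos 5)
Your general mechanism is sound, and it is in fact exactly the mechanism of the paper's proof: lemma \ref{lem7} gives $p(u)\neq 0$ together with the factorisation $p(u)=Y\cdot p(1\otimes X_{-\alpha}v)$ with $Y\in \mathcal{U}(\mathfrak{l}_{\theta}^-)_{-(\beta_1+\cdots+\beta_k)}$, and evaluating $X_{\mu}\cdot p(u)$ once directly (giving a nonzero multiple of $p(1\otimes X_{\nu}v)$ by cuspidality and injectivity of $p$ on $1\otimes C$) and once through the factorisation (giving $0$ when $X_{\mu}$ commutes with that weight space and kills $1\otimes C$) produces the contradiction. The paper's three displayed Dynkin configurations are special cases of your conditions (1) and (2), with $\nu=\alpha$ and $\mu$ equal to $\beta_1+2\alpha$, to $\gamma+2\alpha+2\beta_1+\cdots+2\beta_k$, or to $\alpha_2+2\alpha_1+2\beta_1+\cdots+2\beta_k$ respectively.

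The genuine gap is that you never instantiate this scheme. For no line of table \ref{tab1} do you exhibit the root $\alpha$, the chain $\beta_1,\ldots,\beta_k$ (with all partial sums $\alpha+\beta_1+\cdots+\beta_j$ roots) and the root $\mu$, nor verify conditions (1) and (2) for them --- and you say yourself that this verification ``is the whole content of the argument.'' That content is precisely the case-by-case root-system computation which separates the pairs of table \ref{tab1} from the surviving pairs (table \ref{tab2} and the type $A$/$C$ cases): the existence of a suitable $\mu$, and above all the vanishing condition (2) that $\mu-\gamma\notin\mathcal{R}$ for every $\langle\theta\rangle$-root $\gamma$ below the chain, is not automatic from the heuristic ``the junction with $\theta$ is not of type $A$.'' In particular the $D_n$, $E$ and $G_2$ entries (where there is no multiple edge at the junction, and where the chain has to leave $\Phi\setminus\theta$, travel into $\theta$ and return via two roots of the form $\epsilon_p\pm\epsilon_q$ or via the coefficient-$2$ or $3$ roots of $G_2$) require explicit choices and checks that your sketch does not supply. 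The paper writes these data out for the $B$, $C$ and $F_4$ families (and only then omits the remaining cases ``with the same method''); your proposal supplies them for none, so as it stands the statement is not proved, even though the approach is the right one.
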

\begin{proof} Assume on the contrary that $L(C)$ is in $\cal O _{\Phi,\theta}$. 
\begin{itemize}
\item[$\bullet$] Suppose we can find in the Dynkin diagram of $\germ g $ the following piece:

\setlength{\unitlength}{1cm}
\begin{center}
\begin{picture}(3,1.2)
\put(1,.5){\circle*{.2}}
\put(1,0.45){\line(1,0){1}}
\put(1,.55){\line(1,0){1}}
\put(2,.5){\circle*{.2}}
\put(2,.5){\circle{.35}}
\put(.9,.8){$\beta_1$}
\put(1.9,.8){$\alpha$}
\put(1.4,.4){$>$}
\end{picture}
\end{center} 

Let $v$ be a weight vector in $C$. Apply lemma \ref{lem7} to $\alpha$ and $\mbox{\boldmath $\beta$}=(\beta_1)$. Since $\beta_1$ is a simple root, there is a non zero complex number $\eta (v)$ such that $u=\eta(v)p(X_{-\beta_1}\otimes X_{-\alpha}v)$. Apply now the vector $X_{\beta_1+2\alpha}\in \germ n ^+$ to this equality. We get:
\begin{align*}
p([X_{\beta_1+2\alpha},X_{-\alpha-\beta_1}]\otimes v) = & \eta (v) p([X_{2\alpha+\beta_1},X_{-\beta_1}]\otimes X_{-\alpha}v).
\end{align*}
But $[X_{2\alpha+\beta_1},X_{-\beta_1}]=0$. So we have $p([X_{\beta_1+2\alpha},X_{-\alpha-\beta_1}]\otimes v)=0$. Moreover there is a non zero structure constant $c$ such that the following holds: $[X_{\beta_1+2\alpha},X_{-\alpha-\beta_1}]=cX_{\alpha}$. Thus we have $cp(1\otimes X_{\alpha}v)=0$. As $C$ is cuspidal, the action of $X_{\alpha}$ on $v$ is non zero. Therefore $p(1\otimes X_{\alpha}v)\not= 0$. This is a contradiction. From this reasoning the theorem is proved for the following cases in table \ref{tab1}: $(B_n,\{e_n\})$, $(C_n,\{e_{n-1},\ldots ,e_i\})$ with $i\leq n-1$, $(F_4,\{e_3\})$, $(F_4,\{e_3,e_4\})$.

\item[$\bullet$] Suppose we can find in the Dynkin diagram of $\germ g $ the following piece (with $k>0$):

\setlength{\unitlength}{1cm}
\begin{center}
\begin{picture}(7,1.2)
\put(1,.5){\circle*{.2}}
\put(1,.5){\line(1,0){1}}
\put(2,.5){\circle*{.2}}
\put(3,.5){\line(1,0){.2}}
\put(3.4,.5){\line(1,0){.2}}
\put(3.8,.5){\line(1,0){.2}}
\put(4.2,.5){\line(1,0){.2}}
\put(4.6,.5){\line(1,0){.2}}
\put(3,.5){\circle*{.2}}
\put(2,.5){\line(1,0){1}}
\put(5,.5){\circle*{.2}}
\put(5,.45){\line(1,0){1}}
\put(5,.55){\line(1,0){1}}
\put(6,.5){\circle*{.2}}
\put(2,.5){\circle{.35}}
\put(.9,.8){$\gamma$}
\put(4.9,.8){$\beta_{k-1}$}
\put(5.9,.8){$\beta_k$}
\put(1.9,.8){$\alpha$}
\put(2.9,.8){$\beta_1$}
\put(5.4,.4){$>$}
\end{picture}
\end{center}

We use the same method as above, applying lemma \ref{lem7} to $\alpha$ and $\mbox{\boldmath $\beta$}=(\gamma,\beta_1+\ldots +\beta_k,\beta_1+\ldots +\beta_k)$. Set $u:=p(X_{-\gamma-\alpha-2\beta_1-\ldots -2\beta_k}\otimes v)$. Lemma \ref{lem7} gives then $u\not=0$ and $u\in p(\cal U (\germ l _{\theta}^-)_{-\gamma-2(\beta_1+\cdots +\beta_k)}\otimes X_{-\alpha}v)$. Let us apply to $u$ the vector $X_{\gamma+2\alpha+2\beta_1+\cdots +2\beta_k}\in \germ n ^+$. First note that the adjoint action of $X_{\gamma+2\alpha+2\beta_1+\cdots +2\beta_k}$ on $\cal U (\germ l _{\theta}^-)_{-\gamma-2(\beta_1+\cdots +\beta_k)}$ is trivial. Therefore we must have:
$$p([X_{\gamma+2\alpha+2\beta_1+\cdots +2\beta_k},X_{-(\gamma+\alpha+2\beta_1+\cdots +2\beta_k)}]\otimes v)=0.$$ But now there a non zero structure constant $c$ such that 
$$[X_{\gamma+2\alpha+2\beta_1+\cdots +2\beta_k},X_{-(\gamma+\alpha+2\beta_1+\cdots +2\beta_k)}]=c X_{\alpha}.$$ Thus we should have $cp(1\otimes X_{\alpha}v)=0$. The cuspidality of the module $C$ implies that $X_{\alpha}v\not= 0$ and so we have $p(1\otimes X_{\alpha}v)\not= 0$. This is a contradiction. From this reasoning the theorem is proved for the following cases in table \ref{tab1}: $(B_n,\{e_i\})$ pour $i<n$, $(F_4,\{e_2\})$, $(F_4,\{e_1\})$.

\item[$\bullet$] Suppose we can find in the Dynkin diagram of $\germ g $ the following piece (with $k>0$):

\setlength{\unitlength}{1cm}
\begin{center}
\begin{picture}(7,1.2)
\put(1,.5){\circle*{.2}}
\put(1,.5){\line(1,0){1}}
\put(2,.5){\circle*{.2}}
\put(3,.5){\line(1,0){.2}}
\put(3.4,.5){\line(1,0){.2}}
\put(3.8,.5){\line(1,0){.2}}
\put(4.2,.5){\line(1,0){.2}}
\put(4.6,.5){\line(1,0){.2}}
\put(3,.5){\circle*{.2}}
\put(2,.5){\line(1,0){1}}
\put(5,.5){\circle*{.2}}
\put(5,.45){\line(1,0){1}}
\put(5,.55){\line(1,0){1}}
\put(6,.5){\circle*{.2}}
\put(2,.5){\circle{.35}}
\put(1,.5){\circle{.35}}
\put(.9,.8){$\alpha_2$}
\put(4.9,.8){$\beta_{k-1}$}
\put(5.9,.8){$\beta_k$}
\put(1.9,.8){$\alpha_1$}
\put(2.9,.8){$\beta_1$}
\put(5.4,.4){$>$}
\end{picture}
\end{center}

We apply lemma \ref{lem7} to the vector $u=p(X_{-\alpha_1-2\beta_1-\ldots -2\beta_k}\otimes x(b))$ with $\alpha=\alpha_1$ and $\mbox{\boldmath $\beta$}=(\beta_1+\ldots +\beta_k,\beta_1+\ldots +\beta_k)$. We get $u\not=0$ and $u\in \cal U (\germ g )_{-2\beta_1-\ldots -2\beta_k}$. Apply the vector $X_{\alpha_{2}+2\alpha_1+2\beta_1+\cdots +2\beta_l}\in \germ n ^+$ to this. We then obtain the same contradiction as above.  From this reasoning the theorem is proved for the following cases in table \ref{tab1}: $(B_n,\{e_i,\ldots ,e_{i+k}\})$ ($i+k<n$) et $(F_4,\{e_1,e_2\})$.

\item[$\bullet$] All the remaining cases are proved with the same method, by applying lemma \ref{lem7} to a well chosen vector. We omit the details.
\end{itemize}

\end{proof}


\subsection{Type $A$ case}\label{ssec:typeA}



\subsubsection{Case $\germ l '=\germ sl _{2}$ (I)}\label{sec:A1An}


We consider here the following case: $\germ g =A_n$ and $\Phi\setminus\theta=\{e_1\}$ (or $\{e_n\}$) which corresponds to the Dynkin diagram:

\setlength{\unitlength}{1cm}
\begin{center}
\begin{picture}(5,1.2)
\put(1,0.5){\line(1,0){1}}
\put(1,0.5){\circle*{.2}}
\put(2,0.5){\circle*{.2}}
\put(2,0.5){\line(1,0){.2}}
\put(2.4,0.5){\line(1,0){.2}}
\put(2.8,0.5){\line(1,0){.2}}
\put(3.2,0.5){\line(1,0){.2}}
\put(3.6,0.5){\line(1,0){.2}}
\put(4,0.5){\circle*{.2}}
\put(1,0.5){\circle{.35}}
\put(.9,0.8){$\alpha$}
\put(1.9,0.8){$\beta_1$}
\put(3.9,0.8){$\beta_{n-1}$}
\end{picture}
\end{center}

Let $L(C)$ be a simple module in the category $\cal O _{\Phi,\theta}$. In this case, $\germ l '=A_1$. Therefore the module $C$ is of the form $C=N(a_1,a_2)$ with $a_{1}$ and $a_{2}$ two non integer complex numbers, and the center of $\germ l $ acts as scalar operators on $C$. Set $A=a_1+a_2$. For the commodity of the reader, we recall that $C$ is generated by vectors $x(k)$ for $k\in \Z$, and the action of $\germ l '$ on $x(k)$ is given by the following recipe:
$$
\left\{ \begin{array}{ccc}
H_{\alpha}\cdot x(b) & = & (a_1-a_2+2k)x(k)\\
X_{\alpha}\cdot x(b) & = & (a_2-k)x(k+1)\\
X_{-\alpha}\cdot x(b) & = & (a_1+k)x(b-1)\\
\end{array}\right.$$

\begin{lemm}\label{lemA12}
We have $p(X_{-\beta_1-\alpha}\otimes x(k))=\frac{c+a_1+k}{a_2-k+1}p(X_{-\beta_1}\otimes x(k-1))$, where $c=0$ or $c=-1-A$.
\end{lemm}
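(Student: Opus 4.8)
The plan is to reduce the identity to the computation of a single scalar $\lambda_k$ and then to determine that scalar in two independent ways, whose comparison forces the dichotomy $c\in\{0,-1-A\}$. First I would apply Lemma \ref{lem7} to the root $\alpha=e_1$ and to the single adjacent simple root $\beta_1$ (so that $\alpha+\beta_1\in \cal R $). Since $X_{-\alpha}\cdot x(k)=(a_1+k)x(k-1)$ and $p$ is a homomorphism of $\germ g $-modules with $X_{-\beta_1}\cdot p(1\otimes x(k-1))=p(X_{-\beta_1}\otimes x(k-1))$, the lemma yields a scalar $\lambda_k$ with $p(X_{-\beta_1-\alpha}\otimes x(k))=\lambda_k\,p(X_{-\beta_1}\otimes x(k-1))$, both sides being nonzero because $a_1,a_2\notin\Z$. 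Equivalently, $w_k:=X_{-\beta_1-\alpha}\otimes x(k)-\lambda_k X_{-\beta_1}\otimes x(k-1)$ lies in $K(C)$, and the whole problem becomes the evaluation of $\lambda_k$.

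To evaluate $\lambda_k$ I would use that $\beta_1$ and $\alpha+\beta_1$ are positive roots not proportional to $\alpha$, so $X_{\beta_1},X_{\alpha+\beta_1}\in\germ n $ are raising operators. Applying each to $w_k$ lands in the weight spaces of $V(C)$ of the weights of $1\otimes x(k-1)$ and $1\otimes x(k)$ respectively; a direct inspection (no nonzero positive combination in $\cal R ^+\setminus\{\alpha\}$ equals a nonzero multiple of $\alpha$) shows these weight spaces are one-dimensional, spanned by $1\otimes x(k-1)$ and $1\otimes x(k)$. Since $w_k\in K(C)$, Proposition \ref{GVM}(5) forces $pr(X_{\beta_1}\cdot w_k)=0$ and $pr(X_{\alpha+\beta_1}\cdot w_k)=0$. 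Commuting each raising operator to the right so that $\germ n $ annihilates $1\otimes C$, and using the brackets $[X_{\alpha+\beta_1},X_{-\beta_1-\alpha}]=H_{\alpha+\beta_1}$, $[X_{\alpha+\beta_1},X_{-\beta_1}]=X_\alpha$, $[X_{\beta_1},X_{-\beta_1-\alpha}]=X_{-\alpha}$ and $[X_{\beta_1},X_{-\beta_1}]=H_{\beta_1}$ (I normalise the root vectors by a Chevalley basis so the two off-diagonal structure constants are $1$; the antisymmetry $N_{\gamma,\delta}=-N_{-\gamma,-\delta}$ makes this consistent), the two vanishings become $\mu_k(H_{\alpha+\beta_1})=\lambda_k(a_2-k+1)$ and $a_1+k=\lambda_k\,\mu_{k-1}(H_{\beta_1})$, where $\mu_k$ denotes the $\germ h $-weight of $x(k)$.

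The first relation already has the right shape: putting $c:=\mu_k(H_{\alpha+\beta_1})-a_1-k$ and checking, by a short computation with $\mu_k(H_\alpha)=a_1-a_2+2k$, $\langle\alpha,\beta_1^\vee\rangle=-1$ and $H_{\alpha+\beta_1}=H_\alpha+H_{\beta_1}$, that $c$ is independent of $k$ (it is the central character of $C$ shifted by $-A/2$), gives $\lambda_k=\frac{c+a_1+k}{a_2-k+1}$. The remaining and crucial point is $c\in\{0,-1-A\}$. For this I would eliminate $\lambda_k$ between the two relations to obtain the polynomial identity $\mu_k(H_{\alpha+\beta_1})\,\mu_{k-1}(H_{\beta_1})=(a_1+k)(a_2-k+1)$ in the variable $k$; its $k^2$- and $k^1$-coefficients agree automatically, and its constant coefficient, once the weights are rewritten in terms of $c$, reduces exactly to $c(c+1+A)=0$. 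This is the asserted dichotomy. I expect the main difficulty to lie precisely here: the two raising-operator computations must be carried out with the correct structure constants, and the whole force of the argument is that the $k^2$- and $k^1$-terms cancel on the nose so that only the clean quadratic $c(c+1+A)$ survives; getting the normalisation of the root vectors right so that the coefficient in the final formula is genuinely $1$ is the fussy part, while the cancellations themselves are what make the two-valued answer fall out.
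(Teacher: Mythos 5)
Your proposal is correct and follows essentially the same route as the paper: apply Lemma \ref{lem7} to get the proportionality with a scalar $\eta(k)=\lambda_k$, use the central element of $\germ l $ to write the action of $H_{\beta_1}$ on $x(k)$ as $c+a_2-k$, then apply the two raising operators $X_{\beta_1}$ and $X_{\alpha+\beta_1}$ to obtain the two scalar equations whose compatibility yields $\lambda_k=\frac{c+a_1+k}{a_2-k+1}$ and $c(c+1+A)=0$. Your phrasing via $K(C)$ and Proposition \ref{GVM}(5) is just an equivalent reformulation of the paper's step of applying the operators to the identity in $L(C)$ and using injectivity of $p$ on $1\otimes C$.
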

\begin{proof} Set $u=p(X_{-\beta_1-\alpha}\otimes x(k))$. As $X_{-\alpha}\cdot x(b)=(a_1+k)x(k-1)$ and $a_1\not =0$, lemma \ref{lem7} applied to $\alpha$ and $\mbox{\boldmath $\beta$}=(\beta_1)$) ensures that there is a non zero complex number $\eta(k)$ such that 
\begin{eqnarray}\label{eq1}
u=\eta(k)p(X_{-\beta_1}\otimes x(k-1)).
\end{eqnarray}
On the other hand, $2H_{\beta_1}+H_{\alpha}$ is in the center of $\germ l $. Therefore it acts on $C$ by some constant. Let $c(k)$ denotes the action of $H_{\beta_1}$ on $x(k)$. Then we must have $2c(k)+(a_1-a_2+2k)=cte$. Thus we have $c(k)=c+a_2+k$ for some constant $c$. Apply $X_{\beta_1}$ and $X_{\beta_1+\alpha}$ to equation \eqref{eq1}. We obtain the following equations:
$$\left\{\begin{array}{ccc}
p([X_{\beta_1},X_{-\beta_1-\alpha}]\otimes x(k)) & = & \eta (k)p([X_{\beta_1},X_{-\beta_1}]\otimes x(k-1))\\
p([X_{\beta_1+\alpha},X_{-\beta_1-\alpha}]\otimes x(k)) & = & \eta(k)p([X_{\beta_1+\alpha},X_{-\beta_1}]\otimes x(k-1))\\
\end{array}\right.$$
Now we have the following structure constants:
$$[X_{\beta_1},X_{-\beta_1-\alpha}]=X_{-\alpha}, \: [X_{\beta_1+\alpha},X_{-\beta_1}]= X_{\alpha}.$$ Hence we get:
$$\left\{\begin{array}{ccc}
(a_1+k) p(1\otimes x(k-1)) & = & \eta (k)c(k-1)p(1\otimes x(k-1))\\
(c(k)+a_1-a_2+2k)p(1\otimes x(k)) & = & \eta(k)(a_2-k+1)p(1\otimes x(k))\\
\end{array}\right.$$
Since $p(1\otimes x(k-1))\not= 0$ and $p(1\otimes x(k))\not= 0$, we deduce that:
$$\left\{\begin{array}{ccc}
a_1+k  & = & \eta (k)c(k-1)\\
(c(k)+a_1-a_2+2k) & = & \eta(k)(a_2-k+1)\\
\end{array}\right.$$
The solution of this system is:
$$c=0 \mbox{ or } c=-1-A \mbox{ and } \eta(k)=\frac{c+a_1+k}{a_2-k+1}.$$

\end{proof}

The previous lemma together with theorem \ref{thmNO} allow us to state a classification result for $\germ g =A_2$ :

\begin{coro}\label{corA12}
Assume $\germ g =A_2$. Let $\theta\subset \Phi$ be such that $\Phi\setminus\theta=\{e_1\}$. Then $M$ is a simple module in $\cal O _{\Phi,\theta}$ if and only if $M$ is isomorphic to some $N(a'_1,a'_2,0)$ with $a'_1, a'_2\in \C\setminus \Z$.
\end{coro}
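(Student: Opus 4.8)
The plan is to prove the two implications separately, in each case reducing to the explicit degree-one module $N(a'_1,a'_2,0)$ whose action is completely known. For the implication from right to left, I would first observe that a triple $a=(a'_1,a'_2,0)$ with $a'_1,a'_2\in\C\setminus\Z$ is exactly of the form treated in theorem \ref{thmNO}, namely with $j=0$, $m=2$ and $l=1$, so that the hypothesis $m>j+1$ is satisfied. A glance at the explicit action of the root vectors then shows that $X_{\pm e_1}$ act by the scalars $a'_2+k_2$ and $a'_1+k_1$, which never vanish since $a'_1,a'_2\notin\Z$, whereas $X_{e_2}$ and $X_{e_1+e_2}$ annihilate every $x(k)$ with $k_3=0$. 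Hence $e_1$ is the unique cuspidal simple root, i.e. $\theta_a=\theta$, and theorem \ref{thmNO} delivers at once that $N(a'_1,a'_2,0)$ is a simple object of $\cal O _{\Phi,\theta}$.

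For the converse, let $M$ be a simple module in $\cal O _{\Phi,\theta}$. By Fernando's theorem \ref{thmFe} we may write $M=L(C)$, and theorem $A$ forces $C$ to be a simple cuspidal $\germ l $-module of degree one. Since $\germ l '=\germ sl _{2}$ is generated by $\alpha=e_1$, necessarily $C\cong N(a_1,a_2)$ with $a_1,a_2\in\C\setminus\Z$, the centre of $\germ l $ acting by a scalar. The module $M$ is then completely determined by the $\germ sl _{2}$-structure of $C$ together with this central character, so the task is to pin down the admissible central characters and to match each one to a member of the family $N(a'_1,a'_2,0)$. Here I would invoke lemma \ref{lemA12}: the proportionality it establishes between $p(X_{-\alpha-\beta_1}\otimes x(k))$ and $p(X_{-\beta_1}\otimes x(k-1))$ restricts the scalar $c$ governing the central element $2H_{\beta_1}+H_{\alpha}$ to the two values $c=0$ and $c=-1-A$, where $A=a_1+a_2$.

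It then remains to identify $M$ in each case. When $c=0$, I would compare $M$ with $N(a_1,a_2,0)$: by the first part this module is simple in $\cal O _{\Phi,\theta}$, and a direct computation of its $\germ n $-invariants identifies $N(a_1,a_2,0)^{\germ n }$ with $C$ (same $\germ sl _{2}$-module, same central character $c=0$), so proposition \ref{GVM}, part (4), yields $M\cong N(a_1,a_2,0)$. For the value $c=-1-A$ I would use the symmetry $A\mapsto -2-A$ of the $\germ sl _{2}$-Casimir: a simple cuspidal $\germ sl _{2}$-module depends on $A$ only through $\frac12 A(A+2)$, so $N(a_1,a_2)\cong N(-1-a_2,-1-a_1)$, and once the central characters are aligned (both then act by $-2-A$) these are isomorphic as $\germ l $-modules; since $-1-a_1,-1-a_2\notin\Z$, the same comparison with $N(-1-a_2,-1-a_1,0)$ identifies $M$ with that module. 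In both cases $M$ lies in the asserted family, which together with the first part completes the proof.

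The genuinely delicate point is this second central character $c=-1-A$: it is not visibly a member of the family, and the matching rests on recognising that the two triples $(a_1,a_2,0)$ and $(-1-a_2,-1-a_1,0)$ produce isomorphic $\germ l $-modules, i.e. that $N(a_1,a_2)$ only sees $A$ through the Casimir value $\frac12 A(A+2)$. By contrast, the remaining ingredients — the explicit root action used in the first part and the computation of $N(a_1,a_2,0)^{\germ n }$ — are routine, provided one keeps careful track of the indices and of the non-integrality of the parameters.
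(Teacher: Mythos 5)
Your argument is correct, and its overall skeleton is the one in the paper: write $M=L(C)$ with $C\cong N(a_1,a_2)$ cuspidal of degree $1$, use Lemma \ref{lemA12} to pin the central parameter down to $c=0$ or $c=-1-A$, identify $M$ with an explicit degree-one module, and get the converse from Theorem \ref{thmNO}. Where you genuinely diverge is the identification step. The paper compares the action of $\cal U (\germ g )_{0}$ on the weight vector $x(0)$ of $L(C)$ with its action on $x(0,0,0)\in N(a_1,a_2,0)$ (resp.\ $N(-1-a_2,-1-a_1,0)$) and concludes by Lemire's correspondence \cite{Le68}; you instead compute the $\germ n $-invariants of the candidate module $N(a'_1,a'_2,0)$, check that they reproduce $C$ as an $\germ l $-module (the $\germ sl _2$-structure together with the central character, with the case $c=-1-A$ absorbed by the isomorphism $N(a_1,a_2)\cong N(-1-a_2,-1-a_1)$, a symmetry the paper itself records only later, in the remark after Corollary \ref{cor:AC1}), and then invoke Proposition \ref{GVM}(4) to conclude $M\cong N(a'_1,a'_2,0)$. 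Your route is more self-contained: it uses only statements already proved or quoted in the paper (Theorem \ref{thmBBLA}/\ref{thmNO} for simplicity of $N(a'_1,a'_2,0)$ and Fernando's Proposition \ref{GVM}) and avoids matching the full $\cal U (\germ g )_{0}$-action, at the cost of an explicit computation of the invariants and of the central character on them; the paper's appeal to Lemire's correspondence is shorter and is the template it reuses verbatim in the later identifications (Corollaries \ref{cor:A1N}, \ref{corA1NL}, \ref{cor:AC1} and Theorem \ref{thm:AkAn}). Both proofs hinge on the same delicate point you single out, namely that the second central character is realized by the triple $(-1-a_2,-1-a_1,0)$, and you handle it correctly.
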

\begin{proof} Let $M$ be a simple module in $\cal O _{\Phi,\theta}$. As we already mentioned we have $M=L(C)$ for some cuspidal module $C=N(a_1,a_2)$. Now, using lemma \ref{lemA12} we see that the action of $\cal U (\germ g )_{0}$ on the vector $x(0)$ is the same as the action of $\cal U (\germ g )_{0}$ on the vector $x(0,0,0) \in N(a_{1},a_{2},0)$ if $c=0$ or on the vector $x(0)\in N(-1-a_{2},-1-a_{1},0)$ if $c=-1-A$. Therefore we conclude from Lemire's correspondence \cite{Le68} that these modules are isomorphic. Conversely theorem \ref{thmNO} ensures that these modules are objects in the category $\cal O _{\Phi,\theta}$.

\end{proof}

Unfortunately, our method is rather inefficient to treat the general case, which seems to be more complicate. As an example we treat the case of $A_{3}$ in appendix \ref{append}.


\subsubsection{Case $\germ l '=\germ sl _{2}$ (II)}\label{subsec:A1N}


Here we consider the case $\Phi\setminus\theta=\{e_l\}$, with $1<l<n$. Hence the Dynkin diagram of $\germ g =A_n$ contains the following piece:

\setlength{\unitlength}{1cm}
\begin{center}
\begin{picture}(4,1.2)
\put(1,.5){\line(1,0){1}}
\put(1,.5){\circle*{.2}}
\put(2,.5){\circle*{.2}}
\put(2,.5){\line(1,0){1}}
\put(3,.5){\circle*{.2}}
\put(2,.5){\circle{.35}}
\put(.9,.8){$\gamma_1$}
\put(1.9,.8){$\alpha$}
\put(2.9,.8){$\beta_1$}
\end{picture}
\end{center}

Let $L(C)$ be a simple module in the category $\cal O _{\Phi,\theta}$. In this case, $\germ l '=A_1$. Therefore the module $C$ is of the form $C=N(a_1,a_2)$ with $a_{1}$ and $a_{2}$ two non integer complex numbers, and the center of $\germ l $ acts as scalar operators on $C$. Set $A=a_1+a_2$. Remark that $H_{\alpha}+2H_{\beta_1}$ and $H_{\alpha}+2H_{\gamma_1}$ are in the center of $\germ l _{\Phi\setminus\theta}$. Denote by $c(k)$ and $c'(k)$ the respective actions of $H_{\beta_1}$ and $H_{\gamma_1}$ on $x(k)$. As in lemma \ref{lemA12}, we get $c(k)=c+a_2+k$ and $c'(b)=c'+a_2+k$ where $c$ and $c'$ are equal either to $0$ or to $-1-A$.

\begin{lemm}\label{lem:A1N}
With the notations as above we have $c+c'+A+1=0$ and $cc'=0$.
\end{lemm}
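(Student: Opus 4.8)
The plan is to play the ``left'' neighbour $\gamma_1$ and the ``right'' neighbour $\beta_1$ of $\alpha$ off against each other through the single long positive root $\gamma_1+\alpha+\beta_1$, which for the middle node plays the role that $\alpha+\beta_1$ played in lemma \ref{lemA12}. Throughout write $v_k:=p(1\otimes x(k))$, a non-zero highest weight vector by lemma \ref{lem2}, and set $Q:=a_2-k+1$, so that $a_1+k=A+1-Q$ and $a_1+k\neq0$ since $a_1\notin\Z$. Recall that centrality of $H_\alpha+2H_{\beta_1}$ (resp. $H_\alpha+2H_{\gamma_1}$) forces $c(k)=c+a_2-k$ (resp. $c'(k)=c'+a_2-k$), so that the $k$-dependence in $c(k)+c'(k)+(a_1-a_2+2k)$ cancels and this sum equals $c+c'+A$. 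The object to study is $u:=p(X_{-\gamma_1-\alpha-\beta_1}\otimes x(k))$, which I shall hit with two elements of $\germ n ^+$ and evaluate each of the resulting vectors in two ways.

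First I would extract the shape of $u$. Applying lemma \ref{lem7} to $\alpha$ and $\mbox{\boldmath $\beta$}=(\gamma_1,\beta_1)$ gives $u\neq0$ and $u\in\cal U (\germ l _{\theta}^-)_{-(\gamma_1+\beta_1)}\cdot p(1\otimes X_{-\alpha}x(k))$; since $\gamma_1+\beta_1\notin\cal R $ this weight space is just $\C X_{-\gamma_1}X_{-\beta_1}$, whence
$$u=\nu\,X_{-\gamma_1}X_{-\beta_1}v_{k-1}\quad\text{for some }\nu=\nu(k)\neq0 .$$
Now let $w:=X_{\gamma_1}X_{\beta_1}$. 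The adjoint computation inside the proof of lemma \ref{lem7} gives $w\cdot u=e_1e_2\,(a_1+k)\,v_{k-1}$ with non-zero structure constants $e_1,e_2$; expanding $w\cdot u$ instead from the displayed form of $u$, using that $X_{\gamma_1},X_{\beta_1}$ commute with $X_{-\beta_1},X_{-\gamma_1}$ and kill $v_{k-1}$, gives $w\cdot u=\nu\,c(k-1)c'(k-1)\,v_{k-1}$. Comparing yields the first relation
$$\nu\,(c+Q)(c'+Q)=e_1e_2\,(A+1-Q).$$

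Next I would apply $X_{\gamma_1+\alpha+\beta_1}\in\germ n ^+$ to $u$. Computing directly, $X_{\gamma_1+\alpha+\beta_1}\cdot u=p(H_{\gamma_1+\alpha+\beta_1}\otimes x(k))$ with $H_{\gamma_1+\alpha+\beta_1}=H_{\gamma_1}+H_\alpha+H_{\beta_1}$, and by the cancellation above this is $c_0(c+c'+A)\,v_k$ for the non-zero normalisation scalar $c_0$ of $[X_{\gamma_1+\alpha+\beta_1},X_{-\gamma_1-\alpha-\beta_1}]$. Computing instead from $u=\nu X_{-\gamma_1}X_{-\beta_1}v_{k-1}$, commuting $X_{\gamma_1+\alpha+\beta_1}$ inwards and using that it annihilates $v_{k-1}$ and that $X_{\gamma_1+\alpha}v_{k-1}=X_{\alpha+\beta_1}v_{k-1}=0$, every mixed term drops and one is left with a single contribution $\nu\,c_1c_3\,(a_2-k+1)\,v_k$ with $c_1,c_3$ non-zero structure constants. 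This gives the second relation
$$c_0(c+c'+A)=\nu\,c_1c_3\,Q .$$

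Finally I would eliminate $\nu$. Multiplying the second relation by $(c+Q)(c'+Q)$ and substituting the first turns it into
$$c_0(c+c'+A)(c+Q)(c'+Q)=K\,(A+1-Q)\,Q,\qquad K:=c_1c_3e_1e_2\neq0,$$
valid for all $k$, hence, as $Q=a_2-k+1$ takes infinitely many values, an identity of polynomials in $Q$. Matching the coefficient of $Q^2$ gives $c_0(c+c'+A)=-K\neq0$, so $c+c'+A\neq0$; feeding this into the coefficients of $Q^1$ and $Q^0$ then forces $c+c'=-(A+1)$ and $cc'=0$, which are exactly the two assertions. The main obstacle is bookkeeping rather than ideas: one must check that in both double computations every ``mixed'' commutator term genuinely vanishes and that each surviving scalar ($a_1+k$, the $e_i$, the $c_i$, and $c_0$) is non-zero, since it is precisely $K\neq0$ — equivalently, the fact that the $Q^2$-coefficient cannot vanish — that rules out the spurious solution $c+c'+A=0$ and pins down $c+c'+A+1=0$.
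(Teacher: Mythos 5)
Your proof is correct, and its skeleton is the one the paper uses: the same test vector $u=p(X_{-\gamma_1-\alpha-\beta_1}\otimes x(k))$, Lemma \ref{lem7} to write it as $\nu\,X_{-\gamma_1}X_{-\beta_1}p(1\otimes x(k-1))$, and the long root vector $X_{\gamma_1+\alpha+\beta_1}$ to produce the relation involving $c+c'+A$. Where you genuinely diverge is in how you close the computation. The paper's second equation comes from applying $X_{\alpha+\beta_1}$, which pins down $\eta(k)(a_2-k+1)=1$ but needs the nonvanishing of $p(X_{-\gamma_1}\otimes x(k))$, and it then obtains $cc'=0$ only by combining $c+c'+A+1=0$ with the prior dichotomy $c,c'\in\{0,-1-A\}$ inherited from Lemma \ref{lemA12}. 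You instead take the second equation from $X_{\gamma_1}X_{\beta_1}$, eliminate the unknown scalar $\nu$, and exploit that the resulting identity holds for all $k$, i.e.\ for infinitely many values of $Q=a_2-k+1$, so that comparing coefficients of the polynomial in $Q$ delivers both $c+c'+A+1=0$ and $cc'=0$ at once; this makes the lemma self-contained (no appeal to the dichotomy, no nonvanishing statement beyond $p(1\otimes x(k))\neq0$), at the modest cost of a little more structure-constant bookkeeping, all of which checks out (the consistency $c_0=c_1c_3e_1e_2$ demanded by the $Q^2$-coefficient is automatic, as one verifies in the standard basis of type $A$). Two small presentational points: your phrase that ``$X_{\gamma_1},X_{\beta_1}$ commute with $X_{-\beta_1},X_{-\gamma_1}$'' should say only the cross pairs commute --- your displayed outcome $\nu\,c(k-1)c'(k-1)v_{k-1}$ is exactly what the non-commuting same-root pairs produce, so the computation itself is right; and you are (correctly) using $c(k)=c+a_2-k$, which is the reading forced by the computation in Lemma \ref{lemA12}, the ``$+k$'' in the paper's displayed formula being a typo.
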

\begin{proof} Consider $v:=p(X_{-\gamma_1-\alpha-\beta_1}\otimes x(k))$. Lemma \ref{lem7} applied to $\alpha$ and $\mbox{\boldmath $\beta$}=(\beta_1,\gamma_1)$ implies that there is a non zero complex number $\eta(k)$ such that
\begin{align}\label{lemA1n}
v= & \eta(k)p(X_{-\beta_1}X_{-\gamma_1}\otimes x(k-1)).
\end{align}
Apply $X_{\gamma_1+\alpha+\beta_1}$ to equation \eqref{lemA1n}. We get:
\begin{align*}
p([X_{\gamma_1+\alpha+\beta_1},X_{-\gamma_1-\alpha-\beta_1}] \otimes x(k)) = & \eta(k) p\left(\left([X_{\gamma_1+\alpha+\beta_1},X_{-\beta_1}]X_{-\gamma_1}\right.\right.\\
 & \left.\left. +X_{-\beta_1}[X_{\gamma_1+\alpha+\beta_1},X_{-\gamma_1}]\right) \otimes x(k-1)\right).
\end{align*}
On the other hand we have the following:
\begin{align*}
[X_{\gamma_1+\alpha+\beta_1},X_{-\gamma_1-\alpha-\beta_1}] = & H_{\beta_1+\alpha+\gamma_1}=H_{\beta_1}+H_{\alpha}+H_{\beta_1}\\
[X_{\gamma_1+\alpha+\beta_1},X_{-\beta_1}]= & X_{\gamma_1+\alpha}\\
[X_{\gamma_1+\alpha+\beta_1},X_{-\gamma_1}]= & -X_{\beta_1+\alpha}.
\end{align*}
Thus we obtain:
\begin{align*}
p((H_{\beta_1}+H_{\alpha}+H_{\beta_1})\otimes x(k)) = & \eta(k) p((X_{\alpha+\gamma_1}X_{-\gamma_1}-X_{-\beta_1}X_{\alpha+\beta_1})\otimes x(k-1)).
\end{align*}
But now $[X_{\alpha+\gamma_1},X_{-\gamma_1}]=-X_{\alpha}$ and $X_{\alpha+\beta_1}\in \germ n ^+$. Hence:
\begin{multline}\label{lemA1kn}
(c+c'+a_2+a_1)p(1\otimes x(k))=  -\eta(k)(a_2-k+1)p(1\otimes x(k)).
\end{multline}
Applying now $X_{\alpha+\beta_1}$ to equation \eqref{lemA1n}, we get:
$$p(X_{-\gamma_1}\otimes x(k))=\eta(k)(a_2-k+1)p(X_{-\gamma_1}\otimes x(b)).$$ Since $p(X_{-\gamma_1}\otimes x(b))\not= 0$ by lemma \ref{lem2}, we deduce that $\eta(k)(a_2-k+1)=1$. Then equation \eqref{lemA1kn} gives $c+c'+A+1=0$. Since $c$ and $c'$ are equal either to $0$ or to $-1-A$, we conclude that $cc'=0$ except if $c=c'=-1-A$. But in this case the equation $c+c'+A+1=0$ gives $-1-A=0$ and therefore $c=c'=0$.

\end{proof}

From now on, we will assume that $c=0$ and $c'=-1-A$. First we have the following corollary, whose proof is analogous to the proof of corollary \ref{corA12} (and is thus omitted).

\begin{coro}\label{cor:A1N}
Assume $\germ g =A_{3}$. Consider $\theta \subset \Phi$ such that $\Phi\setminus\theta=\{e_2\}$. Then $M$ is a simple module in the category $\cal O _{\Phi,\theta}$ if and only if $M$ is isomorphic to some $N(-1,a_1,a_2,0)$ with $a_1,a_2 \in \C\setminus\Z$.
\end{coro}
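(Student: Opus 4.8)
The plan is to mimic the proof of corollary \ref{corA12} almost verbatim, the only new feature being that $\alpha$ now sits between two simple roots $\gamma_1,\beta_1$ of $\theta$ in the $A_3$ chain. By Theorems $A$, $B$ and $C$ together with the reduction already carried out, every simple $M$ in $\cal O _{\Phi,\theta}$ has the form $M=L(C)$ with $C=N(a_1,a_2)$ a simple cuspidal $\germ l $-module of degree $1$, where $a_1,a_2\in\C\setminus\Z$. The central elements $H_{\alpha}+2H_{\beta_1}$ and $H_{\alpha}+2H_{\gamma_1}$ of $\germ l $ act by scalars, so $H_{\beta_1}$ and $H_{\gamma_1}$ act on $x(k)$ by $c(k)=a_2+k$ and $c'(k)=-1-a_1+k$ under the normalization $c=0$, $c'=-1-A$ supplied by lemma \ref{lem:A1N}. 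The goal is to identify $M$ with $N(-1,a_1,a_2,0)$.

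As in corollary \ref{corA12}, the heart of the argument is to compute the action of $\cal U (\germ g )_0$, the commutant of $\germ h $, on the cyclic weight vector $p(1\otimes x(0))$ of $M$, and to match the resulting $\cal U (\germ g )_0$-module structure with the one carried by the corresponding weight vector of $N(-1,a_1,a_2,0)$; Lemire's correspondence \cite{Le68} then forces $M\cong N(-1,a_1,a_2,0)$. These being pointed modules, each element of $\cal U (\germ g )_0$ acts by a scalar on a weight line, so it suffices to record the scalars by which the generators $\germ h $ and $X_{\delta}X_{-\delta}$ act, $\delta$ a positive root of $A_3$. For $\delta=\alpha$ the scalar is read off the explicit $\germ sl _2$-action on $x(0)$; for $\delta\in\{\gamma_1,\beta_1\}$ it follows from $c(k)$ and $c'(k)$; and for the three remaining roots $\alpha+\beta_1$, $\gamma_1+\alpha$ and $\gamma_1+\alpha+\beta_1$ I would use lemma \ref{lem7} to rewrite $p(X_{-\delta}\otimes x(0))$ in terms of $p(1\otimes X_{-\alpha}x(0))$ and then apply the matching raising operator, exactly as in the proof of lemma \ref{lem:A1N}.

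I would then compare these values with those produced by the explicit $A_3$-action of theorem \ref{thmNO} at the vector $x(0,0,0,0)\in N(-1,a_1,a_2,0)$. Here the tuple $(-1,a_1,a_2,0)$ has a single leading $-1$ and a single trailing $0$, so in the notation of theorem \ref{thmNO} one has $j=1$ and $m=3$; the two circled nodes of the diagram collapse onto $e_2$, giving $\Phi\setminus\theta_a=\{e_2\}$, exactly our $\Phi\setminus\theta$. The normalization $c=0$, $c'=-1-A$ is precisely what makes the two families of scalars agree, so Lemire's correspondence yields $M\cong N(-1,a_1,a_2,0)$. For the converse, theorem \ref{thmNO} asserts directly that $N(-1,a_1,a_2,0)$ with $a_1,a_2\in\C\setminus\Z$ is a simple object of $\cal O _{\Phi,\theta}$, which closes the equivalence.

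The step I expect to be the genuine obstacle is controlling the normalization. Lemma \ref{lem:A1N} only pins down $\{c,c'\}=\{0,-1-A\}$ as an unordered pair, and one must verify that the choice $c=0$, $c'=-1-A$ places the $-1$ at the $\gamma_1$-end and the $0$ at the $\beta_1$-end of the chain, thereby producing the normal form $N(-1,a_1,a_2,0)$ rather than its reverse. Since $\gamma_1$ and $\beta_1$ play asymmetric roles once the normalization is fixed, I would track carefully which end carries $-1$ and which carries $0$; the opposite choice is carried onto ours by the order-two automorphism of the $A_3$ diagram exchanging $\gamma_1$ and $\beta_1$, so no generality is lost, just as the passage $c=0\leftrightarrow c=-1-A$ replaced $(a_1,a_2)$ by $(-1-a_2,-1-a_1)$ in corollary \ref{corA12}.
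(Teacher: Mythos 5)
Your proposal is correct and follows exactly the route the paper intends: the paper omits this proof, stating it is analogous to corollary \ref{corA12}, i.e.\ combine lemma \ref{lem:A1N} with a comparison of the $\cal U (\germ g )_0$-action on a highest weight line against that on $x(0)\in N(-1,a_1,a_2,0)$ via Lemire's correspondence, with theorem \ref{thmNO} giving the converse. Your extra care about the unordered normalization $\{c,c'\}=\{0,-1-A\}$ is a sound justification of what the paper simply decrees by "from now on we assume $c=0$ and $c'=-1-A$".
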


From now on we assume that $n>3$. Hence we have the following Dynkin diagram: 
\setlength{\unitlength}{1cm}
\begin{center}
\begin{picture}(6,1.2)
\put(0,.5){\line(1,0){.2}}
\put(.4,.5){\line(1,0){.2}}
\put(.8,.5){\line(1,0){.2}}
\put(1,.5){\line(1,0){1}}
\put(2,.5){\line(1,0){1}}
\put(3,.5){\line(1,0){1}}
\put(4,.5){\line(1,0){1}}
\put(5,.5){\line(1,0){.2}}
\put(5.4,.5){\line(1,0){.2}}
\put(5.8,.5){\line(1,0){.2}}
\put(2,.5){\circle*{.2}}
\put(3,.5){\circle*{.2}}
\put(4,.5){\circle*{.2}}
\put(1,.5){\circle*{.2}}
\put(5,.5){\circle*{.2}}
\put(3,.5){\circle{.35}}
\put(.9,.8){$\gamma_2$}
\put(2.9,.8){$\alpha$}
\put(3.9,.8){$\beta_1$}
\put(1.9,.8){$\gamma_1$}
\put(4.9,.8){$\beta_2$}
\end{picture}
\end{center}
The vectors $H_{\beta_i}$ and $H_{\gamma_j}$ for $i$ and $j$ greater than $1$ are in the center of $\germ l $. Denote by $d_i$ and $d'_j$ their action on $C$. We show:

\begin{lemm}\label{lemA1NL}
With notations as above, we have $d_i=0=d'_j$ for any $i>1$ and $j>1$.
\end{lemm}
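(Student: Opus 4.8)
The plan is to pin down each central scalar $d_i$ (and, symmetrically, $d'_j$) by probing the long root $\alpha+\beta_1+\cdots+\beta_i$ with lemma \ref{lem7} and then striking the resulting identity with several elements of $\germ n ^+$, in the spirit of the proof of lemma \ref{lem:A1N}. I would argue by induction on $i$, the case $i=2$ being the prototype, so let me describe how to force $d_2=0$. Fix $v=x(k)\in C$, set $\xi=p(1\otimes X_{-\alpha}x(k))$ and $\xi_k=p(1\otimes x(k))$; by lemma \ref{lem2}, $\xi$ is an $\germ l _{\theta}$-highest weight vector, so $X_{\beta_1}\xi=X_{\beta_2}\xi=0$, and $X_{\alpha+\beta_1}\xi=X_{\alpha+\beta_1+\beta_2}\xi=0$ since these vectors lie in $\germ n ^+$ and annihilate $1\otimes C$.

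First I apply lemma \ref{lem7} to $\alpha$ and $\mbox{\boldmath $\beta$}=(\beta_1,\beta_2)$. As $\alpha+\beta_1$, $\alpha+\beta_1+\beta_2$ are roots and $X_{-\alpha}x(k)=(a_1+k)x(k-1)\ne0$, this gives $p(X_{-\alpha-\beta_1-\beta_2}\otimes v)\ne0$ lying in the weight space $\cal U (\germ l _{\theta}^{-})_{-(\beta_1+\beta_2)}\xi$, so I may write
\[p(X_{-\alpha-\beta_1-\beta_2}\otimes v)=r\,X_{-\beta_1}X_{-\beta_2}\xi+s\,X_{-\beta_2}X_{-\beta_1}\xi.\]
Now I hit this with three raising operators. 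Applying $X_{\alpha+\beta_1+\beta_2}$, using $[X_{\alpha+\beta_1+\beta_2},X_{-\alpha-\beta_1-\beta_2}]=H_\alpha+H_{\beta_1}+H_{\beta_2}$ and $X_\alpha\xi=(a_1+k)(a_2-k+1)\xi_k$, and reading off the coefficient of $\xi_k$ yields $a_1+3k+d_2=s\,c_1c_2(a_1+k)(a_2-k+1)$, where the slope $3=2+1+0$ records the eigenvalues of $H_\alpha,H_{\beta_1},H_{\beta_2}$ (here I use $c=0$). Applying $X_{\beta_2}$ and using $X_{\beta_2}X_{-\beta_2}\xi=d_2\xi$ together with $H_{\beta_2}X_{-\beta_1}\xi=(d_2+1)X_{-\beta_1}\xi$ gives $r\,d_2+s(d_2+1)=c_4\eta(v)$, where $\eta(v)$ is the nonzero scalar of lemma \ref{lem7} defined by $p(X_{-\alpha-\beta_1}\otimes v)=\eta(v)X_{-\beta_1}\xi$; this step needs $X_{-\beta_1}\xi\ne0$, which holds because the $H_{\beta_1}$-eigenvalue of $\xi$ is $a_2+k-1\ne0$ (as $a_2\notin\Z$). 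Finally, applying $X_{\alpha+\beta_1}$ gives $(r+s)c_2(a_1+k)(a_2-k+1)=c_3$, meaningful as soon as $X_{-\beta_2}\xi_k\ne0$.

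Suppose now, for contradiction, that $d_2\ne0$. Then $\xi_k$ has nonzero $H_{\beta_2}$-eigenvalue $d_2$ while $X_{\beta_2}\xi_k=0$, forcing $X_{-\beta_2}\xi_k\ne0$, so the third relation is available. Eliminating $r$ and $s$ between the three relations collapses the numerator to $c_1c_3d_2+a_1+d_2+3k$ and, after clearing denominators, produces the polynomial identity
\[(c_1c_3d_2+a_1+d_2+3k)(a_2+k-1)=c_1c_2c_4c_5\,(a_1+k)(a_2-k+1),\]
which must hold for all $k\in\Z$. Comparing the coefficients of $k^2$ forces $3=-c_1c_2c_4c_5$; but in a Chevalley basis of the simply-laced algebra $\germ g $ each structure constant $c_i$ is $\pm1$, whence $c_1c_2c_4c_5=\pm1\ne-3$. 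This contradiction gives $d_2=0$.

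For general $i>1$ I would repeat the computation with the root $\alpha+\beta_1+\cdots+\beta_i$ and the operators $X_{\alpha+\beta_1+\cdots+\beta_i}$, $X_{\beta_i}$, $X_{\alpha+\beta_1+\cdots+\beta_{i-1}}$: the slope of the eigenvalue of $H_{\alpha+\beta_1+\cdots+\beta_i}$ in $k$ is again $3$ (only $H_\alpha$ and $H_{\beta_1}$ contribute, the remaining $H_{\beta_j}$ being central), so the same comparison of $k^2$-coefficients gives the contradiction. The genuine obstacle is that the intermediate weight space $\cal U (\germ l _{\theta}^{-})_{-(\beta_1+\cdots+\beta_i)}\xi$ grows with $i$ and the commutator bookkeeping becomes heavy; this is exactly where the induction intervenes, since $d_2=\cdots=d_{i-1}=0$ forces $X_{-\beta_j}\xi=0$ for $2\le j\le i-1$ and collapses that space to the two-term situation of the base case. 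The scalars $d'_j$ are handled identically on the $\gamma$-side, replacing $c=0$ by $c'=-1-A$; the slope in $k$ is still $3$ and $a_1\notin\Z$ supplies the required nonvanishing, so the same contradiction closes the argument.
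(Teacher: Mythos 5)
There is a genuine gap, and it sits at the heart of your contradiction. The coefficient you call the ``slope'' is wrong: since $H_{\alpha}+2H_{\beta_1}$ lies in the center of $\mathfrak{l}$ and $H_{\alpha}$ acts on $x(k)$ by $a_1-a_2+2k$, the eigenvalue of $H_{\beta_1}$ on $x(k)$ must decrease with slope $-1$ in $k$ (it is $c+a_2-k$; the formula $c+a_2+k$ in the body of lemma \ref{lemA12} is a typo, as the appendix \ref{append} confirms), while $H_{\beta_2}$ acts by the constant $d_2$. Hence $H_{\alpha+\beta_1+\beta_2}$ acts on $x(k)$ with slope $2-1+0=1$, not $3=2+1+0$. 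With the correct slope, your final identity has matching $k^2$-coefficients compatible with structure constants $\pm1$, so the comparison of quadratic terms yields no contradiction, and the argument collapses.

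The failure is not repairable within your framework, because you only probe with roots supported on the $\beta$-side of $\alpha$ (namely $\alpha+\beta_1+\cdots+\beta_i$, $\beta_i$, $\alpha+\beta_1+\cdots+\beta_{i-1}$), importing the $\gamma$-side only through the normalization $c=0$. The appendix \ref{append} carries out exactly this one-sided analysis (for $A_3$ with $\Phi\setminus\theta=\{e_1\}$, where $\alpha$ is extremal and there is no $\gamma$-side) and finds that the resulting system is \emph{consistent} with the central element acting by the nonzero scalar $d=-2-A-2c$; the corresponding modules $L_{c,d}(a_1,a_2)$ really do lie in that category. Since any module in the situation of lemma \ref{lemA1NL} satisfies all relations obtained from probes supported on $\{\alpha,\beta_1,\ldots,\beta_i\}$, no argument using only such probes can force $d_i=0$. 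This is precisely why the paper's proof takes a different probe: it applies lemma \ref{lem7} to the vector $p(X_{-(\gamma_1+\alpha+\beta_1+\cdots+\beta_i)}\otimes x(k))$, whose root straddles both sides of $\alpha$, then strikes with $X_{\alpha+\beta_1+\cdots+\beta_i}$ and $X_{\gamma_1+\alpha+\beta_1+\cdots+\beta_i}$; the first equation pins the coefficient $\eta_{\sigma_0}(a_2-k+1)=1$ (using $p(X_{-\gamma_1}\otimes x(k))\neq0$), and the second, combined with the two-sided relation $c+c'+A+1=0$ of lemma \ref{lem:A1N}, forces $d_i=0$ directly, with no polynomial-in-$k$ comparison. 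To fix your proof you would need to incorporate $\gamma_1$ (or some equivalent two-sided input) into the probing, not merely the value $c=0$.
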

\begin{proof} We prove the lemma for the $d_i$'s. Assume the lemma does not hold. Let $i$ be the first integer such that $d_i\not=0$. Consider $v:=p(X_{-(\gamma_1+\alpha+\beta_1+\cdots +\beta_i)}\otimes x(k))$. Lemma \ref{lem7} applied to $\alpha$ and $\mbox{\boldmath $\beta$}=(\gamma_1,\beta_1+\cdots +\beta_i)$ gives $v\not=0$ and  
$$v \in p(\cal U (\germ l _{\theta}^-)_{-\gamma_1-\beta_1-\ldots -\beta_i}\otimes x(k-1)).$$ Since $X_{-\gamma_1}$ commutes with the $X_{-\beta_i}$'s, we have  
$$\cal U (\germ l _{\theta}^-)_{-\gamma_1-\beta_1-\ldots -\beta_i}=X_{-\gamma_1}\cal U (\germ l _{\theta}^-)_{-\beta_1-\ldots -\beta_i}.$$ Thus we get:
\begin{align}\label{lem2A1n}
v= & \sum_{\sigma \in \mathfrak{S}_i} \: \eta_{\sigma} p(X_{-\gamma_1}X_{-\sigma(\beta_1)}\cdots X_{-\sigma(\beta_i)}\otimes x(k-1)).
\end{align}
Apply the vectors $X_{\alpha+\beta_1+\cdots +\beta_i}$ and $X_{\gamma_1+\alpha+\beta_1+\cdots +\beta_i}$ to equation \eqref{lem2A1n}. Remark that $$X_{\alpha+\beta_1+\cdots +\beta_i}\cdot p(X_{-\gamma_1}X_{-\sigma(\beta_1)}\cdots X_{-\sigma(\beta_i)}\otimes x(k-1))=0$$ except if $\sigma=\sigma_0:=(i,i-1,\ldots ,1)$. Indeed $[X_{\alpha+\beta_1+\cdots +\beta_i},X_{-\beta_j}]=0$ except if $j=i$ and in this latter case we have $[X_{\alpha+\beta_1+\cdots +\beta_i},X_{-\beta_i}]=X_{\alpha+\beta_1+\cdots +\beta_{i-1}}$. Therefore we obtain:
\begin{multline}
X_{\alpha+\beta_1+\cdots +\beta_i}\cdot p(X_{-\gamma_1}X_{-\sigma_0(\beta_1)}\cdots X_{-\sigma_0(\beta_i)}\otimes x(k-1))= \\
p(X_{-\gamma_1}X_{\alpha}\otimes x(k-1)) = (a_2-k+1)p(X_{-\gamma_1}\otimes x(k)).
\end{multline}
Remark also that $$X_{\gamma_1+\alpha+\beta_1+\cdots +\beta_i}\cdot p(X_{-\gamma_1}X_{-\sigma(\beta_1)}\cdots X_{-\sigma(\beta_i)}\otimes x(k-1))=0 \mbox{ except if } \sigma=\sigma_0,$$ and in this latter case we have 
$$X_{\gamma_1+\alpha+\beta_1+\cdots +\beta_i}\cdot p(X_{-\gamma_1}X_{-\sigma_0(\beta_1)}\cdots X_{-\sigma_0(\beta_i)}\otimes x(k-1))=-p(1\otimes X_{\alpha}x(k-1)).$$ So we get:
\begin{multline}
X_{\gamma_1+\alpha+\beta_1+\cdots +\beta_i}\cdot p(X_{-\gamma_1}X_{-\sigma(\beta_1)}\cdots X_{-\sigma(\beta_i)}\otimes x(k-1)) \\ =  -(a_2-k+1)p(1\otimes x(k)).
\end{multline}
On the other hand we have the following:
$$[X_{\alpha+\beta_1+\cdots +\beta_i},X_{-(\gamma_1+\alpha+\beta_1+\cdots +\beta_i)}]=X_{-\gamma_1}$$ and 
$$[X_{\gamma_1+\alpha+\beta_1+\cdots +\beta_i},X_{-(\gamma_1+\alpha+\beta_1+\cdots +\beta_i)}]=H_{\gamma_1}+H_{\alpha}+H_{\beta_1}+\cdots +H_{\beta_i}.$$  Therefore we are left with the following equations:
$$\left\{ \begin{array}{l}
p(X_{-\gamma_1}\otimes x(k))=  \eta_{\sigma_0}(a_2-k+1)p(X_{-\gamma_1}\otimes x(k)),\\
(c'+a_1+c+a_2+d_i)p(1\otimes x(k))=  -\eta_{\sigma_0}(a_2-k+1)p(1\otimes x(k)).
\end{array}\right.$$ Since $p(X_{-\gamma_1}\otimes x(b))\not= 0$ according to lemma \ref{lem2}, we deduce from that $\eta_{\sigma_0}(b_2+1)=1$ and $d_i=0$. This contradicts our assumption.

\end{proof}

We have now the following corollary, whose proof is analogous to the proof of corollary \ref{corA12} (and is thus omitted).

\begin{coro}\label{corA1NL}
Let $\germ g =A_n$. Let $\theta\subset \Phi$ such that $\Phi\setminus\theta=\{e_l\}$ for some $1<l<n$. Then the simple module $M$ is in the category $\cal O _{\Phi,\theta}$ if and only if $M$ is isomorphic to some $N(-1,\ldots, -1,a'_1,a'_2,0,\ldots ,0)$ with $a'_{1},\: a'_{2}\in\C\setminus\Z$.
\end{coro}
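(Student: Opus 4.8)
The plan is to follow the same pattern as the proof of corollary \ref{corA12}, the extra work coming only from the fact that the centre of $\germ l $ is now larger; its action has, however, already been computed in lemmas \ref{lem:A1N} and \ref{lemA1NL}. So I would start from a simple module $M$ in $\cal O _{\Phi,\theta}$ and invoke theorem $A$ together with Fernando's theorem \ref{thmFe} to write $M=L(C)$, where $C=N(a_1,a_2)$ is a simple cuspidal $\germ sl _{2}$-module of degree $1$ with $a_1,a_2\in \C\setminus\Z$; put $A=a_1+a_2$. Fixing the normalisation $c=0$, $c'=-1-A$, lemma \ref{lem:A1N} records the scalars by which $H_{\beta_1}$ and $H_{\gamma_1}$ act on the vectors $x(k)$, while lemma \ref{lemA1NL} shows that all the other central generators $H_{\beta_i}$ $(i>1)$ and $H_{\gamma_j}$ $(j>1)$ act by $0$. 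Combined with the $\germ sl _{2}$-action, which gives the value of $H_{\alpha}$ on $x(k)$, this completely determines the action of $\germ h $ on the weight vector $x(0)$.

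The next step is to promote this into a description of the full centraliser $\cal U (\germ g )_{0}$ acting on $x(0)$. As in the proof of lemma \ref{lem6}, $\cal U (\germ g )_{0}$ is generated by $\cal U (\germ h )$ together with weight-zero monomials, so it suffices to pin down the scalar by which each product $X_{\delta}X_{-\delta}$ acts on $x(0)$, for $\delta$ a positive root built from $\alpha$ and the simple roots $\beta_i,\gamma_j$. These are exactly the quantities produced by the relations of lemma \ref{lem7}: pushing $X_{\pm\alpha}$ through the $\theta$-directions on the cuspidal $\germ sl _{2}$-module $N(a_1,a_2)$ expresses every such $X_{\delta}X_{-\delta}$ in terms of $a_1,a_2$ and the central scalars just found.

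I would then introduce the candidate module $N(a')$ with $a'=(\underbrace{-1,\ldots,-1}_{l-1},a_1,a_2,\underbrace{0,\ldots,0}_{n-l})$, placing the non-integer block at the two positions framing the node $e_l$, so that precisely $\pm e_l$ is cuspidal; the alternative normalisation $c=-1-A$, $c'=0$ is accounted for by the symmetry $N(s_1,s_2)\cong N(-1-s_2,-1-s_1)$ already used in corollary \ref{corA12}. Using the explicit action recorded in subsection \ref{sssec:deg1A}, I would check that $\cal U (\germ g )_{0}$ acts on the weight vector $x(0,\ldots,0)\in N(a')$ by exactly the same scalars as on $x(0)\in M$. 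Since both $M$ and $N(a')$ are simple weight modules of degree $1$, Lemire's correspondence \cite{Le68} then forces $M\cong N(a')$. The converse is immediate, as theorem \ref{thmNO} guarantees that each such $N(a')$ is a simple object of $\cal O _{\Phi,\theta}$.

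The step I expect to be the genuine obstacle is this last comparison of the two $\cal U (\germ g )_{0}$-characters on \emph{all} of $\cal U (\germ g )_{0}$ and not merely on $\germ h $: one must verify that the scalars attached by lemma \ref{lem7} to the mixed monomials $X_{\delta}X_{-\delta}$ for the long roots $\delta$ involving several $\beta_i$ or $\gamma_j$ agree with those computed directly inside $N(a')$, and that the vanishing $d_i=d_j'=0$ of lemma \ref{lemA1NL} is exactly what makes this work. Getting the combinatorial bookkeeping of the $-1$'s, the $0$'s and the symmetry $(a_1,a_2)\leftrightarrow(-1-a_2,-1-a_1)$ right is the only real subtlety; once the computation on the single vector $x(0)$ closes, Lemire's correspondence finishes the proof.
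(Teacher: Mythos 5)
Your proposal is correct and follows essentially the same route as the paper: the paper omits the proof as being analogous to corollary \ref{corA12}, i.e.\ it writes $M=L(C)$ with $C=N(a_1,a_2)$, uses the central scalars from lemmas \ref{lem:A1N} and \ref{lemA1NL} to match the action of $\cal U (\germ g )_{0}$ on $x(0)$ with that on the corresponding weight vector of $N(-1,\ldots ,-1,a_1,a_2,0,\ldots ,0)$ (the other normalisation being absorbed by $N(a_1,a_2)\cong N(-1-a_2,-1-a_1)$), concludes by Lemire's correspondence, and gets the converse from theorem \ref{thmNO}. This is exactly your argument, at the same level of detail as the paper's.
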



\subsubsection{Case $\germ l '=\germ sl _{l+1}$, with $l>1$}\label{secAkAn}


We show the following:

\begin{theo}\label{thm:AkAn}
Let $\germ g =A_n$. Let $1<l<n$. Let $\theta\subset \Phi$ such that $\Phi\setminus\theta$ is a connected subset of $\Phi$ with cardinality $l$. A simple module $M$ is in the category $\cal O _{\Phi,\theta}$ if and only if it is isomorphic to some $N(-1,\ldots, -1,a_1,\ldots , a_{l+1},0,\ldots ,0)$ with $a_1,\ldots , a_{l+1} \in \C\setminus\Z$.
\end{theo}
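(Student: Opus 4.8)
The plan is to follow the strategy already used for the rank-one cases (Corollaries \ref{corA12} and \ref{corA1NL}): reduce the problem to the determination of a central character, and then conclude by Lemire's correspondence. First I would apply Theorem $A$ to the simple module $M=L(C)$, which gives that $C$ is a simple cuspidal $\germ l $-module of degree $1$. Since $\Phi\setminus\theta$ is connected of cardinality $l$, the semisimple part is $\germ l '=A_l$, so as an $\germ l '$-module $C\cong N(a_1,\ldots ,a_{l+1})$ with all $a_i\in \C\setminus \Z$ (Theorem \ref{thmBBLA} together with the cuspidality condition). The only remaining freedom in $C$ is the scalar action of the centre of $\germ l $. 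By Lemire's correspondence \cite{Le68} it then suffices to show that $\cal U (\germ g )_0$ acts on the vector $p(1\otimes x(0))$ by exactly the same scalars as on the corresponding vector of the candidate module $N(-1,\ldots ,-1,a_1,\ldots ,a_{l+1},0,\ldots ,0)$. Thus the whole proof reduces to computing this central action.

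Next I would fix notation: let $\alpha _1,\ldots ,\alpha _l$ be the simple roots spanning $\germ l '$, let $\gamma _1,\gamma _2,\ldots $ be the simple roots of $\theta $ to the left of $\germ l '$ (with $\gamma _1$ adjacent to $\alpha _1$) and $\beta _1,\beta _2,\ldots $ those to the right (with $\beta _1$ adjacent to $\alpha _l$). The central directions of $\germ l $ adjacent to $\germ l '$ are carried by $H_{\gamma _1}$ and $H_{\beta _1}$. To pin down their scalars I would run the arguments of Lemmas \ref{lemA12} and \ref{lem:A1N}, using the $\germ sl _2$-triples attached to the extreme simple roots $\alpha _1$ and $\alpha _l$ of $\germ l '$ in place of the single root of the rank-one case, and using the long positive root $\gamma _1+\alpha _1+\cdots +\alpha _l+\beta _1\in \cal R $ as a coupling root. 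Concretely, applying Lemma \ref{lem7} to $p(X_{-(\gamma _1+\alpha _1+\cdots +\alpha _l+\beta _1)}\otimes x(k))$ with $\alpha =\alpha _1+\cdots +\alpha _l$ and $\mbox{\boldmath $\beta$}=(\beta _1,\gamma _1)$, and then hitting the resulting identity with the raising vectors $X_{\alpha +\beta _1}$ and $X_{\gamma _1+\alpha +\beta _1}$ of $\germ n ^+$, should produce a coupling relation analogous to the $cc'=0$, $c+c'+A+1=0$ of Lemma \ref{lem:A1N} for the two boundary constants; here $A$ records the scalar by which $X_{\alpha }X_{-\alpha }$ acts on $x(k)$ (a scalar by the lemmas proving Theorem $A$). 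As in the rank-one case this forces, after the normalisation fixed in the statement, the $-1$-block on the left and the $0$-block on the right.

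It then remains to show that every central direction of $\germ l $ not adjacent to $\germ l '$ acts by $0$, i.e. that $H_{\gamma _j}$ and $H_{\beta _i}$ act trivially for $i,j>1$. Here I would generalise Lemma \ref{lemA1NL}: for the right-hand roots, apply Lemma \ref{lem7} to $p(X_{-(\gamma _1+\alpha +\beta _1+\cdots +\beta _i)}\otimes x(k))$ with $\alpha =\alpha _1+\cdots +\alpha _l$, expand the image through $\cal U (\germ l _{\theta}^-)_{-\gamma _1-\beta _1-\cdots -\beta _i}$ as a sum over the symmetric group exactly as in the proof of Lemma \ref{lemA1NL}, and then apply the two raising operators $X_{\alpha +\beta _1+\cdots +\beta _i}$ and $X_{\gamma _1+\alpha +\beta _1+\cdots +\beta _i}$. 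Since all commutators of these operators with the $X_{-\beta _j}$ vanish except for one extremal permutation, only a single term survives, and comparing it with $p(1\otimes x(k))$ (non-zero by Lemma \ref{lem2}) forces the corresponding scalar to vanish; the left-hand roots are handled symmetrically. With all central scalars determined, the $\cal U (\germ g )_0$-action on $p(1\otimes x(0))$ agrees with that on the corresponding vector of $N(-1,\ldots ,-1,a_1,\ldots ,a_{l+1},0,\ldots ,0)$, and Lemire's correspondence yields the asserted isomorphism. The converse inclusion is immediate from Theorem \ref{thmNO}, which already exhibits each such module as a simple object of $\cal O _{\Phi,\theta}$.

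The main obstacle I anticipate is precisely the passage of these commutator computations through the whole $A_l$ block. In the rank-one lemmas the coupling root and the chains are short and the structure constants transparent; for $l>1$ one must carefully track the non-zero structure constants along the string $\gamma _1+\alpha _1+\cdots +\alpha _l+\beta _1$ and control the symmetric-group expansion of $\cal U (\germ l _{\theta}^-)_{-\gamma _1-\beta _1-\cdots -\beta _i}$, verifying that every unwanted monomial is killed by the raising operators so that exactly one scalar relation survives at each step. Once this bookkeeping is organised, the argument is a formal repetition of the rank-one proofs.
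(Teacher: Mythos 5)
Your overall route coincides with the paper's: Theorem \ref{thm_{A}} reduces everything to a simple cuspidal degree-one module $C\cong N(a_1,\ldots ,a_{l+1})$ over $\mathfrak{l}'=A_l$, the problem becomes the computation of the scalars by which the centre of $\mathfrak{l}$ acts, the non-adjacent central elements $H_{\beta_i},H_{\gamma_j}$ ($i,j>1$) are killed by the same symmetric-group expansion as in Lemma \ref{lemA1NL}, and one concludes with Lemire's correspondence and, for the converse, Theorem \ref{thmNO}. The step that does not go through as you state it is the determination of the two \emph{boundary} scalars (the actions of $H_{\beta_1}$ and $H_{\gamma_1}$). You propose to reproduce the coupling of Lemma \ref{lem:A1N} through the long root $\gamma_1+\alpha_1+\cdots +\alpha_l+\beta_1$ and to conclude ``as in the rank-one case, after the normalisation fixed in the statement''. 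But in rank one that system has two solutions, and the paper may take $c=0$, $c'=-1-A$ only because the two solutions are exchanged by the isomorphism $N(a_1,a_2)\cong N(-1-a_2,-1-a_1)$; for $l>1$ there is no analogous isomorphism of $A_l$-modules interchanging the two ends of the block, so the ``swapped'' solution is a genuinely different candidate and cannot be normalised away. (A related inaccuracy: with $\alpha=\alpha_1+\cdots +\alpha_l$, the operator $X_{\alpha}X_{-\alpha}$ acts on $x(k)$ by the $k$-dependent scalar $(a_1+k_1)(a_{l+1}+k_{l+1}+1)$, not by a constant $A$, so your coupling relation is not literally of the rank-one shape; it only ties the two boundary constants together and does not decide which end carries the $0$-block and which the $-1$-block.)

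The gap is fixable, and the paper's fix uses precisely the hypothesis $l>1$: it applies $X_{\beta_1}$ and $X_{\alpha_{l-1}+\alpha_l+\beta_1}$ to the identity $p(X_{-\alpha_l-\beta_1}\otimes x(k))=\eta(k)\,p(X_{-\beta_1}\otimes x(k-\epsilon_l+\epsilon_{l+1}))$ coming from Lemma \ref{lem7}; the second raising operator, which has no rank-one counterpart, produces the factor $a_{l+1}+k_{l+1}+1$ with no occurrence of the unknown constant, and comparison gives $c=0$ (and symmetrically $c'=-1$) with no case distinction at all. Alternatively, staying inside your scheme with the pair $X_{\beta_1}$, $X_{\alpha_l+\beta_1}$ at the right end, the resulting quadratic constraint has a second branch which depends on $k_l+k_{l+1}$; since this quantity is not constant on $C$ once $l\ge 2$ (unlike rank one, where $k_1+k_2=0$), that branch cannot equal the constant scalar and $c=0$ follows. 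Either remedy must be made explicit; the appeal to the rank-one normalisation, as written, is the one step of your plan that would fail.
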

\begin{proof} Let $L(C)$ be a simple module in $\cal O _{\Phi,\theta}$. We know then that $C$ is a simple cuspidal $\germ l $-module of degree $1$. Here $\germ l '=A_l$. Therefore $C$ is a the form $C=N(a_1,\ldots ,a_{l+1})$ with $a_{i}\in \C\setminus\Z$. Assume first that the Dynkin diagram of $\germ g $ contains the following piece:

\setlength{\unitlength}{1cm}
\begin{center}
\begin{picture}(6,1.2)
\put(1,.5){\circle*{.2}}
\put(1,.5){\line(1,0){.2}}
\put(1.4,.5){\line(1,0){.2}}
\put(1.8,.5){\line(1,0){.2}}
\put(2.2,.5){\line(1,0){.2}}
\put(2.6,.5){\line(1,0){.2}}
\put(3,.5){\circle*{.2}}
\put(3,.5){\line(1,0){1}}
\put(4,.5){\circle*{.2}}
\put(4,.5){\line(1,0){.2}}
\put(4.4,.5){\line(1,0){.2}}
\put(4.8,.5){\line(1,0){.2}}
\put(1,.5){\circle{.35}}
\put(3,.5){\circle{.35}}
\put(3.9,.8){$\beta_1$}
\put(.9,.8){$\alpha_1$}
\put(2.9,.8){$\alpha_l$}
\put(.8,.3){$\underbrace{\hphantom{11111111111111}}_{A_l=\germ l '}$}
\end{picture}
\end{center}

Then the vector $H_{\alpha_1}+2H_{\alpha_2}+\cdots +lH_{\alpha_l}+(l+1)H_{\beta_1}$ is in the center of $\germ l $. Its action on $C$ must be constant. Denote by $c(k)$ the action of $H_{\beta_1}$ on $x(k)\in C$. We get $(a_{1}+k_1-a_2-k_{2})+2(a_2+k_{2}-a_3-k_{3})+\cdots +l(a_l+k_{l}-a_{l+1}-k_{l+1})+(l+1)c(k)=cte$. Since $k_1+\cdots +k_{l+1}=0$ we obtain $c(k)=c+a_{l+1}+k_{l+1}$ for some complex number $c$. Consider now $u:=p(X_{-\alpha_l-\beta_1}\otimes x(k))$. Note that $X_{-\alpha_l}x(k)=(a_l+k_{l})x(k-\epsilon_{l}+\epsilon_{l+1})$ and $X_{\alpha_l+\alpha_{l-1}}x(k)=(a_{l+1}+k_{l+1})x(k-\epsilon_{l+1}+\epsilon_{l-1})$. Apply now lemma \ref{lem7} to $u$. There is a non zero complex number $\eta(k)$ such that
\begin{eqnarray}\label{eq3}
u=\eta(k)p(X_{-\beta_1}\otimes x(k-\epsilon_{l}+\epsilon_{l+1})).
\end{eqnarray}
Apply the vectors $X_{\beta_1}$ and $X_{\alpha_{k-1}+\alpha_k+\beta_1}$ to the equation \eqref{eq3}. We get
\begin{multline}
(a_l+k_{l})p(1\otimes x(k-\epsilon_{l}+\epsilon_{l+1}))  =  \eta(k)c(k-\epsilon_{l}+\epsilon_{l+1})p(1\otimes x(k-\epsilon_{l}+\epsilon_{l+1})),\\
(a_l+k_{l})p(1\otimes x(k-\epsilon_{l+1}+\epsilon_{l-1}))  =  \eta(k)(a_{l+1}+k_{l+1}+1)p(1\otimes x(k-\epsilon_{l+1}+\epsilon_{l-1})).
\end{multline} Since $\eta(k)\not=0$, $p(1\otimes x(k-\epsilon_{l}+\epsilon_{l+1}))\not=0$ and $p(1\otimes x(k-\epsilon_{l+1}+\epsilon_{l-1}))\not=0$, we deduce from the above equations that $c=0$.

Assume now the Dynkin diagram of $\germ g $ contains the following piece:

\setlength{\unitlength}{1cm}
\begin{center}
\begin{picture}(6,1.2)
\put(1,.5){\line(1,0){.2}}
\put(1.4,.5){\line(1,0){.2}}
\put(1.8,.5){\line(1,0){.2}}
\put(2,.5){\circle*{.2}}
\put(2,.5){\line(1,0){1}}
\put(3,.5){\circle*{.2}}
\put(3,.5){\line(1,0){.2}}
\put(3.4,.5){\line(1,0){.2}}
\put(3.8,.5){\line(1,0){.2}}
\put(4.2,.5){\line(1,0){.2}}
\put(4.6,.5){\line(1,0){.2}}
\put(5,.5){\circle*{.2}}
\put(3,.5){\circle{.35}}
\put(5,.5){\circle{.35}}
\put(1.9,.8){$\gamma_1$}
\put(2.9,.8){$\alpha_1$}
\put(4.9,.8){$\alpha_l$}
\put(2.8,.3){$\underbrace{\hphantom{111111111111}}_{A_l=\germ l '}$}
\end{picture}
\end{center}

Denote by $c'(k)$ the action of $H_{\gamma_1}$ on $x(k)\in C$. A reasoning as above shows that $c'(k)=c'-(a_1+k_{1})$ with $c'=-1$.

In general, we have the following Dynkin diagram:
\setlength{\unitlength}{1cm}
\begin{center}
\begin{picture}(8,1.2)
\put(0,.5){\circle*{.2}}
\put(0,.5){\line(1,0){.2}}
\put(.4,.5){\line(1,0){.2}}
\put(.8,.5){\line(1,0){.2}}
\put(1.2,.5){\line(1,0){.2}}
\put(1.6,.5){\line(1,0){.2}}
\put(2,.5){\circle*{.2}}
\put(2,.5){\line(1,0){1}}
\put(3,.5){\circle*{.2}}
\put(3,.5){\line(1,0){.2}}
\put(3.4,.5){\line(1,0){.2}}
\put(3.8,.5){\line(1,0){.2}}
\put(4.2,.5){\line(1,0){.2}}
\put(4.6,.5){\line(1,0){.2}}
\put(5,.5){\line(1,0){1}}
\put(6,.5){\line(1,0){.2}}
\put(6.4,.5){\line(1,0){.2}}
\put(6.8,.5){\line(1,0){.2}}
\put(7.2,.5){\line(1,0){.2}}
\put(7.6,.5){\line(1,0){.2}}
\put(6,.5){\circle*{.2}}
\put(8,.5){\circle*{.2}}
\put(5,.5){\circle*{.2}}
\put(3,.5){\circle{.35}}
\put(5,.5){\circle{.35}}
\put(0,.8){$\gamma_j$}
\put(1.9,.8){$\gamma_1$}
\put(2.9,.8){$\alpha_1$}
\put(4.9,.8){$\alpha_l$}
\put(5.9,.8){$\beta_1$}
\put(7.9,.8){$\beta_m$}
\put(2.8,.3){$\underbrace{\hphantom{111111111111}}_{A_l=\germ l '}$}
\end{picture}
\end{center}
Note then that $H_{\beta_i}$ and $H_{\gamma_i}$ with $i >1$ belong to the center of $\germ l $. Thus they act as constant on $C$. Denote by $d_i$ and $d'_i$ these constants. We show that $d_{i}=0=d'_{i}$. Let us prove this for the $d_i$'s. Assume it does not hold. Let $p$ be the first integer such that $d_p\not=0$. We apply lemma \ref{lem7} to the vector $v:=p(X_{-\alpha_l-\beta_1-\ldots -\beta_p}\otimes x(k))$ with $\alpha=\alpha_l$ and $\mbox{\boldmath $\beta$}=(\beta_1+\cdots +\beta_p)$. We have $v\not=0$ and $v\in \cal U (\germ l _{\theta}^-)_{-(\beta_1+\cdots +\beta_p)}p(1\otimes x(k-\epsilon_{l}+\epsilon_{l+1}))$. More precisely we can write down the following expression:
$$v=\sum_{\sigma \in \mathfrak{S}_p}\: \eta_{\sigma}p(X_{-\beta_{\sigma(1)}}\cdots X_{-\beta_{\sigma(p)}}\otimes x(k-\epsilon_{l}+\epsilon_{l+1})).$$ Apply then the vectors $X_{\alpha_l+\beta_1+\cdots +\beta_p}$ et $X_{\alpha_{l-1}+\alpha_l+\beta_1+\cdots +\beta_p}$ to this equation. Note that both of these vectors act trivially on $p(X_{-\beta_{\sigma(1)}}\cdots X_{-\beta_{\sigma(p)}}\otimes x(k-\epsilon_{l}+\epsilon_{l+1}))$ except if $\sigma=\sigma_0=(p,p-1,\ldots ,1)$. Computations analogous to those in lemma \ref{lemA1NL} give the following two equations: 
\begin{multline}
((a_l+k_{l}-a_{l+1}-k_{l+1})+c(k)+d_p)p(1\otimes x(k))=  \eta_{\sigma_0}(a_{l+1}+k_{l+1}+1)p(1\otimes x(k))\\ 
(a_l+k_{l})p(1\otimes x(k-\epsilon_{l+1}+\epsilon_{l-1}))= \eta_{\sigma_0}(a_{l+1}+k_{l+1}+1)p(1\otimes x(k-\epsilon_{l+1}+\epsilon_{l-1})).
\end{multline}
We deduce then that $\eta_{\sigma_0}(a_{l+1}+k_{l+1}+1)=a_l+k_{l}$ and thus that $d_p=0$. This contradicts our assumption.

Now we conclude as in corollary \ref{corA1NL}. We compare the action of $\cal U (\germ g ) _0$ on the weight vector $x(0)$ with its action on the weight vector $x(0)\in N(a)$ with $a=(-1,\ldots ,-1,a_1,\ldots ,a_{l+1},0,\ldots ,0)$ and then use Lemire's correspondence \cite{Le68}.

\end{proof}


\subsection{Type $C$ case}\label{ssec:typeC}


In this section we assume that $\germ g =\germ sp _{2n}$.

\subsubsection{Case $\germ l $ of type $A$}\label{sssec:CA}

According to the theorem \ref{thm1red}, we need only to consider the following situation:

\setlength{\unitlength}{1cm}
\begin{center}
\begin{picture}(5,1.2)
\put(1,.5){\line(1,0){.2}}
\put(1.4,.5){\line(1,0){.2}}
\put(1.8,.5){\line(1,0){.2}}
\put(2,.5){\circle*{.2}}
\put(2,.5){\line(1,0){1}}
\put(3,.5){\circle*{.2}}
\put(3,0.45){\line(1,0){1}}
\put(3,.55){\line(1,0){1}}
\put(4,.5){\circle*{.2}}
\put(4,.5){\circle{.35}}
\put(1.9,.8){$\beta_2$}
\put(2.9,.8){$\beta_1$}
\put(3.9,.8){$\alpha$}
\put(3.4,.4){$<$}
\end{picture}
\end{center} 

Let $L(C)$ be a simple module in the corresponding category $\cal O _{\Phi,\theta}$. We know that $C$ is a simple cuspidal $\germ l $-module of degree $1$. Since $\germ l '=A_{1}$ we must have $C=N(a_1,a_2)$ for some $a_{1},\: a_{2}\in \C\setminus\Z$. Set $A=a_1+a_2$. The vector $H_{\beta_1}+H_{\alpha}$ is in the center of $\germ l $. Therefore it acts on $C$ by some constant. Denote by $c(k)$ the action of $H_{\beta_1}$ on $x(b)$. Then we find that $c(k)=c+2a_2-2k$.
\begin{lemm}\label{lemAC1}
With the notations as above, we have either $c=0$ or $c=-2-2A$. We also have $2c+2A+1=0$.
\end{lemm}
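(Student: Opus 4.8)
The plan is to derive the two asserted identities from two applications of lemma \ref{lem7}, exactly in the spirit of lemma \ref{lemA12}, but using that the double bond makes the $\beta_1$-string through the long root $\alpha$ longer: here $\alpha$, $\alpha+\beta_1$ and $2\beta_1+\alpha$ all lie in $\cal R $ (while $3\beta_1+\alpha\notin\cal R $), the middle one being short and $2\beta_1+\alpha$ being long. Throughout I keep $c(k)=c+2a_2-2k$ for the scalar action of $H_{\beta_1}$ on $x(k)$, and I use freely that $X_{\pm\alpha}\in\germ l $, that $X_{\beta_1},X_{\alpha+\beta_1},X_{2\beta_1+\alpha}\in\germ n ^+$ act on $V(C)$ through the adjoint action on the $\cal U (\germ g )$-factor, and that $p$ is injective on $1\otimes C$ (proposition \ref{GVM}). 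I fix once the structure constants $[X_{\beta_1},X_{-\alpha-\beta_1}]=c_1X_{-\alpha}$, $[X_{\alpha+\beta_1},X_{-\beta_1}]=c_2X_{\alpha}$, $[X_{2\beta_1+\alpha},X_{-\beta_1}]=c_3X_{\alpha+\beta_1}$ and $[X_{\alpha+\beta_1},X_{-\alpha-2\beta_1}]=c_4X_{-\beta_1}$.

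\emph{The relation $c=0$ or $c=-2-2A$.} First I would set $u=p(X_{-\alpha-\beta_1}\otimes x(k))$ and apply lemma \ref{lem7} to $\alpha$ and $(\beta_1)$, obtaining a nonzero scalar $\eta(k)$ with $u=\eta(k)\,p(X_{-\beta_1}\otimes x(k-1))$ (the factor $a_1+k\neq 0$ from $X_{-\alpha}x(k)=(a_1+k)x(k-1)$ being absorbed into $\eta(k)$). Applying $X_{\beta_1}$ and $X_{\alpha+\beta_1}$ to this identity, using the above constants together with the coroot identity $H_{\alpha+\beta_1}=H_{\beta_1}+2H_{\alpha}$, and cancelling the nonzero vectors $p(1\otimes x(k-1))$ and $p(1\otimes x(k))$, one is reduced to the system $c_1(a_1+k)=\eta(k)c(k-1)$ and $c+2a_1+2k=\eta(k)c_2(a_2-k+1)$. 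Eliminating $\eta(k)$ and requiring the resulting equality to hold for all $k\in\Z $ forces, by comparison of both sides as polynomials in $k$, first $c_1c_2=4$ and then $c(c+2A+2)=0$, i.e. $c=0$ or $c=-2-2A$.

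\emph{The relation $2c+2A+1=0$.} For the second identity I would exploit the long root of the string. Setting $u'=p(X_{-\alpha-2\beta_1}\otimes x(k))$ and applying lemma \ref{lem7} to $\alpha$ and $(\beta_1,\beta_1)$, and noting that $\cal U (\germ l _{\theta}^-)_{-2\beta_1}=\C X_{-\beta_1}^2$ (since $2\beta_1$ is not a root of the type $A$ algebra $\germ l _{\theta}$), one gets $u'=\mu(k)\,p(X_{-\beta_1}^2\otimes x(k-1))$ with $\mu(k)\neq 0$. I then apply the two raising vectors $X_{\alpha+\beta_1}$ and $X_{2\beta_1+\alpha}$. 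The key point is the asymmetry in the Leibniz expansion of the brackets against $X_{-\beta_1}^2$: because $X_{\alpha}$ commutes with $X_{-\beta_1}$, the vector $X_{\alpha+\beta_1}$ leaves both terms alive and yields a coefficient $2$, whereas for $X_{2\beta_1+\alpha}$ one term dies (as $X_{\alpha+\beta_1}$ kills $1\otimes C$) and only a coefficient $1$ survives. Using $H_{2\beta_1+\alpha}=H_{\beta_1}+H_{\alpha}$, which acts on $x(k)$ by the constant $c+A$, and the non-vanishing of $p(X_{-\beta_1}\otimes x(k))$ and $p(1\otimes x(k))$ from lemma \ref{lem2}, these two computations give $c_4=2\mu(k)c_2(a_2-k+1)$ and $c+A=\mu(k)c_3c_2(a_2-k+1)$. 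Dividing removes $\mu(k)$ and the $k$-dependence and leaves $c+A=c_3c_4/2$.

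The main obstacle, and the feature distinguishing type $C$ from type $A$, is that this last constant is not $0$. The long-root Cartan element carries the half-integer shift visible in the Weyl-algebra model ($X_{\alpha}=\tfrac12 q_n^2$, $[X_{\alpha},X_{-\alpha}]=q_np_n+\tfrac12$), and an explicit computation of the relevant Chevalley constants gives $c_3=1$ and $c_4=-1$, whence $c+A=-\tfrac12$, that is $2c+2A+1=0$. So the real work lies entirely in pinning down the type $C$ structure constants and coroot relations (notably $H_{\alpha+\beta_1}=H_{\beta_1}+2H_{\alpha}$ and $H_{2\beta_1+\alpha}=H_{\beta_1}+H_{\alpha}$) and in the asymmetric bracket expansion against $X_{-\beta_1}^2$, which is precisely what supplies the decisive factor $\tfrac12$.
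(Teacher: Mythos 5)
Your proposal is correct and takes essentially the same route as the paper's proof: the same two vectors $p(X_{-\alpha-\beta_1}\otimes x(k))$ and $p(X_{-\alpha-2\beta_1}\otimes x(k))$ are rewritten via lemma \ref{lem7} and then struck with raising operators, the scalar identities being extracted from the injectivity of $p$ on $1\otimes C$ (the paper applies $X_{\beta_1}$, $X_{\alpha+\beta_1}$ and then $X_{\beta_1}$, $X_{\alpha+\beta_1}$, $X_{\alpha+2\beta_1}$ and solves the resulting systems with the explicit $C_2$ structure constants). Your variations are harmless: eliminating $\eta(k)$ and comparing polynomials in $k\in\Z$ instead of computing the constants in the first step, and in the second step using only $X_{\alpha+\beta_1}$ and $X_{\alpha+2\beta_1}$ --- the latter producing the central element $H_{\alpha}+H_{\beta_1}$, which acts by the constant $c+A$ --- so that $2c+2A+1=0$ follows from the basis-independent product $c_3c_4=-1$ (your individual values $c_3=1$, $c_4=-1$ depend on sign conventions, but the product does not), without needing $\eta(k)$ or the value of $c$ from the first step.
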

\begin{proof} The proof is analogous to that of lemma \ref{lemA12}. Set 
$$u:=p(X_{-\alpha-\beta_1}\otimes x(k)) \mbox{ and } v:=p(X_{-\alpha-2\beta_1}\otimes x(k)).$$ Lemma \ref{lem7} implies that $u=\eta(k)p(X_{-\beta_1}\otimes x(k-1))$ for some non zero complex number $\eta(k)$. Apply $X_{\beta_1}$ and $X_{\alpha+\beta_1}$ to the equation $u=\eta(k)p(X_{-\beta_1}\otimes x(k-1))$. Using the structure constants of $C_{2}$ we get:
$$\left\{ \begin{array}{rcl}
-2(a_1+k) p(1\otimes x(k-1))& = & \eta(k)c(k-1)p(1\otimes x(k-1))\\
(2(a_1-a_2+2k)+c(k))p(1\otimes x(k)) & = & -2\eta(k)(a_2-k+1)p(1\otimes x(k))
\end{array}\right.$$
Since $p(1\otimes x(k-1))\not=0$ and $p(1\otimes x(k))\not=0$, this system gives $c=0$ or $c=-2-2A$ and $\eta(k)=-\frac{c+2a_1+2k}{2a_2-2k+2}$.

Now lemma \ref{lem7} applied to $\alpha$ and $\mbox{\boldmath $\beta$}=(\beta_1,\beta_1)$ gives $v=\eta'(k)p(X_{-\beta_1}^2\otimes x(k-1))$ for a non zero complex number $\eta'(k)$. Apply $X_{\beta_1}$, $X_{\alpha+\beta_1}$ and $X_{\alpha+2\beta_1}$ to this equation to obtain the following system:
$$\left\{ \begin{array}{l}
-p(X_{-\alpha-\beta_1}\otimes x(k)) =  2\eta'(k)(c(k-1)-1) p(X_{-\beta_1}\otimes x(k-1))\\
p(X_{-\beta_1}\otimes x(k))  =  -4\eta'(k)(a_2-k+1) p(X_{-\beta_1}\otimes x(k))\\
(c(k)+a_1-a_2+2k)p(1\otimes x(k))  =  2\eta'(k)(a_2-k+1)p(1\otimes x(k))
\end{array} \right.$$
We solve this system using the value of $c$ and of $\eta(k)$ found above. We get $2c+2A+1=0$. 

\end{proof}

\begin{coro}\label{cor:AC1}
Assume $\germ g =C_{2}$ and $\theta=\{e_{1}\}$. A simple module $M$ belongs to the category $\cal O _{\Phi,\theta}$ if and only if it is isomorphic to $M(-1,a_1-a_2-\frac{1}{2})$.
\end{coro}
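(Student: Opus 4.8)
The plan is to argue exactly as for Corollary \ref{corA12}: I reduce the claim to comparing the action of the centraliser $\cal U (\germ g )_{0}$ of $\germ h $ on a single weight vector, and then invoke Lemire's correspondence \cite{Le68} to upgrade an equality of characters into an isomorphism of $\germ g $-modules.

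Let $M$ be simple in $\cal O _{\Phi,\theta}$. By the reduction opening this section $M\cong L(C)$ with $C$ a simple cuspidal $\germ l $-module, and Theorem \ref{thm_{A}} gives that $C$ has degree $1$; since here $\germ l '=A_{1}$ is the long-root copy of $\germ sl _{2}$ (namely $\Phi\setminus\theta=\{\alpha\}$ with $\alpha$ long), we get $C=N(a_{1},a_{2})$ with $a_{1},a_{2}\in\C\setminus\Z$, the central character of $Z(\germ l )$ being encoded by the constant $c$ of Lemma \ref{lemAC1}. Following the proof of Lemma \ref{lem6}, the action of $\cal U (\germ g )_{0}$ on the weight vector $p(1\otimes x(0))$ is determined by the scalars giving the action of $X_{\gamma}X_{-\gamma}$ for the positive roots $\gamma\in\{\beta_{1},\alpha,\alpha+\beta_{1},\alpha+2\beta_{1}\}$. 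The scalar for $\alpha$ is read off from $C=N(a_{1},a_{2})$, the one for $\beta_{1}$ equals $c(k)=c+2a_{2}-2k$, and the ones for $\alpha+\beta_{1}$ and $\alpha+2\beta_{1}$ are precisely the content of Lemma \ref{lemAC1}, which also forces $c\in\{0,-2-2A\}$ together with $2c+2A+1=0$. I would then compute the same scalars for $M(-1,a_{1}-a_{2}-\tfrac{1}{2})$ straight from the explicit type-$C$ formulas of \S\ref{sssec:deg1C} (with $N=n=2$, $l=1$, $a=(-1,a_{1}-a_{2}-\tfrac{1}{2})$), restricted to the long-root $\germ sl _{2}$, and check that, after identifying the weight of $x(0)$, the two characters of $\cal U (\germ g )_{0}$ coincide. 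Lemire's correspondence then yields $M\cong M(-1,a_{1}-a_{2}-\tfrac{1}{2})$, exactly as in Corollary \ref{corA12}.

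For the converse, Theorem \ref{thmMO} shows that $M(-1,a_{1}-a_{2}-\tfrac{1}{2})$ is a simple object of $\cal O _{\Phi,\theta_{a}}$; for $a=(-1,a_{1}-a_{2}-\tfrac{1}{2})$ one has $l=n-1=1$, so the associated diagram is the first one of \S\ref{sssec:deg1C} and $\theta_{a}=\{e_{1}\}=\theta$. Hence this module already lies in $\cal O _{\Phi,\theta}$, which closes the equivalence.

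I expect the real work to be purely computational: tracking the repeated factors of $\tfrac{1}{2}$ produced by the metaplectic embedding (the $+\tfrac{1}{2}$ in $H_{e_{n}}$ and the $\tfrac{1}{2}q_{n}^{2}$ giving $X_{\pm e_{n}}$), which is where the shift $-\tfrac{1}{2}$ in the parameter originates. A second point to verify is that the two admissible values of $c$ in Lemma \ref{lemAC1} are not a genuine ambiguity: the relation $2c+2A+1=0$ makes the $Z(\germ l )$-character equal $-\tfrac{1}{2}$ in both cases ($c=0,\ A=-\tfrac{1}{2}$ and $c=1,\ A=-\tfrac{3}{2}$), and the two cases are exchanged by the involution $(a_{1},a_{2})\mapsto(-1-a_{2},-1-a_{1})$ already met in Corollary \ref{corA12}, under which the induced module $L(C)$ is unchanged; hence both parametrisations name the same module $M(-1,a_{1}-a_{2}-\tfrac{1}{2})$.
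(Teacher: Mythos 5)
Your proposal is correct and follows essentially the same route as the paper: the paper's proof likewise just compares the action of $\mathcal{U}(\mathfrak{g})_{0}$ on $p(1\otimes x(0))\in L(N(a_{1},a_{2}))$ with its action on $x(0)\in M(-1,a_{1}-a_{2}-\tfrac{1}{2})$, concludes by Lemire's correspondence, and (as in Corollary \ref{corA12}) gets the converse from Theorem \ref{thmMO}. Your discussion of the two admissible values of $c$ and the involution $(a_{1},a_{2})\mapsto(-1-a_{2},-1-a_{1})$ matches the remark the paper places right after the corollary.
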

\begin{proof} Once more, it suffices to check that the action of $\cal U (\germ g )_0$ on $p(1\otimes x(0))\in L(N(a_{1},a_{2}))$ is identical as its action on $x(0)\in M(-1,a_1-a_2- \frac{1}{2})$. We then conclude with Lemire's correspondence \cite{Le68}.

\end{proof}

\begin{rema}
Note that if $c=0$ then lemma \ref{lemAC1} implies that $a_1-a_2-\frac{1}{2}=2a_1$ and if $c=-2-2A$, it implies that $a_1-a_2-\frac{1}{2}=2a_1+1$. In both cases we see that  $a_1-a_2-\frac{1}{2}$ is a non integer complex number. Conversely if $z\in \C\setminus \Z$, there is a pair $(a_{1},a_{2})$ of non integer complex numbers such that $z=a_1-a_2-\frac{1}{2}$ and $z=2a_1$ or $z=2a_1+1$.

We also note that $N(a_1,a_2)\cong N(-1-a_2,-1-a_1)$. But if $a_1+a_2=-\frac{1}{2}$ then $(-1-a_1)+(-1-a_2)=-\frac{3}{2}$ and if $a_1+a_2=-\frac{3}{2}$ then $(-1-a_1)+(-1-a_2)=-\frac{1}{2}$. Therefore with the notations as above, we can always assume that $c=0$ and $A=-\frac{1}{2}$.
\end{rema}

We now show the following:

\begin{theo}\label{thm:AC1}
Assume $\germ g =C_{n}$ with $n>2$ and $\Phi\setminus\theta=\{e_{n}\}$. A simple module $M$ is in the category $\cal O _{\Phi,\theta}$ if and only if it is isomorphic to $M(-1,\ldots ,-1,a)$ for some $a\in \C-\Z$.
\end{theo}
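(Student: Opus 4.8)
The plan is to prove the two implications separately; the reverse one is immediate and the forward one carries all the work. For the reverse implication I would observe that when $\Phi\setminus\theta=\{e_n\}$ the subset $\theta_a$ attached in section \ref{sssec:deg1C} to the parameter $a=(-1,\ldots,-1,a)$ (with exactly $n-1$ entries equal to $-1$) is precisely our $\theta$, corresponding to the first of the two Dynkin diagrams drawn there (the case $l=n-1$, where $e_n$ is the unique circled node). Hence Theorem \ref{thmMO} already guarantees that $M(-1,\ldots,-1,a)$ is a simple object of $\cal O _{\Phi,\theta}$ for every $a\in\C\setminus\Z$.

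For the forward implication, let $M=L(C)$ be a simple module in $\cal O _{\Phi,\theta}$. By Theorem \ref{thm_{A}}, $C$ is a simple cuspidal $\germ l $-module of degree $1$, and since here $\germ l '$ is the $\germ{sl} _2$ generated by the long simple root $\alpha=e_n$, we have $C=N(a_1,a_2)$ with $a_1,a_2\in\C\setminus\Z$; moreover the center of $\germ l $ acts by scalars on $C$. Following the strategy of Theorem \ref{thm:AkAn}, I would determine all of these central scalars and then identify $M$ by means of Lemire's correspondence \cite{Le68}. The element $H_{\beta_1}+H_\alpha$ (with $\beta_1=e_{n-1}$) is central, and Lemma \ref{lemAC1}, which uses only the local piece $\beta_2-\beta_1\Leftarrow\alpha$ and hence holds for $n>2$, gives the action $c(k)=c+2a_2-2k$ of $H_{\beta_1}$ together with $c\in\{0,-2-2A\}$ and $2c+2A+1=0$, where $A=a_1+a_2$. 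By the remark following Corollary \ref{cor:AC1} we may normalize to $c=0$ and $A=-\tfrac12$.

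The new point over the $C_2$ case is that the remaining simple roots $\beta_i=e_{n-i}$ for $2\leq i\leq n-1$ are orthogonal to $\alpha$, so each $H_{\beta_i}$ lies in the center of $\germ l $ and acts on $C$ by a scalar $d_i$; one must show $d_i=0$ for all such $i$. This is carried out exactly as in Lemma \ref{lemA1NL}: writing $v:=p(X_{-(\alpha+\beta_1+\cdots+\beta_i)}\otimes x(k))$, one checks that $\alpha+\beta_1+\cdots+\beta_k=\epsilon_{n-k}+\epsilon_n\in\cal R $ is a root for every $k\leq i$, so Lemma \ref{lem7} applies and yields $v\neq 0$ together with an expansion of $v$ inside $\cal U (\germ l _{\theta}^-)_{-(\beta_1+\cdots+\beta_i)}\,p(1\otimes X_{-\alpha}x(k))$. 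Applying suitable raising vectors from $\germ n ^+$ (built from $\alpha$ and the string $\beta_1+\cdots+\beta_i$), only the summand attached to the permutation $\sigma_0=(i,i-1,\ldots,1)$ survives, and comparing coefficients — using that $p(1\otimes X_{-\alpha}x(k))\neq0$ by cuspidality — forces $d_i=0$.

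Once every central scalar is known ($c=0$, $A=-\tfrac12$, and all $d_i=0$), the action of $\cal U (\germ g )_0$ on the weight vector $p(1\otimes x(0))\in L(C)$ coincides with its action on the corresponding generating vector of $M(-1,\ldots,-1,a)$, where $a=a_1-a_2-\tfrac12$ (a non-integer by the remark after Corollary \ref{cor:AC1}); Lemire's correspondence \cite{Le68} then yields $M\cong M(-1,\ldots,-1,a)$. I expect the vanishing $d_i=0$ to be the main obstacle: unlike type $A$, the long root $\alpha=2\epsilon_n$ produces longer root strings (both the short root $\alpha+\beta_1=\epsilon_{n-1}+\epsilon_n$ and the long root $\alpha+2\beta_1=2\epsilon_{n-1}$ occur), so one must verify that the relevant adjoint actions and structure constants still isolate the single surviving term exactly as in the type $A$ computation.
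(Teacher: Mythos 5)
Your skeleton coincides with the paper's: the converse direction via Theorem \ref{thmMO}, and for the direct one, degree $1$ from Theorem \ref{thm_{A}}, Lemma \ref{lemAC1} with the normalization $c=0$, $A=-\tfrac12$, vanishing of the remaining central scalars $d_i$, and Lemire's correspondence to identify $L(C)$ with $M(-1,\ldots,-1,a)$. The genuine gap is exactly the step you yourself flag: the claim that $d_i=0$ follows ``exactly as in Lemma \ref{lemA1NL}.'' That lemma's mechanism requires \emph{two} raising operators, $X_{\alpha+\beta_1+\cdots+\beta_i}$ and $X_{\gamma_1+\alpha+\beta_1+\cdots+\beta_i}$, where $\gamma_1$ is a simple root on the other side of $\alpha$: the first equation pins down the unknown coefficient $\eta_{\sigma_0}$ (through the nonzero vector $p(X_{-\gamma_1}\otimes x(k))$), and only then does the second equation force $d_i=0$. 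In the present situation $\alpha=e_n$ sits at the extremity of the Dynkin diagram of $C_n$, so there is no $\gamma_1$, and applying the single operator $X_{\alpha+\beta_1+\cdots+\beta_i}$ to your vector $v$ yields one relation linking $\eta_{\sigma_0}$ and $d_i$, which determines neither.

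The paper's actual argument is correspondingly heavier. For $d_2$ it uses the same test vector $p(X_{-\alpha-\beta_1-\beta_2}\otimes x(k))$, but assumes $d\neq0$ (this is what guarantees $p(X_{-\beta_2}\otimes x(k))\neq0$ via Proposition \ref{GVM}), applies the three operators $X_{\beta_1}$, $X_{\alpha+\beta_1}$, $X_{\alpha+\beta_1+\beta_2}$, and solves the system in $(\eta_1(k),\eta_2(k),d)$: the output is not $d=0$ but $d=1-3a_1-3k$, a quantity depending on $k$, contradicting constancy. For $d_l$ with $l>2$ the direct analogue of your vector is abandoned: the paper first proves an auxiliary lemma (resting on the induction hypothesis $d_2=\cdots=d_{l-1}=0$) that $p(u\otimes x(k))=0$ for all $u\in\mathcal{U}(\mathfrak{l}_{\theta}^-)_{-(\beta_i+\cdots+\beta_j)}$ with $1<i\le j<l$, then works with $p(X_{-(\alpha+2\beta_1+\beta_2+\cdots+\beta_l)}\otimes x(k))$ (note the coefficient $2$ on $\beta_1$, available only because $\alpha$ is long), applies the family of operators $X_{\alpha+\beta_1+\cdots+\beta_l}$, $X_{\beta_1}^2$, $X_{\beta_1}X_{\beta_2},\ldots,X_{\beta_1}X_{\beta_{l-1}}$, $X_{\beta_1}X_{\beta_1+\cdots+\beta_l}$, and again concludes by showing $d$ would have to depend on $k$. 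So your induction on $i$ is the right organizing idea, but the ``only $\sigma_0$ survives, compare coefficients'' step does not close it: you must either supply the extra equations together with the non-vanishing statements they need (which is where the assumption $d_i\neq0$ and Proposition \ref{GVM} enter), or reproduce the paper's contradiction-by-non-constancy; as written, the central step of the forward implication is missing.
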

\begin{proof} Let $L(C)$ be a simple module in $\cal O _{\Phi,\theta}$. We keep the notations above. As we mentioned, we can assume that $c=0$ and $A=-\frac{1}{2}$. Now the vectors $H_{\beta_2},\ldots ,H_{\beta_{n-1}}$ are in the center of $\germ l $. Thus they act on $C$ by some constants. Denote $d_{2},\ldots ,d_{n-1}$ these constants. We show by induction that $d_{i}=0$. 

\begin{enumerate}
\item We begin with $d:=d_{2}$. Assume that $d\not=0$. Then we have $X_{\beta_2}\cdot (X_{-\beta_2}\otimes x(k))=H_{\beta_2}\otimes x(k)=d\times 1\otimes x(k)\not=0$. Hence, the  proposition \ref{GVM} implies that $p(X_{-\beta_2}\otimes x(b))\not=0$. Consider $u:=p(X_{-\alpha-\beta_1-\beta_2}\otimes x(k))$. We apply lemma \ref{lem7} to $\alpha$ et $\mbox{\boldmath $\beta$}=(\beta_1,\beta_2)$. We get $u\in p(\cal U (\germ l ^-_{\theta})_{-\beta_1-\beta_2}\otimes x(k-1))$. But $\cal U (\germ l ^-_{\theta})_{-\beta_1-\beta_2}$ is generated by the two vectors $X_{-\beta_1}X_{-\beta_2}$ and $X_{-\beta_2}X_{-\beta_1}$. Therefore there are two non zero complex numbers $\eta_{1}(k)$ and $\eta_{2}(k)$ such that
\begin{align}\label{eqAC1}
u= & \eta_1(k)p(X_{-\beta_2}X_{-\beta_1}\otimes x(k-1))+\eta_2(k)p(X_{-\beta_1}X_{-\beta_2}\otimes x(k-1)).
\end{align}
Apply to equation \eqref{eqAC1} the vectors $X_{\beta_1}$, $X_{\alpha+\beta_1}$ and $X_{\alpha+\beta_1+\beta_2}$. Using the structure constants of $C_n$, we have the following system:
$$\left\{ \begin{array}{l}
0  =  (\eta_1(k)c(k-1)+\eta_2(k)(c(k-1)+1))p(X_{-\beta_2}\otimes x(k-1))\\
p(X_{-\beta_2}\otimes x(k))  =  -2(\eta_1(k)+\eta_2(k))(a_2-k+1)p(X_{-\beta_2}\otimes x(k))\\
(2(a_1-a_2+2k)+c(k)+d)p(1\otimes x(k))  =  -\eta_1(k)(a_2-k+1)p(1\otimes x(k))
\end{array}\right.$$
Since $p(X_{-\beta_2}\otimes x(b'))\not=0$, $p(X_{-\beta_2}\otimes x(b))\not=0$ and $p(1\otimes x(b))\not=0$ the solution of this system gives in particular $d=1-3a_1-3k$. This contradicts the fact that $d$ is a constant. Therefore $d=0$.

\item By induction assume that $d_{i}=0$ for $2 \leq i\leq l-1$. We prove then that $d:=d_{l}$ is also zero. On the contrary, assume $d\not=0$.

We begin with a lemma.

\begin{lemm}
Let $1<i\leq j<l$. Then for all $v\in \cal U (\germ n ^+_{\Phi\setminus\theta})$ and all $u\in \cal U (\germ l _{\theta}^-)_{-(\beta_i+\cdots +\beta_j)}$, we have $v\cdot (u\otimes x(k))=0$.
\end{lemm}
\begin{proof} We prove it by induction on $j-i$. If $1<i=j<l$, we have $X_{\beta_i}\cdot X_{-\beta_i}\otimes x(k)=H_{\beta_i}\otimes x(k)=0$ since $d_{i}=0$. Now for $X\in \germ n ^+_{\Phi\setminus\theta}$ of weight $\beta\not=\beta_i$, either $\beta-\beta_i\not\in \cal R $ or $\beta-\beta_i$ is a positive root, belonging to $\langle \Phi\setminus\theta\rangle ^+$. In both cases the action of $X$ on $X_{-\beta_i}\otimes x(k)$ is trivial. In other words the action of $\cal U (\germ n ^+_{\Phi\setminus\theta})$ on $X_{-\beta_i}\otimes x(k)$ is trivial.

Assume the lemma holds if $j-i<m$. Let $1<i\leq j<l$ be such that $j-i=m$. Let $u\in \cal U (\germ l _{\theta}^-)_{-(\beta_i+\cdots +\beta_j)}$. Without loss of generality we can assume that $u$ is a monomial. Let $v\in \cal U (\germ n ^+_{\Phi\setminus\theta})$ be a weight vector of weight $\beta$. Then we have the following possibilities. 
\begin{enumerate}
\item If $ad(v)(u)=0$, then $v\cdot u\otimes x(k)=0$.
\item If $ad(v)(u)$ is a weight vector with a negative weight. Then we must have $ad(v)(u)\in \cal U (\germ l _{\theta}^-)$ since the weight of $v$ is positive. Note that this case can only happen if $v\in \cal U (\germ l _{\theta})^+$. Then the weight of $ad(v)(u)$ is of smaller length that the weight of $u$. Thus we can apply the induction hypothesis to conclude that $v\cdot u\otimes x(k)=0$.
\item If $ad(v)(u)$ has zero weight, then $ad(v)(u)$ must be proportional to $H_{\beta_{i}}+\cdots +H_{\beta_{j}}$. Then the hypothesis $d_{p}=0$ for $1<p<l$ imply that $v\cdot u\otimes x(k)=0$.
\item If $ad(v)(u)$ is a weight vector of positive weight, then $ad(v)(u)\in \cal U (\germ n ^+_{\Phi\setminus\theta})$. Therefore we have $v\cdot u\otimes x(k)=0$.
\end{enumerate}
Thus in every case the action of $v$ on $u\otimes x(k)$ is trivial.

\end{proof}

The lemma together with proposition \ref{GVM} now imply that $p(u\otimes x(k))=0$ for all $u\in \cal U (\germ l _{\theta}^-)_{-(\beta_i+\cdots +\beta_j)}$. Consider now $$v=p(X_{-(\alpha+2\beta_1+\beta_2+\cdots +\beta_l)}\otimes x(k)).$$ Lemma \ref{lem7} with $\alpha$ and $\mbox{\boldmath $\beta$}=(\beta_1+\cdots +\beta_l,\beta_1)$ implies that $v\not=0$ and that $v\in p(\cal U (\germ l ^-_{\theta})_{-(2\beta_1+\beta_2+\cdots +\beta_l)}\otimes x(k-1))$. We shall order the elements in $\cal U (\germ l ^-_{\theta})_{-(2\beta_1+\beta_2+\cdots +\beta_l)}$ putting on the left the weight vectors in $\germ l _{\theta}^-$ whose weight has the form $\beta_l+\cdots +\beta_i$ and then we order the remaining vectors according to the length of their weight (that is the number of simple roots involved in the writing of the weight). Using the above lemma giving the non zero contributions in $p(\cal U (\germ l ^-_{\theta})_{-(2\beta_1+\beta_2+\cdots +\beta_l)}\otimes x(k-1))$, we finally get complex numbers $\mu_i$ such that
\begin{multline}\label{eqcx}
v= \mu_1p(X_{-(\beta_1+\cdots +\beta_l)}X_{-\beta_1}\otimes x(k-1))+\mu_2p(X_{-(\beta_2+\cdots +\beta_l)}X_{-\beta_1}^2\otimes x(k-1))+\\ \mu_3p(X_{-(\beta_3+\cdots +\beta_l)}X_{-\beta_1-\beta_2}X_{-\beta_1}\otimes x(k-1))\\
+\cdots +\mu_lp(X_{-\beta_l}X_{-(\beta_1+\cdots +\beta_{n-1})}X_{-\beta_1}\otimes x(k-1)).
\end{multline}
Apply $X_{\alpha+\beta_1+\cdots +\beta_k}$ to equation \eqref{eqcx}. We get
$$p(X_{-\beta_1}\otimes x(k))=2(-\mu_1+2\mu_2+\mu_3+\cdots +\mu_l)(a_2-k+1)p(X_{-\beta_1}\otimes x(k)).$$
Now we remark that the above lemma implies 
$$p(X_{-(\beta_2+\cdots +\beta_i)}X_{-\beta_1}\otimes x(k-1))=p(X_{-(\beta_1+\cdots +\beta_i)}\otimes x(k-1)), \quad i<l.$$ Indeed, we have
\begin{align*}
p(X_{-(\beta_2+\cdots +\beta_i)}X_{-\beta_1}\otimes x(k-1)) & =  p(X_{-\beta_1}X_{-(\beta_2+\cdots +\beta_i)}\otimes x(k-1))\\
 & +p([X_{-(\beta_2+\cdots +\beta_i)},X_{-\beta_1}]\otimes x(k-1))\\
 & = X_{-\beta_1}p(X_{-(\beta_2+\cdots +\beta_i)}\otimes x(k-1))\\
 & + p(X_{-(\beta_1+\cdots +\beta_i)}\otimes x(k-1))\\
 & = 0+p(X_{-(\beta_1+\cdots +\beta_i)}\otimes x(k-1))
\end{align*}
Apply now the vectors $X_{\beta_1}^2, X_{\beta_1}X_{\beta_2}, \ldots , X_{\beta_1}X_{\beta_{l-1}}$ to equation \eqref{eqcx}. Using the above lemma and the structure constants in $C_{n}$ we get:
$$
\left\{ \begin{array}{l}
0  =  \left(-2c(k-1)\mu_1+2c(k-1)(c(k-1)-1)\mu_2\right)p(X_{-(\beta_2+\cdots +\beta_l)}\otimes x(k-1))\\
0  =  \left(-2(c(k-1)-1)\mu_2+2(c(k-1)-1)\mu_3\right)p(X_{-(\beta_3+\cdots +\beta_l)}X_{-\beta_1}\otimes x(k-1))\\
0  =  \left(-(c(k-1)-1)\mu_3+(c(k-1)-1)\mu_4\right)p(X_{-(\beta_4+\cdots +\beta_l)}X_{-\beta_2-\beta_1}\otimes x(k-1))\\
\phantom{00} \vdots  \\
0  =  \left(-(c(k-1)-1)\mu_{l-1}+(c(k-1)-1)\mu_l\right)p(X_{-\beta_l}X_{-(\beta_1+\cdots \beta_{l-2}}\otimes x(k-1))
\end{array}\right.$$
On the other hand since $d\not=0$, we have for $i>1$: 
\begin{multline*}
X_{\beta_1+\cdots +\beta_{i-2}}X_{\beta_i+\cdots +\beta_l}\cdot X_{-(\beta_i+\cdots +\beta_l)}X_{-(\beta_1+\cdots +\beta_{i-2})}\otimes x(k-1)=\\
d\times c(k-1)\times 1\otimes x(k-1)\not= 0.
\end{multline*}
Proposition \ref{GVM} thus implies that
$$p(X_{-(\beta_i+\cdots +\beta_l)}X_{-(\beta_1+\cdots +\beta_{i-2})}\otimes x(k-1))\not=0.$$ Finally we apply the vector $X_{\beta_1}X_{\beta_1+\cdots +\beta_l}$ to equation \eqref{eqcx} to obtain:
\begin{multline}
2(a_1+k)p(1\otimes x(k-1)) = \left( ((d+c(k-1)-1)\mu_1+2(c(k-1)-1)\mu_2\right. \\
 \left. +(c(k-1)-1)\mu_3+\cdots +(c(k-1)-1)\mu_l)c(k-1)\right) p(1\otimes x(k-1)).
\end{multline}
We solve this system in the indeterminate $(\mu_1,\ldots ,\mu_l,d)$ obtained from the $k+1$ equations above, using $c(k)=c+2(a_2-k)$, $c=0$ and $A=-\frac{1}{2}$. In particular we get $d=1-2a_2+2k-l\times \frac{2a_2-2k+3}{2a_2-2k+1}$. This contradicts the fact that $d$ is a constant. Therefore we proved that $d=0$.
\item We conclude as in corollary \ref{corA1NL} that $L(C)$ is isomorphic to some $M(-1,\ldots ,-1,a)$ with $a\in \C\setminus \Z$.
\item Conversely, theorem \ref{thmMO} ensures that the module $M(-1,\ldots ,-1,a)$ with $a\in \C\setminus \Z$ is in the category $\cal O _{\Phi,\theta}$.
\end{enumerate}

\end{proof}

\subsubsection{Case $\germ l $ of type $C$}\label{sssec:CC}

We show the following:

\begin{theo}\label{thmCC}
Let $\theta\subset \Phi$ be such that $\Phi\setminus\theta$ is of type $C$. The simple module $M$ is in the category $\cal O _{\Phi,\theta}$ if and only if $M$ is isomorphic to $M(-1,\ldots,-1,a_{1},\ldots ,a_{l})$ with $a_{i}\in \C\setminus \Z$ and $l>1$.
\end{theo}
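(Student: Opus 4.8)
The plan is to run the same two-stage argument as for Theorems \ref{thm:AC1} and \ref{thm:AkAn}, the only genuinely new feature being that the cuspidal datum now sits on a Levi factor of type $C$ rather than of type $A$. By Fernando's theorem \ref{thmFe} every simple $M$ in $\cal O _{\Phi,\theta}$ is of the form $L(C)$ with $C$ a simple cuspidal $\germ l $-module, and by Theorem \ref{thm_{A}} the module $C$ has degree $1$. Since $\germ l '$ is of type $C_{l}$ with $l>1$, theorem \ref{thmBBLC} forces $C\cong M(b_{1},\ldots ,b_{l})$ with all $b_{i}\in \C\setminus \Z$, the centre of $\germ l $ acting by scalars. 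I fix notation by writing $\theta=\{\beta_{1},\ldots ,\beta_{n-l}\}$ (of type $A_{n-l}$), with $\beta_{1}$ the junction root adjacent to $\germ l '$, and letting $\alpha_{1}$ be the short simple root of $\germ l '$ meeting $\beta_{1}$ by a single bond; the long root of $\germ g $ then lies at the far end of $\germ l '$, which is precisely where $l>1$ is used, since for $l=1$ the junction would be the double bond treated in theorem \ref{thm:AC1}.

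First I would pin down the junction scalar. As in theorem \ref{thm:AkAn}, a suitable integral combination of the Cartan elements of $\germ l '$ together with $H_{\beta_{1}}$ is central in $\germ l $, and this expresses the action $c(k)$ of $H_{\beta_{1}}$ on a weight basis of $C$ in terms of the parameters $b_{i}$. Applying Lemma \ref{lem7} to $\alpha=\alpha_{1}$ and $\mbox{\boldmath $\beta$}=(\beta_{1})$, then hitting the resulting identity with the raising operators $X_{\beta_{1}}$ and $X_{\alpha_{1}+\beta_{1}}$ and using that $p$ is injective on $1\otimes C$ (proposition \ref{GVM} and lemma \ref{lem2}), produces a small linear system whose only solution forces the relevant constant to take the value corresponding to a $-1$ entry. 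Because the junction bond is short--short, these computations are formally identical to the type $A$ ones and never invoke the quadratic long--root structure constants.

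Next I would show that the remaining central charges vanish. For $i>1$ the element $H_{\beta_{i}}$ is central in $\germ l $ and acts by some scalar $d_{i}$; arguing by induction on $i$ I would assume $d_{2}=\cdots =d_{i-1}=0$ and apply Lemma \ref{lem7} with $\alpha=\alpha_{1}$ and $\mbox{\boldmath $\beta$}=(\beta_{1}+\cdots +\beta_{i})$. The key auxiliary fact, exactly as in the embedded lemma inside the proof of theorem \ref{thm:AC1}, is that under the standing induction hypothesis $\cal U (\germ n ^{+}_{\Phi\setminus\theta})$ annihilates $p(u\otimes x(k))$ for $u\in \cal U (\germ l _{\theta}^{-})_{-(\beta_{i}+\cdots +\beta_{j})}$; this kills all but the single surviving permutation term when I apply $X_{\alpha_{1}+\beta_{1}+\cdots +\beta_{i}}$ and its companion raising operator, and solving the ensuing system forces $d_{i}=0$, contradicting $d_{i}\neq 0$.

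Finally, with every $\theta$-charge fixed to the value dictated by the pattern $(-1,\ldots ,-1,b_{1},\ldots ,b_{l})$, I would compare the action of $\cal U (\germ g )_{0}$ on the highest weight vector $p(1\otimes x(0))$ of $L(C)$ with its action on $x(0)\in M(-1,\ldots ,-1,a_{1},\ldots ,a_{l})$ and invoke Lemire's correspondence \cite{Le68} to conclude $M\cong M(-1,\ldots ,-1,a_{1},\ldots ,a_{l})$ with $a_{i}\in \C\setminus \Z$. The converse is immediate from theorem \ref{thmMO}, which guarantees that each such module is an object of $\cal O _{\Phi,\theta}$. The hard part will be the inductive vanishing of the deeper charges $d_{i}$: it demands careful bookkeeping of which monomials in $\cal U (\germ l _{\theta}^{-})_{-(\beta_{1}+\cdots +\beta_{i})}$ survive the raising operators, together with the solution of the resulting linear systems, and this is where essentially all of the computational effort lies.
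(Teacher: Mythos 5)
Your proposal is correct and follows essentially the same route as the paper: reduce via Theorem \ref{thm_{A}} and Theorem \ref{thmBBLC} to a degree-one cuspidal $C\cong M(b_{1},\ldots ,b_{l})$, pin down the central character of the Levi (the junction scalar and the vanishing of the deeper charges $d_{i}$) by Lemma \ref{lem7} together with well-chosen raising operators, conclude with Lemire's correspondence, and get the converse from Theorem \ref{thmMO}. The only differences are organisational: for $l>2$ the paper simply invokes Theorem \ref{thm:AkAn} for the type-$A$ part of the Dynkin diagram instead of redoing the induction, and at the junction it first normalises $\eta(k)$ with a third operator, whereas your two-operator system yields a quadratic whose second root is $k$-dependent and must be discarded by constancy -- a small point to make explicit, but not a gap.
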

\begin{proof} \begin{enumerate}
\item Let us begin with the following case:
\setlength{\unitlength}{1cm}
\begin{center}
\begin{picture}(4,1.2)
\put(1,.5){\circle*{.2}}
\put(1,.5){\line(1,0){1}}
\put(2,.5){\circle*{.2}}
\put(2,0.45){\line(1,0){1}}
\put(2,.55){\line(1,0){1}}
\put(3,.5){\circle*{.2}}
\put(2,.5){\circle{.35}}
\put(3,.5){\circle{.35}}
\put(.9,.8){$\beta$}
\put(1.9,.8){$\alpha_1$}
\put(2.9,.8){$\alpha_2$}
\put(2.4,.4){$<$}
\end{picture}
\end{center} 

Let $L(C)$ be a simple module in $\cal O _{\Phi,\theta}$. We already know that $C$ is a simple cuspidal $\germ l $-module of degree $1$. Thus it is of the form $M(a)$ where $a\in (\C\setminus\Z)^2$. Denote by $c(k)$ the action of $H_{\beta}$ on $x(k)$. Since $H_{\beta}+H_{\alpha_1}+H_{\alpha_2}$ is in the center of $\germ l $, we deduce that $c(k)=c-a_1-k_{1}$ for some complex number $c$. Let us prove the following:
\begin{lemm}
We have $c=-1$.
\end{lemm}
\begin{proof} Consider $u:=p(X_{-\alpha_1-\beta}\otimes x(k))$. Lemma \ref{lem7} implies that there is a non zero complex number $\eta(k)$ such that 
$$u=\eta(k) p(X_{-\beta}\otimes x(k-\epsilon_{1}+\epsilon_{2})).$$ Apply now the vector $X_{\beta+\alpha_2+\alpha_1}$ to this equation. We get:
\begin{multline*}
p([X_{\beta+\alpha_1+\alpha_2},X_{-\beta-\alpha_1}]\otimes x(k))=\\
\eta(k)p([X_{\beta+\alpha_2+\alpha_1},X_{-\beta}]\otimes x(k-\epsilon_{1}+\epsilon_{2}).
\end{multline*}
Using the structure constants in $C_{3}$ we finally obtain $1=\eta(k)$. Apply then the vectors $X_{\beta}$ and $X_{\beta+\alpha_1}$ to the equation $u=\eta(k) p(X_{-\beta}\otimes x(k-\epsilon_{1}+\epsilon_{2}))$. We get two equations whose resolution gives $c=-1$.

\end{proof}

We can now check as in corollary \ref{corA1NL} that the $C_3$-module $L(C)$ is isomorphic to some $M(-1,a,b)$ for well chosen non integer complex numbers $a$ and $b$.

\item More generally if $\germ l '=C_2$, we show that the action of $H_{\beta_i}$ is trivial for $i>1$. This can be done by induction as in theorem \ref{thm:AkAn}. Here we need to consider the vector $v=p(X_{-(\alpha_1+\beta_1+\cdots +\beta_i)}\otimes x(k))$ and apply to $v$ the vectors $X_{\alpha_2+\alpha_1+\beta_1+\cdots +\beta_i}$, $X_{\alpha_1+\beta_1+\cdots +\beta_i}$, \ldots We left the details to the reader.

\item Assume now that $\germ l '=C_l$ for $l>2$. Then the Dynkin diagram contains the Dynkin diagram of $(A_{l-1},A_{n-1})$ of section \ref{secAkAn}. Therefore theorem \ref{thm:AkAn} gives us the action of the center of $\germ l $ on the module $C$. It only remains to check that $L(C)$ is then isomorphic to some $M(-1,\ldots ,-1,a_1,\ldots a_l)$ using the argument of corollary \ref{corA1NL}.

\item Conversely, the theorem \ref{thmNO} ensures that the simple modules \newline $M(-1,\ldots,-1,a_{1},\ldots ,a_{l})$ belong to some category $\cal O _{\Phi,\theta}$.
\end{enumerate}

\end{proof}


\subsection{Classification theorem}\label{ssec:class}


Let us gather the results obtained in this part:

\begin{theo}\label{thmpcpl}
Let $\germ g $ be a simple Lie algebra. Let $\theta \subset \Phi$ be such that $\theta\not=\Phi$ and $\theta\not=\emptyset$. Assume that $(\germ g , \Phi\setminus\theta)$ does not belong to table \ref{tab2}. Then:
\begin{enumerate}
\item The category $\cal O _{\Phi,\theta}$ is non trivial if and only if either $\germ g $ is isomorphic to $A_n$ and $\germ l '_{\Phi\setminus\theta}$ to $A_m$ with $m<n$ or $\germ g $ is isomorphic to $C_n$ and $\germ l '_{\Phi\setminus\theta}$ is isomorphic to either the $\germ {sl} _2$-subalgebra generated by the long simple root or to $C_k$ with $k<n$.
\item For a pair $(\germ g ,\germ l '_{\Phi\setminus\theta})$ as in $(1)$, the simple module in the category $\cal O _{\Phi,\theta}$ are of degree $1$ except in the case where $\germ g =A_n$ with $n>2$ and $\germ l '_{\Phi\setminus\theta}$ is the $\germ sl _{2}$-subalgebra generated by one of the simple roots on the extremity of the Dynkin diagram of $\germ g $. 
\item Conversely every simple module of degree $1$ belongs to one category $\cal O _{\Phi,\theta}$.
\end{enumerate}
\end{theo}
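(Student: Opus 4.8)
The plan is to \emph{assemble} the preceding results of this section rather than to prove anything new: Theorems $A$, $B$ and $C$ cut down the admissible pairs $(\germ g ,\Phi\setminus\theta)$, the first reduction (Theorem \ref{thm1red}) removes the configurations yielding trivial categories, and the explicit computations of Subsections \ref{ssec:typeA} and \ref{ssec:typeC} identify the surviving simple modules. I would organise the argument around the three assertions, proving the ``only if'' half of $(1)$ first, since the structural theorems feed straight into it.

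For that half, suppose $\cal O _{\Phi,\theta}$ is non trivial. Being artinian by Proposition \ref{prop:GO}$(1)$, it then contains a simple object, which by the discussion opening this section (via Fernando's Theorem \ref{thmFe}) is of the form $L(C)$ with $C$ a simple cuspidal $\germ l _{\Phi\setminus\theta}$-module. Theorem $A$ (\ref{thm_{A}}) forces $C$ to have degree $1$, and Theorem $C$ (\ref{thm_{C}}) forces $\germ l '_{\Phi\setminus\theta}$ to be simple; thus $\Phi\setminus\theta$ is a connected subdiagram whose span has type $A$ or $C$, and when $\germ g $ is not of type $C$ Theorem $B$ (\ref{thm_{B}}) restricts this to type $A$. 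The next step is a finite walk through the Dynkin types: I would enumerate the connected type-$A$ and type-$C$ subdiagrams of each $\germ g $ and check that, after deleting those in Table \ref{tab1} (trivial, by Theorem \ref{thm1red}) and those in Table \ref{tab2} (excluded by hypothesis), exactly the configurations of $(1)$ remain---that is, $\germ g =A_n$ with $\germ l '_{\Phi\setminus\theta}=A_m$ (and $m<n$ because $\theta\neq\emptyset$), or $\germ g =C_n$ with $\germ l '_{\Phi\setminus\theta}$ either the $\germ {sl} _2$ attached to the long simple root or a subdiagram $C_k$ ($k<n$) containing that root. The ``if'' half is then immediate from the converse parts of the classification statements of Subsections \ref{ssec:typeA} and \ref{ssec:typeC}, which, resting on Theorems \ref{thmNO} and \ref{thmMO}, produce for each surviving pair a non zero degree $1$ module $N(a)$ or $M(a)$ in the category.

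Assertion $(2)$ I would read off from the case-by-case classifications, which exhibit every simple object as an $N(a)$ or an $M(a)$, hence of degree $1$: Theorem \ref{thm:AkAn} and Corollary \ref{corA1NL} in type $A$, and Theorems \ref{thm:AC1} and \ref{thmCC} together with Corollary \ref{cor:AC1} in type $C$. The only family untouched by these statements is $\germ g =A_n$ with $n>2$ and $\germ l '_{\Phi\setminus\theta}$ the $\germ {sl} _2$ generated by an extreme simple root (Subsection \ref{sec:A1An}), where only the case $A_2$ has been settled (Corollary \ref{corA12}); this is exactly the exception recorded in $(2)$. For $(3)$, I would invoke the Benkart--Britten--Lemire classification (Theorems \ref{thmBBLA} and \ref{thmBBLC}): every infinite dimensional simple weight module of degree $1$ is an $N(a)$ or an $M(a)$, and Theorems \ref{thmNO} and \ref{thmMO} display each such module as an object of the corresponding category $\cal O _{\Phi,\theta_a}$.

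The genuinely delicate point---and the step I would check most carefully---is the exhaustiveness of the enumeration in $(1)$: one must confirm that Table \ref{tab1}, Table \ref{tab2} and the list of $(1)$ together account for \emph{every} pair $(\germ g ,\Phi\setminus\theta)$ with $\Phi\setminus\theta$ connected of type $A$ or $C$, ranging over $B_n$, $D_n$, $E_6$, $E_7$, $E_8$, $F_4$ and $G_2$ as well as $A_n$ and $C_n$. This is finite but fussy bookkeeping; its subtle feature is that the one-node type-$A$ Levi subalgebras sitting at an extremity of the diagram in types $B$, $D$ and $E$ are precisely the ones deferred to Table \ref{tab2} rather than resolved, mirroring the unresolved extremal $\germ {sl} _2$ case in type $A$ that produces the exception in $(2)$.
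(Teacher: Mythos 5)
Your proposal is correct and follows essentially the same route as the paper, which offers no separate argument for this theorem but simply gathers the preceding results: Theorems \ref{thm_{A}}--\ref{thm_{C}} and \ref{thm1red} for the exclusions, the case-by-case classifications of Subsections \ref{ssec:typeA} and \ref{ssec:typeC} for the surviving pairs and assertion $(2)$, and Theorems \ref{thmNO}, \ref{thmMO} (with the Benkart--Britten--Lemire classification) for non-triviality and assertion $(3)$. Your identification of the Dynkin-diagram bookkeeping and of the deferred extremal one-node cases (Table \ref{tab2}, and the extreme $\germ{sl}_2$ exception in type $A$) as the delicate points matches the structure of the paper's treatment.
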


\begin{table}[htbp]
\begin{center}
\begin{tabular}{|c|c|}
\hline
\textbf{Type} & $\mathbf{\Phi\setminus\theta}$\\
\hline
$B_n$ & $\{e_1\}$\\
\hline
$D_n$ & $\{e_1\}$ or $\{e_{n-1}\}$ or $\{e_n\}$\\
\hline
$E_6$ & $\{e_1\}$ or $\{e_6\}$\\
\hline
$E_7$ & $\{e_7\}$\\
\hline
\end{tabular}
\caption{Excluded Cases}\label{tab2}
\end{center}
\end{table}

\begin{rema}
If $\germ g =A_n$ with $n>2$ and $\germ l '_{\Phi\setminus\theta}$ is the $\germ sl _{2}$-subalgebra generated by one of the simple roots on the extremity of the Dynkin diagram of $\germ g $, then there exists in $\cal O _{\Phi,\theta}$ simple modules not of degree $1$ (see appendix \ref{append}).
\end{rema}


\section{Semisimplicity of the category $\mathcal O _{\Phi,\theta}$}\label{sec:semis}


In this part we show that the non empty category $\cal O _{\Phi,\theta}$ is semisimple except if $\germ l '_{\Phi\setminus\theta}=\{e_{1}\} \mbox{ or } \{e_{n}\}$ when $\germ g $ is of type $A$. As in the previous part, we denote $\germ l :=\germ l _{\Phi\setminus\theta}$.


\subsection{Type $A$ case}\label{ssec:semiA}


In this section we assume that $\germ g =A_{n}$ for $n>1$. Let $\theta \subset \Phi$ be such that $\germ l $ is of type $A$. Moreover, if $n>2$ we will assume that $\Phi\setminus\theta\not=\{e_1\}$ and $\Phi\setminus\theta\not=\{e_n\}$.

\subsubsection{Case $\germ l '=A_{1}$}\label{sssec:semiA1}

Suppose here that $\Phi\setminus\theta=\{\alpha\}=\{\alpha_{l+1}\}$. According to the classification theorem \ref{thmpcpl}, the simple modules in $\cal O _{\Phi,\theta}$ are the modules $$N_a=N(\underbrace{-1,\ldots ,-1}_{l},a_1,a_2,\underbrace{0\ldots ,0}_{m}),\: l+2+m=n$$ where $a_i\in \C\setminus\Z$.

\begin{theo}\label{thmextA}
For $a$ and $b$ in $(\C\setminus\Z)^2$, we have $Ext^1(N_b,N_a)=\{0\}$.
\end{theo}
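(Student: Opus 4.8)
The plan is to represent a class in $Ext^1(N_b,N_a)$ by a cocycle $c:\germ g \to Hom_{\C}(N_b,N_a)$ in the sense of definition \ref{def:cocy} (so that the extension $V$ is $N_a\oplus N_b$ with the twisted action) and to prove that $c$ is a coboundary. First I would normalise $c$ using the weight grading: choosing the underlying vector-space splitting $V=N_a\oplus N_b$ compatibly with the decomposition into $\germ h $-weight spaces forces $c(H)=0$ for all $H\in\germ h $ and makes $c(\germ g _\gamma)$ raise weights by $\gamma$, i.e. $c(\germ g _\gamma)$ sends $(N_b)_\mu$ into $(N_a)_{\mu+\gamma}$. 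Since $N_a$ and $N_b$ are simple, each has support contained in a single coset of the root lattice $Q_{\cal R }$; if these two cosets are distinct, then every $c(\germ g _\gamma)$ is forced to vanish and the sequence splits. So I may assume the supports lie in one common coset. A standard reduction along the generalised eigenspaces of the centre of $\cal U (\germ g )$ further lets me assume that $N_a$ and $N_b$ have the same central character, the only case in which a non-split extension can occur.

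Next I would restrict $c$ to the subalgebra $\germ l '=\langle X_{\alpha},X_{-\alpha}\rangle\cong\germ {sl} _2$, where $\alpha=\alpha_{l+1}$ is the unique simple root of $\Phi\setminus\theta$. By proposition \ref{propNM} both $N_a$ and $N_b$ restrict to direct sums of simple cuspidal $\germ {sl} _2$-modules, the summands being exactly the eigenspaces of the centre of $\germ l $; since adding or subtracting $\alpha$ leaves these central eigenvalues unchanged, $c(X_{\pm\alpha})$ respects this decomposition and restricts to a cocycle on each summand. Applying proposition \ref{prop:GS} componentwise and subtracting a globally weight-preserving coboundary $[\,\cdot\,,\phi]$ with $\phi:N_b\to N_a$ (which is available precisely because the two supports sit in one coset), I can arrange that $c(H_{\alpha})=c(X_{-\alpha})=0$, that $c(X_{\alpha})$ acts as a scalar multiple of $(X_{-\alpha})^{-1}$ on each summand where $N_a$ and $N_b$ are isomorphic over $\germ {sl} _2$, and that $c$ vanishes on $\germ l '$ on the remaining summands.

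The heart of the proof is then to propagate this normalisation through the simple roots of $\theta$ and to show that the surviving scalars are zero. Because the extremal nodes are excluded in this case, the node $\alpha$ has a neighbour $\beta\in\theta$ with $\alpha+\beta\in\cal R $ on each side of the Dynkin diagram, which is exactly the configuration used in lemmas \ref{lem7} and \ref{lem:A1N}. I would feed the brackets $[X_{\alpha},X_{\beta}]$, $[X_{\alpha+\beta},X_{-\beta}]$ and $[X_{\beta},X_{-\alpha-\beta}]$ into the cocycle identity and evaluate the resulting relations on a weight vector $x(k)$, using the explicit action of $X_{\pm\beta}$ recalled in this section and the cuspidality of $N_a$ (so that $X_{-\alpha}$ is bijective and $(X_{-\alpha})^{-1}$ is defined). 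This expresses $c$ on $X_{\pm\beta}$ and $X_{\pm(\alpha+\beta)}$ in terms of the normalised data and, crucially, forces the scalar attached to $(X_{-\alpha})^{-1}$ to vanish, in the same way that the two-sided relations $cc'=0$ and $c+c'+A+1=0$ were obtained in lemma \ref{lem:A1N}. Once these scalars are zero and $c$ has been killed on the generators $X_{\pm\alpha}$ and $X_{\pm\beta}$ of $\germ g $ modulo coboundaries, $c$ is itself a coboundary and the extension splits.

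The hard part will be this last step, namely ruling out the self-extension parameter furnished by proposition \ref{prop:GS} in the case where $N_a\cong N_b$ as $\germ {sl} _2$-modules (in particular when $a=b$), since there the $\germ {sl} _2$-cocycle need not be a coboundary. Eliminating it requires a careful bookkeeping of the commutation relations around the node $\alpha$ and a verification that a nonzero $(X_{-\alpha})^{-1}$-term is incompatible with the highest weight and restriction conditions defining $\cal O _{\Phi,\theta}$; the computation is of the same flavour as those in lemmas \ref{lemA12} and \ref{lem:A1N}, but must be run for the full cocycle rather than for a single matrix coefficient, and it is here that the presence of a neighbour of $\alpha$ on both sides is indispensable. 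A secondary technical point is to make sure that the coboundaries subtracted at the $\germ {sl} _2$ stage and during the propagation can all be realised by a single weight-preserving $\phi:N_b\to N_a$, so that they genuinely trivialise $c$ on all of $\germ g $.
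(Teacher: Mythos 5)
Your overall strategy (cocycle representative, weight normalisation, Proposition \ref{propNM} plus Proposition \ref{prop:GS} on the $\mathfrak{sl}_2$ generated by $X_{\pm\alpha}$, then use of a neighbour $\beta\in\theta$ on each side of $\alpha$ to kill the surviving scalar) is indeed the shape of the paper's argument, but as written there is a genuine gap, and you also skip the step that makes the computation feasible. The paper's first move is to observe that the middle term $M$ of the extension lies in $\mathcal{O}_{\Phi,\theta}$, so by the restriction condition it is a semisimple $\mathfrak{l}_{\theta}$-module; hence the sequence splits as a sequence of $\mathfrak{l}_{\theta}$-modules and one may choose the splitting so that $c$ vanishes identically on $\mathfrak{l}_{\theta}$, in particular on $\mathfrak{h}$ and on all $X_{\pm\beta}$ with $\beta\in\langle\theta\rangle$. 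You never secure this: instead you plan to \emph{derive} $c(X_{\pm\beta})$ and $c(X_{\pm(\alpha+\beta)})$ from bracket relations after normalising on $\mathfrak{sl}_2$. But the identity that actually eliminates the self-extension parameter, namely $c(X_{\alpha})=[[c(X_{\alpha}),X_{\beta}],X_{-\beta}]$ evaluated on the explicit basis $x(k)$, is only available because $c(X_{\pm\beta})=0$; with those values unknown your propagation step involves extra undetermined terms and is no longer ``of the same flavour'' as Lemma \ref{lem:A1N} -- it is precisely the part you would have to invent.

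The decisive computation is then missing rather than sketched: you assert, but do not prove, that the scalars $b(k)$ multiplying $(X_{-\alpha})^{-1}$ vanish, and you explicitly defer this as ``the hard part''. In the paper this is done by the recursion \eqref{eq1666}, obtained from the identity above together with the explicit action of $X_{\pm\beta}$, and the contradiction uses crucially that $b(k)$ is independent of $k_{l+1}$ and $k_{l+2}$ (because the simple $\mathfrak{l}$-summand $\mathcal{U}(\mathfrak{l})x(k)$ does not depend on these coordinates); the case $k_{l+2+i}=1$ then forces a forbidden dependence on $k_{l+2}$ unless $b=0$, and the argument must be run on both sides of the node $\alpha$ (the $\beta$'s and the $\gamma$'s) -- this is where the exclusion of the extremal nodes enters, a point you flag but do not exploit. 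Finally, your treatment of the case $N_a\not\cong N_b$ (reduce to a common coset of the root lattice and a common central character) does not close that case: two non-isomorphic modules $N_a$, $N_b$ can share both. The paper instead proves a weight-comparison lemma showing that no weight of $N_a$ equals the weight of an $\mathfrak{l}_{\theta}$-highest weight vector of $N_b$ unless $N_a\cong N_b$, concludes $c(X_{\alpha})$ kills the highest weight vectors, and then propagates down the $\mathfrak{l}_{\theta}$-decomposition by induction; some substitute for this is needed in your scheme as well.
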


\begin{rema}
We shall prove the following equivalent statement:

Any exacte sequence $$(S_{a,b}) : 0\rightarrow N_a\rightarrow M\rightarrow N_b\rightarrow 0$$ with $M\in\cal O _{\Phi,\theta}$ splits.
\end{rema}

\begin{proof} Let us denote by $c$ the cocyle corresponding to the exact sequence $(S_{a,b})$. Note first that the sequence splits as a sequence of $\germ l _{\theta}$-modules since the restriction condition of category $\cal O _{\Phi,\theta}$ expresses that $N_{a}$, $N_{b}$ and $M$ are semisimple $\germ l _{\theta}$-modules. In other words, the cocyle $c$ vanishes on $\germ l _{\theta}$. To prove the theorem we have to show it vanishes on the whole Lie algebra $\germ g $. Thanks to the cocyle relation, we only need to show that $c(X_{\pm\alpha})=0$.

\begin{enumerate}
\item {\bf Suppose that $N_a \not\cong N_b$.} From theorem \ref{thmNO}, we know that the $\germ l _{\theta}$-highest weight vectors in $N_a$ or $N_b$ are the $x(k)$ with  $k_i=0$ for $i\not\in\{l+1,l+2\}$. To avoid any confusion we shall write $x(k)$ the basis vectors for $N_b$ and $y(j)$ for $N_a$. Using the explicit action of $\germ h $, we show:

\begin{lemm}
Assume $x(k)$ and $y(j)$ have the same weight under the action of $\germ h $. Then there is an integer $K$ such that
\begin{multline*}
y(j)=y( k_1+K,\ldots ,k_l+K,b_1-a_{1}+k_{l+1}+K,\\
 b_{2}-a_2+k_{l+2}+K,k_{l+3}+K,\ldots ,k_n+K)
\end{multline*}
and 
$$K+k_1\leq 0,\ldots ,K+k_l\leq 0,k_{l+3}+K\geq 0, \ldots , k_n+K\geq 0.$$ In particular, if $x(k)$ is a $\germ l _{\theta}$-highest weight vector and if $y(j)$ has the same weight as $x(k)$, then we have $K=0$ and $N_a \cong N_b$.
\end{lemm}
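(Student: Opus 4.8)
The plan is to compute both weights explicitly from the formulas for the action of $\germ h $ on $N(a)$ recorded just before Theorem \ref{thmNO}, and then to solve the resulting linear system for $j$ in terms of $k$. Both $N_a$ and $N_b$ have the same shape — $l$ leading parameters equal to $-1$, two non-integral parameters in positions $l+1$ and $l+2$, and $m$ trailing parameters equal to $0$ — and they differ only in these two middle parameters ($b_1,b_2$ for $N_b$ versus $a_1,a_2$ for $N_a$). So the first step is to equate the eigenvalue of each $H_{e_i}$ on $x(k)\in N_b$ with its eigenvalue on $y(j)\in N_a$.

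For the flat indices $i\le l-1$ and $i\ge l+3$ this yields $j_i-j_{i+1}=k_i-k_{i+1}$, while at the three junction indices $i=l,l+1,l+2$ the parameters enter and introduce the shifts $b_1-a_1$ and $b_2-a_2$. Solving the flat relations forces $j_i-k_i$ to be constant on each of the two blocks; setting $K:=j_1-k_1$ and propagating this constant across the junctions gives $j_i=k_i+K$ for $i\notin\{l+1,l+2\}$, together with $j_{l+1}=b_1-a_1+k_{l+1}+K$ and $j_{l+2}=b_2-a_2+k_{l+2}+K$. Since $j_1,k_1\in\Z$, the number $K$ is automatically an integer, and this is exactly the claimed form of $y(j)$.

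The inequalities are then immediate from the definition of $\cal K $ for the module $N_a$: at the positions $i\le l$ where $a_i=-1<0$ membership in $\cal K $ requires $a_i+j_i<0$, that is $j_i\le 0$, i.e.\ $k_i+K\le 0$; at the positions $i\ge l+3$ where $a_i=0$ it requires $a_i+j_i\ge 0$, i.e.\ $k_i+K\ge 0$. For the final assertion I would specialize to $x(k)$ a $\germ l _{\theta}$-highest weight vector, which by Theorem \ref{thmNO} has $k_i=0$ for every $i\le l$ and every $i\ge l+3$ (with $k_{l+1}+k_{l+2}=0$). The two families of inequalities then reduce to $K\le 0$ and $K\ge 0$, forcing $K=0$.

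With $K=0$ the index $j$ is supported on positions $l+1,l+2$, with $j_{l+1}=b_1-a_1+k_{l+1}$ and $j_{l+2}=b_2-a_2+k_{l+2}$; integrality of $j$ forces $b_1-a_1,b_2-a_2\in\Z$, and $\sum_i j_i=0$ together with $k_{l+1}+k_{l+2}=0$ gives $a_1+a_2=b_1+b_2$. Hence $b$ differs from $a$ only by replacing the middle pair $(a_1,a_2)$ with $(a_1+t,a_2-t)$ for an integer $t$, and Lemire's correspondence \cite{Le68} applied to the $\cal U (\germ g )_0$-action on a fixed weight vector (equivalently, a direct reindexing of the two bases) yields $N_a\cong N_b$. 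I expect the only real difficulty to be the bookkeeping at the three junction indices, where the parameters enter asymmetrically and the constant $K$ must be carried consistently across both blocks; the isomorphism step at the end is routine once the integrality relations $b_i-a_i\in\Z$ and $a_1+a_2=b_1+b_2$ are in hand.
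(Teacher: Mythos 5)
Your proof is correct and is exactly the argument the paper leaves implicit (``using the explicit action of $\germ h $''): equate the $H_{e_i}$-eigenvalues of $x(k)$ and $y(j)$, propagate the constant $K$ across the three junction indices, read the inequalities off the definition of $\cal K $ for $N_a$, and then pinch $K\le 0$, $K\ge 0$ at a highest weight vector before concluding $N_a\cong N_b$ from $b_i-a_i\in\Z$ and $a_1+a_2=b_1+b_2$. Note only that the pinch requires both the $-1$-block and the $0$-block to be nonempty, which is exactly what the section's standing assumption $\Phi\setminus\theta\not=\{e_1\},\{e_n\}$ guarantees.
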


Recall from proposition \ref{propNM} that as $\germ l $-modules $N_a$ and $N_b$ are semisimple. From proposition \ref{prop:GS}, we get $c(X_{-\alpha})=0$ on every simple $\germ l '$--submodules of $N_b$ and therefore on the whole of $N_b$. Let $x=x(k)\in N_b$ be a $\germ l _{\theta}$-highest weight vector, of weight $\lambda$. Then we see that $c(X_{\alpha})(x)$, if non zero, is a weight vector of $N_a$ of weight $\lambda+\alpha$. Indeed, since $c(\germ h )=0$, we have
\begin{align*}
\underbrace{c([H_{\alpha},X_{\alpha}])(x)}_{2c(X_{\alpha})(x)} & =\underbrace{[c(H_{\alpha}),X_{\alpha}](x)}_{0}+[H_{\alpha},c(X_{\alpha})](x)\\
 & = H_{\alpha}\cdot c(X_{\alpha})(x)-\lambda(H_{\alpha})c(X_{\alpha})(x).
\end{align*}
Since the module $N_a$ is $\{-\alpha\}$--cuspidal, we deduce from the above equation that $N_a$ should have a weight vector of weight $\lambda$, that is with the same weight as $x(k)$. This contradicts the above lemma. Hence we must have $c(X_{\alpha})(x)=0$.

Consider now $\beta \in \langle \theta\rangle ^+$. Let $y=X_{-\beta}\cdot x$. Since $[X_{\alpha},X_{-\beta}]=0$, the cocyle relation together with the fact that $c(X_{-\beta})=0$ (for $X_{-\beta}\in \germ l _{\theta}$), show that 
$$0=c([X_{\alpha},X_{-\beta}])(x)=c(X_{\alpha})(y)-X_{-\beta}\cdot c(X_{\alpha})(x).$$ Since we proved that $c(X_{\alpha})(x)=0$, we must have $c(X_{\alpha})(y)=0$ as well. The very same computation implies that if for some vector $x'\in N_b$ we have $c(X_{\alpha})(x')=0$ then we must also have $c(X_{\alpha})(X_{-\beta}\cdot x')=0$. Since $N_b$ is a direct sum of simple $\germ l _{\theta}$--modules, reasoning by induction shows that $c(X_{\alpha})=0$ on $N_b$, which proves the theorem in this case.

\item {\bf Suppose that $N_a\cong N_b$.} Recall the action of $X^-=X_{-\alpha}$ and $X^+=X_{\alpha}$ on $N_a$ :
$$\left\{\begin{array}{ccc}
X^+\cdot x(k) & = & (a_2+k_{l+2})x(k-\epsilon_{l+2}+\epsilon_{l+1})\\
X^-\cdot x(k) & = & (a_1+k_{l+1})x(k-\epsilon_{l+1}+\epsilon_{l+2})
\end{array}\right.$$
From that we deduce that
$$\left(X^-\right)^{-1}\cdot x(k)=\frac{1}{a_1+k_{l+1}+1}x(k-\epsilon_{l+2}+\epsilon_{l+1}).$$
Proposition \ref{propNM} implies that $N_a$ is a direct sum of simple cuspidal $\germ l $--modules. Using the above equations, remark that the $\germ l $--module $\cal U (\germ l )x(k)$ does not depend upon $k_{l+1}$ and $k_{l+2}$. Proposition \ref{prop:GS} gives the expression of the cocycle $c$ on each module $\cal U (\germ l )x(k)$ : $c(X^-)=0$ and $c(X^+)=b(k)\times {\left(X^-\right)}^{-1}$ with $b(k)\in \C$. The previous remark implies that $b(k)$ should also not depend upon $k_{l+1}$ and $k_{l+2}$.

Let us come back to the notations of section \ref{subsec:A1N}. Let $\beta= \beta_1+\cdots +\beta_i\in \langle \theta\rangle^+$. Then we have $\alpha+\beta\in\cal R $, $[X^+,X_{\beta}]=X_{\alpha+\beta}$ and $[X_{\alpha+\beta},X_{-\beta}]=X^+$. Using the cocyle relation we thus get $c(X^+)=[[c(X^+),X_{\beta}],X_{-\beta}]$ (since $c(Y)=0$ for $Y\in \germ l _{\theta}$). This is an equality in $End_{\C}(N_a)$. Let us see how it acts on a vector $x(k)\in N_a$. Recall we have
$$\left\{\begin{array}{rcl}
X_{\beta}x(k) & = & k_{l+2+i}x(k+\epsilon_{l+2}-\epsilon_{l+2+i})\\
X_{-\beta}x(k) & = & (a_2+k_{l+2})x(k+\epsilon_{l+2+i}-\epsilon_{l+2})
\end{array}\right.$$

Thus we get
\begin{equation}\label{eqqqqq}
c(X^+)x(k)=\frac{b(k)}{a_1+k_{l+1}+1}x(k-\epsilon_{l+2}+\epsilon_{l+1}).
\end{equation}
We deduce then
$$[c(X^+),X_{\beta}]x(k)=\frac{k_{l+2+i}}{a_1+k_{l+1}+1}\left( b(k-\epsilon_{l+2+i})-b(k)\right) x(k-\epsilon_{l+2+i}+\epsilon_{l+1})$$ and  
\begin{multline}\label{eqcxA}
[[c(X^+),X_{\beta}],X_{-\beta}]x(k)=\\ \frac{a_2+k_{l+2}}{a_1+k_{l+1}+1}\Big{(}(k_{l+2+i}+1)(b(k)-b(k+\epsilon_{l+2+i})) \\
 -k_{l+2+i}(b(k-\epsilon_{l+2+i})-b(k))\Big{)}x(k-\epsilon_{l+2}+\epsilon_{l+1}).
\end{multline}
The equality of equation \eqref{eqqqqq} and equation \eqref{eqcxA} implies 
\begin{multline}\label{eq1666}
b(k)=  (k_{l+2+i}+1)(a_2+k_{l+2})(b(k)-b(k+\epsilon_{l+2+i}))\\ -k_{l+2+i}(a_2+k_{l+2})(b(k-\epsilon_{l+2+i})-b(k)).
\end{multline}
Let $k$ be such that $k_{l+2+i}=0$. Then we have $[c(X^+),X_{\beta}]x(k)=0$ and therefore we must have $c(X^+)x(k)=0$ which implies that $b(k)=0$. Now if $k$ is such that $k_{l+2+i}=1$, equation \eqref{eq1666} gives $2b(k+\epsilon_{l+2+i})=b(k)\left(3+\frac{1}{a_{2}+k_{l+2}}\right)$. This is a contradiction since $b(k)$ does not depend upon $k_{l+2}$, unless $b(k)=0=b(k+\epsilon_{l+2+i})$. Now a simple induction using equation \eqref{eq1666} shows that $b(k+j\epsilon_{l+2+i})=0$ for any non negative integer $j$. The same reasoning with the roots $\gamma=\gamma_1+\cdots +\gamma_i$ finally implies that $b(k)=0$. Hence the cocyle $c$ is zero, as asserted.
\end{enumerate}

\end{proof}

\begin{rema}
Looking at the proof above, we see that the theorem holds also in case $l=0$, that is $\Phi\setminus\theta=\{\alpha_{1}\}$. We will use this case of the theorem to prove our next result.
\end{rema}

\subsubsection{Case $\germ l '=A_{m-1}$, with $m>2$}\label{sssec:semiAn}

Let $\theta\subset \Phi$ be such that $card(\Phi\setminus\theta)>1$. Recall from theorem \ref{thmpcpl} that $\Phi\setminus\theta$ is a connected part of the Dynkin diagram, which thus looks as follows:
\setlength{\unitlength}{1cm}
\begin{center}
\begin{picture}(7,1.2)
\put(1,.5){\line(1,0){.2}}
\put(1.4,.5){\line(1,0){.2}}
\put(1.8,.5){\line(1,0){.2}}
\put(2,.5){\circle*{.2}}
\put(2,.5){\line(1,0){1}}
\put(3,.5){\circle*{.2}}
\put(3,.5){\line(1,0){.2}}
\put(3.4,.5){\line(1,0){.2}}
\put(3.8,.5){\line(1,0){.2}}
\put(4.2,.5){\line(1,0){.2}}
\put(4.6,.5){\line(1,0){.2}}
\put(5,.5){\line(1,0){1}}
\put(6,.5){\line(1,0){.2}}
\put(6.4,.5){\line(1,0){.2}}
\put(6.8,.5){\line(1,0){.2}}
\put(5,.5){\circle*{.2}}
\put(6,.5){\circle*{.2}}
\put(3,.5){\circle{.35}}
\put(5,.5){\circle{.35}}
\put(1.9,.8){$\gamma_1$}
\put(2.9,.8){$\alpha_1$}
\put(4.9,.8){$\alpha_{m-1}$}
\put(5.9,.8){$\beta_1$}
\put(2.8,.3){$\underbrace{\hphantom{11111111111111}}_{A_{m-1}}$}
\end{picture}
\end{center}

The simple modules in $\cal O _{\Phi,\theta}$ are the  
$$N_a=N(\underbrace{-1,\ldots ,-1}_j,a_1,\ldots ,a_m,\underbrace{0,\ldots ,0}_l),$$ with $a=(a_{1},\ldots ,a_{m})\in (\C\setminus\Z)^m$. Without lack of generality we can assume that $l$ is positive. We prove

\begin{theo}\label{thmextAb}
With the notations as above, we have $Ext^1(N_b,N_a)=\{0\}$.
\end{theo}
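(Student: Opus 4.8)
The plan is to follow the scheme of Theorem \ref{thmextA} and reduce the splitting of $(S_{a,b})$ to the vanishing of a cocycle on finitely many root vectors, then to run an induction on $m$. First I would let $c$ denote the cocycle (Definition \ref{def:cocy}) attached to $(S_{a,b})$. Because the restriction condition of $\cal O _{\Phi,\theta}$ makes $N_a$, $N_b$ and $M$ semisimple $\germ l _{\theta}$-modules, the sequence splits over $\germ l _{\theta}$, so $c$ vanishes on $\germ l _{\theta}$; as $\germ g $ is generated by $\germ l _{\theta}$ together with the vectors $X_{\pm\alpha_i}$ attached to the simple roots $\alpha_1,\ldots,\alpha_{m-1}$ of $\germ l '\cong A_{m-1}$, the cocycle relation reduces the theorem to the rigidity statement: \emph{every cocycle vanishing on $\germ l _{\theta}$ must vanish on each $X_{\pm\alpha_i}$}. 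I would prove this by induction on $m$, the base case $m=2$ being Theorem \ref{thmextA} together with its extreme ($l=0$) variant recorded in the remark following it, and both its isomorphic and non-isomorphic cases being subsumed.

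For the inductive step I would restrict $c$ to the two Levi subalgebras $\germ c $ and $\germ c '$ of $\germ g $ of type $A$ obtained by deleting from the Dynkin diagram the node $\alpha_{m-1}$ together with the whole $\beta$-chain, respectively the node $\alpha_1$ together with the whole $\gamma$-chain. Their circled parts are $\{\alpha_1,\ldots,\alpha_{m-2}\}$ and $\{\alpha_2,\ldots,\alpha_{m-1}\}$, each of type $A_{m-2}$, so each restriction is a strictly smaller instance of the theorem. By Proposition \ref{propNM}, and the standard fact that a degree-$1$ module restricts to a Levi as a direct sum of degree-$1$ modules whose parameters form the corresponding sub-tuple, $N_a$ and $N_b$ decompose over $\germ c $ into simple modules again of the shape $N(-1,\ldots,-1,a',0,\ldots,0)$ prescribed by Theorem \ref{thmpcpl}, and $c$ still vanishes on $\germ l _{\theta}\cap\germ c $. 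Applying the inductive rigidity to each pair of simple $\germ c $-summands then gives $c(X_{\pm\alpha_i})=0$ for $i=1,\ldots,m-2$, and symmetrically from $\germ c '$ for $i=2,\ldots,m-1$; since these two ranges exhaust $\{1,\ldots,m-1\}$, the cocycle vanishes on all $X_{\pm\alpha_i}$ and the induction closes.

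The hard part is the passage to interior nodes, which is what forces the reduction to extreme positions. Restricting $c$ to the three-dimensional subalgebra generated by an interior $\alpha_i$ and invoking Proposition \ref{prop:GS} only yields $c(X_{-\alpha_i})=0$ and $c(X_{\alpha_i})=b_i(k)\,(X_{-\alpha_i})^{-1}$ on each cuspidal $\germ l '$-summand; the scalars $b_i(k)$ cannot be removed at this level, precisely because the cuspidal category in type $A$ is not semisimple (this is why the full cocycle theory for $A_{m-1}$ is deliberately avoided in the paper). They are killed only when $\alpha_i$ sits at the extremity of a type-$A$ sub-Levi, adjacent to a genuine node of $\theta$: there the interaction of $\alpha_i$ with the roots of $\langle\theta\rangle$ reached along the $\alpha$-string produces, through the cocycle relation, a recursion of the type \eqref{eq1666} (in the spirit of Lemma \ref{lemA1NL}) forcing $b_i(k)=0$. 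I therefore expect the main technical effort to lie in verifying that the summands of $N_a|_{\germ c }$ and $N_b|_{\germ c }$ are of exactly the form demanded by the inductive hypothesis, including the boundary instances in which one flanking chain is empty, which is what makes the $l=0$ case of Theorem \ref{thmextA} indispensable.
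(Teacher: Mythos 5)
Your opening reduction (normalize the cocycle to vanish on $\mathfrak{l}_{\theta}$, then prove the rigidity statement on the vectors $X_{\pm\alpha_i}$, applied componentwise to the simple summands of the restrictions) is in the spirit of the paper, but the two-sided induction you propose does not close, and the obstruction is precisely a case your scheme cannot avoid. The theorem covers blocks $\Phi\setminus\theta$ touching an end of the Dynkin diagram of $\mathfrak{g}$ (one flanking chain empty, e.g. $\mathfrak{g}=A_3$ with $\Phi\setminus\theta=\{e_1,e_2\}$): the paper only assumes, without loss of generality, that $l>0$, so $j=0$ is allowed. In that situation your Levi $\mathfrak{c}$ (delete $\alpha_{m-1}$ and the whole $\beta$-chain) is the circled block alone, i.e. its $\theta'$-part is empty; its restriction lands in the cuspidal category of type $A$, where extensions between simple cuspidal modules do not vanish (Proposition \ref{prop:GS}), so neither the inductive hypothesis nor any Ext-vanishing is available on that side. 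Only $\mathfrak{c}'$ survives, and it gives $c(X_{\pm\alpha_i})=0$ for $i\geq 2$ but says nothing about $\alpha_1$; nor can $\alpha_1$ be recovered from the cocycle relation, because every bracket expression for $X_{\alpha_1}$ involves a root vector whose root has nonzero $\alpha_1$-coefficient, and $c$ is unknown on all of those. Worse, these boundary instances are not a marginal corner: your inductive step always places the restricted block at an extremity of $\mathfrak{c}$, resp. $\mathfrak{c}'$ (the summands are of the form $N(-1,\ldots,-1,c_1,\ldots,c_{m-1})$ with no trailing zeros, resp. $N(c_1,\ldots,c_{m-1},0,\ldots,0)$ with no leading $-1$'s), so the inductive hypothesis you invoke is exactly the boundary-position case of the theorem, which your method cannot itself establish at any rank. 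The induction is therefore incomplete even when the original block is interior.

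The paper avoids this entirely by arguing one-sidedly and without induction on $m$: for each $p$ it uses the type $A_{l+1}$ subalgebra generated by $\bar{\theta}=\{\beta_1,\ldots,\beta_l\}$ together with the single, generally non-simple, root $\alpha_{m-p}+\cdots+\alpha_{m-1}$ of $\mathfrak{l}'$. The restrictions of $N_a$ and $N_b$ to it decompose into simples of the form $N(c_1,c_2,0,\ldots,0)$, so the extreme-node variant of Theorem \ref{thmextA} (the remark following it) yields $c(X_{\pm(\alpha_{m-p}+\cdots+\alpha_{m-1})})=0$ for every $p$, and the values on simple roots are then extracted by the cocycle relation applied to brackets such as $[X_{\alpha_{m-2}+\alpha_{m-1}},X_{-\alpha_{m-1}}]=X_{\alpha_{m-2}}$. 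This requires only that the $\beta$-chain be nonempty, which is the legitimate WLOG. That trick (or some substitute handling the extreme simple root of a boundary-position block, say a direct computation in the spirit of Lemma \ref{lemA1NL} using the root $\alpha_1+\cdots+\alpha_{m-1}+\beta_1$) is the idea missing from your proposal; without it the proof does not go through.
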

\begin{proof} Denote by $c$ the cocycle associated with the exact sequence 
$$0\rightarrow N_a \rightarrow M\rightarrow N_b\rightarrow 0.$$ We have to prove that $c=0$. From the restriction condition of $\cal O _{\Phi,\theta}$ we already know that $c$ is zero on $\germ l _{\theta}$. The cocyle relation implies then that it suffices to prove $c(X_{\pm \alpha_{i}})=0$ for $i\in \{1,\ldots ,m\}$. Our strategy is to use the previous theorem.

Let $\bar{\theta}=\{\beta_1,\ldots, \beta_l\}$. Consider the set $\theta_{1}=\bar{\theta}\cup\{\alpha_{m-1}\}$. Denote by $\germ l _{1}$ the corresponding Levi subalgebra, whose semisimple part is of type $A_{l+1}$. An explicit computation analogous to that in proposition \ref{propNM} shows that $N_a$ splits into a direct sum of simple $\germ l ^1$-modules. More precisely, these modules are isomorphic to $N(c_1,c_2,0,\ldots ,0)$ for some $c_1,c_2\in \C\setminus\Z$. In other words $N_a$ and $N_b$ are direct sum of simple modules in the category $\cal O _{\theta_{1},\bar{\theta}}$. Obviously $M$ belongs to this category by restriction. Theorem \ref{thmextA} implies now that $c(X_{\alpha_{m-1}})=0$ and $c(X_{-\alpha_{m-1}})=0$. 

We can reason the same way using $\theta_{2}=\bar{\theta}\cup\{\alpha_{m-2}+\alpha_{m-1}\}$. Hence we get $c(X_{\alpha_{m-2}+\alpha_{m-1}})=0$ and $c(X_{-(\alpha_{m-2}+\alpha_{m-1})})=0$. Applying the cocycle relation to $[X_{\alpha_{m-1}+\alpha_{m-2}},X_{-\alpha_{m-1}}]=X_{\alpha_{m-2}}$, we obtain $c(X_{\alpha_{m-2}})=0$ and $c(X_{-\alpha_{m-2}})=0$. Using the sets $\theta_{p}=\bar{\theta}\cup\{\alpha_{m-p}+\cdots +\alpha_{m-1}\}$ we deduce that the cocyle $c$ is zero, as asserted.

\end{proof}

Theorem \ref{thmextA} together with theorem \ref{thmextAb} leads to the following

\begin{coro}\label{corextAf}
Let $\germ g  =A_n$. Let $\theta$ be a proper subset of $\Phi$ such that $\Phi\setminus\theta$ is different from $\{e_1\}$ and $\{e_n\}$. Then the category $\cal O _{\Phi,\theta}$ is semisimple.
\end{coro}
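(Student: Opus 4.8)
The plan is to deduce semisimplicity from the vanishing of $Ext^1$ between simple objects, which is precisely the content of the two preceding theorems; the corollary is then a purely formal assembly. First I would recall that, under the stated hypotheses, the simple objects of $\cal O _{\Phi,\theta}$ are completely known. By theorem \ref{thmS} the semisimple part $\germ l '$ of $\germ l $ is simple of type $A$, so $\Phi\setminus\theta$ is a connected subdiagram; and by the classification theorem \ref{thmpcpl} every simple module of the category is a degree-$1$ module $N_a$. Since the hypothesis excludes $\Phi\setminus\theta\in\{\{e_1\},\{e_n\}\}$, every such simple module falls into the scope of theorem \ref{thmextA} (the case $\germ l '=A_1$) or of theorem \ref{thmextAb} (the case $\germ l '=A_{m-1}$ with $m>2$). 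In both cases $Ext^1(N_b,N_a)=\{0\}$, the $Ext$ being computed inside $\cal O _{\Phi,\theta}$, since those theorems are phrased as the splitting of every short exact sequence whose middle term lies in the category.

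Next I would record the standard homological reduction. By proposition \ref{prop:GO}(1) the category $\cal O _{\Phi,\theta}$ is abelian, noetherian and artinian, so every object $M$ has a finite composition series whose factors are among the $N_a$. I claim every such $M$ is semisimple, and I would argue by induction on the length of $M$. If the length is at most $1$ there is nothing to prove. Otherwise choose a subobject $N\subset M$ with $M/N$ simple; by the induction hypothesis $N$ is semisimple, say $N\cong\bigoplus_i S_i$ with each $S_i$ simple. The extension
$$0\rightarrow N\rightarrow M\rightarrow M/N\rightarrow 0$$
defines a class in $Ext^1(M/N,N)$, and using the additivity $Ext^1(M/N,\bigoplus_i S_i)\cong\bigoplus_i Ext^1(M/N,S_i)$ for finite direct sums, together with the vanishing recalled above, this class is zero. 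Hence the sequence splits and $M\cong N\oplus(M/N)$ is semisimple.

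This completes the argument: every object of $\cal O _{\Phi,\theta}$ is a direct sum of simple modules, so the category is semisimple. I do not expect a genuine obstacle at this stage, since all the real work has already been carried out in theorems \ref{thmextA} and \ref{thmextAb}, whose cocycle computations establish the $Ext^1$-vanishing. The one point deserving a word of care is that the $Ext^1$ appearing in the induction must be taken in the full subcategory $\cal O _{\Phi,\theta}$; this matches the two theorems exactly, because the subobject $N$, the quotient $M/N$, and the pushouts used in the additivity isomorphism are again objects of $\cal O _{\Phi,\theta}$, the category being stable under submodules and quotients by proposition \ref{prop:GO}(1).
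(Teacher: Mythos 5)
Your proposal is correct and follows essentially the same route as the paper: induction on the Jordan--H\"older length (guaranteed finite by the noetherian/artinian property), with the splitting at each step supplied by the $Ext^1$-vanishing of Theorems \ref{thmextA} and \ref{thmextAb}. The only difference is cosmetic: you spell out the additivity $Ext^1(M/N,\bigoplus_i S_i)\cong\bigoplus_i Ext^1(M/N,S_i)$ needed when the submodule is semisimple but not simple, a step the paper's proof leaves implicit.
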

\begin{proof}
Proposition \ref{prop:JH} implies that every module in the category $\cal O _{\Phi,\theta}$ admits a Jordan-H\"older series of finite length. We show the corollary by induction on the length $\ell (M)$ of $M$. If $\ell (M)=1$ then $M$ is a simple module. If $\ell (M)=2$ then there is an exact sequence 
$$0\rightarrow N_{a}\rightarrow M\rightarrow N_{b}\rightarrow 0.$$ By theorem \ref{thmextA} or theorem \ref{thmextAb} this sequence splits and thus $M=N_{a}\oplus N_{b}$. Assume now that the corollary holds for the modules in category $\cal O _{\Phi,\theta}$ of length at most $m$. Let $M$ be a module in $\cal O _{\Phi,\theta}$ of length $m+1$. Then there is a submodule $M'$ of $M$ of length $m$. This submodule is semisimple by induction. Now we have an exact sequence
$$0\rightarrow M'\rightarrow M\rightarrow N_{a}\rightarrow 0.$$ This must splits, which proves that $M=M'\oplus N_{a}$. This completes the proof.

\end{proof}

\begin{rema}
If $\theta=\Phi$, the category $\cal O _{\Phi,\Phi}(\germ sl _n)$ is semisimple by definition. If $\theta=\emptyset$, the category $\cal O _{\Phi,\emptyset}(\germ sl _n)$ is no longer semisimple. The indecomposable modules of this category have been studied by Grantcharov and Serganova \cite{GS07} and by Mazorchuk and Stroppel \cite{MS10}.
\end{rema}


\subsection{Type $C$ case}\label{ssec:semiC}


In this section $\germ g =C_{n}$. Let $\theta$ be a proper subset of $\Phi$. We keep the notations of part \ref{ssec:typeC}.

\subsubsection{Case $\Phi\setminus\theta=\{\alpha\}$}\label{sssec:semiCA}

In this case we have the following Dynkin diagram:
\setlength{\unitlength}{1cm}
\begin{center}
\begin{picture}(5,1.2)
\put(1,.5){\line(1,0){.2}}
\put(1.4,.5){\line(1,0){.2}}
\put(1.8,.5){\line(1,0){.2}}
\put(2,.5){\circle*{.2}}
\put(2,.5){\line(1,0){1}}
\put(3,.5){\circle*{.2}}
\put(3,0.45){\line(1,0){1}}
\put(3,.55){\line(1,0){1}}
\put(4,.5){\circle*{.2}}
\put(4,.5){\circle{.35}}
\put(1.9,.8){$\beta_2$}
\put(2.9,.8){$\beta_1$}
\put(3.9,.8){$\alpha$}
\put(3.4,.4){$<$}
\end{picture}
\end{center} 

From theorem \ref{thmpcpl} we know that the simple modules in the category $\cal O _{\Phi,\theta}$ are the $$M_a=M(-1,\ldots ,-1,a),$$ with $a\in \C\setminus\Z$. We prove:

\begin{theo}\label{thmextC}
In the category $\cal O _{\Phi,\theta}$, we have $Ext^1(M_b,M_a)=\{0\}$.
\end{theo}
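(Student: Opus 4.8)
The plan is to mimic closely the proof of Theorem \ref{thmextA}, with the type $A$ structure constants replaced by those of $C_n$. Let $c$ be the cocycle attached to an exact sequence $0\rightarrow M_a\rightarrow M\rightarrow M_b\rightarrow 0$ with $M\in\cal O _{\Phi,\theta}$. Since $M_a$, $M_b$ and $M$ are semisimple $\germ l _\theta$-modules by the restriction condition, I may fix the vector space identification $M\cong M_a\oplus M_b$ so that $c$ vanishes on $\germ l _\theta$; because $\germ g $ is generated by $\germ l _\theta$ together with $X_{\pm\alpha}$, where $\alpha=2e_n$ is the long simple root spanning $\germ l '=\germ sl _2$, the cocycle relation reduces the problem to proving $c(X_\alpha)=0$ and $c(X_{-\alpha})=0$. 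Throughout I will use that $M_a$ and $M_b$ split into direct sums of simple cuspidal $\germ l $-modules (Proposition \ref{propNM}) and that cocycles between simple cuspidal $\germ sl _2$-modules are controlled by Proposition \ref{prop:GS}.

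First I would treat the case $M_a\not\cong M_b$. As in Theorem \ref{thmextA}, I would establish a weight lemma: comparing the $\germ h $-weights read off from Theorem \ref{thmMO}, no $\germ l _\theta$-highest weight vector of $M_b$ has the same weight as a vector of $M_a$. Granting this and using $c(\germ h )=0$, the identity $2c(X_\alpha)(x)=[H_\alpha,c(X_\alpha)](x)$ shows that $c(X_\alpha)(x)$, if nonzero, is a weight vector of $M_a$ of weight $\lambda+\alpha$ for $x$ of weight $\lambda$; the $\{-\alpha\}$-cuspidality of $M_a$ then produces a nonzero weight vector of weight $\lambda$ in $M_a$, contradicting the weight lemma, so $c(X_\alpha)(x)=0$ on every highest weight vector $x$. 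Since $2e_n-\beta\notin\cal R $ and hence $[X_\alpha,X_{-\beta}]=0$ for every $\beta\in\langle\theta\rangle^+$, the cocycle relation propagates this vanishing along the action of the $X_{-\beta}$, giving $c(X_\alpha)=0$ on all of $M_b$. For $c(X_{-\alpha})$ I would argue exactly as in Theorem \ref{thmextA}, applying Proposition \ref{prop:GS} on each simple cuspidal $\germ l '$-submodule of $M_b$.

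The case $M_a\cong M_b$ carries the real content. Writing $X^+=X_\alpha$ and $X^-=X_{-\alpha}$ and invoking Propositions \ref{propNM} and \ref{prop:GS}, on each simple cuspidal $\germ l '$-submodule I may assume $c(X^-)=0$ and $c(X^+)=b(k)\,(X^-)^{-1}$, with the scalar $b(k)$ depending only on the submodule and therefore independent of $k_n$. For $\beta=e_i-e_n\in\langle\theta\rangle^+$ one has $\alpha+\beta=e_i+e_n\in\cal R $, and $[[X^+,X_\beta],X_{-\beta}]$ is a nonzero multiple of $X^+$; since $c$ kills $X_{\pm\beta}\in\germ l _\theta$, the cocycle relation yields that $c(X^+)$ is a nonzero multiple of $[[c(X^+),X_\beta],X_{-\beta}]$. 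Evaluating both sides on a basis vector $x(k)$ with the explicit $C_n$-action of $X_\beta$, $X_{-\beta}$ and $(X^-)^{-1}$ produces a recursion among $b(k)$, $b(k+\epsilon_i)$ and $b(k-\epsilon_i)$; choosing $k$ with the relevant coordinate equal to $0$ forces $b(k)=0$, and an induction then gives $b(k)=0$ for all $k$, whence $c(X^+)=0$ and $c=0$.

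The main obstacle is exactly this last recursion. Because $\alpha=2e_n$ is long, the $\beta$-string through $\alpha$ has length two (both $\alpha+\beta$ and $\alpha+2\beta=2e_i$ are roots) and the operator $X^-$ acts with the quadratic coefficient $-\frac{1}{2}(a+k_n)(a+k_n-1)$, so the identity coming out of the cocycle relation is more intricate than \eqref{eq1666} and must be unwound with the correct $C_n$ structure constants. I nonetheless expect the same mechanism to close the argument: a vanishing coordinate forces $b=0$ at one point, induction spreads it, and the independence of $b(k)$ from $k_n$ supplies the contradiction that rules out any nontrivial cocycle.
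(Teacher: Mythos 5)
Your strategy is essentially the one the paper follows, but at two points it falls short of the paper's proof. For the case $M_a\not\cong M_b$, the paper establishes the stronger fact $Supp(M_a)\cap Supp(M_b)=\emptyset$: equating the $\mathfrak{h}$-weights of $x(k)\in M_a$ and $y(k')\in M_b$ forces $k_i=k'_i$ for $i\leq n-1$ and $b\in a+2\Z$, hence $M_a\cong M_b$ by Lemire's correspondence. Disjointness of the supports then kills $c(X_{\alpha})$ and $c(X_{-\alpha})$ simultaneously, since $c(X_{\pm\alpha})x$ would be a vector of $M_a$ with the same weight as the nonzero vector $X_{\pm\alpha}x$ of $M_b$. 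Your detour through $\mathfrak{l}_{\theta}$-highest weight vectors and propagation along the $X_{-\beta}$ can be made to work, but the ``weight lemma'' you postulate is exactly this support computation, which you do not carry out; moreover your treatment of $c(X_{-\alpha})$ via Proposition \ref{prop:GS} is not automatic, because simple cuspidal $\mathfrak{sl}_2$-constituents of $M_a$ and $M_b$ can be isomorphic even when $M_a\not\cong M_b$ (for instance when $b-a$ is an odd integer), so the non-isomorphic case of that proposition does not apply to every pair of constituents and one has to control coboundary normalizations compatibly with the vanishing of $c$ on $\mathfrak{l}_{\theta}$. The support argument sidesteps all of this.

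For $M_a\cong M_b$ you reproduce the paper's setup exactly: $c(X^-)=0$ and $c(X^+)=b(k)\,(X^-)^{-1}$ on each simple cuspidal $\mathfrak{l}$-constituent with $b(k)$ independent of $k_n$, the identity $2c(X^+)=[[c(X^+),X_{\beta}],X_{-\beta}]$ for $\beta=\beta_1+\cdots+\beta_i$, and evaluation on $x(k)$. But you stop at the resulting recursion and only ``expect'' it to force $b\equiv 0$; this is precisely the heart of the proof. In the paper the computation is carried out in \eqref{eqqqCC}--\eqref{eqqqCCCC}: for $k_{n-i}=0$, \eqref{1deplus} gives $[c(X^+),X_{\beta}]x(k)=0$ and hence $b(k)=0$; for $k_{n-i}=-1$, \eqref{eqqqCCCC} produces a relation between $b(k)$ and $b(k-\epsilon_{n-i})$ whose coefficients depend on $k_n$, so the independence of $b(k)$ from $k_n$ forces both to vanish; an induction on $k_{n-i}\leq 0$ then yields $b\equiv 0$. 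So your expectation is borne out, but as written the proposal is an outline rather than a proof: the single step you flag as the main obstacle is the step that actually has to be computed, and without it the isomorphic case -- and therefore the theorem -- is not established.
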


\begin{rema}
We shall prove the following equivalent statement:

Any exacte sequence $$(S_{a,b}) : 0\rightarrow M_a\rightarrow M\rightarrow M_b\rightarrow 0$$ with $M\in\cal O _{\Phi,\theta}$ splits.
\end{rema}

\begin{proof} Let us denote by $c$ the cocyle corresponding to the exact sequence $(S_{a,b})$. Note first that the sequence splits as a sequence of $\germ l _{\theta}$-modules since the restriction condition of category $\cal O _{\Phi,\theta}$ expresses that $M_{a}$, $M_{b}$ and $M$ are semisimple $\germ l _{\theta}$-modules. In other words, the cocyle $c$ vanishes on $\germ l _{\theta}$. To prove the theorem we have to show it vanishes on the whole Lie algebra $\germ g $. Thanks to the cocyle relation, we only need to show that $c(X_{\pm\alpha})=0$.

\begin{enumerate}
\item {\bf First suppose that $M_a\not\cong M_b$}. From the explicit action of $\germ h $ on $M_{a}$ and $M_{b}$, we obtain $Supp(M_a)\cap Supp(M_b)=\emptyset$. Indeed, if $x(k)\in M_{a}$ has the same weight as $y(k')\in M_{b}$, we would have 
$$\left\{ \begin{array}{ccc}
k_{1}-k_2 & = & k'_{1}-k'_2\\
\vdots & = & \vdots\\
k_{n-2}-k_{n-1} & = & k'_{n-2}-k'_{n-1}\\
k_{n-1}-a-k_n-1 & = & k'_{n-1}-b-k'_n-1\\
a+k_n+\frac{1}{2} & = & b+k'_n+\frac{1}{2}
\end{array}\right.$$
The unique solution of this system is given by $k_i=k'_i$ for any $i\leq n-1$ and $b=a+k_n-k'_n$. But we must have $k_1+\cdots +k_n\in 2\Z$ and $k'_1+\cdots +k'_n\in 2\Z$. Since $k_i=k'_i$ for $i\leq n-1$, we must have $k_n-k'_n\in 2\Z$. Hence we have $b\in a+2\Z$. This means that the vector $y(k')\in M_b$ also appears in $M_a$. Then Lemire's correspondence \cite{Le68} would imply that $M_a\cong M_b$, which contradicts our assumption.

Now let $x\in M_{b}$ be a weight vector. From the cocycle relation and the fact that $c(H)=0$ for $H\in \germ h \subset \germ l _{\theta}$, we get that $c(X_{\pm\alpha})x$ is a weight vector of $M_{a}$, having the same weight as $X_{\pm\alpha}x$ which is non zero since the action of $X_{\pm\alpha}$ on $M_{b}$ is cuspidal. As $Supp(M_a)\cap Supp(M_b)=\emptyset$, this is impossible unless $c(X_{\pm\alpha})x=0$. Thus we proved that $c(X_{\pm\alpha})=0$, as asserted.

\item {\bf Suppose now that $M_a\cong M_b$}. Recall the action of $X^-=X_{-\alpha}$ and $X^+=X_{\alpha}$ on $M_a$ :
$$\left\{\begin{array}{ccc}
X^+\cdot x(k) & = & \frac{1}{2}x(k+2\epsilon_n)\\
X^-\cdot x(k) & = & -\frac{1}{2}(a+k_n)(a+k_n-1)x(k-2\epsilon_n)
\end{array}\right.$$
Proposition \ref{propNM} implies that $M_a$ splits into a direct sum of simple cuspidal $\germ l $-modules. Using the action of $X^-$ and $X^+$ we remark that the vectors $x(k')$ belonging to the module $\cal U (\germ l )x(k)$ satisfy $k'_i=k_i$ for $i\not= n$. Thus the module $\cal U (\germ l )x(k)$ does not depend upon $k_{n}$. Proposition \ref{prop:GS} now gives the cocycle on each $\cal U (\germ l )x(k)$: $c(X^-)=0$ and $c(X^+)=b(k)\times {\left(X^-\right)}^{-1}$ for some $b(k)\in \C$. We should remember from the previous remark that $b(k)$ does not depend upon $k_{n}$.

Let $\beta \in \langle \theta\rangle^+$ be such that $\alpha+\beta\in\cal R $. We write $\beta=\beta_1+\cdots +\beta_i$. Then we have $[X^+,X_{\beta}]=-X_{\alpha+\beta}$ and $[-X_{\alpha+\beta},X_{-\beta}]=2X^+$. Using the cocycle relation we get:
$$2c(X^+)=c([[X^+,X_{\beta}],X_{-\beta}])=[[c(Y),X_{\beta}],X_{-\beta}],$$ since $c(X_{\pm \beta})=0$ as $X_{\pm \beta}\in \germ l _{\theta}$. This is an identity in $End_{\C}(M_a)$. Let us apply it to the vector $x(k)\in M_a$. Recall that: 
$$\left\{\begin{array}{rcl}
X_{\beta}\cdot x(k) & = & k_{n-i}(a+k_n)x(k+\epsilon_{n-i}-\epsilon_n)\\
X_{-\beta}\cdot x(k) & = & x(k-\epsilon_{n-i}+\epsilon_n)
\end{array}\right.$$

Using the above action of $X^-$ we get
$$\left(X^-\right)^{-1}x(k)=-\frac{2}{(a+k_n+2)(a+k_n+1)}x(k+2\epsilon_n).$$ Therefore we have 
\begin{equation}\label{eqqqCC}
2c(X^+)x(k)=-\frac{4b(k)}{(a+k_{n}+2)(a+k_n+1)}x(k+2\epsilon_n).
\end{equation}
Hence: 
\begin{equation}\label{1deplus}
[c(X^+),X_{\beta}]x(k)=\frac{2k_{n-i}}{a+k_n+1}(b(k)-b(k+\epsilon_{n-i}))x(k+\epsilon_{n-i}+\epsilon_n)
\end{equation}
and thus
\begin{multline}\label{eqqqCCC}
[[c(X^+),X_{\beta}],X_{-\beta}]x(k) = \frac{2(k_{n-i}-1)}{a+k_n+2}(b(k-\epsilon_{n-i})-b(k))x(k+2\epsilon_{n})\\
  -\frac{2k_{n-i}}{a+k_n+1}(b(k)-b(k+\epsilon_{n-i}))x(k+2\epsilon_{n}).
\end{multline}
Equaling equations \eqref{eqqqCC} and \eqref{eqqqCCC}, we finally get:
\begin{multline}\label{eqqqCCCC}
2b(k)= k_{n-i}(a+k_n+2)(b(k)-b(k+\epsilon_{n-i}))\\
  -(k_{n-i}-1)(a+k_n+1)(b(k-\epsilon_{n})-b(k)).
\end{multline}
Let $k$ be such that $k_{n-i}=0$. Then the equation \eqref{1deplus} implies that $[c(X^+),X_{\beta}]x(k)=0$. Therefore $c(X^+)x(k)$ should be zero. Thus we have $b(k)=0$. If $k_{n-i}=-1$, then equation \eqref{eqqqCCCC} gives $3(a+k_{n}+2)b(k)=2(a+k_{n}+1)b(k-\epsilon_{n})$. Since $b(k)$ does not depend upon $k_{n}$, we must have $b(k)=0=b(k-\epsilon_{n})$. Now we show by induction on $k_{n-i}\leq 0$ using equation \eqref{eqqqCCCC} that $b(k-p\epsilon_{n-i})=0$ for any non negative integer $p$.  Thus the cocycle $c$ is zero, as asserted.
\end{enumerate}

\end{proof}

\begin{coro}\label{corextC}
Let $\theta \subset \Phi$ be such that $\Phi\setminus\theta=\{\alpha\}$. Then the category $\cal O _{\Phi,\theta}$ is semisimple.
\end{coro}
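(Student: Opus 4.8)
The plan is to mimic, almost verbatim, the argument used for Corollary \ref{corextAf}, with the single Theorem \ref{thmextC} now playing the role that Theorems \ref{thmextA} and \ref{thmextAb} played there. First I would record that by Proposition \ref{prop:GO}(1) the category $\cal O _{\Phi,\theta}$ is abelian, noetherian and artinian, so (as in Proposition \ref{prop:JH}) every object $M$ admits a Jordan--H\"older series of finite length, which I denote $\ell(M)$. By the classification Theorem \ref{thmpcpl}, in the present situation $\Phi\setminus\theta=\{\alpha\}$ the simple objects of $\cal O _{\Phi,\theta}$ are precisely the modules $M_a=M(-1,\ldots,-1,a)$ with $a\in\C\setminus\Z$, so every simple subquotient appearing in such a series is of this form. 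The proof then proceeds by induction on $\ell(M)$.

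For the base cases: if $\ell(M)=1$ then $M$ is simple, hence semisimple; if $\ell(M)=2$ there is an exact sequence $0\to M_a\to M\to M_b\to 0$, and Theorem \ref{thmextC} asserts $Ext^1(M_b,M_a)=\{0\}$, so the sequence splits and $M\cong M_a\oplus M_b$. For the inductive step, assuming the result for all modules of length at most $m$, let $M$ have length $m+1$ and choose a submodule $M'$ of length $m$. By the inductive hypothesis $M'$ is semisimple, say $M'\cong\bigoplus_i M_{a_i}$, and $M/M'$ is a simple module $M_b$. I would then show that the exact sequence $0\to M'\to M\to M_b\to 0$ splits, which yields $M\cong M'\oplus M_b$ and closes the induction.

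The only point that needs more than a bare citation of Theorem \ref{thmextC} is this last splitting, since the theorem vanishes $Ext^1$ only between two \emph{simple} objects, whereas here the kernel $M'$ is merely semisimple. This is handled by the additivity of $Ext^1$ in its second argument inside the abelian category $\cal O _{\Phi,\theta}$: one has $Ext^1(M_b,M')\cong\bigoplus_i Ext^1(M_b,M_{a_i})$, and each summand vanishes by Theorem \ref{thmextC}, so $Ext^1(M_b,M')=\{0\}$ and the sequence splits. I expect no genuine obstacle here; the argument is purely formal homological algebra once Theorem \ref{thmextC} and the classification are in hand, and the real work has already been done in proving the vanishing of $Ext^1$ between the simple objects.
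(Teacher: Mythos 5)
Your proposal is correct and follows essentially the same route the paper intends: Corollary \ref{corextC} is deduced from Theorem \ref{thmextC} by exactly the Jordan--H\"older induction used for Corollary \ref{corextAf}, and your explicit remark that $Ext^1(M_b,M')$ decomposes additively over the simple summands of $M'$ just fills in the step the paper leaves implicit there.
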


\subsubsection{Case $\germ l '=C_{j}$}\label{sssec:semiCC}

Theorem \ref{thmpcpl} implies that the simple modules of the category $\cal O _{\Phi, \theta}$ are modules of degree $1$. Then proposition \ref{propNM} asserts that these modules split into a direct sum of simple cuspidal $\germ l $--modules. Moreover any $M$ in the category $\cal O _{\Phi,\theta}$ can be seen as a cuspidal $\germ l $-module by restriction (this is the cuspidality condition). Applying theorem \ref{thmBKLM}, we get that as a $\germ l $-module $M$ is semisimple.

On the other hand, the restriction condition of category $\cal O _{\Phi,\theta}$ implies that $M$ is a semisimple $\germ l _{\theta}$-module. Hence $M$ is semisimple as a $\germ g $-module. Therefore we proved

\begin{coro}\label{corextCf}
Let $\theta \subset \Phi$ be such that $\Phi\setminus\theta=C_j$. Then the category $\cal O _{\Phi,\theta}$ is semisimple.
\end{coro}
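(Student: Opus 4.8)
The plan is to prove that every short exact sequence in $\cal O _{\Phi,\theta}$ splits; since every object has a finite Jordan--H\"older series (Proposition \ref{prop:JH}, see also Proposition \ref{prop:GO}), an induction on the length identical to the one used in Corollary \ref{corextAf} then shows that each object is a direct sum of simple modules, which is the asserted semisimplicity. So I fix an exact sequence $0\rightarrow N\rightarrow M\rightarrow M'\rightarrow 0$ with $M\in\cal O _{\Phi,\theta}$ and I want to prove that it splits.

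I would exploit two semisimplicity statements that are built into the definition of the category. On one hand, the cuspidality condition says that $M$ is $\langle\Phi\setminus\theta\rangle$-cuspidal, so its restriction to $\germ l =\germ l _{\Phi\setminus\theta}$ is a cuspidal module; as $\germ l '$ is of type $C$, that is $\germ l '\cong\germ sp _{2j}$, Theorem \ref{thmBKLM} --- together with the fact that the centre of $\germ l $ lies in $\germ h $ and hence acts diagonally --- shows that $M$ is semisimple as an $\germ l $-module. On the other hand, the restriction condition says precisely that $M$ is semisimple as an $\germ l _{\theta}$-module. The point of the proof is to glue these two facts, and the natural tool is the cocycle description of extensions from Definition \ref{def:cocy}.

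Concretely, I would attach to the sequence a cocycle $c:\germ g \rightarrow Hom_{\C}(M',N)$. Because $M$, $N$ and $M'$ are semisimple $\germ l _{\theta}$-modules, the sequence splits $\germ l _{\theta}$-equivariantly, and choosing such a splitting I may assume $c$ vanishes on $\germ l _{\theta}$. The cocycle relation then forces $c$ to be $\germ l _{\theta}$-equivariant: for $Y\in\germ l _{\theta}$ and $X\in\germ g $ one gets $c([Y,X])=Y\cdot c(X)$, where $Y\cdot$ denotes the natural action on $Hom_{\C}(M',N)$. Next, the semisimplicity of $M$ over $\germ l $ provided by Theorem \ref{thmBKLM} means that $c|_{\germ l }$ is a coboundary, say $c(X)=[X,\phi]$ for $X\in\germ l $ and some $\phi\in Hom_{\C}(M',N)$. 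Now every simple root of $\Phi$ lies either in $\Phi\setminus\theta$ or in $\theta$, so the root vectors $X_{\pm\alpha}$, $\alpha\in\Phi$, all belong to $\germ l \cup\germ l _{\theta}$; as they generate $\germ g $, any cocycle vanishing on both $\germ l $ and $\germ l _{\theta}$ vanishes on all of $\germ g $, the cocycle relation propagating the vanishing along brackets. Hence it suffices to produce a single $\phi$ for which $c-[\,\cdot\,,\phi]$ vanishes simultaneously on $\germ l $ and on $\germ l _{\theta}$.

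The main obstacle is exactly this simultaneous normalisation: I must choose the $\germ l $-splitting datum $\phi$ to be $\germ l _{\theta}$-equivariant, so that subtracting the coboundary $[\,\cdot\,,\phi]$ removes $c|_{\germ l }$ without destroying the vanishing on $\germ l _{\theta}$. When $M'\not\cong N$ as $\germ l $-modules the space $Hom_{\germ l }(M',N)$ is zero, so $\phi$ is unique and the $\germ l _{\theta}$-equivariance of $c$ forces $\phi$ itself to be $\germ l _{\theta}$-equivariant; this settles the sequences with disjoint support. The delicate case is $M'\cong N$ over $\germ l $, where $\phi$ is determined only up to $Hom_{\germ l }(M',N)$ and one must use the $\germ l _{\theta}$-module structure on this ambiguity space to correct $\phi$ into an equivariant one. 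This is the conceptual analogue of the ``$M_a\cong M_b$'' computations carried out by hand in Theorems \ref{thmextA} and \ref{thmextC}, but here I expect it to follow cleanly from the semisimplicity furnished by Theorem \ref{thmBKLM} rather than from an explicit cocycle calculation. Once $c$ is shown to be a coboundary the sequence splits, and the Jordan--H\"older induction completes the proof.
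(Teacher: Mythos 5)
Your reduction to the two semisimplicity statements is sound and matches the paper's starting point: the cuspidality condition plus Theorem \ref{thmBKLM} (together with the fact that the centre of $\mathfrak{l}$ lies in $\mathfrak{h}$ and acts diagonally) makes every object of $\mathcal{O}_{\Phi,\theta}$ semisimple over $\mathfrak{l}$, and the restriction condition makes it semisimple over $\mathfrak{l}_{\theta}$; the Jordan--H\"older induction at the end is also fine. But the decisive step of your cocycle argument is missing. After normalising $c$ to vanish on $\mathfrak{l}_{\theta}$ and writing $c|_{\mathfrak{l}}=[\,\cdot\,,\phi]$, everything hinges on choosing $\phi$ to commute with $\mathfrak{l}_{\theta}$, and in the case $M'\cong N$ over $\mathfrak{l}$ you only assert that you ``expect'' this to follow from semisimplicity. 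That case is exactly where nontrivial extensions could live (it is the analogue of the $M_a\cong M_b$ computations in Theorems \ref{thmextA} and \ref{thmextC}), and the hoped-for clean equivariance argument is not available as stated: $\mathfrak{l}_{\theta}$ does not normalise $\mathfrak{l}$ (for $\alpha\in\langle\Phi\setminus\theta\rangle$ and $\beta\in\langle\theta\rangle$ the bracket $[X_{\beta},X_{\alpha}]$ lies in neither subalgebra), so there is no natural $\mathfrak{l}_{\theta}$-action on $\mathfrak{l}$-cochains making the coboundary map $\phi\mapsto[\,\cdot\,,\phi]|_{\mathfrak{l}}$ equivariant, and the $\mathfrak{l}_{\theta}$-equivariance of $c$ on all of $\mathfrak{g}$ cannot simply be transferred to $\phi$, because $c([Y,X])$ for $Y\in\mathfrak{l}_{\theta}$, $X\in\mathfrak{l}$ is not controlled by $c|_{\mathfrak{l}}$. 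Your treatment of the other case is also incorrect as stated: $M'\not\cong N$ as $\mathfrak{l}$-modules does not give $Hom_{\mathfrak{l}}(M',N)=0$, since both restrictions are direct sums of many simple cuspidal $\mathfrak{l}$-modules and may share isotypic components, so $\phi$ need not be unique there either.

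For comparison, the paper does not attempt this gluing at all: having observed that any $M$ in $\mathcal{O}_{\Phi,\theta}$ is semisimple as an $\mathfrak{l}$-module (cuspidality condition and Theorem \ref{thmBKLM}, with Theorem \ref{thmpcpl} and Proposition \ref{propNM} describing the simple objects) and semisimple as an $\mathfrak{l}_{\theta}$-module (restriction condition), it concludes directly that $M$ is semisimple over $\mathfrak{g}$, with no normalisation of cocycles. So the cocycle machinery you set up is precisely the part the paper's argument avoids, and as written your proposal stops exactly where the real work would have to be done --- either a genuine argument for the simultaneous normalisation of $\phi$, or an explicit computation in the spirit of Theorem \ref{thmextC}.
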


\begin{rema}
If $\theta=\Phi$, the category $\cal O _{\Phi,\Phi}$ is semisimple by definition. On the other hand, if $\theta=\emptyset$, the category $\cal O _{\Phi,\emptyset}$ is also semisimple according to the theorem \ref{thmBKLM}. Thus if $\germ g =C_{n}$, then the category $\cal O _{\Phi,\theta}$ is semisimple for any subset $\theta$ of $\Phi$.
\end{rema}


\appendix

\section{Case $\germ g =A_{3}$}\label{append}


Let $\germ g =\germ sl _4(\C)$. We described the category $\cal O _{\Phi,\theta}(\germ g )$ in all cases except when $\theta=\{e_2,e_3\}$. We handle this case here. It corresponds to the following Dynkin diagram:

\setlength{\unitlength}{1cm}
\begin{center}
\begin{picture}(4,1.2)
\put(1,.5){\line(1,0){1}}
\put(1,.5){\circle*{.2}}
\put(2,.5){\circle*{.2}}
\put(2,.5){\line(1,0){1}}
\put(3,.5){\circle*{.2}}
\put(1,.5){\circle{.35}}
\put(.9,.8){$\alpha$}
\put(1.9,.8){$\beta_1$}
\put(2.9,.8){$\beta_{2}$}
\end{picture}
\end{center}

We shall use the notations introduced in section \ref{sec:A1An}. Recall we set $\germ l :=\germ l _{\Phi\setminus\theta}$. For simplicity, we set $X^-:=X_{-\alpha}$ and $X^+=X_{\alpha}$. Since the semisimple Lie algebra $\germ l '$ is isomorphic to $\germ sl _2(\C)$ we know that the simple modules in $\cal O _{\Phi,\theta}$ are the $L(C)$ where $C$ is a simple cuspidal $\germ l $-module, isomorphic to some $N(a_{1},a_{2})$ as a $\germ l '$-module, with $a_{1}$ and $a_{2}$ non integer complex numbers. Recall that we denoted by $V(C)$ the corresponding generalised Verma module and by $p$ the natural projection $p:V(C)\rightarrow L(C)$.

The center of $\germ l $ is two dimensional and generated by $H_1:=H_{\alpha}+2H_{\beta_1}$ and $H_2:=H_{\beta_2}$. We denote by $c(k)$ the action of $H_{\beta_{1}}$ on $x(k)\in C$. We have seen in lemma \ref{lemA12} that $c(k)=c+a_{2}-k$ with $c=0$ or $c=-1-a_{1}-a_{2}$. As $H_{2}$ is in the center of $\germ l $, it acts on $C$ by some constant that we shall denote by $d$. As in corollary \ref{corA12}, we prove that if $d=0$ then the module $L(C)$ is isomorphic to $N(a_{1},a_{2},0,0)$ if $c=0$ or to $N(-1-a_{2},-1-a_{1},0,0)$ if $c=-1-A$.

{\bf In what follows we assume that $d\not=0$}. This condition implies that $p(X_{-\beta_2}\otimes x(b))\not=0$. Indeed, we have $X_{\beta_2}\cdot X_{-\beta_2}\otimes x(k)=H_2\otimes x(k)=d\times 1\otimes x(k)\not=0$. Then we apply proposition \ref{GVM} to conclude. We shall find the possible values for $d$ in order the module $L(C)$ be in category $\cal O _{\Phi,\theta}$ in the spirit of section \ref{sec:A1An}.

From lemma \ref{lem7}, there are two complex numbers $\eta_{1}(k)$ and $\eta_{2}(k)$ such that
\begin{multline}\label{ann:EQ}
p(X_{-\alpha-\beta_1-\beta_2}\otimes x(k) = \eta_1(b)p(X_{-\beta_2}X_{-\beta_1}\otimes x(k-1))\\
+\eta_2(b)p(X_{-\beta_1}X_{-\beta_2}\otimes x(k-1)).
\end{multline}
We apply the vector  $X_{\alpha+\beta_1+\beta_2}$ to equation \eqref{ann:EQ}. We get:
$$p(H_{\alpha+\beta_1+\beta_2}\otimes x(k)) = \eta_1(k)p(X^+\otimes x(k-1)),$$ which gives us the following equation:
\begin{align}
c+d+a_1+k= & (a_2-k+1)\eta_1(k).\label{eqA13a}
\end{align}
We apply then the vector $X_{\beta_2}$ to equation \eqref{ann:EQ}. We have
$$p(X_{-\alpha-\beta_1}\otimes x(k)) = \eta_1(k)p(H_2X_{-\beta_1}\otimes x(k-1))+\eta_2(k)p(X_{-\beta_1}H_2\otimes x(k-1)),$$ which gives together with lemma \ref{lemA12}:
\begin{align}
\eta(k) = & (d+1)\eta_1(k)+d\eta_2(k).\label{eqA13b}
\end{align}
Finally we apply $X_{\beta_1+\beta_2}$ to equation \eqref{ann:EQ}. We get
$$p(X^-\otimes x(k)) = \eta_1(k)p(H_{\beta_1}\otimes x(k-1))+\eta_2(k)p(-H_2\otimes x(k-1)),$$ from which we obtain
\begin{align}
a_1+k = & (c+a_2-k+1)\eta_1(k)-d\eta_2(k).\label{eqA13c}
\end{align}
From equations  \eqref{eqA13a}, \eqref{eqA13b} and \eqref{eqA13c} we find the following values:
$$\left\{\begin{array}{cc}
\eta_1(k) = & -\frac{c+a_2-k+2}{a_2-k+1},\\
\eta_2(k) = & \frac{c+a_2-k+1}{a_2-k+1},\\
d = & -2-A-2c.
\end{array}\right.$$
Note in particular that $d$ is entirely determined by $A=a_{1}+a_{2}$ and $c$.

Conversely, we have to show that given $c\in\{0,-1-A\}$ and $d=-2-A-2c$ the corresponding module $L_{c,d}(a_1,a_2)$ belongs to the category $\cal O _{\Phi,\theta}$. We prove it for the case $(c,d)=(0,-2-A)$, the other one being analogous. Of course we only need to show that the module $L(C)$ satisfies the restriction condition. First PBW theorem implies that $L(C)$ is generated by the following weight vectors:
$$p(X_{-\beta_1-\beta_2}^{m_1}X_{-\beta_2}^{m_2}X_{-\beta_1}^{m_3}X_{-\alpha-\beta_1-\beta_2}^{m_4}X_{-\alpha-\beta_1}^{m_5}\otimes x(k)).$$
Thanks to lemma \ref{lemA12} and equation \eqref{ann:EQ}, this reduces to the following vectors only: 
$$p(X_{-\beta_1-\beta_2}^{m_1}X_{-\beta_2}^{m_2}X_{-\beta_1}^{m_3}\otimes x(k)).$$
This proves that $L(C)$ splits into a direct sum of $\germ l _{\theta}$-modules, namely the modules $M_k:=p(\cal U (\germ l _{\theta})\otimes x(k))$. 

Thus it remains to prove that $M_{k}$ is a simple highest weight module. From lemma \ref{lem2} we already know that $M_{k}$ is a highest weight module, generated by the highest weight vector $p(1\otimes x(k))$. Its $\germ l '_{\theta}$-highest weight is $\lambda=(a_2-k)\omega_1+(-2-A)\omega_2$ where $\omega_{i}$ are the fundamental weights of $\germ l '_{\theta}$. Let $V_{k}$ be the $\germ l '_{\theta}$-Verma module with highest weight $\lambda$. Using a theorem of Bernstein-Gelfand-Gelfand (see for instance \cite[thm 5.1]{Hu08}), we remark that $V_{k}$ is simple (and therefore isomorphic to $M_{k}$ by the universal property of Verma modules) if and only if $A\not\in\Z_{<-1}$.

So it only remains to work out the case $A\in\Z_{<-1}$. In this case, Bernstein-Gelfand-Gelfand's theorem \cite[thm 5.1]{Hu08} together with the Kazdhan-Lusztig conjecture for rank two Lie algebras (see for instance \cite[chapitre 8]{Hu08}) show that $V$ admits a unique simple submodule $L(\mu)$ with $\mu=s_{_{\beta_{2}}}\cdot \lambda$. We shall see $V_{k}$ as a $\germ l '_{\theta}$-submodule of $V(C)$. Then it is easy matter to check that the submodule $L(\mu)$ is generated by $X_{-\beta_2}^{-A-1}\otimes x(k)$. On the other hand, one shows by straightforward computation that $X_{-\beta_2}^{-A-1}\otimes x(k)$ is annihilated by the action of $X_{\alpha+\beta_1}$ and $X_{\alpha+\beta_1+\beta_2}$. Using proposition \ref{GVM}, we conclude that $p(X_{-\beta_2}^{-A-1}\otimes x(b))=0$. But now by the universal property of the Verma module $V_{k}$ there is a surjective map from $V_{k}$ onto $M_{k}$. As we just shown that $p(L(\mu))=0$, we get a surjective map from $V_{k}/L(\mu)$ onto $M_k$. As $L(\mu)$ is the unique submodule of $V_{k}$, the quotient $V_{k}/L(\mu)$ is simple and so is $M_{k}$. This completes the proof of the following

\begin{prop}
\begin{enumerate}
\item The simple modules in $\cal O _{\Phi,\theta}$ are the modules $L_{c,d}(a_1,a_2)$ with $a_1,a_2\in \C\setminus \Z$, $c\in \{0,-1-a_1-a_2\}$ and $d\in \{0,-2-a_1-a_2-2c\}$.
\item If $d=0$, the module $L_{c,d}(a_1,a_2)$ is of degree $1$, isomorphic to some module $N(a'_1,a'_2,0,0)$, with $a'_{1}$ and $a'_{2}$ non integer complex numbers.
\end{enumerate}
\end{prop}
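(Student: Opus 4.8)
The plan is to encode a simple module $L(C)$ in $\cal O _{\Phi,\theta}$ by the data $(a_1,a_2,c,d)$, where $C\cong N(a_1,a_2)$ as an $\germ l '$-module (so $a_1,a_2\in\C\setminus\Z$), the constant $c\in\{0,-1-A\}$ (with $A=a_1+a_2$) governs the action of $H_{\beta_1}$ through Lemma \ref{lemA12}, and $d$ is the scalar by which the central element $H_{\beta_2}$ acts. I would split the analysis according to whether $d=0$ or $d\neq0$. When $d=0$ the vector $p(1\otimes x(0))$ carries exactly the same action of the commutant $\cal U (\germ g )_0$ as the corresponding vector of an explicit degree-$1$ module $N(a'_1,a'_2,0,0)$, so Lemire's correspondence \cite{Le68} identifies $L(C)$ with that module, just as in Corollary \ref{corA12}. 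This simultaneously proves part (2) and the $d=0$ half of part (1).

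For the necessity in the case $d\neq0$, the starting point is that $X_{\beta_2}\cdot(X_{-\beta_2}\otimes x(k))=d\,(1\otimes x(k))\neq0$, so Proposition \ref{GVM} forces $p(X_{-\beta_2}\otimes x(k))\neq0$; this non-vanishing is what keeps the ensuing linear system non-degenerate. I would then expand $p(X_{-\alpha-\beta_1-\beta_2}\otimes x(k))$ via Lemma \ref{lem7} as in \eqref{ann:EQ}, with unknown scalars $\eta_1(k),\eta_2(k)$, and extract three relations by applying $X_{\alpha+\beta_1+\beta_2}$, $X_{\beta_2}$ and $X_{\beta_1+\beta_2}$, using the structure constants of $A_3$ together with Lemma \ref{lemA12}. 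Solving this $3\times3$ system pins down $\eta_1(k),\eta_2(k)$ and, crucially, yields $d=-2-A-2c$, which is precisely the constraint asserted in part (1).

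The substantive part is the converse: for each admissible pair with $d\neq0$ one must check that $L_{c,d}(a_1,a_2)$ really lies in $\cal O _{\Phi,\theta}$, i.e. satisfies the restriction condition (the cuspidality and $\germ n ^+$-finiteness conditions being automatic for modules of the form $L(C)$). I would prove it for $(c,d)=(0,-2-A)$, the other case being identical. First, PBW together with Lemma \ref{lemA12} and \eqref{ann:EQ} reduces a spanning set of $L(C)$ to the vectors $p(X_{-\beta_1-\beta_2}^{m_1}X_{-\beta_2}^{m_2}X_{-\beta_1}^{m_3}\otimes x(k))$, exhibiting $L(C)=\bigoplus_k M_k$ with $M_k:=p(\cal U (\germ l _{\theta})\otimes x(k))$ a highest weight $\germ l _{\theta}$-module (Lemma \ref{lem2}). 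It then remains to see that each $M_k$ is simple. Computing its $\germ l '_{\theta}\cong\germ sl _3$-highest weight $\lambda=(a_2-k)\omega_1+(-2-A)\omega_2$, I would compare $M_k$ with the $\germ l '_{\theta}$-Verma module $V_k$ of highest weight $\lambda$; by the Bernstein-Gelfand-Gelfand simplicity criterion \cite{Hu08}, $V_k$ is simple exactly when $A\notin\Z_{<-1}$, which settles the generic case via the universal property of Verma modules.

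The main obstacle is the singular case $A\in\Z_{<-1}$. Here $V_k$ is reducible and, by the BGG theorem combined with the rank-two Kazhdan-Lusztig picture \cite{Hu08}, possesses a unique simple submodule $L(\mu)$ with $\mu=s_{\beta_2}\cdot\lambda$, which inside $V(C)$ is generated by $X_{-\beta_2}^{-A-1}\otimes x(k)$. The plan is to verify by a direct computation that this generator is annihilated by $X_{\alpha+\beta_1}$ and $X_{\alpha+\beta_1+\beta_2}$, whence Proposition \ref{GVM} gives $p(X_{-\beta_2}^{-A-1}\otimes x(k))=0$, that is $p(L(\mu))=0$. Consequently $M_k$ is a quotient of the simple module $V_k/L(\mu)$ and is therefore simple, which verifies the restriction condition. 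Carrying out this annihilation computation cleanly, and making sure $L(\mu)$ is the only possible obstruction, is where the real effort lies.
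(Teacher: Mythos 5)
Your proposal follows the paper's own argument essentially step for step: the same split into $d=0$ (Lemire's correspondence, as in Corollary \ref{corA12}) and $d\neq 0$, the same three applications of $X_{\alpha+\beta_1+\beta_2}$, $X_{\beta_2}$, $X_{\beta_1+\beta_2}$ to the expansion \eqref{ann:EQ} to force $d=-2-A-2c$, and the same converse via the PBW reduction to the modules $M_k$, the BGG simplicity criterion for the $\germ{sl}_3$-Verma module $V_k$, and the annihilation of $X_{-\beta_2}^{-A-1}\otimes x(k)$ combined with Proposition \ref{GVM} in the singular case $A\in\Z_{<-1}$. It is correct and takes the same route as the paper.
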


\begin{rema}
If $d\not\in \Z$, the module $L_{c,d}(a_1,a_2)$ is not of finite degree. Indeed, in this case $A\not\in \Z$. Therefore the previous proof implies that the simple $\germ l _{\theta}$-modules $M_{k}$ are isomorphic to Verma modules, which are obviously not of finite degree.
\end{rema}

\bibliographystyle{plain}
\bibliography{biblio}

\end{document}